\numberwithin{equation}{section}
\newtheorem{thm}{Theorem}[section]
\newtheorem{lemma}[thm]{Lemma}
\newtheorem{cor}[thm]{Corollary}
\newtheorem{prop}[thm]{Proposition}
\newtheorem{introthm}{Theorem}
\theoremstyle{definition}
\newtheorem{rk}[thm]{Remark}
\newtheorem{constr}[thm]{Construction}
\newtheorem{warn}[thm]{Warning}
\newtheorem{ex}[thm]{Example}
\newtheorem*{claim*}{Claim}
\renewcommand\qedsymbol{$\blacktriangle$\kern-1.2pt\null}}
\newtheorem{defi}[thm]{Definition}
\newcommand{\nerve}{\textup{N}}
\newcommand{\h}{\textup{h}}
\newcommand{\cat}[1]{\textbf{\textup{#1}}}
\newcommand{\op}{{\textup{op}}}
\newcommand{\blank}{{\textup{--}}}
\newcommand{\pr}{{\textup{pr}}}
\newcommand{\Hom}{{\textup{Hom}}}
\newcommand{\id}{{\textup{id}}}
\newcommand{\ev}{{\textup{ev}}}
\newcommand{\im}{\mathop{\textup{im}}}
\newcommand{\incl}{{\textup{incl}}}
\newcommand{\Fun}{\textup{Fun}}
\newcommand{\Inj}{\textup{Inj}}
\newcommand{\supp}{\mathop{\textup{supp}}\nolimits}
\newcommand{\Maps}{\mathord{\textup{maps}}}
\newcommand{\Ob}{\mathop{\textup{Ob}}}
\newcommand{\myh}{\mathord{\textup{`$\mskip-.1\thinmuskip h\mskip-.4\thinmuskip$'}}}
\newcommand{\forget}{\mathop{\textup{forget}}\nolimits}
\newcommand{\Alg}{\textup{Alg}}
\newcommand{\grpdfy}{\textup{grpdfy}}
\newcommand{\discr}{\textup{discr}}
\newcommand{\core}{\mathop{\textup{core}}}
\newcommand{\const}{\textup{const}}
\newcommand{\CMon}{\textup{CMon}}
\newcommand{\ppo}{\mathbin{\raise.25pt\hbox{\scalebox{.67}{$\square$}}}}
\newcommand{\Ex}{{\textup{Ex}}}
\newcommand{\hocolim}{\mathop{\textup{hocolim}}\nolimits}
\newcommand{\NERVE}{{\cat{N}}}
\newcommand{\diag}{{\textup{diag}}}
\newcommand{\Sing}{{\textup{Sing}}}
\newcommand{\Mor}{\mathop{\textup{Mor}}}
\newcommand{\src}{\mathop{\textup{src}}}
\let\setminus=\smallsetminus
\let\phi=\varphi
\let\del=\partial
\def\twocell[#1]{\arrow[#1, dash, phantom, "\Rightarrow"{scale=1.125, yshift=-.4pt, description, allow upside down, sloped, inner sep=0pt}]}
\begin{document}
\title{Genuine vs.~na\"ive symmetric monoidal $G$-categories}

\emsauthor{1}{
	\givenname{Tobias}
	\surname{Lenz}
    \mrid{1292331}}{Tobias Lenz}
\Emsaffil{1}{
	\department1{Mathematisches Institut}
	\organisation1{Rheinische Friedrich-Wilhelms-Universität Bonn}
	\rorid1{041nas322}
	\address1{Endenicher Allee 60}
	\zip1{53115}
	\city1{Bonn}
	\country1{Germany}

	\department2{Mathematisch Instituut}
	\organisation2{Universiteit Utrecht}
	\rorid2{04pp8hn57}
	\address2{Budapestlaan 6}
	\zip2{3584$\,$CD}
	\city2{Utrecht}
	\country2{The Netherlands (\textit{current address})}
	\affemail2{t.lenz@uu.nl}}

\classification[55P48, 19D23, 55U35]{55P91}
\keywords{Genuine symmetric monoidal $G$-categories, operads, parsummable categories, equivariant infinite loop spaces, $G$-global homotopy theory, equivariant algebraic $K$-theory}

\begin{abstract}
	We prove that through the eyes of equivariant \emph{weak} equivalences the genuine symmetric monoidal $G$-categories of Guillou and May [Algebr. Geom. Topol. 17 (2017), no. 6, 3259--3339] are equivalent to just ordinary symmetric monoidal categories with $G$-action. Along the way, we give an operadic model of global infinite loop spaces and provide an equivalence between the equivariant category theory of genuine symmetric monoidal $G$-categories and the $G$-parsummable categories studied by Schwede [J. Topol. 15 (2022), no. 3, 1325--1454] and the author [New York
	J. Math. 29 (2023), 635--686].
\end{abstract}

\makeatletter
\renewcommand*\ps@titlepage{%
	\let\@oddfoot\@empty
	\let\@evenfoot\@empty
	\def\@oddhead{\ems@titlehead{\cr}{\cr}}
	\let\@evenhead\@oddhead
}
\makeatother

\maketitle
\tableofcontents

\section*{Introduction}
\emph{Commutative monoids up to coherent homotopy} play an important role in algebraic topology, not least because of their close connection to stable homotopy theory: every coherently commutative monoid can be delooped to a connective spectrum, and this construction is an important tool to obtain stable homotopy types from data of a more algebraic nature, in particular featuring prominently in the modern construction of the algebraic $K$-theory of rings \cite{may-permutative}. Over the years, several equivalent approaches to the subject have been studied, in particular May's operadic approach in terms of \emph{$E_\infty$-algebras} \cite{may-loop-spaces}, Segal's theory of \emph{special $\Gamma$-spaces} \cite{segal-gamma}, and various `ultra-commutative' models \cite{sagave-schlichtkrull}.

Equivarianty, i.e.~with respect to the action of a finite group $G$, the theory becomes more subtle: namely, if one wants such \emph{$G$-equivariantly coherently commutative monoids} to deloop to \emph{genuine} $G$-spectra in the sense of equivariant stable homotopy theory, one has to encode additional algebraic structure in the form of additive norms, intuitively corresponding to certain `twisted' sums. In the operadic approach, this leads to the notion of \emph{genuine $G$-$E_\infty$-operads} and their algebras, the \emph{genuine $G$-$E_\infty$-algebras}. However, in many practical contexts the latter are much harder to construct than their non-equivariant counterparts: in particular, if we equip an ordinary $E_\infty$-algebra with a $G$-action, even a trivial one, this does not yield a genuine $G$-$E_\infty$-algebra in any natural way, but only a so-called \emph{na\"ive} one. Similarly, when one wants to consider equivariant generalizations of algebraic $K$-theory, one faces the obstacle that typically the inputs one wants to consider only come to us as na\"ive symmetric monoidal $G$-categories, which are (pseudo)algebras over a certain $E_\infty$-operad $E\Sigma_*$ with trival $G$-action, as opposed to the desired \emph{genuine symmetric monoidal $G$-categories}.

To circumvent this issue, Guillou and May \cite{guillou-may} introduced a general procedure (following Shimakawa) that builds a genuine symmetric monoidal $G$-category from a na\"ive one via some sort of Borel construction. However, as remarked in \cite{guillou-may} and reiterated in \cite{merling}, these are actually the only examples they were able to construct, which naturally leads to the question whether other genuine symmetric monoidal $G$-categories exist. While Guillou and May originally expected this to be the case, we prove in this paper:

\begin{introthm}[see Theorem~\ref{thm:main}]\label{introthm:main}
The Guillou-May-Shimakawa construction induces an equivalence between the quasi-category of na\"ive symmetric monoidal $G$-categories (with respect to a certain explicit notion of weak equivalence) and the quasi-category of genuine symmetric monoidal $G$-categories (with respect to the $G$-equivariant \emph{weak} equivalences, i.e.~functors inducing weak homotopy equivalences on nerves of fixed points).
\end{introthm}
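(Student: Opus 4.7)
The most natural plan is to factor the comparison through the equivalence between genuine symmetric monoidal $G$-categories and $G$-parsummable categories announced in the abstract. With that equivalence in hand, Theorem~\ref{introthm:main} reduces to identifying the Guillou-May-Shimakawa construction (composed with the passage to $G$-parsummable categories) with a purely operadic change-of-operad or Borel-type functor from naïve symmetric monoidal $G$-categories to $G$-parsummable categories, and then showing that this functor induces an equivalence of underlying quasi-categories.

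To carry this out, I would first present both sides as (semi-)model or relative categories: on the naïve side as $E\Sigma_\ast$-algebras in $G$-categories equipped with the specified weak equivalences, and on the genuine side as $G$-parsummable categories with the equivariant weak equivalences on fixed-point nerves. The Guillou-May-Shimakawa construction, viewed through the parsummable translation, should then be recognisable as (one half of) a Quillen-type adjunction whose other leg is essentially the forgetful functor from genuine to na\"ive structures together with the ambient $G$-action. Homotopical fully faithfulness should then follow from a standard computation of derived mapping spaces, provided one knows that the output of the construction is suitably cofibrant or fibrant; in practice this amounts to checking that the bar/Borel-type formulas used by Guillou-May and Shimakawa are homotopically well-behaved, which is an operadic bookkeeping exercise given the machinery assembled earlier in the paper.

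The main obstacle is essential surjectivity: every genuine symmetric monoidal $G$-category must be equivariantly weakly equivalent to one produced by Guillou-May-Shimakawa from a na\"ive input. The natural candidate input is obtained by restricting the genuine $G$-$E_\infty$-structure to its underlying na\"ive $E\Sigma_\ast$-structure together with the inherited $G$-action, applying Guillou-May-Shimakawa, and exhibiting a zig-zag of equivariant weak equivalences back to the original. The crucial point is to verify that the fixed-point nerves match up: this is the statement that the Guillou-May construction always computes the genuine fixed points as a suitable homotopy fixed-point object, and it is where the technical heart of the argument sits. I expect the cleanest route to be via a simplicial free resolution of the genuine $G$-$E_\infty$-operad, reducing the problem to a degreewise comparison in which the Borel nature of both sides becomes manifest, and where one can invoke the comparison with $G$-parsummable categories to translate the degreewise question into one about $EG$-indexed categorical constructions that are well-understood.
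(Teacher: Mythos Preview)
Your plan has a genuine gap: you have not engaged with the central difficulty, which is that the weak equivalences on the two sides are \emph{incomparable}. On the na\"ive side the relevant weak equivalences are the $G$-`homotopy' fixed point weak equivalences (maps $f$ such that $\Fun(EH,f)^H$ is a weak homotopy equivalence for all $H\subset G$), while on the genuine side one uses the $G$-equivariant weak equivalences (maps inducing weak homotopy equivalences on honest fixed points). Neither class contains the other, so your proposed Quillen-adjunction/fully-faithful-plus-essentially-surjective strategy has no obvious way to get off the ground: you cannot simply localize one class at the other, and your ``essential surjectivity'' paragraph never explains why hitting an object up to $G$-weak equivalence on the genuine side corresponds to anything on the na\"ive side.

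The paper's key idea, which your outline is missing, is the notion of a \emph{saturated} $G$-category: one for which the canonical map $C^H\to\Fun(EH,C)^H$ is a weak homotopy equivalence for every $H\subset G$. The Guillou--May--Shimakawa construction $\Fun(EG,\blank)$ always lands in saturated objects, and among saturated objects the $G$-weak equivalences and the $G$-`h'fp weak equivalences agree (both being coarser than underlying equivalences there). This reduces the theorem to two statements: first, that $\Fun(EG,\blank)$ is an equivalence with respect to the \emph{underlying} equivalences of categories, which is nearly trivial since $\mathcal O\to\mathcal O^{EG}$ is a non-equivariant equivalence of $\Sigma$-free operads; and second, that the inclusion of saturated genuine algebras into all genuine algebras is an equivalence with respect to the $G$-weak equivalences. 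It is only this second statement where the passage through $G$-parsummable categories (and the analogous result already established there) is used. Your proposal invokes the parsummable comparison at the wrong stage and without isolating the saturation step, so as written it does not constitute a proof.
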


In fact, Guillou and May more generally work with (genuine and na\"ive) symmetric monoidal $G$-categories internal to the category of topological spaces, and we also prove the analogue of Theorem~\ref{introthm:main} in this context, see Theorem~\ref{thm:main-topological}. Note however that both of these crucially rely on using the $G$-equivariant \emph{weak} equivalences---for the $G$-equivariant equivalences (functors inducing \emph{equivalences of categories} on fixed points) there are both trivial and non-trivial examples of genuine symmetric monoidal $G$-categories not arising via their construction, see Remarks~\ref{rk:counterexamples} and~\ref{rk:counterexample-interesting}. However, at least for applications to equivariant algebraic $K$-theory the $G$-weak equivalences are fine enough; in particular, Theorem~\ref{introthm:main} allows us to derive a version of the main result of \cite{g-global} for genuine symmetric monoidal $G$-categories, which generalizes a non-equivariant result due to Thomason \cite{thomason}:

\begin{introthm}[see Theorem~\ref{thm:guillou-may-k-theory}]\label{introthm:gm-thomason}
The Guillou-May construction of equivariant algebraic $K$-theory exhibits the quasi-category of connective genuine $G$-spectra as a quasi-localiza\-tion of the category of genuine symmetric monoidal $G$-categories.
\end{introthm}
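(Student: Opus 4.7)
The plan is to reduce Theorem~\ref{introthm:gm-thomason} to an analogous localization result for $G$-parsummable categories, which is available in \cite{g-global}, by using the two comparisons established in the body of the paper: Theorem~\ref{introthm:main}, and the equivalence between genuine symmetric monoidal $G$-categories and $G$-parsummable categories advertised in the abstract.

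Explicitly, I would first invoke Theorem~\ref{introthm:main} to replace genuine symmetric monoidal $G$-categories by naïve ones, reducing the claim to showing that the composite $\mathbb{K}_G\circ\mathrm{GMS}$ exhibits the quasi-category of connective genuine $G$-spectra as a quasi-localization of naïve symmetric monoidal $G$-categories. Next, I would use the paper's equivalence with $G$-parsummable categories to translate this composite into a K-theory functor defined on $G$-parsummable categories. That translated functor should agree, up to natural weak equivalence of genuine $G$-spectra, with the K-theory functor studied in \cite{g-global}, whose main result---a $G$-global Thomason-type theorem---yields the required localization statement once specialized from the $G$-global to the fixed-$G$ setting.

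The principal technical obstacle is precisely this identification of K-theory functors across the different models: one has to verify that the Guillou-May spectrum associated to $\mathrm{GMS}(\mathcal{C})$ coincides up to weak equivalence with the spectrum associated to the $G$-parsummable category corresponding to $\mathcal{C}$, and that this identification lifts to the level of quasi-categories. This is essentially careful diagrammatic bookkeeping, but it must be performed at the level of homotopy-coherent data rather than underlying strict structures, since every comparison involved is only a weak equivalence; in particular one cannot hope for a strict zigzag of natural isomorphisms of spectrum-valued functors, only a natural equivalence on localized quasi-categories. Once this compatibility is pinned down, no further homotopical input beyond Theorem~\ref{introthm:main} and \cite{g-global} is needed, and the theorem follows formally.
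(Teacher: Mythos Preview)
Your proposal is correct and follows essentially the same strategy as the paper: reduce via Theorem~\ref{introthm:main} to na\"ive permutative $G$-categories, then invoke the localization result from \cite{g-global}. The paper's proof is slightly more streamlined in two respects. First, it cites \cite[Theorem~4.3.9]{g-global} directly for na\"ive permutative $G$-categories, so the explicit detour through $G$-parsummable categories you outline is unnecessary (that passage is already absorbed into the cited result). Second, the ``principal technical obstacle'' you identify---matching up the Guillou--May $K$-theory spectrum with the one from \cite{g-global}---is not handled by fresh diagrammatic bookkeeping but by appealing to the comparison of the operadic and $\Gamma$-space approaches to equivariant $K$-theory due to May, Merling, and Osorno \cite{may-merling-osorno}. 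So your plan is sound, but the actual execution is shorter than you anticipate because the compatibility you worry about is already in the literature.
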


We also prove a `non-group completed' version of this in the spirit of Mandell's non-equivariant result \cite{mandell}, see Theorem~\ref{thm:g-equiv-mandell}.

\subsection*{Global and \texorpdfstring{$\bm G$}{G}-global \texorpdfstring{$\bm{E_\infty}$}{E-infinity}-algebras}
Our proof of Theorem~\ref{introthm:main} uses the language of \emph{$G$-global homotopy theory} in the sense of \cite{g-global}. Intuitively speaking, $1$-global (typically referred to simply as \emph{global}) homotopy theory \cite{schwede-book} studies equivariant phenomena that exist universally across all suitable groups, while $G$-global homotopy theory generalizes this to the presence of an additional twist in the form of a group action and can be viewed as a synthesis of ordinary global and $G$-equivariant homotopy theory.

The main portion of the present paper is then devoted to developing a $G$-global version of the theory of $E_\infty$-algebras and connecting this to the $\Gamma$-space- and ultra-commutative approaches studied in \cite{g-global}, which we hope to be interesting in its own right, and which already gives new results in the global case $G=1$. In particular, we introduce \emph{$G$-global $E_\infty$-operads} and we prove the following comparison, refining an equivariant result due to May, Merling, and Osorno \cite{may-merling-osorno}:

\begin{introthm}[see Theorem~\ref{thm:genuine-e-infty-vs-gamma}]\label{introthm:genuine-e-infty-vs-gamma}
There exists an equivalence between the quasi-category of special $G$-global $\Gamma$-spaces \cite{g-global} and the quasi-category of $\mathcal O$-algebras for any $G$-global $E_\infty$-operad $\mathcal O$.
\end{introthm}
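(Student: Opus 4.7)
The approach is to reduce the problem to a single conveniently chosen $G$-global $E_\infty$-operad and then compare its algebras directly with special $G$-global $\Gamma$-spaces. For the reduction, I would observe that for any two $G$-global $E_\infty$-operads $\mathcal{O}$ and $\mathcal{O}'$, the product $\mathcal{O}\times\mathcal{O}'$ is again a $G$-global $E_\infty$-operad and both projections are levelwise $G$-global equivalences---this is immediate from the contractibility condition packaged into the $E_\infty$-property. Combined with a homotopy invariance statement for algebras over $G$-global equivalences of operads (which one expects to follow either from a suitable model structure on $\mathcal{O}$-algebras or from a quasi-categorical rectification argument), this shows that the quasi-category of $\mathcal{O}$-algebras is independent of the choice of $\mathcal{O}$, so it suffices to exhibit one specific operad $\mathcal{O}_0$ whose algebras model special $G$-global $\Gamma$-spaces.

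For $\mathcal{O}_0$ I would take an operad adapted from the Barratt--Eccles or injection operad in such a way that $\mathcal{O}_0(n)$ serves as a universal space for precisely the family of subgroups of $G\times\Sigma_n$ detected by $G$-global equivalences, so that specialness and operadic coherence become two expressions of the same phenomenon. With this choice in hand, one direction of the comparison is transparent: a special $G$-global $\Gamma$-space $X$ gives $X(1_+)$ the structure of an $\mathcal{O}_0$-algebra by combining homotopy inverses to the Segal maps $X(n_+)\to X(1_+)^n$ with the fold map $X(n_+)\to X(1_+)$, the indeterminacy in the homotopy inverse being rectified by the $\mathcal{O}_0(n)$-action. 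In the opposite direction, an $\mathcal{O}_0$-algebra $A$ is sent to a $\Gamma$-space $BA$ via a two-sided bar construction along the monad associated to $\mathcal{O}_0$, evaluated at finite pointed sets; the $E_\infty$-condition on $\mathcal{O}_0$ then forces $BA$ to be special.

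It remains to show that these two functors induce mutually inverse equivalences after passing to the relevant localizations. On suitably cofibrant inputs, one expects the unit and counit to be $G$-global equivalences by direct inspection, and the general case then follows by functorial replacement. The main obstacle will be verifying the homotopical statements at the finer $G$-global level, where weak equivalences are detected on fixed points for all finite-group homomorphisms $H\to G\times\Sigma_n$ rather than merely for subgroups of $G$. This forces the choice of $\mathcal{O}_0$ to be made with exactly the correct equivariance, and it requires showing that the bar construction computes the right homotopy type on each such fixed-point space. Once the combinatorics of $\mathcal{O}_0$ are arranged correctly, these verifications should reduce to universal-space arguments in the style of May--Merling--Osorno, but carried out uniformly across the richer family of ambient finite groups characteristic of $G$-global homotopy theory.
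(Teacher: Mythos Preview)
Your reduction step is sound and matches what the paper does: Theorem~\ref{thm:change-of-operad-sset} establishes that $\mathcal F$-weak equivalences of $\Sigma$-free operads induce Quillen equivalences on algebras (via a monadicity argument rather than cell induction), so the zig-zag $\mathcal O\leftarrow\mathcal O\times\mathcal O'\to\mathcal O'$ does reduce the problem to a single well-chosen operad.

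Where you diverge from the paper is in the direct comparison with $\Gamma$-spaces. The paper does \emph{not} build a bar construction and verify specialness by hand. Instead it passes through an intermediate model: first it identifies $\mathcal O$-algebras in $\cat{$\bm{E\mathcal M}$-$\bm G$-SSet}$ with algebras over a \emph{twisted} $G$-operad $\mathcal O\rtimes E\mathcal M$ in $\cat{$\bm G$-SSet}$ (Proposition~\ref{prop:ext-vs-internal-1-cat}), then compares these to algebras over the injection operad $\mathcal I$ (Theorem~\ref{thm:external-vs-internal}), and finally identifies tame $\mathcal I$-algebras with $G$-parsummable simplicial sets via an explicit operadic description of the box product (Theorem~\ref{thm:box-product-operadic}, Proposition~\ref{prop:parsum-vs-tame-I}, Theorem~\ref{thm:taming-I-algebras}). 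The comparison between $G$-parsummable simplicial sets and special $G$-global $\Gamma$-spaces is then imported wholesale from~\cite{g-global}. So the paper's proof of Theorem~\ref{thm:genuine-e-infty-vs-gamma} is essentially a one-line assembly of these pieces.

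Your proposed route---a direct Segal/bar comparison in the style of May--Merling--Osorno---is a legitimate alternative and is in fact what the paper says it is \emph{refining}. But carrying it out at the $G$-global level would require redoing that machinery relative to the $E\mathcal M$-action and the family of universal subgroups of $\mathcal M$ (not, as you write, ``finite-group homomorphisms $H\to G\times\Sigma_n$''---$G$-global weak equivalences are detected on $\phi$-fixed points for universal $H\subset\mathcal M$ and $\phi\colon H\to G\times\Sigma_n$). Your sketch of how to obtain an $\mathcal O_0$-algebra from a special $\Gamma$-space by ``combining homotopy inverses to the Segal maps'' glosses over the rectification needed to produce a \emph{strict} algebra, which is where the real work lies in that approach. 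The paper's route avoids all of this by leveraging the parsummable framework already built in~\cite{g-global}.
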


In fact, we develop the whole theory both for algebras in simplicial sets as well as algebras in categories. As an upshot, this then allows us to give a new model for the \emph{category theory} of genuine symmetric monoidal $G$-categories---which unlike the respective homotopy theories differs from the one for na\"ive symmetric monoidal $G$-categories---in terms of the \emph{$G$-parsummable categories} studied in \cite{schwede-k-theory,sym-mon-global}. These represent a rather different approach to `coherent commutativity,' similar to the `ultra-commutative' philosophy of \cite{sagave-schlichtkrull,schwede-book}; somewhat loosely speaking, we can think of them as $G$-categories equipped with a \emph{strictly} equivariant, unital, associative, and commutative, but only \emph{partially} defined operation.

\begin{introthm}[see Theorem~\ref{thm:genuine-sym-vs-parsum}]\label{introthm:genuine-sym-mon-vs-parsum-cat}
There exists an equivalence between the quasi-category of genuine symmetric monoidal $G$-categories and the quasi-category of $G$-parsummable categories, both formed with respect to the $G$-equivariant equivalences of categories.
\end{introthm}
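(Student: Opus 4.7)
My plan is to exhibit both quasi-categories as algebras over operads in $G$-categories and to compare them via a change-of-operads argument carried out intrinsically at the categorical level. Because the theorem is formulated with respect to the \emph{equivalences of categories} (as opposed to the $G$-weak equivalences used in Theorem~\ref{introthm:main}), the comparison must see genuinely more than the underlying homotopy type and cannot simply be reduced to an underlying simplicial statement.

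First, I would identify the quasi-category of genuine symmetric monoidal $G$-categories with the quasi-category of algebras over a specific categorical $G$-global $E_\infty$-operad $\mathcal{O}$. This should proceed exactly like the categorical companion of Theorem~\ref{introthm:genuine-e-infty-vs-gamma}, and is essentially built into the Guillou-May definition, since those are pseudoalgebras over a specific categorical $E_\infty$-operad carrying the appropriate $G$-action. Second, following Schwede \cite{schwede-k-theory} and the author's \cite{sym-mon-global}, I would reinterpret $G$-parsummable categories operadically as algebras over an operad $\mathcal{P}$ in $G$-categories built from the injection monoid $\mathcal{M} = \Inj(\omega,\omega)$: its $n$-th level records tuples of $\mathcal{M}$-elements with pairwise disjoint supports, encoding the data of a $G$-equivariant, strictly commutative, partially defined $n$-ary sum. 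A key point is to verify that $\mathcal{P}$ is itself a $G$-global $E_\infty$-operad in the strong, categorical sense, so that it qualifies as an input to the comparison machinery.

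Third, I would invoke (or prove) the following change-of-operads principle: any zigzag of maps of $G$-global $E_\infty$-operads in $G$-categories that is a levelwise equivalence of categories induces an equivalence on quasi-categories of algebras with respect to the $G$-equivariant equivalences of categories. Applied to $\mathcal O$ and $\mathcal P$ (which are both categorical $G$-global $E_\infty$-operads, and therefore connected by such a zigzag through the product $\mathcal O \times \mathcal P$ or through a suitable resolution), this yields the desired equivalence between the two quasi-categories.

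The main obstacle will be the categorical change-of-operads principle itself. At the level of $G$-weak equivalences one can leverage standard model-categorical rectification, but equivalences of categories detect strictly more structure, and two $E_\infty$-operads in categories need not produce equivalent algebra categories without some form of $\Sigma$-freeness or $G$-global cofibrancy on the operads involved. Verifying that $\mathcal P$ (and the chosen $\mathcal O$) satisfy such a condition in the $G$-global setting --- i.e., that the $\mathcal M$-combinatorics provide the freeness needed to make $\mathcal P$ into a cofibrant $G$-global $E_\infty$-operad in categories --- is where the genuine technical work lies. Once that is in place, the zigzag comparison with any other such operad is formal.
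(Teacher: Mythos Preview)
Your strategy has the right shape, and the change-of-operads principle you flag as the main obstacle is precisely Theorem~\ref{thm:change-of-operad-categories}, which the paper establishes for $\Sigma$-free operads with respect to the $\mathcal F$-equivalences. The real gap lies earlier, in your second step: realizing $G$-parsummable categories directly as algebras over an operad $\mathcal P$ in $\cat{$\bm G$-Cat}$ does not work as stated, and the terminology is slightly off (genuine symmetric monoidal $G$-categories are algebras over a \emph{genuine $G$-$E_\infty$-operad}, not a $G$-global one).

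By definition, $G$-parsummable categories are commutative monoids for the box product on the \emph{tame} subcategory $\cat{$\bm{E\mathcal M}$-$\bm G$-Cat}^\tau$. The paper identifies these (Proposition~\ref{prop:parsum-vs-tame-I}) with the \emph{tame} algebras over the injection operad $\mathcal I$, where $\mathcal I(n)=E\Inj(\bm n\times\omega,\omega)$---not ``tuples with disjoint support.'' Crucially, $\mathcal I$ is not a genuine $G$-$E_\infty$-operad in $\cat{$\bm G$-Cat}$: it is a \emph{twisted} $G$-operad, carrying an extra homomorphism $\core(\mathcal M)\to\mathcal I(1)^G$ that records the $E\mathcal M$-action on algebras separately from the $G$-action. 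There is no operad in $\cat{$\bm G$-Cat}$ whose algebra category is literally $\cat{$\bm G$-ParSumCat}$, so a direct zig-zag to a genuine $G$-$E_\infty$-operad $\mathcal O$ is unavailable. The paper's route therefore inserts two steps you omit: Theorem~\ref{thm:taming-I-algebras} shows that the inclusion of tame into all $\mathcal I$-algebras is an equivalence (with respect to the $G$-global structure), and the action-twist isomorphism $\Delta\colon\Alg_{\mathcal I}(\cat{$\bm G$-Cat})\to\Alg_{\mathcal I_G}(\cat{$\bm G$-Cat})$ of Lemma~\ref{lemma:Delta-right-Bousfield} converts the twisted operad into the honest genuine $G$-$E_\infty$-operad $\mathcal I_G$ (with conjugation $G$-action). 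Only then does Theorem~\ref{thm:change-of-operad-categories} compare $\mathcal I_G$-algebras with $\mathcal O$-algebras. Both extra steps carry content---tameness is what makes the box-product description work, and the twist is what produces the correct $\phi$-fixed-point behaviour---and neither is addressed in your outline.
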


On the pointset level, $G$-parsummable categories actually possess more structure than just genuine symmetric monoidal $G$-categories in that unlike the latter they already model the corresponding \emph{$G$-global} theory, see Theorem~\ref{thm:taming-I-algebras}. Nevertheless, they are arguably easier to construct, as we illustrate by an example related to the algebraic $K$-theory of groups rings (see Section~\ref{sec:k-theory-group-rings}).

\subsection*{Related work} Recently, Barrero \cite{barrero} studied a notion of global $E_\infty$-algebras in Schwede's orthogonal space model of global homotopy theory \cite{schwede-book}. Our approach presented here differs from his treatment in basically two ways:

Firstly, we work in a different model based on the `universal finite group.' While this requires us to restrict to finite groups (as opposed to all compact Lie groups), this model is necessary to develop the corresponding theory for categories, and moreover this is what allows us to construct genuine $G$-global $E_\infty$-algebras of a more combinatorial or algebraic nature.

Secondly, and more importantly, we consider all operads and algebras with respect to the Cartesian product as opposed to the so-called box product used by Barrero. While this comes at the cost of various `$\Sigma$-freeness' conditions on the operads in question, this is more in line with the usual (non-)equivariant approach, and it is moreover in some sense one step further removed from the ultra-commutative model of global coherent commutativity originally studied by Schwede in \cite{schwede-book}: for example, the analogous non-equivariant comparison between $E_\infty$-algebras in the usual sense and a certain notion of ultra-commutative monoids due to Sagave and Schlichtkrull \cite{sagave-schlichtkrull} proceeds through $E_\infty$-algebras with respect to the box product.

Both of these changes actually faciliate the comparison with the equivariant theory developed by Guillou and May and in particular are important for the proofs of Theorems~\ref{introthm:main} and~\ref{introthm:gm-thomason}.

\subsection*{Strategy and outline}
While it seems natural to approach Theorems~\ref{introthm:main} and~\ref{introthm:gm-thomason} via model categorical techniques, one soon faces the issue that the Thomason style model structure on $\cat{$\bm G$-Cat}$ from \cite{gcat} is not well-behaved monoidally; in fact, already for $G=1$ it is not known to me whether it transfers to categories of operadic algebras, or whether these even carry any model structure at all whose weak equivalences are the $G$-weak equivalences.

On the other hand, as we will see below the \emph{$G$-equivariant equivalences} interact as nicely with the Cartesian product as one could wish, which in particular allows us to construct transferred model structures for operadic algebras---sadly, however, Theorem~\ref{introthm:main} is simply no longer true with respect to this finer notion of equivalence.

We will solve this dilemma by pushing the comparison with respect to the $G$-equivariant equivalences of categories as far as we can (exploiting all the model categorical techniques available in this setting) and only switching to the $G$-equivariant \emph{weak} equivalences in the very end. As a consequence of this approach, the present paper can be roughly divided into two parts (separated by a short interlude in the form of Section~\ref{sec:k-theory-group-rings}): the first part is mostly model categorical in nature and establishes the general theory of operadic algebras with respect to the $G$-equivariant equivalences of categories, as well as the corresponding simplicial theory; here we in particular prove Theorems~\ref{introthm:genuine-e-infty-vs-gamma} and~\ref{introthm:genuine-sym-mon-vs-parsum-cat}. The second part is then devoted to the proofs of Theorems~\ref{introthm:main} and~\ref{introthm:gm-thomason}, where we have to work with bare categories with weak equivalences as well as the quasi-categories they represent. In more detail:

In Section~\ref{sec:reminder} we give a brief reminder on unstable $G$-global homotopy theory, before developing the basic theory of $G$-global operads and their algebras. Section~\ref{sec:equivariant-cat} is then devoted to the analogous theory in the world of categories.

In Section~\ref{sec:g-glob-vs-g-equiv} we compare the $G$-global and $G$-equivariant approaches. Afterwards, we relate the operadic models to the models we studied in \cite{g-global} in Section~\ref{sec:parsummability}, in particular proving Theorems~\ref{introthm:genuine-e-infty-vs-gamma} and \ref{introthm:genuine-sym-mon-vs-parsum-cat}. As an application of these results, we construct a genuine symmetric monoidal $G$-category whose equivariant algebraic $K$-theory captures the usual algebraic $K$-theory of group rings in Section~\ref{sec:k-theory-group-rings}.

Section~\ref{sec:cat-vs-simpl} explains the relation between the categorical and simplicial models. Afterwards, we prove Theorems~\ref{introthm:main} and~\ref{introthm:gm-thomason} in Section~\ref{sec:main-thm} building on the model categorical comparisons established in earlier sections as well as our work in \cite{sym-mon-global, g-global}. Finally, Section~\ref{sec:internal} generalizes our main results to categories internal to spaces.

\section{\texorpdfstring{$G$}{G}-global homotopy theory}\label{sec:reminder}
\subsection{A reminder on \texorpdfstring{$\bm G$}{G}-global spaces} In this section we will set up the theory of \emph{$G$-global operads} and the corresponding algebras. We begin by giving a very brief reminder on (unstable) \emph{$G$-global homotopy theory} in the sense of \cite[Chapter~1]{g-global}. The approach we will take for this is based on a certain simplicial monoid $E\mathcal M$ that we call the \emph{universal finite group}.

\begin{defi}
We set $\omega\mathrel{:=}\{0,1,\dots\}$, and we write $\mathcal M=\Inj(\omega,\omega)$ for the monoid of self-injections of $\omega$ (with monoid structure given by composition).
\end{defi}

Using the indiscrete category functor $E\colon\cat{Set}\to\cat{Cat}$ (i.e.~the right adjoint to the functor $\Ob\colon\cat{Cat}\to\cat{Set}$) we get a category $E\mathcal M$, and as $E$ preserves products, this inherits a monoid structure. We will also write $E\mathcal M$ for the simplicial monoid $\nerve(E\mathcal M)$.

\begin{defi}
A finite subgroup $H\subset\mathcal M=(E\mathcal M)_0$ is \emph{universal} if the induced $H$-action on $\omega$ makes the latter into a \emph{complete $H$-set universe}, i.e.~every countable $H$-set embeds equivariantly into $\omega$.
\end{defi}

One can show that any finite group $H$ admits an injective homomorphism $H\to\mathcal M$ with universal image, and that any two such homomorphisms are conjugate \cite[Lemma~1.2.8]{g-global}, i.e.~any abstract finite group is isomorphic to a universal subgroup of $E\mathcal M$ in an essentially unique way.

Moreover, the universal subgroups are closed under subgroups and conjugation \cite[Corollary~1.2.7]{g-global}, i.e.~they form a so-called \emph{family of subgroups}.

\begin{thm}\label{thm:g-global-model-sset}
There is a unique model structure on the category $\cat{$\bm{E\mathcal M}$-$\bm G$-SSet}$ of simplicial sets with $(E\mathcal M\times G)$-action in which a map is a weak equivalence or fibration if and only if $f^\phi$ is a weak homotopy equivalence or Kan fibration, respectively, for every universal subgroup $H\subset\mathcal M$ and every homomorphism $\phi\colon H\to G$; here we write $(\blank)^\phi$ for the fixed points with respect to the \emph{graph subgroup} $\Gamma_{H,\phi}\mathrel{:=}\{(h,\phi(h)):h\in H\}$.

We call this the \emph{$G$-global model structure} and its weak equivalences the \emph{$G$-global weak equivalences}. This model structure is proper, simplicial, and combinatorial with generating cofibrations
\begin{equation*}
\{ (E\mathcal M\times_\phi G)\times (\del\Delta^n\hookrightarrow\Delta^n) : \text{$H\subset\mathcal M$ universal, $\phi\colon H\to G$, $n\ge0$}\}
\end{equation*}
and generating acyclic cofibrations
\begin{equation*}
\{ (E\mathcal M\times_\phi G)\times (\Lambda^n_k\hookrightarrow\Delta^n) : \text{$H\subset\mathcal M$ universal, $\phi\colon H\to G$, $0\le k\le n$}\}
\end{equation*}
where $E\mathcal M\times_\phi G\mathrel{:=} (E\mathcal M\times G)/\Gamma_{H,\phi}$. Moreover, filtered colimits and finite products in it are homotopical.
\begin{proof}
See \cite[Corollary~1.2.33 and Lemma~1.1.3]{g-global}.
\end{proof}
\end{thm}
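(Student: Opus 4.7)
The approach is to apply Kan's recognition theorem for cofibrantly generated model structures directly to the proposed sets $I$ and $J$. The first step is to identify the would-be classes of fibrations and trivial fibrations: since $E\mathcal M\times_\phi G$ corepresents the $\phi$-fixed-point functor by construction, a map $f$ has the right lifting property against $I$ (respectively, $J$) if and only if each $f^\phi$ is an acyclic Kan fibration (respectively, a Kan fibration) in $\cat{SSet}$. Combined with the pointwise definition of weak equivalence, this makes the structure of the model category self-consistent at the level of lifting properties.

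Next I would verify Kan's hypotheses. Smallness is automatic as each $(E\mathcal M\times_\phi G)\times\Delta^n$ has countably many simplices and $\cat{$\bm{E\mathcal M}$-$\bm G$-SSet}$ is locally presentable. That the $G$-global weak equivalences satisfy two-out-of-three and retract closure transfers fixed-pointwise from the Kan model structure. The heart of the argument is the containment $J$-cell $\subseteq I$-cof $\cap\, W$. The key structural input is that for any universal subgroups $H,H'\subset\mathcal M$ and homomorphisms $\phi\colon H\to G$, $\psi\colon H'\to G$, the simplicial set $(E\mathcal M\times_\phi G)^\psi$ is either empty or contractible: indeed, $E\mathcal M$ is the nerve of an indiscrete category, so its fixed points are nerves of indiscrete categories on the corresponding fixed sets and thus contractible when nonempty, while closure of universal subgroups under subgroups and conjugation guarantees that the relevant fixed sets are accessible. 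Consequently, smashing $\Lambda^n_k\hookrightarrow\Delta^n$ with $E\mathcal M\times_\phi G$ induces a weak equivalence on every graph-subgroup fixed point; since each fixed-point functor commutes with pushouts along injections and with transfinite composition (using finiteness of $\Gamma_{H,\phi}$), this property propagates to all $J$-cells.

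Once the model structure is established---uniqueness is automatic once weak equivalences and fibrations are prescribed---the remaining assertions follow by transfer. Combinatoriality is by construction. The simplicial enrichment is inherited from $\cat{SSet}$ via $(X\times K)^\phi=X^\phi\times K$ for $K\in\cat{SSet}$. Left and right properness follow from the corresponding properties of $\cat{SSet}$ together with the fact that the relevant fixed-point functors preserve pushouts along injections and pullbacks along fibrations. Homotopicality of filtered colimits and of finite products reduces fixed-pointwise to the corresponding statements in $\cat{SSet}$, using $(\colim_i X_i)^\phi=\colim_i X_i^\phi$ and $(X\times Y)^\phi=X^\phi\times Y^\phi$.

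The main obstacle is the verification $J$-cell $\subseteq W$. This is where the combination of freeness off fixed points and contractibility \emph{on} fixed points of $E\mathcal M$ enters decisively, and it is precisely what forces the restriction to \emph{universal} subgroups in the statement; any enlargement of this family would destroy the contractibility of the relevant orbit fixed points and break the argument.
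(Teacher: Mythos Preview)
Your overall strategy---direct verification of Kan's recognition theorem---is sound, and with one correction does establish the model structure. The paper itself only cites this result; the surrounding material (Theorem~\ref{thm:equiv-model-structure-sset}) makes clear that the intended argument proves a general statement for \emph{any} simplicial monoid $M$ and \emph{any} collection $\mathcal F$ of finite subgroups of $M_0$, then specializes. In the categorical analogue (Theorem~\ref{thm:equiv-model-structure}) the paper carries this out via transfer and Quillen's Path Object Argument rather than a direct Kan check.

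Your argument contains a genuine error, though one that is easily bypassed. You assert that $(E\mathcal M\times_\phi G)^\psi$ is ``either empty or contractible'' and call this the key structural input. This is false: take $H=1$ (which is universal), $G=\mathbb Z/2$, and $\phi,\psi$ trivial with $H'=1$; then $(E\mathcal M\times_\phi G)^\psi=E\mathcal M\times G$, which has two contractible components. Fortunately the claim is also unnecessary. Since $\Lambda^n_k\hookrightarrow\Delta^n$ is already a weak homotopy equivalence, its product with \emph{any} simplicial set is one, so
\[
\big((E\mathcal M\times_\phi G)\times\Lambda^n_k\big)^\psi\longrightarrow\big((E\mathcal M\times_\phi G)\times\Delta^n\big)^\psi
\]
is a weak equivalence regardless of the homotopy type of $(E\mathcal M\times_\phi G)^\psi$. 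The closure under pushout along injections and transfinite composition that you invoke then finishes $J\text{-cell}\subseteq W$ exactly as you outline.

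This also undermines your final paragraph: the restriction to universal subgroups is \emph{not} forced by the existence proof, and enlarging the family would not ``break the argument.'' Theorem~\ref{thm:equiv-model-structure-sset} establishes the model structure for an arbitrary collection of finite subgroups; universality is merely the choice that singles out the $G$-global model structure one actually wants, and neither the special features of $E\mathcal M$ nor any contractibility statement enters the existence argument.
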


\begin{rk}
Schwede originally studied unstable \emph{global homotopy theory} using a model in terms of orthogonal spaces \cite[Chapter~1]{schwede-book}. While his approach contains equivariant information for all compact Lie groups, the more combinatorial models we employ here (and in particular the use of categories later) force us to restrict to finite groups. Other than that, $1$-global homotopy theory in our sense recovers usual global homotopy theory: for $G=1$ the above is equivalent to Schwede's orthogonal spaces localized with respect to a certain natural notion of `$\mathcal F\textit{in}$-global weak equivalences,' see \cite[Section~1.5]{g-global}.
\end{rk}

In fact, the above is an instance of a more general construction of model structures for actions of \emph{simplicial monoids}, i.e.~(strict) monoids in the category $\cat{SSet}$ or, equivalently, simplicial objects in the category of monoids:

\begin{thm}\label{thm:equiv-model-structure-sset}
Let $M$ be a simplicial monoid and let $\mathcal F$ be a collection of finite subgroups of the ordinary monoid $M_0$. Then there exists a unique model structure on the category $\cat{$\bm M$-SSet}$ of simplicial sets with $M$-action in which a map $f$ is a weak equivalence or fibration if and only if $f^H$ is a weak homotopy equivalence or Kan fibration, respectively, for every $H\in\mathcal F$. We call this the \emph{$\mathcal F$-model structure} and its weak equivalences the \emph{$\mathcal F$-weak equivalences}. It is simplicial, proper, and combinatorial with generating cofibrations
\begin{equation*}
\{ M/H\times(\del\Delta^n\hookrightarrow\Delta^n) : H\in\mathcal F,n\ge0\}
\end{equation*}
and generating acyclic cofibrations
\begin{equation*}
\{ M/H\times(\Lambda^n_k\hookrightarrow\Delta^n) : H\in\mathcal F,0\le k\le n\}.
\end{equation*}
Moreover, filtered colimits and finite products in this model structure are homotopical.
\begin{proof}
See \cite[Proposition~1.1.2 and Lemma~1.1.3]{g-global}.
\end{proof}
\end{thm}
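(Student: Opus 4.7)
The plan is to obtain the model structure by right transfer along the product of fixed-point adjunctions. Concretely, I would consider
\[
F\colon\textstyle\prod_{H\in\mathcal F}\cat{SSet}\rightleftarrows \cat{$\bm M$-SSet}\colon U,\qquad U(X)=(X^H)_{H\in\mathcal F},\quad F\bigl((Y_H)_{H\in\mathcal F}\bigr)=\bigsqcup_{H\in\mathcal F} M/H\times Y_H,
\]
and right-transfer the product of Kan--Quillen model structures from the left-hand side. Since $F$ sends the generating (acyclic) cofibrations of the product to the families $I$ and $J$ stated in the theorem, existence of the transfer automatically recovers the advertised description. Closure of the putative weak equivalences under 2-out-of-3 and retracts is immediate from their definition via $U$, and combinatoriality takes care of the smallness half of the standard transfer criterion.

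The heart of the matter is the second half of that criterion: every map in the weak saturation of $J$ should become a weak homotopy equivalence after applying $(\blank)^K$ for every $K\in\mathcal F$. My key observation is that for any $M$-equivariant monomorphism $A\hookrightarrow B$, the pushout $X\cup_A B$ is computed levelwise in sets as the disjoint union $X\sqcup(B\setminus A)$, and hence is compatible with taking $K$-fixed points; together with the fact that $(\blank)^K$ preserves coproducts and filtered colimits, this shows that $(\blank)^K$ carries a $J$-cell complex to a transfinite composition of pushouts of coproducts of horn inclusions $(M/H)^K\times(\Lambda^n_k\hookrightarrow\Delta^n)$, which is an anodyne extension in $\cat{SSet}$, hence a weak equivalence.

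The remaining properties follow by similar reductions to $\cat{SSet}$. Simpliciality reduces via the same fixed-point-preserves-pushouts-along-monomorphisms argument to the pushout--product axiom in $\cat{SSet}$. Left properness uses that every cofibration is a retract of an $I$-cell complex and hence a levelwise monomorphism, so pushouts of weak equivalences along cofibrations commute with $(\blank)^K$; right properness uses that $(\blank)^K$ preserves pullbacks outright; and the homotopicality of filtered colimits and finite products follows because $(\blank)^K$ commutes with both and these are homotopical in $\cat{SSet}$. The main obstacle throughout is the fixed-point/pushouts-along-monomorphisms lemma, around which essentially the entire proof turns.
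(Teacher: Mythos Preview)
Your argument is correct. The paper does not reproduce a proof here but simply cites the author's earlier work; however, the analogous categorical statement (Theorem~\ref{thm:equiv-model-structure}) and the operadic transfer later in the paper are both established via Quillen's Path Object Argument---constructing functorial path objects and, in the simplicial setting, a product-preserving fibrant replacement such as $\Ex^\infty$ or $\Sing|{\blank}|$---which strongly suggests that the cited reference proceeds the same way.

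Your route is genuinely different: instead of invoking the path object machinery, you verify the acyclicity condition directly, relying on the lemma that for a finite group $K$ the functor $(\blank)^K$ commutes with pushouts along monomorphisms (as well as with coproducts and filtered colimits), so that any $J$-cell complex is carried to an anodyne map of simplicial sets. This is more elementary and self-contained, and in particular avoids the need to produce a fibrant replacement. On the other hand, the path object approach is a black-box tool that transfers verbatim to the categorical and operadic settings treated later in the paper, whereas your monomorphism-preservation argument would not carry over to $\Alg_{\mathcal O}(\cat{$\bm M$-SSet})$ without substantial additional work, since the free $\mathcal O$-algebra functor need not send generating acyclic cofibrations to monomorphisms.
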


\begin{rk}\label{rk:g-universal-sset}
The $G$-global weak equivalences and fibrations in fact only depend on the action of the discrete group $\core(\mathcal M)\times G$ (where $\core$ denotes the maximal subgroup), i.e.~the $G$-global model structure is transferred from an analogously defined model structure on $\cat{$\bm{\core(\mathcal M)}$-$\bm G$-SSet}$. This model structure will become relevant again later in Section~\ref{sec:g-glob-vs-g-equiv} and we will refer to it as the \emph{$G$-universal model structure} and its weak equivalences as the \emph{$G$-universal weak equivalences}. Beware however that this does \emph{not} model $G$-global homotopy theory.
\end{rk}

\begin{warn}\label{warn:not-monoidal}
If $M=G$ is a discrete group and $\mathcal F$ is a family of subgroups, then one easily checks that the above is a \emph{monoidal model category} with respect to the Cartesian product, i.e.~the cofibrations and acyclic cofibrations satisfy the pushout product axiom and for any cofibrant replacement $Q\to *$ of the terminal object and any (cofibrant) $X$ the projection $Q\times X\to X$ is a weak equivalence. However, for general (simplicial) monoids $M$, $\cat{$\bm M$-SSet}$ will typically \emph{not} be monoidal, even in the non-equivariant (or `projective') case where $\mathcal F$ only consists of the trivial group: namely, the $M$-simplicial set $M\times M$ is usually not cofibrant.

As a concrete example, let $M=\{0,1\}$ with monoid operation $a*b\mathrel{:=}\max\{a,b\}$. All maps in the above set $I$ of generating cofibrations except $\varnothing\to M$ are isomorphisms in degree $0$, so any $I$-cell complex $X$ splits in degree $0$ as $X_0\cong\coprod_{k\in K} M$. By Quillen's Retract Argument it then in particular follows that for any cofibrant $X'$ the set $X_0'$ of $0$-simplices admits an equivariant embedding into a set of the form $\coprod_{k\in K'}M$. However, every $(x_1,x_2)\in M\times M$ satisfies $1.(x_1,x_2)=(\max\{1,x_1\},\max\{1,x_2\})=(1,1)$, so any equivariant map $M\times M\to\coprod_{k\in K'}M$ has to factor through one of the coproduct summands and hence cannot be injective for cardinality reasons.
\end{warn}

Let us now turn to the cofibrations of the above model structures. In the case that $M=G$ is a discrete group, there is a classical characterization of the cofibrations of the $\mathcal F$-model structure, see e.g.~\cite[Proposition~2.16]{cellular}:

\begin{lemma}\label{lemma:charact-cof-sset}
Let $G$ be a discrete group and let $\mathcal F$ be a family of (finite) subgroups of $G$. Then a map $f$ is a cofibration in the $\mathcal F$-model structure on $\cat{$\bm G$-SSet}$ if and only if $f$ is an injective cofibration (i.e.~a cofibration of underlying simplicial sets) and moreover any simplex not in the image of $f$ has isotropy contained in $\mathcal F$.\qed
\end{lemma}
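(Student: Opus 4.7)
The plan is to verify the two properties on the generating cofibrations, then propagate them to all cofibrations for the ``only if'' direction, and conversely build up an arbitrary map satisfying the two conditions as a transfinite composition of pushouts of generating cofibrations for the ``if'' direction.

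For the \emph{only if} direction, I first observe that the class $\mathcal{C}$ of maps $f$ that are injective cofibrations and whose complement has simplices with isotropy in $\mathcal{F}$ contains the generating cofibrations $G/H\times(\partial\Delta^n\hookrightarrow\Delta^n)$ for $H\in\mathcal F$: each such map is a monomorphism, and the non-degenerate simplices missed by the map have the form $(gH,\sigma)$ with $\sigma$ the top non-degenerate simplex of $\Delta^n$, so isotropy $gHg^{-1}\in\mathcal F$ by closure of $\mathcal F$ under conjugation; degenerate simplices have isotropy containing such an isotropy, hence also in $\mathcal F$ by closure under subgroups. Next I would check that $\mathcal{C}$ is closed under the operations generating cofibrations from $I$: it is clearly closed under pushouts (a pushout of an injection along any map is again injective since $\cat{$\bm G$-SSet}$ is a presheaf topos, and complement simplices of the pushout are $G$-equivariant images of complement simplices of the original map, hence have isotropy containing a conjugate of the original isotropy), under transfinite composition (injections compose and the complements are unions), and under retracts (both properties descend). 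By the small object argument every cofibration is a retract of a transfinite composition of pushouts of maps in $I$, so the ``only if'' direction follows.

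For the \emph{if} direction, given $f\colon X\hookrightarrow Y$ injective with the isotropy condition on $Y\setminus f(X)$, I would build $Y$ from $X$ by filtering by skeleta: set $Y^{(-1)}\mathrel{:=} X\cup\textup{sk}_{-1}Y=X$ and inductively $Y^{(n)}\mathrel{:=} Y^{(n-1)}\cup \textup{sk}_n Y$. For each $n\ge 0$, the non-degenerate $n$-simplices of $Y$ not contained in $Y^{(n-1)}$ decompose into $G$-orbits; pick representatives $\sigma_j\in Y_n$ with stabilizers $H_j\subset G$. By assumption each $H_j$ lies in $\mathcal F$ (the stabilizer of a non-degenerate simplex equals the stabilizer of any of its degeneracies, so we may apply the hypothesis). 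The faces of $\sigma_j$ lie in $Y^{(n-1)}$, so $\sigma_j$ is classified by an $H_j$-equivariant map $\Delta^n\to Y^{(n-1)}\cup\{G\cdot\sigma_j\}$ extending a map $\partial\Delta^n\to Y^{(n-1)}$; by adjunction this assembles all chosen representatives into a pushout square
\begin{equation*}
\begin{tikzcd}
\coprod_j G/H_j\times\partial\Delta^n\ar[r]\ar[d] & Y^{(n-1)}\ar[d] \\
\coprod_j G/H_j\times\Delta^n\ar[r] & Y^{(n)}
\end{tikzcd}
\end{equation*}
of the desired form. Taking the composite $X=Y^{(-1)}\to Y^{(0)}\to\cdots$ and noting that $Y=\colim_n Y^{(n)}$ (every simplex of $Y$ is a degeneracy of some non-degenerate one), we exhibit $f$ as a transfinite composition of pushouts of generating cofibrations, hence as a cofibration.

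The potentially subtle point is the pushout square at stage $n$: one must verify that the attaching map on $\coprod_j G/H_j\times\partial\Delta^n$ actually lands in $Y^{(n-1)}$ and that the resulting map to $Y^{(n)}$ is an isomorphism. Both follow from the standard description of skeleta of a $G$-simplicial set, using that the chosen $\sigma_j$'s form a complete system of orbit representatives for the non-degenerate $n$-simplices not already in $Y^{(n-1)}$ and that their stabilizers coincide with $H_j$ (so $G/H_j\times\Delta^n$ correctly parametrizes the orbit together with its degeneracies to be glued in). This is the only bookkeeping step that requires care; after that, closure of $\mathcal F$ under subgroups and conjugation makes the isotropy condition on all relevant orbits automatic.
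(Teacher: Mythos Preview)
The paper does not actually prove this lemma; it just cites \cite[Proposition~2.16]{cellular} and places a \qedsymbol. Your argument is the standard skeletal/cell-attachment proof and is essentially correct.

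There is one small logical slip worth flagging. In the pushout step of the ``only if'' direction you observe that complement simplices of the pushout are $G$-equivariant images of complement simplices of the original map and hence have isotropy \emph{containing} the original isotropy. But containing a member of $\mathcal F$ does not place a group in $\mathcal F$; closure under subgroups goes the other direction. The fix is that for a pushout along a monomorphism in a presheaf topos, the complement of the pushed-out injection is levelwise $G$-equivariantly \emph{bijective} to the complement of the original injection, so isotropy is preserved exactly. The same reversed use of ``closure under subgroups'' appears in your remark about degenerate simplices of $G/H\times\Delta^n$, but there it is harmless since $G$ acts trivially on $\Delta^n$ and every simplex of $G/H\times\Delta^n$ has isotropy precisely a conjugate of $H$.
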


In particular, if $G$ is finite and $\mathcal F=\mathcal A\ell\ell$ is the collection of all subgroups, then the cofibrations are precisely the injective cofibrations, while for general $\mathcal F$ (and in particular for general simplicial monoids) there are far fewer cofibrations. However, we can still combine the injective cofibrations and the $\mathcal F$-weak equivalences into an \emph{injective} (or `mixed') model structure that will be useful at several points below:

\begin{thm}
Let $M$ be a simplicial monoid and let $\mathcal F$ be any collection of finite subgroups of $M_0$. Then there exists a unique model structure on $\cat{$\bm M$-SSet}$ whose weak equivalences are the $\mathcal F$-weak equivalences and whose cofibrations are the injective cofibrations. We call this the \emph{injective $\mathcal F$-model structure}. It is combinatorial, simplicial, and proper.

In particular, $\cat{$\bm{E\mathcal M}$-$\bm G$-SSet}$ admits a proper, simplicial, and combinatorial model structure whose weak equivalences are the $G$-global weak equivalences and whose cofibrations are the underlying cofibrations of simplicial sets. We call this the \emph{injective $G$-global model structure}.
\begin{proof}
See \cite[Proposition~1.1.15]{g-global}.
\end{proof}
\end{thm}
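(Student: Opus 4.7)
The plan is to invoke Jeff Smith's recognition theorem for combinatorial model categories, taking the weak equivalences to be the $\mathcal F$-weak equivalences of Theorem~\ref{thm:equiv-model-structure-sset} and the cofibrations to be the monomorphisms in $\cat{$\bm M$-SSet}$. The class of $\mathcal F$-weak equivalences is already known to be accessible, closed under retracts, and to satisfy $2$-out-of-$3$, since it is the weak equivalence class of a combinatorial model structure. The monomorphisms in any locally presentable category form a cofibrantly generated weakly saturated class; for a sufficiently large regular cardinal $\kappa$ they are generated by the set of all monos between $M$-simplicial sets of cardinality $<\kappa$. Smith's theorem then reduces the remaining work to checking that the class of trivial cofibrations is closed under pushouts and transfinite compositions and admits a set of generators.

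Both closure properties reduce, after passage to fixed points, to the analogous statements in the Kan--Quillen model structure on $\cat{SSet}$: for any finite subgroup $H\subseteq M_0$, the fixed-point functor $(\blank)^H$ preserves pushouts along monomorphisms and transfinite compositions of monos, so fixed points of a pushout or transfinite composition along a trivial cofibration in $\cat{$\bm M$-SSet}$ compute the corresponding construction in $\cat{SSet}$. A set of generating trivial cofibrations is then produced by the standard Bousfield--Smith cardinality argument: one enlarges $\kappa$ so that $\mathcal F$-weak equivalences are detected by $\kappa$-filtered colimits, and takes the (necessarily essentially small) set of trivial cofibrations between $\kappa$-small objects. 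This cardinality step is the main technical obstacle, as one must verify carefully that every trivial cofibration factors as a transfinite composition of pushouts of these generators, which is typically where such existence proofs are most delicate.

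Right properness will be immediate: since the injective $\mathcal F$-model structure has more cofibrations than the $\mathcal F$-model structure of Theorem~\ref{thm:equiv-model-structure-sset} but the same weak equivalences, it has fewer fibrations, so every injective fibration is in particular an $\mathcal F$-fibration, and right properness is inherited. Left properness follows from one more application of the fixed-point argument above. The simplicial enrichment (SM7) reduces via adjointness to pushout-products of generators: monos are stable under pushout-product with injective cofibrations of $\cat{SSet}$, while the weak-equivalence half follows from SM7 of the $\mathcal F$-model structure together with the closure properties already established, or directly by reduction to fixed points. The second statement is then the specialization $M=E\mathcal M\times G$ with $\mathcal F$ the family of graph subgroups $\{\Gamma_{H,\phi}\}$ from Theorem~\ref{thm:g-global-model-sset}.
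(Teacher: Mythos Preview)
Your approach via Smith's recognition theorem is correct and is the standard route; the paper itself gives no argument beyond the citation to \cite[Proposition~1.1.15]{g-global}, and what you outline is essentially what that reference does. The key point---that for a finite group $H$ the fixed-point functor $(\blank)^H$ on simplicial sets preserves pushouts along monomorphisms, so that trivial injective cofibrations are stable under pushout---is exactly right (and is the simplicial analogue of what the paper isolates as Proposition~\ref{prop:inj-po} in the categorical setting).

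One small remark: in the formulations of Smith's theorem you would actually invoke (e.g.\ \cite[Proposition~A.2.6.13]{htt} or Barwick's version), the Bousfield--Smith cardinality argument producing generating trivial cofibrations is already built into the theorem. Once you have verified that $\mathcal F$-weak equivalences are accessible, that maps with the right lifting property against all monomorphisms are $\mathcal F$-weak equivalences (immediate since the generating $\mathcal F$-cofibrations are monomorphisms), and that trivial cofibrations are closed under pushout and transfinite composition, the theorem applies directly. So what you flag as ``the main technical obstacle'' is absorbed into the black box; the genuine content of your argument is the pushout-stability step, which you handle correctly.
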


\subsection{Functoriality} In the study of operadic actions below, we will at several places need to know how the above model structures relate to each other as the monoid varies. In order to formulate these results, we first have to recall the following notion:

\begin{defi}
Let $M,N$ be simplicial monoids. We write $\mathcal G_{M,N}$ for the collection of all graph subgroups of $M_0\times N_0$, i.e.~all subgroups of the form $\Gamma_{H,\phi}=\{(h,\phi(h)):h\in H\}$ with $H\subset M_0$ and $\phi\colon H\to N_0$. More generally, if $\mathcal F$ is a collection of subgroups of $M_0$, then we write $\mathcal G_{\mathcal F,N}$ for the collection of all graph subgroups $\Gamma_{H,\phi}$ with $H\in\mathcal F$.
\end{defi}

Throughout let $M$ be a simplicial monoid and let $\mathcal F$ be a family of finite subgroups of $M_0$. All of the following results are well-known at least for groups:

\begin{lemma}\label{lemma:alpha-shriek-sset}
Let $\alpha\colon H\to G$ be any group homomorphism. Then
\begin{equation*}
\alpha_!\colon \cat{$\bm{(M\times H)}$-SSet}_{\mathcal G_{\mathcal F,H}}\rightleftarrows\cat{$\bm{(M\times G)}$-SSet}_{\mathcal G_{\mathcal F,G}} :\!\alpha^*=(M\times\alpha)^*
\end{equation*}
is a Quillen adjunction with homotopical right adjoint.
\begin{proof}
See \cite[Lemma~1.1.16 and Example~1.1.21]{g-global}.
\end{proof}
\end{lemma}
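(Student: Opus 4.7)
The plan is to check that the right adjoint $\alpha^{*}=(M\times\alpha)^{*}$ is both right Quillen and homotopical; both claims will follow from one and the same fixed-point calculation, after which the existence of the left adjoint $\alpha_{!}$ is automatic from presentability (and cocompleteness of $\cat{SSet}$), since both categories of equivariant simplicial sets are presheaf categories on a one-object simplicial category and hence locally presentable, so a cocontinuous functor like $\alpha^{*}$ has a left adjoint by the adjoint functor theorem. Thus the real content is in verifying that $\alpha^{*}$ preserves all $\mathcal G_{\mathcal F,G}$-weak equivalences and all $\mathcal G_{\mathcal F,G}$-fibrations.

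The central computation I would do is this: for any $(M\times G)$-simplicial set $X$ and any graph subgroup $\Gamma_{K,\psi}\in\mathcal G_{\mathcal F,H}$, unwinding the definition of the restricted $(M\times H)$-action gives
\begin{equation*}
(\alpha^{*}X)^{\Gamma_{K,\psi}}=\bigl\{x\in X:(k,\alpha(\psi(k)))\cdot x=x\text{ for all }k\in K\bigr\}=X^{\Gamma_{K,\alpha\circ\psi}},
\end{equation*}
and this identification is moreover natural in $X$. Since $K\in\mathcal F$ and $\alpha\circ\psi\colon K\to G_{0}$ is again a group homomorphism, the graph subgroup $\Gamma_{K,\alpha\circ\psi}$ belongs to $\mathcal G_{\mathcal F,G}$. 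Hence every generating test subgroup on the source is realized as a test subgroup on the target.

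Combining this with the characterization of weak equivalences and fibrations in Theorem~\ref{thm:equiv-model-structure-sset} immediately shows that a $\mathcal G_{\mathcal F,G}$-weak equivalence (resp.\ $\mathcal G_{\mathcal F,G}$-fibration) $f$ restricts to a $\mathcal G_{\mathcal F,H}$-weak equivalence (resp.\ $\mathcal G_{\mathcal F,H}$-fibration) $\alpha^{*}f$, as each $(\alpha^{*}f)^{\Gamma_{K,\psi}}$ equals $f^{\Gamma_{K,\alpha\circ\psi}}$. Therefore $\alpha^{*}$ is right Quillen, so $(\alpha_{!}\dashv\alpha^{*})$ is a Quillen adjunction, and moreover $\alpha^{*}$ is homotopical, i.e.\ it preserves all weak equivalences (not merely those between fibrant objects).

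There is no serious obstacle in this argument; the only point to be slightly careful about is allowing $\alpha\circ\psi$ to fail to be injective even when $\psi$ is, but this is harmless because the definition of $\mathcal G_{\mathcal F,G}$ only requires $\alpha\circ\psi$ to be a homomorphism of groups, not an injection, so the graph is still a genuine subgroup of $M_{0}\times G_{0}$. Everything else is formal once the fixed-point identity above is written down.
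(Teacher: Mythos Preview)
Your argument is correct and is exactly the approach the paper takes (the paper itself only cites \cite{g-global}, but the analogous Lemma~\ref{lemma:alpha-shriek} for categories is proved precisely by observing that $\alpha^*$ preserves weak equivalences and fibrations, which is what your fixed-point identity $(\alpha^*X)^{\Gamma_{K,\psi}}=X^{\Gamma_{K,\alpha\circ\psi}}$ establishes). One minor slip: to obtain a \emph{left} adjoint $\alpha_!$ from the adjoint functor theorem you need $\alpha^*$ to be continuous and accessible, not cocontinuous; of course $\alpha^*$ is both, and in any case $\alpha_!$ is just the explicit induction $(M\times G)\times_{M\times H}(\blank)$, so this does not affect the argument.
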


In particular, specializing to $M=E\mathcal M$ and $\mathcal F$ the family of universal subgroups, we get a Quillen adjunction
\begin{equation*}
\alpha_!\colon\cat{$\bm{E\mathcal M}$-$\bm H$-SSet}_\text{$H$-global}\rightleftarrows\cat{$\bm{E\mathcal M}$-$\bm G$-SSet}_\text{$G$-global} :\!\alpha^*.
\end{equation*}

\begin{lemma}\label{lemma:free-quotients-sset}
Let $G$ be any discrete group and assume $f\colon X\to Y$ is a $\mathcal G_{\mathcal F,G}$-weak equivalence in $\cat{$\bm{(M\times G)}$-SSet}$ such that $G$ acts freely on both $X$ and $Y$. Then $f/G\colon X/G\to Y/G$ is an $\mathcal F$-weak equivalence.
\begin{proof}
See \cite[Proposition~1.1.22]{g-global}.
\end{proof}
\end{lemma}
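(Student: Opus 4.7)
The plan is to verify, for each $H \in \mathcal F$, that $(f/G)^H \colon (X/G)^H \to (Y/G)^H$ is a weak homotopy equivalence, leveraging freeness of the $G$-action to rewrite both sides in terms of fixed points under graph subgroups.

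First I would compute the $H$-fixed points of the quotient. Given $H \in \mathcal F$ and $x \in X$ such that $Gx$ is $H$-fixed in $X/G$, we have $(h,1)\cdot x \in Gx$ for every $h \in H$; since $G$ acts freely, this uniquely determines elements $\phi(h) \in G$ with $(h,\phi(h))\cdot x = x$. Using that the factors $M$ and $G$ commute in $M \times G$ together with freeness again, one checks that $\phi \colon H \to G$ is a group homomorphism, so $x \in X^{\Gamma_{H,\phi}}$; replacing $x$ by $gx$ replaces $\phi$ by its conjugate $g\phi g^{-1}$. This assembles into a canonical identification
\[
(X/G)^H \;\cong\; \coprod_{[\phi] \in \Hom(H,G)/G}\; X^{\Gamma_{H,\phi}}\big/ C_G(\phi(H)),
\]
where $C_G(\phi(H))$ is the centralizer, arising as the stabilizer of $\phi$ under conjugation, and acts on $X^{\Gamma_{H,\phi}}$ by restricting the free $G$-action; the analogous formula holds for $Y$, and $f/G$ respects the decomposition, restricting on each summand to $f^{\Gamma_{H,\phi}}/C_G(\phi(H))$.

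Next I would invoke the hypothesis: since $\Gamma_{H,\phi} \in \mathcal G_{\mathcal F, G}$, the map $f^{\Gamma_{H,\phi}}$ is a weak equivalence for every $\phi$. The group $C_G(\phi(H))$, being a subgroup of $G$, acts freely on both $X^{\Gamma_{H,\phi}}$ and $Y^{\Gamma_{H,\phi}}$; a free discrete group action on a simplicial set makes $X \to X/C$ a principal bundle classified by a map $X/C \to BC$ with fiber $X$, and comparing the resulting fiber sequences shows that any equivariant weak equivalence between free $C$-simplicial sets descends to a weak equivalence of quotients. Hence each $f^{\Gamma_{H,\phi}}/C_G(\phi(H))$ is a weak equivalence, and so is their coproduct $(f/G)^H$.

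The main obstacle is really just the bookkeeping in the first step---correctly identifying the $H$-fixed orbits with conjugacy classes of homomorphisms and keeping track of which graph subgroups and which centralizer quotients appear. Once that decomposition is in place, the homotopical content (freeness descent for weak equivalences) is standard and the conclusion follows summand by summand.
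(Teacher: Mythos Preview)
Your argument is correct. The paper itself only cites \cite{g-global} here, but its proof of the categorical analogue (Lemma~\ref{lemma:free-quotients}) reveals the intended approach, which is quite different from yours: one first reduces to the case where $M=H$ is a finite discrete group with $\mathcal F=\mathcal A\ell\ell$; then freeness of the $G$-action forces every isotropy group in $X$ and $Y$ to lie in the family $\mathcal G_{H,G}$, so by the characterization of cofibrations (Lemma~\ref{lemma:charact-cof-sset}) both objects are cofibrant in the $\mathcal G_{H,G}$-model structure; finally, the quotient functor $(\blank)/G$ is the left Quillen functor $\alpha_!$ for $\alpha\colon G\to 1$ (Lemma~\ref{lemma:alpha-shriek-sset}), and Ken Brown's Lemma finishes the argument. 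Your route instead computes $(X/G)^H$ explicitly as a coproduct over conjugacy classes of homomorphisms $H\to G$ and then invokes the elementary fact that quotients by free group actions preserve weak equivalences. This has the advantage of being self-contained and making the contribution of each graph subgroup $\Gamma_{H,\phi}$ visible; the model-categorical argument is slicker once the framework is in place and sidesteps the orbit-and-centralizer bookkeeping.
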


\begin{lemma}\label{lemma:alpha-star-sset}
Let $\alpha\colon H\to G$ be an \emph{injective} homomorphism of discrete groups. Then
\begin{equation*}
\alpha^*\colon\cat{$\bm{(M\times G)}$-SSet}_{\mathcal G_{\mathcal F,G}}\rightleftarrows\cat{$\bm{(M\times H)}$-SSet}_{\mathcal G_{\mathcal F,H}} :\!\alpha_*
\end{equation*}
is a Quillen adjunction; moreover, if $\im(\alpha)$ has finite index in $G$, then $\alpha_*$ is fully homotopical.
\begin{proof}
See \cite[Proposition~1.1.19 and Example~1.1.21]{g-global}.
\end{proof}
\end{lemma}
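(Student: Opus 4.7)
The plan is to establish the Quillen adjunction by verifying that the left adjoint $\alpha^*$ preserves both cofibrations and weak equivalences, and then to treat the finite-index statement by writing $\alpha_*$ as a finite product of restrictions.

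For the Quillen adjunction, first I would check that $\alpha^*$ preserves weak equivalences. For any $K'\in\mathcal{F}$ and any $\psi\colon K'\to H$, the fixed points $(\alpha^*X)^{\Gamma_{K',\psi}}$ agree with $X^{\Gamma_{K',\alpha\circ\psi}}$, and since $K'\in\mathcal{F}$, the latter is among the fixed-point functors defining the $\mathcal{G}_{\mathcal{F},G}$-weak equivalences. Hence $\alpha^*$ is homotopical. Next, I would show that $\alpha^*$ sends every generating cofibration $(M\times G)/\Gamma_{K,\phi}\times(\partial\Delta^n\hookrightarrow\Delta^n)$ to a cofibration in $\cat{$\bm{(M\times H)}$-SSet}_{\mathcal{G}_{\mathcal{F},H}}$. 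By cofibrant generation this reduces to showing that the restricted $(M\times H)$-set $\alpha^*\big((M\times G)/\Gamma_{K,\phi}\big)$ is cofibrant, i.e.\ a coproduct of orbits of the form $(M\times H)/\Gamma_{K',\psi}$ with $K'\in\mathcal{F}$.

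To verify the orbit decomposition, I would compute the $(M\times H)$-stabilizer of a point $(m_0,g_0)\Gamma_{K,\phi}$. A direct calculation gives
\begin{equation*}
\mathrm{Stab}_{M\times H}\bigl((m_0,g_0)\Gamma_{K,\phi}\bigr)=\bigl\{(k,h)\in K\times H : \alpha(h)=g_0\phi(k)g_0^{-1}\bigr\}.
\end{equation*}
Because $\alpha$ is \emph{injective}, the element $h$ is uniquely determined by $k$ whenever it exists, so setting $K'\mathrel{:=}\{k\in K:g_0\phi(k)g_0^{-1}\in\im\alpha\}$ and $\psi(k)\mathrel{:=}\alpha^{-1}(g_0\phi(k)g_0^{-1})$ exhibits the stabilizer as the graph subgroup $\Gamma_{K',\psi}$. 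Since $\mathcal{F}$ is closed under subgroups, $K'\in\mathcal{F}$, so every orbit has the required form. Thus $\alpha^*$ is left Quillen.

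For the finite-index statement, I would exploit that when $[G:\alpha(H)]<\infty$, the coinduction $\alpha_*X=\Maps_H(G,X)$ is, as an $(M\times H)$-simplicial set, a \emph{finite} product indexed by a set of coset representatives of $\alpha(H)\backslash G$, with a twisted $(M\times G)$-action permuting the factors. Using a double-coset computation analogous to the one above, the fixed points $(\alpha_*X)^{\Gamma_{K,\phi}}$ for $K\in\mathcal{F}$ and $\phi\colon K\to G$ decompose as a finite product indexed by $K\backslash G/\alpha(H)$ of fixed points $X^{\Gamma_{K_i,\psi_i}}$ with $K_i\subseteq K$ (hence $K_i\in\mathcal{F}$) and $\psi_i\colon K_i\to H$. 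Since finite products of simplicial sets are homotopical (Theorem~\ref{thm:equiv-model-structure-sset}) and fixed points commute with products, any $\mathcal{G}_{\mathcal{F},H}$-weak equivalence $X\to Y$ is sent to a $\mathcal{G}_{\mathcal{F},G}$-weak equivalence by $\alpha_*$. The main obstacle I expect is purely bookkeeping: carefully setting up the double-coset formula for fixed points of a coinduced module twisted by the simplicial monoid $M$ and tracking that every graph subgroup that appears on the $H$-side sits inside one already in $\mathcal{G}_{\mathcal{F},H}$.
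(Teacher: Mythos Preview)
Your argument is correct. The paper does not actually prove this lemma but defers to \cite[Proposition~1.1.19 and Example~1.1.21]{g-global}, and your direct verification via the orbit decomposition of $\alpha^*\big((M\times G)/\Gamma_{K,\phi}\big)$ together with the double-coset formula for $(\alpha_*X)^{\Gamma_{K,\phi}}$ is precisely the standard argument one expects to find there. One small point of phrasing: speaking of the ``$(M\times H)$-stabilizer'' is slightly informal since $M$ is only a simplicial monoid, but your computation really identifies each $(M\times H)$-orbit in the restriction with $(M\times H)/\Gamma_{K',\psi}$ via the map $(m,h)\mapsto[(m,\alpha(h)g_0)]$, so the conclusion stands; likewise, ``cofibrant'' is weaker than ``coproduct of orbits $(M\times H)/\Gamma_{K',\psi}$,'' but since you establish the stronger statement this is harmless.
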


\begin{cor}\label{cor:twisted-products-sset}
Let $G$ be any discrete group, let $n\ge 0$, and let $f\colon X\to Y$ be a $\mathcal G_{\mathcal F,G}$-weak equivalence in $\cat{$\bm{(M\times G)}$-SSet}$. Then $f^{\times n}\colon X^{\times n}\to Y^{\times n}$ is a $\mathcal G_{\mathcal F,G\times\Sigma_n}$-weak equivalence in $\cat{$\bm{(M\times G\times\Sigma_n)}$-SSet}$ with respect to the $\Sigma_n$-action permuting the factors.
\begin{proof}
This is a formal consequence of the previous results, also cf.~\cite[proof of Corollary~1.4.71]{g-global}:

Replacing $M$ by $M\times G$ and $\mathcal F$ by $\mathcal G_{\mathcal F,G}$, it suffices to prove the corresponding statement for $\cat{$\bm M$-SSet}$ (i.e.~where $G=1$). This is trivial for $n=0$; if $n\ge1$, we denote by  $\Sigma_n^1\subset\Sigma_n$ the subgroup of permutations fixing $1$, and we write $i\colon \Sigma_n^1\hookrightarrow\Sigma_n$ for the inclusion and $p\colon \Sigma_n^1\to 1$ for the unique homomorphism. It is then straight-forward to check that $(\blank)^{\times n}\colon\cat{$\bm M$-SSet}\to\cat{$\bm{(M\times\Sigma_n)}$-SSet}$ factors up to isomorphism as the composite
\begin{equation*}
\cat{$\bm M$-SSet}\xrightarrow{p^*}\cat{$\bm{(M\times\Sigma_n^1)}$-SSet}\xrightarrow{i_*}\cat{$\bm{(M\times\Sigma_n)}$-SSet},
\end{equation*}
so that the claim follows from Lemmas~\ref{lemma:alpha-shriek-sset} and~\ref{lemma:alpha-star-sset}.
\end{proof}
\end{cor}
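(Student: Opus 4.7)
My plan is to reduce the statement to the case $G = 1$ by absorbing $G$ into the monoid, and then to recognize the $n$-fold Cartesian product as a composition of two functors whose behavior with respect to weak equivalences is already controlled by Lemmas~\ref{lemma:alpha-shriek-sset} and~\ref{lemma:alpha-star-sset}.

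For the reduction, I would identify $\cat{$\bm{(M\times G)}$-SSet}$ with $\cat{$\bm{M'}$-SSet}$ for $M' \mathrel{:=} M \times G$; under this rewriting, the collection $\mathcal G_{\mathcal F, G}$ of graph subgroups of $M_0 \times G$ plays the role of a family $\mathcal F'$ of finite subgroups of $M_0'$, and one easily checks that $\mathcal G_{\mathcal F, G\times\Sigma_n}$ agrees with $\mathcal G_{\mathcal F', \Sigma_n}$ under the same identification. It therefore suffices to show that, for any simplicial monoid $M$ and any collection $\mathcal F$ of finite subgroups of $M_0$, if $f\colon X\to Y$ is an $\mathcal F$-weak equivalence in $\cat{$\bm M$-SSet}$ then $f^{\times n}$ is a $\mathcal G_{\mathcal F, \Sigma_n}$-weak equivalence in $\cat{$\bm{(M\times\Sigma_n)}$-SSet}$.

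The case $n = 0$ is immediate since $X^{\times 0}$ and $Y^{\times 0}$ are both terminal. For $n \geq 1$, let $\Sigma_n^1 \subset \Sigma_n$ denote the stabilizer of $1 \in \{1,\dots,n\}$, and write $i\colon\Sigma_n^1 \hookrightarrow \Sigma_n$ for the inclusion and $p\colon\Sigma_n^1\to 1$ for the unique homomorphism. The key observation is that the functor $(\blank)^{\times n}\colon \cat{$\bm M$-SSet} \to \cat{$\bm{(M\times\Sigma_n)}$-SSet}$ is naturally isomorphic to the composite $i_* \circ p^*$: indeed, coinduction along $i$ sends $Z$ to an equivariant mapping space out of $\Sigma_n$ that, when $Z$ has trivial $\Sigma_n^1$-action, identifies with an $n$-fold product indexed by $\Sigma_n/\Sigma_n^1 \cong \{1,\dots,n\}$, carrying precisely the permutation $\Sigma_n$-action. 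Granting this identification, Lemma~\ref{lemma:alpha-shriek-sset} applied to $p$ gives that $p^*$ is fully homotopical (being the right adjoint in that Quillen pair), and Lemma~\ref{lemma:alpha-star-sset} applied to $i$ gives that $i_*$ is fully homotopical because $i$ is injective with image of finite index $n$. Composing, $f^{\times n}$ is a $\mathcal G_{\mathcal F,\Sigma_n}$-weak equivalence.

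I expect the only mildly subtle step to be the isomorphism $(\blank)^{\times n} \cong i_* \circ p^*$ and the verification that the induced $\Sigma_n$-action matches the standard permutation action. This is routine group-theoretic bookkeeping rather than a genuine obstacle, and once it is set up the corollary is a purely formal consequence of the two preceding lemmas.
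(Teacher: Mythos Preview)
Your proof is correct and follows essentially the same approach as the paper: reduce to $G=1$ by absorbing $G$ into $M$, then factor $(\blank)^{\times n}$ as $i_*\circ p^*$ for the inclusion $i\colon\Sigma_n^1\hookrightarrow\Sigma_n$ of the stabilizer of $1$ and the unique map $p\colon\Sigma_n^1\to 1$, and conclude from Lemmas~\ref{lemma:alpha-shriek-sset} and~\ref{lemma:alpha-star-sset}. Your write-up even spells out a bit more of the bookkeeping than the paper does.
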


\subsection{Equivariant simplicial operads} Next, we will study operads in $\cat{$\bm M$-SSet}$ with respect to the Cartesian symmetric monoidal structures (i.e.~operads with an $M$-action) as well as their algebras. While the general theory we develop here works for all simplicial monoids, we will be particularly interested in the cases where $M=E\mathcal M\times G$ or $M=G$ for a (finite) group $G$, where we will refer to the corresponding operads as \emph{$G$-global operads} or \emph{$G$-equivariant operads}, respectively.

I actually expect the results in this subsection to be known to experts, at least for discrete groups; however, as I am not aware of a place where these results appear in the literature, I have decided to give full proofs. This will at the same time allow us to already see several of the arguments we will employ in the categorical setting later without some of the technical baggage necessary there.

\begin{constr}
Let $\mathcal O$ be an operad in $\cat{$\bm M$-SSet}$. The category $\Alg_{\mathcal O}(\cat{$\bm M$-SSet})$ of $\mathcal O$-algebras in $\cat{$\bm M$-SSet}$ comes with a forgetful functor $\forget\colon\Alg_{\mathcal O}(\cat{$\bm M$-SSet})\to\cat{$\bm M$-SSet}$ and it is naturally enriched, tensored, and cotensored over $\cat{SSet}$ so that this forgetful functor strictly preserves cotensors.

The forgetful functor has a left adjoint, which we denote by $\cat P$; explicitly, this is given by $\cat{P}X=\coprod_{n\ge 0} \mathcal O(n)\times_{\Sigma_n} X^{\times n}$ with the evident functoriality in $X$ and with $\mathcal O$-algebra structure induced by operad structure maps of $\mathcal O$. The unit of the adjunction is given by the composition $X\to\mathcal O(1)\times X\hookrightarrow\forget\cat{P}X$ where the first map is induced by the inclusion of the identity element of $\mathcal O(1)$, while the second one is the inclusion of the summand indexed by $1$.

By \cite[Proposition~3.7.10]{cathtpy} there is then a unique way to make $\cat{P}$ into a simplicially enriched functor such that $\cat{P}\dashv\forget$ is a simplicially enriched adjunction, and with respect to this enrichment $\cat{P}$ preserves tensors.
\end{constr}

\begin{thm}
Let $\mathcal O$ be any operad in $\cat{$\bm M$-SSet}$. Then $\Alg_{\mathcal O}(\cat{$\bm M$-SSet})$  carries a unique model structure in which a map is a weak equivalence or fibration if and only if its image under the forgetful functor $\forget\colon\Alg_{\mathcal O}(\cat{$\bm M$-SSet})\to\cat{$\bm M$-SSet}$ is a weak equivalence or fibration, respectively, in the $\mathcal F$-model structure.

We call this model structure the \emph{$\mathcal F$-model structure} again. It is combinatorial, simplicial, and right proper. Moreover, filtered colimits in it are homotopical.

Finally, we have a Quillen adjunction
\begin{equation}\label{eq:free-forget-sset-QA}
\cat{P}\colon\cat{$\bm M$-SSet}\rightleftarrows\Alg_{\mathcal O}(\cat{$\bm M$-SSet}):\!\forget.
\end{equation}
\end{thm}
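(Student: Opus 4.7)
The plan is to right-transfer the $\mathcal F$-model structure from $\cat{$\bm M$-SSet}$ to $\Alg_{\mathcal O}(\cat{$\bm M$-SSet})$ along the adjunction $\cat{P}\dashv\forget$. Because Warning~\ref{warn:not-monoidal} tells us that $\cat{$\bm M$-SSet}$ is typically not monoidal as a model category with respect to the Cartesian product, the classical cellular filtration of pushouts of free maps by iterated pushout--products is unavailable; instead I would run a \emph{path object argument} in the style of Quillen, made combinatorial by Schwede--Shipley.

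As a preliminary, $\Alg_{\mathcal O}(\cat{$\bm M$-SSet})$ is locally presentable as the category of algebras for an accessible monad on a locally presentable category, and the monad $\mathbb T=\forget\cat{P}$ given by $\mathbb TX=\coprod_{n\ge0}\mathcal O(n)\times_{\Sigma_n}X^{\times n}$ preserves filtered colimits, because each summand does and coproducts commute with filtered colimits. Consequently $\forget$ itself creates filtered colimits, which takes care both of the smallness hypotheses for the small object argument and (once weak equivalences are defined by detection under $\forget$) of homotopicality of filtered colimits in $\Alg_{\mathcal O}$.

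The key input to the transfer is the existence of path objects. For any $\mathcal O$-algebra $Y$ the simplicial cotensor $Y^{\Delta^1}$ inherits an $\mathcal O$-algebra structure (since $\forget$ strictly preserves cotensors), and the maps induced by $\Delta^1\to\Delta^0$ and $\partial\Delta^1\hookrightarrow\Delta^1$ assemble into a factorization $Y\to Y^{\Delta^1}\to Y\times Y$ of the diagonal which, under $\forget$, is an $\mathcal F$-weak equivalence followed by an $\mathcal F$-fibration as soon as $\forget Y$ is fibrant---this is exactly the simpliciality of the $\mathcal F$-model structure from Theorem~\ref{thm:equiv-model-structure-sset}. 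Applying the small object argument to $\cat{P}(J)$ produces a functorial factorization $X\to RX\to *$ with $RX\to *$ having the right lifting property against $\cat{P}(J)$, so that $\forget(RX)$ is $\mathcal F$-fibrant by adjunction. Combining $R$ with the above path object on $R(X)$ in the standard way then shows that $\forget$ sends every relative $\cat{P}(J)$-cell complex to an $\mathcal F$-weak equivalence, which is precisely the nontrivial hypothesis of the combinatorial transfer theorem; this yields the model structure together with the Quillen adjunction~\eqref{eq:free-forget-sset-QA}. Right properness transfers from $\cat{$\bm M$-SSet}$ because pullbacks in $\Alg_{\mathcal O}$ are computed under $\forget$ (and fibrations and weak equivalences are detected by it), and simpliciality follows formally from the same cotensor compatibility used above.

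The principal obstacle is exactly the verification that relative $\cat{P}(J)$-cell complexes become $\mathcal F$-weak equivalences under $\forget$. For monoidal model categories this would be the content of a cellular filtration by iterated pushout--products, but that route is closed here by Warning~\ref{warn:not-monoidal}; the point of the path object detour is that it trades the missing monoidal structure for the existence of a path object, at the cost of first constructing a fibrant replacement by hand. Everything else---combinatoriality, simpliciality, right properness, and filtered-colimit homotopicality---then drops out as a formal consequence of the transfer.
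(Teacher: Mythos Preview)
Your overall strategy---transfer via Quillen's path object argument---matches the paper's, and your treatment of path objects, right properness, simpliciality, and filtered colimits is fine. However, the fibrant replacement step contains a genuine circularity.

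You obtain $R$ by applying the small object argument to $\cat{P}(J)$, so that $X\to RX$ is a relative $\cat{P}(J)$-cell complex and $\forget(RX)$ is $\mathcal F$-fibrant. But Quillen's path object argument (in the form of \cite[2.6]{berger-moerdijk} or Schwede--Shipley) requires the fibrant replacement to satisfy the additional condition that $\forget(X\to RX)$ is already an $\mathcal F$-weak equivalence. Your $X\to RX$ is precisely a relative $\cat{P}(J)$-cell complex, and the assertion that such maps have underlying $\mathcal F$-weak equivalences is exactly the ``nontrivial hypothesis'' you are trying to establish. Note also that $\forget$ does \emph{not} preserve pushouts in $\Alg_{\mathcal O}$, so $\forget$ of a relative $\cat{P}(J)$-cell is not a relative $J$-cell in $\cat{$\bm M$-SSet}$; there is no shortcut here.

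The paper breaks the circle by constructing the fibrant replacement \emph{externally}: it takes a finite-limit-preserving fibrant replacement on $\cat{SSet}$ (Kan's $\Ex^\infty$, or $\Sing|\blank|$), which---because it preserves products---lifts first to $\cat{$\bm M$-SSet}$ and then to $\Alg_{\mathcal O}(\cat{$\bm M$-SSet})$, and---because it preserves finite limits and hence $H$-fixed points---is already an $\mathcal F$-weak equivalence before any transfer has taken place. With that replacement in hand, your path object $Y^{\Delta^1}$ completes the argument exactly as you describe.
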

For $M=G$ a discrete group, the corresponding statement for algebras in orthogonal $G$-spectra (and suitable operads $\mathcal O$) can be found as \cite[Proposition~A.1]{blumberg-hill}.
\begin{proof}
While we cannot directly apply the results of \cite{berger-moerdijk} since $\cat{$\bm M$-SSet}$ is typically not a monoidal model category (see Warning~\ref{warn:not-monoidal}), a similar strategy works in our case:

Namely, $\Alg_{\mathcal O}(\cat{$\bm M$-SSet})$ is locally presentable, so it will be enough by Quillen's Path Object Argument \cite[2.6]{berger-moerdijk} to show that $\Alg_{\mathcal O}(\cat{$\bm M$-SSet})$ admits a fibrant replacement functor (i.e.~an endofunctor $P$ together with a natural transformation $\iota\colon\id\Rightarrow P$ such that $\forget PX$ is fibrant and $\forget \iota_X$ is a weak equivalence for every $X$) as well as functorial path objects for fibrant objects (i.e.~for every fibrant $X$ a factorization $X\to X^I\to X\times X$ of the diagonal into a weak equivalence followed by a fibration that is functorial in maps of fibrant objects).

For the first statement we fix a fibrant replacement functor $\id\Rightarrow P$ on $\cat{SSet}$ such that $P$ preserves finite limits, for example Kan's $\Ex^\infty$-functor with the natural transformation $e\colon\id\Rightarrow\Ex^\infty$ \cite{kan-ex} or the unit of the geometric realization-singular set adjunction $\cat{SSet}\rightleftarrows\cat{Top}$. As $P$ preserves products, it lifts to a functor $\cat{$\bm M$-SSet}\to\cat{$\bm{P(M)}$-SSet}$ and then to an endofunctor of $\cat{$\bm M$-SSet}$ by restricting the action along $M\to P(M)$. As $P$ preserves finite limits (hence in particular fixed points for finite groups) this lift is then a fibrant replacement functor for the $\mathcal F$-model structure on $\cat{$\bm M$-SSet}$. Using again that $P$ preserves products, this then lifts to the desired fibrant replacement on $\Alg_{\mathcal O}(\cat{$\bm M$-SSet})$.

For the second statement, we simply observe that the standard path object in $\cat{SSet}$
\begin{equation*}
X\xrightarrow{\const}\Maps(\Delta^1,X)\xrightarrow{(\ev_0,\ev_1)} X\times X
\end{equation*}
preserves all (finite) limits, so arguing precisely as before this lifts to provide functorial path objects for fibrant objects in $\cat{$\bm M$-SSet}$ and $\Alg_{\mathcal O}(\cat{$\bm M$-SSet})$.

This completes the proof that the $\mathcal F$-model structure exists and is cofibrantly generated, hence combinatorial. The remaining statements follow easily from the corresponding statements for $\cat{$\bm M$-SSet}$ and the fact that $(\ref{eq:free-forget-sset-QA})$ is a simplicial adjunction.
\end{proof}

\begin{ex}\label{ex:g-e-infty-sset}
Let $G$ be a finite group. A \emph{na\"ive $G$-$E_\infty$-operad} is an operad $\mathcal O$ in $\cat{$\bm G$-SSet}$ such that
\begin{equation*}
\mathcal O(n)^H\simeq\begin{cases}
* & H\subset G\times1\\
\varnothing & \text{otherwise}
\end{cases}
\end{equation*}
for all $H\subset G\times\Sigma_n$; here we have turned the right $\Sigma_n$-action on $\mathcal O(n)$ coming from the operad structure into a left action as usual. In particular, any $E_\infty$-operad in the usual non-equivariant sense becomes a na\"ive $G$-$E_\infty$-operad when equipped with the trivial $G$-action. The corresponding algebras are called \emph{na\"ive $G$-$E_\infty$-algebras}.

As the name suggests (and alluded to in the introduction), na\"ive $G$-$E_\infty$-algebras are usually not the objects one wants to study in equivariant homotopy theory (unless $G=1$). For example, from the point of view of \emph{equivariant infinite loop spaces} \cite{guillou-may}, the `grouplike' na\"ive $G$-$E_\infty$-algebras only come with deloopings against the ordinary spheres $S^1,S^2,\dots$ as opposed to deloopings against all representation spheres. Instead, we are interested in \emph{genuine $G$-$E_\infty$-algebras}, which are algebras over so-called \emph{genuine $G$-$E_\infty$-operads}. Here a {genuine $G$-$E_\infty$-operad} is an operad $\mathcal P$ in $\cat{$\bm G$-SSet}$ such that
\begin{equation*}
\mathcal P(n)^H\simeq\begin{cases}
* & \text{if $H\in\mathcal G_{G,\Sigma_n}$}\\
\varnothing & \text{otherwise}
\end{cases}
\end{equation*}
for all $H\subset G\times\Sigma_n$, also see \cite[Definition~2.1]{guillou-may}.
\end{ex}

If $f\colon\mathcal O\to\mathcal P$ is any map of operads, then it is clear from the definitions that the restriction $f^*\colon\Alg_{\mathcal P}(\cat{$\bm M$-SSet})\to\Alg_{\mathcal O}(\cat{$\bm M$-SSet})$ preserves weak equivalences, fibrations, limits, and filtered colimits as these are all created in $\cat{$\bm M$-SSet}$. Appealing to the Special Adjoint Functor Theorem, we therefore get a Quillen adjunction $f_!\dashv f^*$. We are now interested in the question when this Quillen adjunction is a Quillen equivalence, for which we first have to talk about a suitable notion of weak equivalence of operads. As Example~\ref{ex:g-e-infty-sset} suggests, this will be finer than just the levelwise weak equivalences and take the $\Sigma_n$-actions into account:

\begin{defi}
A map $f\colon\mathcal O\to\mathcal P$ of operads in $\cat{$\bm M$-SSet}$ is called an \emph{$\mathcal F$-weak equivalence} if $\mathcal O(n)\to\mathcal P(n)$ is a $\mathcal G_{\mathcal F,\Sigma_n}$-weak equivalence for every $n\ge0$.
\end{defi}

However, already non-equivariantly categories of algebras are typically not invariant under weak equivalences between general operads, or, put differently, strict pointset level algebras only turn out to be the correct thing to study for suitably nice operads. This leads to the notion of \emph{$\Sigma$-cofibrancy} \cite[Remark~3.4]{berger-moerdijk}, demanding that each $\mathcal O(n)$ be cofibrant in the projective model structure on $\Sigma_n$-objects. However, the operads of interest in the equivariant setting are typically \emph{not} $\Sigma$-cofibrant---for example no genuine $G$-$E_\infty$-operad is. Instead we will use the following condition:

\begin{defi}\label{defi:sigma-free-sset}
An operad $\mathcal O$ in $\cat{$\bm M$-SSet}$ is called \emph{$\Sigma$-free} if $\Sigma_n$ acts freely on $\mathcal O(n)$ for every $n\ge0$.
\end{defi}

\begin{ex}
Let $G$ be a finite group again. An operad $\mathcal O$ in $\cat{$\bm G$-SSet}$ is a genuine $G$-$E_\infty$-operad in the sense of Example~\ref{ex:g-e-infty-sset} if and only if it is $\Sigma$-free and the unique map $\mathcal O\to*$ to the terminal operad is a $G$-weak equivalence (i.e.~an $\mathcal F$-weak equivalence for $\mathcal F=\mathcal A\ell\ell$ the collection of all subgroups of $G$).
\end{ex}

We can now introduce one of the central notions of this paper:

\begin{defi}
A $G$-global operad $\mathcal O$ is called a \emph{$G$-global $E_\infty$-operad} if it is $\Sigma$-free and the unique map $\mathcal O\to *$ is a $G$-global weak equivalence, i.e.~for every $n\ge0$ the map $\mathcal O(n)\to *$ is a $(G\times\Sigma_n)$-global weak equivalence.
\end{defi}

Below we will show that $\mathcal F$-weak equivalences of $\Sigma$-free operads induce Quillen equivalences between their model categories of algebras. A standard way to prove this in purely model categorical language would proceed via a `cell induction' argument to reduce the claim to free algebras. However, pushouts in categories of algebras are quite complicated, and while the appendix of \cite{berger-moerdijk-corrected} provides an explicit description, proving the comparison along these lines would become quite involved computationally. Instead, we will use $\infty$-categorical language to recast this reduction argument into much simpler form using mon\-adicity:

\begin{prop}\label{prop:free-monadic-sset}
Let $\mathcal O$ be an operad in $\cat{$\bm M$-SSet}$. Then the functor
\begin{equation*}
\forget^\infty\colon\allowbreak\Alg_{\mathcal O}(\cat{$\bm M$-SSet})^\infty_{\mathcal F}\to\cat{$\bm M$-SSet}_{\mathcal F}^\infty
\end{equation*}
induced by the forgetful functor on associated quasi-categories is conservative and preserves $\Delta^\op$-shaped homotopy colimits. In particular, the adjunction $\cat{L}\cat{P}\dashv{\forget}^\infty$ induced by the Quillen adjunction $(\ref{eq:free-forget-sset-QA})$ is monadic.
\end{prop}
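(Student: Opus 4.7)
My plan is to apply Lurie's Barr-Beck-Lurie monadicity theorem, which reduces monadicity of the Quillen-induced adjunction $\cat L\cat P\dashv\forget^\infty$ to two conditions: (i) conservativity of $\forget^\infty$, and (ii) preservation by $\forget^\infty$ of $\Delta^\op$-shaped homotopy colimits. (Barr-Beck-Lurie only asks for preservation of $\forget^\infty$-split simplicial objects, but the stronger statement is what the proposition asserts.)

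Condition (i) is immediate from the fact that the model structure on $\Alg_{\mathcal O}(\cat{$\bm M$-SSet})$ is transferred: a morphism is a weak equivalence---equivalently, an equivalence in the associated quasi-category---if and only if its image under $\forget$ is, so $\forget^\infty$ reflects equivalences.

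For (ii), the starting observation is that the $1$-categorical forgetful functor $\forget$ already preserves sifted colimits on the nose. Indeed, the monad $T=\forget\cat P$ sends $X\mapsto\coprod_{n\ge 0}\mathcal O(n)\times_{\Sigma_n}X^{\times n}$, and this preserves sifted colimits because finite products do while the remaining operations in sight (coproducts and orbits) commute with all colimits. To promote this to the homotopical level, I would pick a Reedy cofibrant replacement $\tilde X_\bullet\to X_\bullet$ of a simplicial algebra (available since $\Alg_{\mathcal O}(\cat{$\bm M$-SSet})$ is combinatorial and simplicial), concluding $\hocolim X_\bullet\simeq|\tilde X_\bullet|$. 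Applying $\forget$ and using the $1$-categorical commutation yields $|\forget\tilde X_\bullet|$, and it remains to argue that this strict realization models $\hocolim\forget X_\bullet$ in $\cat{$\bm M$-SSet}$.

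The main obstacle is precisely this last step. My approach would be to first show, by cell induction, that cofibrations in $\Alg_{\mathcal O}(\cat{$\bm M$-SSet})$ map to underlying monomorphisms of simplicial sets. The starting point is that $\cat P$ itself preserves monomorphisms, since any injection $A\hookrightarrow B$ induces injections $A^{\times n}/\Sigma_n\hookrightarrow B^{\times n}/\Sigma_n$ (permutations preserve the property of lying in $A$ component-wise); the explicit filtration description of pushouts of free algebras then shows that this property propagates to all cell attachments and transfinite compositions. Combined with a comparison of the latching objects computed in $\Alg_{\mathcal O}(\cat{$\bm M$-SSet})$ and in $\cat{$\bm M$-SSet}$, this should identify $\forget\tilde X_\bullet$ as a Reedy cofibrant simplicial object in the \emph{injective} $\mathcal F$-model structure on $\cat{$\bm M$-SSet}$, whose geometric realization computes the homotopy colimit by standard simplicial model category arguments. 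This cell-induction analysis is the simpler, simplicial counterpart of the technical arguments that will reappear in more elaborate form in the categorical setting treated in the next section.
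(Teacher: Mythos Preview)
Your conservativity argument matches the paper's. For preservation of $\Delta^\op$-shaped homotopy colimits, however, you are working harder than necessary, and the route you propose contains a step that is not justified as stated.

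The paper's approach rests on one observation you have overlooked: geometric realization of simplicial objects in $\cat{$\bm M$-SSet}$ is \emph{fully homotopical}, with no cofibrancy hypothesis whatsoever. This is because realization here is simply the diagonal of a bisimplicial set (equivalently, every bisimplicial set is Reedy cofibrant in the injective model structure). Combined with Proposition~\ref{prop:forget-geometric-realization}---which shows that $\forget$ preserves geometric realizations because realization preserves finite products---one concludes immediately that geometric realization in $\Alg_{\mathcal O}(\cat{$\bm M$-SSet})$ is also fully homotopical and that $\forget$ intertwines the two. No Reedy replacement, no cell induction.

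Your program instead aims to show that cofibrations of algebras forget to monomorphisms, so that a Reedy-cofibrant replacement $\tilde X_\bullet$ in algebras forgets to something Reedy cofibrant in $\cat{$\bm M$-SSet}$. But the implication ``$\forget$ sends cofibrations to monomorphisms $\Rightarrow$ $\forget$ preserves Reedy cofibrancy'' is not automatic: latching objects are colimits over non-sifted diagrams, so $\forget$ does not preserve them, and your ``comparison of latching objects'' is left unexplained. Ironically, the conclusion you want---that $\forget\tilde X_\bullet$ is Reedy cofibrant in the injective model structure---holds for free for \emph{any} simplicial object in $\cat{$\bm M$-SSet}$, so the entire cell induction is superfluous. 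The same simplification carries over to the categorical case: contrary to your final sentence, the paper avoids cell induction there as well, arguing directly in Proposition~\ref{prop:geometric-realization-homotopical} that every simplicial $M$-category is Reedy cofibrant in the injective $\mathcal F$-model structure.
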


The proof will rely on the following standard observation, also cf.~\cite[Theorem~12.2]{may-loop-spaces} or \cite[Proposition~2.1.7]{schwede-book} for similar results:

\begin{prop}\label{prop:forget-geometric-realization}
Let $\mathscr C$ be a cocomplete category with finite products that is in addition enriched, tensored, and cotensored over $\cat{SSet}$. Assume further that the geometric realization functor $\Fun(\Delta^\op,\mathscr C)\to\mathscr C$ (given by the coend of the tensoring) preserves finite products. Then the forgetful functor $\Alg_{\mathcal O}(\mathscr C)\to\mathscr C$ preserves geometric realizations for every operad $\mathcal O$ in $\mathscr C$.
\begin{proof}
We write $\nerve\colon\mathscr C\to\Fun(\Delta^\op,\mathscr C)$ for the right adjoint of $|\blank|$ given by $(\nerve X)_n=X^{\Delta^n}$ with the evident functoriality in each variable, and we write $\mathbb N$ for the right adjoint of geometric realization in $\Alg_{\mathcal O}(\mathscr C)$ defined analogously. The claim then amounts to saying that the canonical mate of the left hand square in
\begin{equation*}
\begin{tikzcd}[cramped]
{\Fun(\Delta^\op,\Alg_{\mathcal O}(\mathscr C))}\arrow[d,"{\Fun(\Delta^\op\!,\forget)}"']\twocell[dr, "\scriptstyle\id"{yshift=-7pt},yshift=3.5pt] &\arrow[l, "\mathbb N"'] \Alg_{\mathcal O}(\mathscr C)\arrow[d,"\forget"]\\
\Fun(\Delta^\op,\mathscr C)&\arrow[l,"\nerve"]\mathscr C
\end{tikzcd}\qquad
\begin{tikzcd}[cramped]
{\Alg_{\mathcal O}(\Fun(\Delta^\op,\mathscr C))}\arrow[d,"\forget"']\twocell[dr, "\scriptstyle\id"{yshift=-7pt},yshift=3.5pt] &[1.5em]\arrow[l, "\Alg_{\mathcal O}(\nerve)"'] \Alg_{\mathcal O}(\mathscr C)\arrow[d,"\forget"]\\
\Fun(\Delta^\op,\mathscr C)&\arrow[l,"\nerve"]\mathscr C
\end{tikzcd}
\end{equation*}
is an isomorphism (here we used that the forgetful functor strictly preserves cotensors by construction). Using the canonical isomorphism $\Fun(\Delta^\op,\Alg_{\mathcal O}(\mathscr C))\cong\Alg_{\mathcal O}(\Fun(\Delta^\op,\mathscr C))$ and the compatibility of mates with pasting, it is then enough to show this for the right hand square.

For this we then observe that $|\blank|$ lifts to a functor $\Alg_{\mathcal O}(\Fun(\Delta^\op,\mathscr C))\to\Alg_{\mathcal O}(\mathscr C)$ as it preserves products, and so do the unit and counit of the adjunction $|\blank|\dashv\nerve$, inducing an adjunction $\Alg_{\mathcal O}(|\blank|)\dashv\Alg_{\mathcal O}(\nerve)$. However, with respect to these choices the canonical mate of the right hand square is even the identity (by the triangle identity for adjunctions), which completes the proof of the proposition.
\end{proof}
\end{prop}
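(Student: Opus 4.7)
The plan is to reduce the claim to a formal mate computation by passing to right adjoints. Geometric realization $|\blank|\colon \Fun(\Delta^\op,\mathscr C)\to\mathscr C$ admits a right adjoint $\nerve$ defined levelwise by the cotensoring $(\nerve X)_n = X^{\Delta^n}$, and since $\Alg_{\mathcal O}(\mathscr C)$ is cocomplete and cotensored over $\cat{SSet}$ there is an analogous right adjoint $\mathbb N$ to geometric realization on the algebra side. That $\forget$ preserves $|\blank|$ then amounts to the invertibility of the canonical Beck--Chevalley mate of the square whose four edges are these two realizations and the two forgetful functors, the latter two of which commute with $\nerve$ (since $\forget$ strictly preserves cotensors by construction).

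The key step is to lift $|\blank|$ itself to a functor $\Alg_{\mathcal O}(\Fun(\Delta^\op,\mathscr C))\to\Alg_{\mathcal O}(\mathscr C)$. Here the hypothesis is used crucially: because $|\blank|$ preserves finite products, the structure maps $\mathcal O(n)\times X_\bullet^{\times n}\to X_\bullet$ of a simplicial object of algebras can be geometrically realized to give $\mathcal O(n)\times |X_\bullet|^{\times n}\to|X_\bullet|$, endowing the realization with an $\mathcal O$-algebra structure, and the equivariance and associativity identities transport along realization for the same reason. The unit and counit of $|\blank|\dashv\nerve$ are then visibly $\mathcal O$-algebra maps, so one obtains a genuine adjunction $\Alg_{\mathcal O}(|\blank|)\dashv\Alg_{\mathcal O}(\nerve)$ whose right adjoint must coincide with $\mathbb N$ under the canonical identification $\Fun(\Delta^\op,\Alg_{\mathcal O}(\mathscr C))\cong\Alg_{\mathcal O}(\Fun(\Delta^\op,\mathscr C))$.

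With both sides now equipped with compatibly chosen adjunctions, the rest is a mate manipulation. By construction $\forget\circ\Alg_{\mathcal O}(\nerve)=\Fun(\Delta^\op,\forget)\circ\nerve$ strictly, so the mate of the associated square (whose top arrow is $\Alg_{\mathcal O}(|\blank|)$ and whose bottom is $|\blank|$) is literally the identity by the triangle identities. Using compatibility of mates with pasting, this identity mate transfers across the isomorphism above to identify the mate controlling preservation of realizations by $\forget$ as an isomorphism, which is the desired conclusion. The main obstacle—if there is one—is purely bookkeeping: ensuring that the lift of $|\blank|$ really defines a functor on algebras (rather than just on underlying objects) and that the two adjunctions are arranged compatibly for the pasting step; but both reduce immediately to the assumption that $|\blank|$ preserves finite products.
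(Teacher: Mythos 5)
Your proposal is correct and follows essentially the same route as the paper: pass to right adjoints, reduce the claim to the invertibility of a Beck--Chevalley mate, lift $|\blank|$ (and the unit and counit of $|\blank|\dashv\nerve$) to the algebra level using that $|\blank|$ preserves finite products, and observe that the mate of the resulting square is the identity by the triangle identities. The only cosmetic difference is that you spell out the identification of $\Alg_{\mathcal O}(\nerve)$ with $\mathbb N$, which the paper leaves implicit when invoking compatibility of mates with pasting.
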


\begin{proof}[Proof of Proposition~\ref{prop:free-monadic-sset}]
It suffices to prove the first statement; the second one will then follow by the $\infty$-categorical Barr-Beck Theorem \cite[Theorem~4.7.3.5]{ha}.

Conservativity is clear since weak equivalences in $\cat{$\bm M$-SSet}$ and $\Alg_{\mathcal O}(\cat{$\bm M$-SSet})$ are saturated and since $\forget$ creates weak equivalences. For the remaining part, we observe that $\Delta^\op$-shaped homotopy colimits on both sides can be computed as geometric realizations of a Reedy cofibrant replacement (of a chosen strictification) by \cite[Corollary~A.2.9.30]{htt}. On the other hand, geometric realization in $\cat{$\bm M$-SSet}$ is just given by taking diagonals (see e.g.~\cite[Proposition~B.1]{bousfield-friedlander}), hence fully homotopical by \cite[Lemma~1.2.57]{g-global}. Thus, Proposition~\ref{prop:forget-geometric-realization} shows that also geometric realization of $\mathcal O$-algebras is fully homotopical, i.e.~$\Delta^\op$-shaped homotopy colimits can be computed by ordinary geometric realization in either category. The claim then follows from another application of Proposition~\ref{prop:forget-geometric-realization}.
\end{proof}

We can now finally prove:

\begin{thm}\label{thm:change-of-operad-sset}
Let $f\colon\mathcal O\to\mathcal P$ be an $\mathcal F$-weak equivalence of $\Sigma$-free operads in $\cat{$\bm M$-SSet}$. Then the Quillen adjunction
\begin{equation}\label{eq:change-of-operad-sset}
f_!\colon\Alg_{\mathcal O}(\cat{$\bm M$-SSet})_{\mathcal F}\rightleftarrows\Alg_{\mathcal P}(\cat{$\bm M$-SSet})_{\mathcal F} :\!f^*
\end{equation}
is a Quillen equivalence.
\end{thm}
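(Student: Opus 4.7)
The plan is to use the $\infty$-categorical monadicity result of Proposition~\ref{prop:free-monadic-sset} to bypass the complicated cell-induction arguments usually required in categories of operad algebras. Writing $\forget^\infty_{\mathcal O}$ and $\forget^\infty_{\mathcal P}$ for the respective forgetful functors to $\cat{$\bm M$-SSet}^\infty_{\mathcal F}$, the fact that $f^*$ is created from the identity on underlying simplicial sets gives a commutative triangle $\forget^\infty_{\mathcal P}\simeq\forget^\infty_{\mathcal O}\circ f^{*\infty}$ whose legs are both monadic right adjoints. A standard consequence of the $\infty$-categorical Barr-Beck theorem is then that $f^{*\infty}$ is an equivalence of quasi-categories precisely when the induced natural transformation $T^\infty_{\mathcal O}\to T^\infty_{\mathcal P}$ of associated monads on $\cat{$\bm M$-SSet}^\infty_{\mathcal F}$ is a natural equivalence; since $f^*$ is in addition homotopical, this in turn is equivalent to $(\ref{eq:change-of-operad-sset})$ being a Quillen equivalence.

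The key step is therefore to show that, for every $X$ in $\cat{$\bm M$-SSet}$, the pointset formula $T_{\mathcal O}(X)\mathrel{:=}\coprod_{n\ge0}\mathcal O(n)\times_{\Sigma_n}X^{\times n}$ (and analogously for $\mathcal P$) is homotopical in $X$ (so that it represents the derived monad) and that the map $T_{\mathcal O}(X)\to T_{\mathcal P}(X)$ induced by $f$ is an $\mathcal F$-weak equivalence. Both facts will rest on three ingredients already established in the previous subsections: (i)~finite products are homotopical in the $\mathcal G_{\mathcal F,\Sigma_n}$-model structure on $\cat{$\bm{(M\times\Sigma_n)}$-SSet}$ (Theorem~\ref{thm:equiv-model-structure-sset}); (ii)~the functor $(\blank)^{\times n}$ sends $\mathcal F$-weak equivalences to $\mathcal G_{\mathcal F,\Sigma_n}$-weak equivalences (Corollary~\ref{cor:twisted-products-sset}); and (iii)~taking $\Sigma_n$-orbits of objects with \emph{free} $\Sigma_n$-action turns $\mathcal G_{\mathcal F,\Sigma_n}$-weak equivalences into $\mathcal F$-weak equivalences (Lemma~\ref{lemma:free-quotients-sset}).

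Given a weak equivalence $X\to Y$, step~(ii) provides a $\mathcal G_{\mathcal F,\Sigma_n}$-weak equivalence $X^{\times n}\to Y^{\times n}$, step~(i) then upgrades this to $\mathcal O(n)\times X^{\times n}\to\mathcal O(n)\times Y^{\times n}$, and since $\Sigma$-freeness of $\mathcal O$ ensures the diagonal $\Sigma_n$-action on both sides is free, step~(iii) yields an $\mathcal F$-weak equivalence after passage to $\Sigma_n$-orbits; summing over $n$ (where coproducts of $\mathcal F$-weak equivalences are again such) proves that $T_{\mathcal O}$ is homotopical, and likewise for $T_{\mathcal P}$. The comparison $T_{\mathcal O}(X)\to T_{\mathcal P}(X)$ follows by the same recipe with the weak equivalence $\mathcal O(n)\to\mathcal P(n)$ (the assumption on $f$) as input to step~(i), and with $\Sigma$-freeness of \emph{both} operads feeding step~(iii) on the source and target.

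The genuine obstacle, and the place where the $\Sigma$-freeness hypothesis is indispensable, is step~(iii): without it one would be forced to work with homotopy orbits and Lemma~\ref{lemma:free-quotients-sset} would no longer apply; this is precisely why the classical Berger-Moerdijk theory instead imposes $\Sigma$-cofibrancy, which however fails for the genuine $G$-$E_\infty$ and $G$-global $E_\infty$-operads of interest to us. Trading $\Sigma$-cofibrancy for $\Sigma$-freeness---together with the sufficiently generous notion of $\mathcal F$-weak equivalence of operads---is exactly what makes the argument succeed in the non-monoidal setting of $\cat{$\bm M$-SSet}$ (cf.~Warning~\ref{warn:not-monoidal}), and pushing all pointset complications into free quotients is the main payoff of the $\infty$-categorical monadic reformulation.
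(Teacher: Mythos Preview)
Your proposal is correct and follows essentially the same route as the paper: both use the monadicity of Proposition~\ref{prop:free-monadic-sset} (invoking the $\infty$-categorical Barr--Beck theorem, in the paper via \cite[Corollary~4.7.3.16]{ha}) to reduce to showing that the map $\coprod_{n\ge0}\mathcal O(n)\times_{\Sigma_n}X^{\times n}\to\coprod_{n\ge0}\mathcal P(n)\times_{\Sigma_n}X^{\times n}$ is an $\mathcal F$-weak equivalence, and both conclude this from Lemma~\ref{lemma:free-quotients-sset} using $\Sigma$-freeness. The only minor difference is that you additionally verify that $T_{\mathcal O}$ is fully homotopical (so that the pointset free functor already models the derived one), whereas the paper simply restricts to cofibrant $X$; your extra step is harmless and uses the same ingredients.
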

For $M=G$ a finite group, an alternative proof using the bar construction is mentioned in \cite[discussion after Remark~4.11]{guillou-may}. The corresponding statement for (suitable) algebras in genuine $G$-spectra also appears without proof as \cite[Theorem~A.3]{blumberg-hill}.
\begin{proof}
It is clear that $(\ref{eq:change-of-operad-sset})$ is a a Quillen adjunction with homotopical right adjoint, so it suffices that $(f^*)^\infty$ is an equivalence of quasi-categories. For this we consider the diagram
\begin{equation*}
\begin{tikzcd}
\Alg_{\mathcal P}(\cat{$\bm M$-SSet})^\infty_{\mathcal F}\arrow[r, "(f^*)^\infty"]\arrow[d, "\forget^\infty"'] & \Alg_{\mathcal O}(\cat{$\bm M$-SSet})_{\mathcal F}^\infty\arrow[d, "\forget^\infty"]\\
\cat{$\bm M$-SSet}_{\mathcal F}^\infty\twocell[ur]\arrow[r, equal] & \cat{$\bm M$-SSet}_{\mathcal F}^\infty
\end{tikzcd}
\end{equation*}
which commutes up to the natural isomorphism induced by the identity transformation $\forget\Rightarrow \forget\circ f^*$. As both vertical functors are monadic (Proposition~\ref{prop:free-monadic-sset}), it suffices by \cite[Corollary 4.7.3.16]{ha} that the canonical mate $\cat{LP}_{\mathcal O}\Rightarrow(f^*)^\infty\circ\cat{LP}_{\mathcal P}$ of the above transformation is an equivalence. Unravelling definitions, this amounts to saying that for each (cofibrant) $X\in\cat{$\bm M$-SSet}$ the map
\begin{equation*}
\coprod_{n\ge 0}\mathcal O(n)\times_{\Sigma_n} X^n\to\coprod_{n\ge 0} \mathcal P(n)\times_{\Sigma_n} X^n
\end{equation*}
induced by $f$ is an $\mathcal F$-weak equivalence. As $\Sigma_n$ acts freely on both $\mathcal O(n)$ and $\mathcal P(n)$, this is in turn immediate from Lemma~\ref{lemma:free-quotients-sset}.
\end{proof}

\iffalse
For later use we record that in the above situation $f^*$ is a \emph{homotopy equivalence}: namely, precomposing $f_!$ with a functorial cofibrant replacement $Q$ in $\Alg_{\mathcal O}(\cat{$\bm M$-SSet})_{\mathcal F}$ (as provided for example by the small object argument) yields a homotopical functor $\mathbb Lf_!$ together with a levelwise weak equivalence $(\mathbb Lf_!)\circ f^*\Rightarrow\id$ and a zig-zag of levelwise weak equivalences $\id\Leftarrow Q\Rightarrow f^*\circ(\mathbb Lf_!)$, providing the desired homotopy inverse. Similar remarks appy to all other Quillen equivalences we will construct below.
\fi

\begin{rk}
A standard trick, see e.g.~\cite[discussion after Remark~4.11]{guillou-may}, shows that any two genuine $G$-$E_\infty$-operads $\mathcal O,\mathcal P$ are weakly equivalent through $\Sigma$-free operads: namely, it suffices to consider the zig-zag $\mathcal O\gets\mathcal O\times\mathcal P\to\mathcal P$ given by the projections. Thus, the previous theorem implies that the model categories of $\mathcal O$- and $\mathcal P$-algebras are Quillen equivalent.

Similarly one shows that any two $G$-global $E_\infty$-operads have Quillen equivalent categories of algebras. In Section~\ref{sec:parsummability} we will compare these further to the models of `$G$-globally coherently commutative monoids' previously studied in \cite[Chapter~2]{g-global}.
\end{rk}

\section{\texorpdfstring{$G$}{G}-global category theory}\label{sec:equivariant-cat}
In this section we want to develop the analogue of the above theory for categories, and in particular we will introduce $G$-global model structures on $\cat{$\bm{E\mathcal M}$-$\bm G$-Cat}$ and on categories of algebras over operads in it.

For this we first recall that the category $\cat{Cat}$ of small categories carries a \emph{canonical model structure} \cite{rezk-cat} whose weak equivalences are the equivalences of categories, whose cofibrations are those functors that are injective on objects, and whose fibrations are the \emph{isofibrations}, i.e.~those functors with the right lifting property against the inclusion $*\to E\{0,1\}$ of either object. This model structure is proper, Cartesian, and combinatorial. Moreover, the functor $\grpdfy\circ\h\colon \cat{SSet}\to\cat{Cat}$ sending a simplicial set to its fundamental groupoid is left Quillen (with right adjoint given by taking the nerve of the maximal subgroupoid) and it preserves finite products. Thus, any $\cat{Cat}$-enriched model category becomes a simplicial model category by transporting the enrichment, tensoring, and cotensoring along this adjunction; in particular, $\cat{Cat}$ itself becomes a simplicial model category.

One of the key objects of study in this paper is the following equivariant generalization of the canonical model structure:

\begin{thm}\label{thm:equiv-model-structure}
Let $M$ be a categorical monoid (i.e.~a strict monoidal category) and let $\mathcal F$ be a collection of finite subgroups of $\Ob(M)$. Then there is a unique model structure on the category $\cat{$\bm M$-Cat}$ of small categories with strict $M$-action in which a map $f$ is a weak equivalence or fibration if and only if $f^H$ is an equivalence of categories or isofibration, respectively, for each $H\in\mathcal F$. We call this the \emph{$\mathcal F$-model structure} and its weak equivalences the \emph{$\mathcal F$-equivalences}. It is right proper, $\cat{Cat}$-enriched (hence simplicial), and combinatorial with generating cofibrations
\begin{equation*}
\{ M/H\times i : H\in\mathcal F,i\in I_{\cat{Cat}}\}
\end{equation*}
and generating acyclic cofibrations
\begin{equation*}
\{ M/H\times j : H\in\mathcal F,j\in J_{\cat{Cat}}\}
\end{equation*}
for arbitrary choices of generating (acyclic) cofibrations $I_{\cat{Cat}},J_{\cat{Cat}}$ of $\cat{Cat}$.

Finally, the $\mathcal F$-equivalences are stable under filtered colimits and arbitrary products.
\begin{proof}
Let us first show that the model structure exists, which we will do by transferring along the functor $\big((\blank)^H)_{H\in\mathcal F}\colon \cat{$\bm M$-Cat}\to\prod_{H\in\mathcal F}\cat{Cat}$ (with $\cat{Cat}$-enriched left adjoint given by sending $(C_H)_{H\in\mathcal F}$ to $\coprod_{H\in\mathcal F}M/H\times C_H$).

As $\cat{$\bm M$-Cat}$ is locally presentable and every object in it is fibrant, it will be enough by Quillen's Path Object Argument to construct functorial path objects. Just as in the simplicial setting, these can be obtained from the usual path objects
\begin{equation*}
C\xrightarrow{\const}\Fun(E\{0,1\},C)\xrightarrow{(\ev_0,\ev_1)} C\times C
\end{equation*}
in $\cat{Cat}$ by pulling through the $M$-actions; here we used that the canonical model structure is Cartesian. This completes the proof of the existence of the model structure and shows that it is combinatorial with the above generating (acyclic) cofibrations.

The model structure is right proper and $\cat{Cat}$-enriched because it is transferred from a right proper $\cat{Cat}$-enriched model structure along a $\cat{Cat}$-enriched adjunction. Finally, filtered colimits and small limits in $\cat{Cat}$ can be computed pointwise, so filtered colimits commute with finite limits in $\cat{Cat}$ as they do so in $\cat{Set}$. We conclude that filtered colimits in the $\mathcal F$-model structure on $\cat{$\bm M$-Cat}$ are homotopical as they are so in $\cat{Cat}$, and likewise for products.
\end{proof}
\end{thm}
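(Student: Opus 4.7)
The plan is to apply Kan's transfer theorem to the $\cat{Cat}$-enriched adjunction
\begin{equation*}
L\colon\prod_{H\in\mathcal F}\cat{Cat}\rightleftarrows \cat{$\bm M$-Cat} :\!\big((\blank)^H\big)_{H\in\mathcal F}
\end{equation*}
whose left adjoint sends $(C_H)_{H\in\mathcal F}$ to $\coprod_{H\in\mathcal F}M/H\times C_H$. Since $\cat{$\bm M$-Cat}$ is locally presentable, I would invoke Quillen's Path Object Argument: since every object of $\cat{Cat}$ is fibrant in the canonical model structure, the same is true pointwise for the product model structure on $\prod_{H\in\mathcal F}\cat{Cat}$, so it suffices to produce a functorial path object. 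I would lift the standard path object from $\cat{Cat}$, namely
\begin{equation*}
C\xrightarrow{\const}\Fun(E\{0,1\},C)\xrightarrow{(\ev_0,\ev_1)} C\times C,
\end{equation*}
by endowing $\Fun(E\{0,1\},C)$ with the $M$-action induced by the $M$-action on $C$ (and the trivial action on $E\{0,1\}$); this makes sense because the canonical model structure on $\cat{Cat}$ is Cartesian. Since fixed points $(\blank)^H$ preserve all limits, hence cotensors, applying the right adjoint recovers the standard path object in each component, verifying the hypotheses of the transfer theorem.

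From the transfer one directly reads off the generating (acyclic) cofibrations: the generating set of $\prod_{H\in\mathcal F}\cat{Cat}$ with respect to a choice $I_{\cat{Cat}}$ consists of maps concentrated in a single factor, and $L$ sends the $H$-th copy of $I_{\cat{Cat}}$ to $\{M/H\times i : i\in I_{\cat{Cat}}\}$; the same argument applies to $J_{\cat{Cat}}$. Right properness is inherited because pullbacks in $\cat{$\bm M$-Cat}$ are created in $\cat{Cat}$, where the canonical model structure is right proper, and fixed points likewise preserve pullbacks. The $\cat{Cat}$-enrichment of the transferred model structure follows from the fact that the adjunction $L\dashv\big((\blank)^H\big)_H$ is $\cat{Cat}$-enriched and that the product of $\cat{Cat}$-enriched model categories is $\cat{Cat}$-enriched; the simplicial enrichment is then obtained by pulling back through $\grpdfy\circ\h$ as explained just before the theorem statement.

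For the final claim, I would observe that (co)limits in $\cat{$\bm M$-Cat}$ are created in $\cat{Cat}$, and that limits and colimits in $\cat{Cat}$ are in turn computed on objects and morphisms, i.e.~in $\cat{Set}$. Since fixed points for the finite group $H$ are themselves computed as finite limits, they commute with filtered colimits and arbitrary products. The result therefore reduces to the statement that equivalences of categories are stable under filtered colimits and arbitrary products in $\cat{Cat}$, which is standard (e.g.~via the characterization of equivalences as fully faithful and essentially surjective functors). I do not expect any genuine obstacle here; the one step that warrants care is verifying that the transferred path object's target map really is a fibration after applying $(\blank)^H$, but this reduces to the standard fact that $(\ev_0,\ev_1)\colon\Fun(E\{0,1\},D)\to D\times D$ is an isofibration for every $D\in\cat{Cat}$.
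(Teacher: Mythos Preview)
Your proposal is correct and follows essentially the same route as the paper: transfer along the $\cat{Cat}$-enriched adjunction $L\dashv\big((\blank)^H\big)_{H\in\mathcal F}$, verify the hypotheses of Quillen's Path Object Argument via the lifted path object $\Fun(E\{0,1\},C)$, and deduce the remaining properties from the transferred structure together with the fact that fixed points (being finite limits) commute with filtered colimits and products in $\cat{Cat}$. Your write-up is in fact slightly more explicit than the paper's in reading off the generating sets and in justifying the final stability claim.
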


Specializing to $M=E\mathcal M\times G$ for a discrete group $G$ we get:

\begin{cor}\label{cor:g-global-model-cat}
Let $G$ be any discrete group. Then there is a unique model structure on $\cat{$\bm{E\mathcal M}$-$\bm G$-Cat}$ in which a map $f$ is a weak equivalence or fibration if and only if $f^\phi$ is an equivalence of categories or isofibration, respectively, for every universal $H\subset\mathcal M$ and every homomorphism $\phi\colon H\to G$.

We call this the \emph{$G$-global model structure} and its weak equivalences the \emph{$G$-global equivalences}. It is right proper, $\cat{Cat}$-enriched (hence simplicial), and combinatorial with generating cofibrations
\begin{equation*}
\{E\mathcal M\times_\phi G\times i : \text{$H\subset\mathcal M$ universal, $\phi\colon H\to G$, $i\in I_\cat{Cat}$}\}
\end{equation*}
and generating acyclic cofibrations
\begin{equation*}
\{E\mathcal M\times_\phi G\times j : \text{$H\subset\mathcal M$ universal, $\phi\colon H\to G$, $j\in J_\cat{Cat}$}\}
\end{equation*}
for any sets $I_\cat{Cat}, J_\cat{Cat}$ of generating (acyclic) cofibrations of $\cat{Cat}$. Moreover, the $G$-global equivalences are stable under filtered colimits and arbitrary products.\qed
\end{cor}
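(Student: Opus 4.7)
The plan is to derive Corollary~\ref{cor:g-global-model-cat} as a direct specialization of Theorem~\ref{thm:equiv-model-structure}, so essentially all the work reduces to choosing the right pair $(M, \mathcal{F})$ and matching notation. The hard part is already contained in the theorem (transfer along the fixed-points functor via the Path Object Argument, together with the verification that filtered colimits and products commute with finite limits in $\cat{Cat}$); what remains is a translation exercise.

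First, I would observe that $E\mathcal{M} \times G$ is a categorical monoid in the sense of Theorem~\ref{thm:equiv-model-structure}: the monoid $\mathcal M$ yields a strict monoidal category $E\mathcal M$ because the indiscrete category functor $E$ preserves products, and $G$ (viewed as a discrete category) is trivially a strict monoidal category, so their product is too. Thus $\cat{$\bm{E\mathcal M}$-$\bm G$-Cat}$ falls within the scope of Theorem~\ref{thm:equiv-model-structure}, with the discrete monoid of objects being $\mathcal M \times G$.

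Next, I would take as the family $\mathcal{F}$ the collection of \emph{graph subgroups} $\Gamma_{H,\phi} = \{(h,\phi(h)) : h \in H\} \subset \mathcal M \times G$, where $H \subset \mathcal M$ runs over universal finite subgroups and $\phi \colon H \to G$ over group homomorphisms. Each such $\Gamma_{H,\phi}$ is finite since it is isomorphic to $H$ via the first projection, so $\mathcal{F}$ is a legitimate collection of finite subgroups of $\Ob(E\mathcal M \times G)$. By the very definition $X^\phi \mathrel{:=} X^{\Gamma_{H,\phi}}$ used in Theorem~\ref{thm:g-global-model-sset}, the weak equivalences and fibrations demanded in the corollary agree with those produced by the theorem for this choice of $\mathcal{F}$. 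Likewise, the coset notation $E\mathcal M \times_\phi G = (E\mathcal M \times G)/\Gamma_{H,\phi}$ matches the $M/H$ appearing in the generating (acyclic) cofibrations of the theorem, so the description of the generators transports verbatim.

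With these identifications, applying Theorem~\ref{thm:equiv-model-structure} yields the model structure, its cofibrant generation, right properness, $\cat{Cat}$-enrichment, combinatoriality, and the homotopicality of filtered colimits and arbitrary products all at once. No additional argument is needed; the only thing worth double-checking is that the list of properties quoted in the corollary really coincides with what the theorem provides, and that the indexing set $\mathcal{F}$ as defined above is closed under no extra conditions beyond being a collection of finite subgroups (which it is). Hence the corollary follows.
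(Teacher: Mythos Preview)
Your proposal is correct and matches the paper's approach exactly: the corollary is stated with a \qed and no proof precisely because it is the direct specialization of Theorem~\ref{thm:equiv-model-structure} to $M = E\mathcal M \times G$ and $\mathcal F$ the collection of graph subgroups $\Gamma_{H,\phi}$, which is just what you spell out.
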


\begin{rk}\label{rk:g-universal-cat}
Again, the above weak equivalences and fibrations on $\cat{$\bm{E\mathcal M}$-$\bm G$-Cat}$ only depend on the action of the discrete group $\core(\mathcal M)\times G$, i.e.~the $G$-global model structure is transferred from an analogously defined \emph{$G$-universal model structure} on $\cat{$\bm{\core(\mathcal M)}$-$\bm G$-Cat}$ (which however does not model $G$-global category theory).
\end{rk}

\begin{rk}
We can also look at $\cat{$\bm{E\mathcal M}$-$\bm G$-Cat}$ through the eyes of the $G$-global \emph{weak} equivalences, i.e.~those functors $f$ such that $\nerve(f)$ is a $G$-global weak equivalence in the sense of Theorem~\ref{thm:g-global-model-sset}, or equivalently such that $f^\phi$ is a weak homotopy equivalence for every $\phi\colon H\to G$. As we prove in \cite[Corollary~3.30]{cat-g-global}, this then yields another model of unstable $G$-global homotopy theory with the nerve descending to an equivalence of the associated quasi-categories.
\end{rk}

Next, we want to show that for suitable $\mathcal F$ the $\mathcal F$-model structure on $\cat{$\bm M$-Cat}$ is also left proper, and that $\cat{$\bm M$-Cat}$ moreover admits an \emph{injective $\mathcal F$-model structure}. For this, we begin by giving an easy description of the above cofibrations in the case that $M=G$ is a discrete group, analogous to the usual characterization for cofibrations of $G$-simplicial sets recalled in Lemma~\ref{lemma:charact-cof-sset}:

\begin{lemma}\label{lemma:charact-cof}
Assume $M=G$ is a discrete group and that $1\in\mathcal F$. Then a map $i\colon A\to B$ is a cofibration in the $\mathcal F$-model structure if and only if it is an injective cofibration (i.e.~injective on objects) and every object not contained in the image of $i$ has isotropy in $\mathcal F$.

In particular, if $G$ is finite and $\mathcal F=\mathcal A\ell\ell$, then every object is cofibrant.
\begin{proof}
We first observe that all generating cofibrations are injective cofibrations and satisfy the above isotropy condition. As the class of all such functors is closed under retracts, pushouts, and transfinite compositions (using that $\Ob$ preserves colimits), we conclude that every $\mathcal F$-cofibration satisfies the above properties.

To complete the proof it therefore suffices to show that any functor $i\colon A\to B$ that is injective on objects and satisfies the above isotropy condition has the left lifting property against each $\mathcal F$-acyclic fibration $p\colon C\to D$. For this we consider any lifting problem
\begin{equation*}
\begin{tikzcd}
A\arrow[r,"\alpha"]\arrow[d, "i"'] & C\arrow[d, two heads, "p", "\sim"']\\
B\arrow[r, "\beta"']\arrow[ur, dashed, "\lambda"] & D.
\end{tikzcd}
\end{equation*}
We may assume without loss of generality that $\Ob(A)$ is a subset of $\Ob(B)$ and that $i$ is given on objects by the inclusion. To define $\lambda$ on objects, we first pick orbit representatives $(b_i)_{i\in I}$ for the $G$-action on $\Ob(B)$. We now set $\lambda(g.b_i)=\alpha(g.b_i)$ whenever $b_i\in A$; otherwise, we write $H$ for the stabilizer of $b_i$ and observe that $H\in\mathcal F$ by assumption, so that $p^H\colon C^H\to D^H$ is an acyclic fibration in $\cat{Cat}$, hence in particular surjective on objects. Thus, we can pick $c_i\in C^H$ with $p(c_i)=\beta(b_i)$, and we set $\lambda(g.b_i)=g.c_i$ for all $g\in G$. We omit the straightforward verification that this is well-defined and $G$-equivariant, that it extends $\alpha$ on objects, and that $p\lambda(b)=\beta(b)$ for all $b\in B$.

Next, we define $\lambda$ on morphisms as follows: given any $f\colon b\to b'$ in $B$, we have $p\lambda(b)=\beta(b)$ and $p\lambda(b')=\beta(b')$ by the above, so there is a unique map $g\colon\lambda(b)\to\lambda(b')$ with $p(g)=\beta(f)$ as $p$ is fully faithful. We then set $\lambda(f)\mathrel{:=} g$. We omit the straightforward verification that this is well-defined and $G$-equivariant. By construction, $p\lambda=\beta$, and moreover $\lambda i=\alpha$ on objects. To prove that $\lambda i$ and $\alpha$ also agree on morphisms, it is enough by full faithfulness to prove this after postcomposition with $p$, in which case this follows from the equalities $p\lambda i=\beta i=p\alpha$.
\end{proof}
\end{lemma}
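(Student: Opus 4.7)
The plan is to prove both implications separately by combining a cell-complex argument with an explicit lifting construction. The key observation throughout is that $\Ob \colon \cat{$\bm G$-Cat} \to \cat{$\bm G$-Set}$ is a left adjoint, so tracking the behavior on objects is purely a matter of $G$-set combinatorics.

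For the forward direction, I would verify by inspection that every generating cofibration $G/H \times j$ (with $H \in \mathcal F$ and $j$ a generating cofibration of $\cat{Cat}$) is injective on objects and that any object outside its image has stabilizer a subgroup of $H$, hence in $\mathcal F$ (using that $\mathcal F$ is closed under subgroups---we can arrange this from $1 \in \mathcal F$ together with looking at orbit-type decompositions, but in the forward direction we only need that conjugates and subgroups of chosen $H$'s still land in $\mathcal F$, which is essentially built into how the generators are indexed). The class of morphisms satisfying the two conditions is then closed under retracts, pushouts, and transfinite compositions (by preservation of colimits on $\Ob$ and the standard analysis of isotropy in pushouts of $G$-sets), so every $\mathcal F$-cofibration satisfies them.

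For the backward direction I would solve a general lifting problem against an $\mathcal F$-acyclic fibration $p \colon C \to D$. On objects, I would pick orbit representatives $\{b_i\}$ for $\Ob(B) \smallsetminus i(\Ob(A))$; each stabilizer $H_i$ lies in $\mathcal F$ by hypothesis, so $p^{H_i} \colon C^{H_i} \to D^{H_i}$ is a surjective-on-objects equivalence of categories and I can choose an $H_i$-fixed preimage $c_i$ of the given image of $b_i$ in $D$, then extend equivariantly across the orbit. On the image of $i$ the values are prescribed by $\alpha$. To extend to morphisms, note that $1 \in \mathcal F$ forces $p = p^1$ to be fully faithful, so each morphism $b \to b'$ in $B$ lifts uniquely to a morphism between the already-chosen lifts of $b$ and $b'$; equivariance and compatibility with $\alpha$ then follow from uniqueness together with postcomposition with $p$.

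The main technical obstacle is ensuring that the object-level construction genuinely produces a $G$-equivariant functor: one must choose the preimage $c_i$ inside $C^{H_i}$ rather than merely in $C$ (otherwise the equivariant extension along $G/H_i$ is ill-defined), and one must verify that the resulting assignments are independent of the coset representative used to express a general element $g.b_i$. Once these compatibilities are in hand, the rest---well-definedness on morphisms, equivariance, and agreement with $\alpha$ on $A$---reduces via fully-faithfulness of $p$ to equalities after postcomposition with $p$, which are tautological.
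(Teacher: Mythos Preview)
Your proposal is correct and follows essentially the same approach as the paper: both directions proceed exactly as you describe, with the forward direction via closure properties of the class of maps satisfying the two conditions, and the backward direction via an explicit lift that chooses $H_i$-fixed preimages on orbit representatives and then uses full faithfulness of $p$ (from $1\in\mathcal F$) for morphisms. One small slip: the stabilizer of an object $[g]\in G/H$ is the \emph{conjugate} $gHg^{-1}$ rather than a subgroup of $H$, so the isotropy condition in the forward direction needs closure of $\mathcal F$ under conjugation (which is harmless since the model structure only depends on $\mathcal F$ through its conjugation closure); the paper glosses over this point as well.
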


\begin{cor}
For $M=G$ a finite discrete group and $\mathcal F=\mathcal A\ell\ell$, the $\mathcal F$-equivalences are precisely the equivalences in the $2$-category of $G$-categories, $G$-equivariant functors, and $G$-equivariant natural transformations.
\end{cor}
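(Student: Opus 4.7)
The plan is to prove the two implications separately; the forward direction is essentially formal, while the backward one requires assembling the pointwise pseudo-inverses of the $f^H$ into a single $G$-equivariant one. For the forward direction, suppose $f\colon C\to D$ admits a $G$-equivariant pseudo-inverse $g$ together with $G$-equivariant natural isomorphisms $\eta\colon\id_C\Rightarrow gf$ and $\varepsilon\colon fg\Rightarrow\id_D$. Since taking $H$-fixed points defines a strict $2$-functor $(\blank)^H\colon\cat{$\bm G$-Cat}\to\cat{Cat}$ for every subgroup $H\subseteq G$, applying it yields data $g^H,\eta^H,\varepsilon^H$ that exhibit $f^H$ as an equivalence of categories, so $f$ is an $\mathcal F$-equivalence.

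For the backward direction, my main strategy is to exploit that the $\mathcal F$-model structure on $\cat{$\bm G$-Cat}$ is $\cat{Cat}$-enriched (Theorem~\ref{thm:equiv-model-structure}) with every object cofibrant (Lemma~\ref{lemma:charact-cof}, using $\mathcal F=\mathcal A\ell\ell\ni 1$) and every object fibrant (since for the terminal map $C\to *$ each fixed-point map $C^H\to *$ is trivially an isofibration). The general principle that a weak equivalence between cofibrant-fibrant objects of a $\cat{Cat}$-enriched model category is a $\cat{Cat}$-enriched equivalence---i.e.~admits a pseudo-inverse in the enriched hom-categories, which in our case are the categories of $G$-equivariant functors and $G$-equivariant natural transformations---then immediately gives the claim.

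Alternatively, one can construct the pseudo-inverse by hand: pick representatives $(d_i)_{i\in I}$ of the $G$-orbits on $\Ob(D)$ with stabilizers $H_i$, use essential surjectivity of $f^{H_i}$ to choose $c_i\in\Ob(C^{H_i})$ together with $H_i$-fixed isomorphisms $\alpha_i\colon f(c_i)\xrightarrow{\sim}d_i$, and set $g(\gamma\cdot d_i)\mathrel{:=}\gamma\cdot c_i$ for $\gamma\in G$. This is well-defined and $G$-equivariant by the choice of isotropies, and full faithfulness of $f=f^1$ determines $g$ uniquely on morphisms via the equation $f\circ g(\phi)=\alpha^{-1}\circ\phi\circ\alpha$; the translates $\gamma\cdot\alpha_i$ then assemble into $\varepsilon$, and $\eta$ is produced analogously. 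The main obstacle in this more concrete approach is checking $G$-equivariance of $\eta$ and $\varepsilon$, which reduces to careful isotropy bookkeeping and the compatibility of the uniquely determined preimages under $G$-translation; the enriched-model-categorical route sidesteps this entirely, which is why I would favour it.
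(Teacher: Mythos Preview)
Your proposal is correct and your preferred enriched-model-categorical route is essentially the paper's argument: the paper likewise observes that every object is cofibrant (by Lemma~\ref{lemma:charact-cof}) and fibrant, invokes the Abstract Whitehead Theorem, and then unwinds the homotopy relation using the explicit cylinder $C\times E\{0,1\}$ coming from the $\cat{Cat}$-enrichment to identify homotopy equivalences with $2$-categorical equivalences. Your by-hand alternative is an extra that the paper does not include.
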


We will refer to these simply as `$G$-equivalences' below.

\begin{proof}
By the previous lemma, all objects of $\cat{$\bm G$-Cat}$ are cofibrant-fibrant, so the Abstract Whitehead Theorem shows that a map is an $\mathcal A\ell\ell$-equivalence if and only if it is a homotopy equivalence. Picking the cylinder objects
\begin{equation*}
C\amalg C\xrightarrow{(\incl_0,\incl_1)} C\times E\{0,1\}\xrightarrow{\pr} C
\end{equation*}
coming from the $\cat{Cat}$-enrichment, the claim follows immediately.
\end{proof}

With the above characterization of the cofibrations at hand we can now prove:

\begin{prop}\label{prop:inj-po}
Let $M$ be any categorical monoid and let $\mathcal F$ be a \emph{family} of finite subgroups of $\Ob(M)$. Then the $\mathcal F$-equivalences are stable under pushout along injective cofibrations. In particular the $\mathcal F$-model structure is left proper.
\begin{proof}
As $\mathcal F$ is closed under passing to subgroups, a map in $\cat{$\bm M$-Cat}$ is an $\mathcal F$-equivalence if and only if it is an $H$-equivalence for each $H\in\mathcal F$. As moreover pushouts in $\cat{$\bm M$-Cat}$ and $\cat{$\bm H$-Cat}$ are both created in $\cat{Cat}$, we are therefore reduced to the case that $M=H$ is a finite discrete group and $\mathcal F=\mathcal A\ell\ell$ is the family of all subgroups.

But in this situation the cofibrations of the $\mathcal F$-model structure are precisely the injective cofibrations. In particular, every object is cofibrant and hence $\cat{$\bm H$-Cat}$ is left proper, which immediately implies the claim.
\end{proof}
\end{prop}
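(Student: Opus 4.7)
My approach will parallel the standard pattern of reducing model-categorical questions on $\cat{$\bm M$-Cat}$ to the classical setting of a finite discrete group equipped with the family of all its subgroups, for which Lemma~\ref{lemma:charact-cof} gives a very clean description of the cofibrations.

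First I would observe that both assertions can be checked by looking at fixed points for the various $H\in\mathcal F$ separately. Since $\mathcal F$ is closed under subgroups, a map $f$ in $\cat{$\bm M$-Cat}$ is an $\mathcal F$-equivalence if and only if its image under restriction along each inclusion $H\hookrightarrow M$ (for $H\in\mathcal F$) is an $\mathcal A\ell\ell$-equivalence in $\cat{$\bm H$-Cat}$. Moreover, pushouts in $\cat{$\bm M$-Cat}$, in $\cat{$\bm H$-Cat}$, and in $\cat{Cat}$ are all created in $\cat{Set}$, so these restriction functors commute with pushouts, and they trivially send injective cofibrations to injective cofibrations. This reduces both statements to the case where $M=H$ is a finite discrete group and $\mathcal F=\mathcal A\ell\ell$.

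In this reduced setting I would invoke Lemma~\ref{lemma:charact-cof}, which identifies the $\mathcal A\ell\ell$-cofibrations with precisely the injective cofibrations, so in particular every object of $\cat{$\bm H$-Cat}$ is cofibrant in the $\mathcal A\ell\ell$-model structure. A standard argument---say Ken Brown's lemma applied to the pushout viewed as a left Quillen bifunctor, or equivalently the observation that in any model category in which every object is cofibrant the pushout of a weak equivalence along a cofibration is a weak equivalence---then yields left properness. Because here injective cofibrations \emph{are} the cofibrations, this simultaneously gives the pushout-stability of $\mathcal A\ell\ell$-equivalences along injective cofibrations in the reduced setting, and hence, by the reduction of the previous paragraph, in the original $\cat{$\bm M$-Cat}$ as well.

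I do not anticipate any real obstacle: the substantive work was already carried out in Lemma~\ref{lemma:charact-cof}, and what remains is a routine reduction exploiting the joint conservativity of the fixed-point functors together with the fact that restriction of actions is a left adjoint and therefore preserves all colimits.
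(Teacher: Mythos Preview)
Your proposal is correct and follows essentially the same approach as the paper: reduce via the restriction functors (which preserve pushouts and injective cofibrations) to the case of a finite discrete group $H$ with $\mathcal F=\mathcal A\ell\ell$, then invoke Lemma~\ref{lemma:charact-cof} to identify injective cofibrations with model-structure cofibrations and conclude left properness from the fact that every object is cofibrant. One small slip: pushouts in $\cat{$\bm M$-Cat}$ and $\cat{$\bm H$-Cat}$ are created in $\cat{Cat}$, not in $\cat{Set}$; this does not affect the argument.
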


\begin{thm}\label{thm:injective-equivariant-model}
Let $M$ be any categorical monoid and let $\mathcal F$ be a family of finite subgroups of $\Ob(M)$. Then there exists a unique model structure on $\cat{$\bm M$-Cat}$ whose weak equivalences are the $\mathcal F$-equivalences and whose cofibrations are the injective cofibrations. We call this the \emph{injective $\mathcal F$-model structure}. It is combinatorial, proper, $\cat{Cat}$-enriched (hence simplicial), and Cartesian.
\begin{proof}
To prove that the model structure exists and that it is combinatorial and proper, it suffices by \cite[Corollary~A.2.18]{g-global} and the existence of the \emph{non-equivariant} injective model structure on $\cat{$\bm M$-Cat}$ that pushouts of $\mathcal F$-equivalences along injective cofibrations are $\mathcal F$-equivalences, which is precisely the content of the previous proposition.

Next, let us show that the model structure is Cartesian. It is clear that the unit is cofibrant, so that it only remains to verify the Pushout Product Axiom. For cofibrations this follows directly from the fact that $\cat{Cat}$ is Cartesian, so it only remains to show the induced map in
\begin{equation*}
\begin{tikzcd}
A\times C\arrow[d, "A\times j"']\arrow[r, "i\times C"] & B\times C\arrow[d,"k"]\arrow[ddr, "B\times j", bend left=10pt]\\
A\times D\arrow[r]\arrow[rrd, "i\times D"', bend right=10pt] &[-1em] P\arrow[dr, dashed]\arrow[ul,phantom,"\ulcorner" very near start]\\[-1em]
&& B\times D
\end{tikzcd}
\end{equation*}
is an $\mathcal F$-equivalence for any cofibration $i\colon A\to B$ and any acyclic cofibration $j\colon C\to D$. For this we observe that $A\times j$ and $B\times j$ are acylic cofibrations, and so is $k$ as a pushout of an acyclic cofibration. The claim now follows by $2$-out-of-$3$.

The proof that this model structure is $\cat{Cat}$-enriched is analogous.
\end{proof}
\end{thm}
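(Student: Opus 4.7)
The plan is to construct the injective $\mathcal{F}$-model structure by \emph{mixing} two structures already at our disposal: the projective $\mathcal{F}$-model structure from Theorem~\ref{thm:equiv-model-structure}, which supplies the correct weak equivalences, and an underlying non-equivariant injective model structure on $\cat{$\bm M$-Cat}$, which supplies the desired cofibrations. A standard Jeff-Smith-style recognition/mixing result (such as the one packaged in \cite[Corollary~A.2.18]{g-global}) reduces the existence statement to a single input, namely that $\mathcal{F}$-equivalences are stable under pushout along injective cofibrations; this is precisely Proposition~\ref{prop:inj-po}. Combinatoriality is then inherited from the two input structures.

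Properness follows by routine comparison. Left properness is the very pushout-stability statement above, hence immediate from Proposition~\ref{prop:inj-po}. For right properness I would compare with the projective $\mathcal{F}$-structure: the two structures share weak equivalences, while the injective cofibrations properly contain the projective ones, so every injective fibration is a projective $\mathcal{F}$-fibration, and right properness transfers from Theorem~\ref{thm:equiv-model-structure}.

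For the Cartesian property the unit is trivially injectively cofibrant, so the only real task is the pushout-product axiom. The cofibration-with-cofibration case follows because the Cartesian product in $\cat{Cat}$ (hence in $\cat{$\bm M$-Cat}$ with the diagonal action) visibly preserves injectivity on objects. For a cofibration $i\colon A\to B$ paired with an acyclic cofibration $j\colon C\to D$, the key observation is that $X\times j$ is an $\mathcal{F}$-equivalence for any $X$: taking $H$-fixed points commutes with finite products, so $(X\times j)^H=X^H\times j^H$, and equivalences of categories are closed under Cartesian products. With this in hand, both $A\times j$ and $B\times j$ are acyclic injective cofibrations, the pushout leg $B\times C\to P$ of $A\times j$ is again an acyclic injective cofibration, and $2$-out-of-$3$ applied to the factorization $B\times C\to P\to B\times D$ of $B\times j$ gives acyclicity of $P\to B\times D$. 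The $\cat{Cat}$-enrichment claim is then established by the parallel argument applied to the $\cat{Cat}$-tensoring in place of the Cartesian product.

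The main obstacle is genuinely Proposition~\ref{prop:inj-po}: once the pushout-stability of $\mathcal{F}$-equivalences along injective cofibrations is secured, every step above is a formal consequence of the mixing machinery and one pushout-product diagram, with the small extra observation that functors of the form $X\times(\blank)$ preserve $\mathcal{F}$-equivalences. Everything else is bookkeeping.
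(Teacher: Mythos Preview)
Your proposal is correct and follows essentially the same approach as the paper: existence, combinatoriality, and properness are obtained by applying the mixing criterion \cite[Corollary~A.2.18]{g-global} with Proposition~\ref{prop:inj-po} as the key input, and the Cartesian and $\cat{Cat}$-enriched properties are deduced via the same pushout-product argument using $2$-out-of-$3$. The only cosmetic difference is that the paper obtains right properness directly from the cited mixing corollary rather than by comparison with the projective structure, but your argument for this is equally valid.
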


\begin{cor}\label{cor:injective-g-global-model-cat}
Let $G$ be any finite group. There is a unique model structure on $\cat{$\bm{E\mathcal M}$-$\bm G$-Cat}$ with cofibrations the injective cofibrations and weak equivalences the $G$-global equivalences. We call this the \emph{injective $G$-global model structure}. It is proper, combinatorial, Cartesian, and $\cat{Cat}$-enriched (hence simplicial).\qed
\end{cor}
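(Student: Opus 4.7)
The plan is to deduce this as a direct specialization of Theorem~\ref{thm:injective-equivariant-model}, applied to the categorical monoid $M = E\mathcal M \times G$ together with the collection $\mathcal F \mathrel{:=} \mathcal G_{\mathcal U, G}$ of graph subgroups $\Gamma_{H,\phi}$ ranging over universal $H \subset \mathcal M$ and homomorphisms $\phi\colon H \to G$ (here $\mathcal U$ denotes the family of universal subgroups of $\mathcal M$). To apply the theorem, I only need to verify that $\mathcal F$ is a family of finite subgroups of $\Ob(M) = \mathcal M \times G$, and that the resulting $\mathcal F$-equivalences agree with the $G$-global equivalences in the sense of Corollary~\ref{cor:g-global-model-cat}.

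Finiteness of each $\Gamma_{H,\phi}$ is clear since $H$ is finite. For the family conditions, closure under subgroups follows because any subgroup of a graph subgroup $\Gamma_{H,\phi}$ has the form $\Gamma_{H',\phi|_{H'}}$ for some subgroup $H' \subset H$, and the universal subgroups of $\mathcal M$ are themselves closed under subgroups by \cite[Corollary~1.2.7]{g-global}; closure under conjugation is analogous, using that universal subgroups form a conjugation-closed collection and that a conjugate of a graph subgroup is again a graph subgroup (of a conjugate universal subgroup, with a conjugated homomorphism). Thus $\mathcal F$ is a family of finite subgroups of $\Ob(E\mathcal M \times G)$ in the sense required by Theorem~\ref{thm:injective-equivariant-model}.

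Next, identifying fixed points for the graph subgroup $\Gamma_{H,\phi} \subset E\mathcal M \times G$ with the $\phi$-twisted fixed points $(\blank)^\phi$ as in Theorem~\ref{thm:g-global-model-sset} and Corollary~\ref{cor:g-global-model-cat}, the $\mathcal F$-equivalences on $\cat{$\bm{E\mathcal M}$-$\bm G$-Cat}$ are exactly those functors $f$ such that $f^\phi$ is an equivalence of categories for every universal $H \subset \mathcal M$ and every $\phi\colon H \to G$, i.e.\ precisely the $G$-global equivalences.

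Applying Theorem~\ref{thm:injective-equivariant-model} therefore yields a model structure on $\cat{$\bm{E\mathcal M}$-$\bm G$-Cat}$ whose cofibrations are the injective cofibrations and whose weak equivalences are the $G$-global equivalences, and this model structure is automatically combinatorial, proper, Cartesian, and $\cat{Cat}$-enriched. No step presents any real obstacle: the only thing to check is that graph subgroups of universal subgroups form a family, and this is essentially bookkeeping that has already been done in the cited results from \cite{g-global}.
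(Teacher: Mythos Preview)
Your proposal is correct and matches the paper's intended argument: the corollary is stated with a \qed\ and is meant as an immediate specialization of Theorem~\ref{thm:injective-equivariant-model} to $M = E\mathcal M \times G$ with $\mathcal F$ the family of graph subgroups over universal $H \subset \mathcal M$, exactly as you describe. Your verification that this collection is a family (closure under subgroups via \cite[Corollary~1.2.7]{g-global}, and closure under conjugation by units of $\mathcal M \times G$) is the only thing there is to check, and you have done it correctly.
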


\begin{warn}\label{warning:no-elmendorf}
In the simplicial world, \emph{Elmendorf's Theorem} \cite{elmendorf} provides an alternative description of the equivariant homotopy theory of $G$-simplicial sets in terms of fixed point systems. More precisely, we define for any discrete group $G$ and any collection $\mathcal F$ of subgroups of $G$ the \emph{orbit category} $\cat{O}_{\mathcal F}$ as the full subcategory of $\cat{$\bm G$-Set}$ spanned by the transitive $G$-sets of the form $G/H$ with $H\in\mathcal F$. We then obtain a functor $\Phi\colon\cat{$\bm G$-SSet}\to\Fun(\cat{O}_{\mathcal F}^\op,\cat{SSet})$ given by $\Phi(X)(G/H)=\Maps^G(G/H,X)\cong X^H$ with the evident functoriality in both variables, and Elmendorf's Theorem says that this is an equivalence of homotopy theories with respect to the $\mathcal F$-weak equivalences on the source and the levelwise weak equivalences on the target. This result was refined by Bergner (for $\mathcal F=\mathcal A\ell\ell$) who constructed a model structure on $\cat{$\bm G$-SSet}$ with weak equivalences those maps $f$ such that $f^H$ is a Joyal equivalence for every subgroup $H\subset G$, and proved that $\Phi$ is the right half of a Quillen equivalence to $\Fun(\cat{O}_{\mathcal A\ell\ell}^\op,\cat{SSet}_{\text{Joyal}})$, see \cite[Theorem~3.3]{bergner-equivariant}.

We caution the reader that the corresponding statement is \emph{not} true in our situation, even if $M=G$ is a finite discrete group and $\mathcal F=\mathcal A\ell\ell$. Indeed, if $\Phi$ were to induce an equivalence of associated quasi-categories, it would have to preserve homotopy pushouts, and hence so would any of the fixed point functors $(\blank)^H\cong \ev_{G/H}\circ\Phi$ for $H\in\mathcal F$. By Theorem~\ref{thm:injective-equivariant-model} above this would imply that pushouts along injective cofibrations commute with fixed points up to equivalence. We will show that this is not the case, already for $G=\mathbb Z/2$ and $\mathcal F=\mathcal A\ell\ell$; note that this does not contradict Bergner's result mentioned above as the nerve does not preserve (homotopy) pushouts.

For this, we consider the `fork' $x\rightrightarrows y\to z$ where the arrow $y\to z$ coequalizes the two arrows $x\rightrightarrows y$. Comparing corepresented functors then shows that the pushout of
\begin{equation}\label{eq:fork-pushout}
(x\rightrightarrows y\to z) \hookleftarrow (x\rightrightarrows y) \to *
\end{equation}
is simply the category $y\to z$ (with the evident maps). We now make this square into a diagram in $\cat{$\bm{\mathbb Z/2}$-Cat}$ by letting the non-trivial element of $\mathbb Z/2$ exchange the two arrows $x\rightrightarrows y$ and act trivial otherwise; it follows formally that this is then a pushout square again.

However, the $\mathbb Z/2$-fixed points of $(\ref{eq:fork-pushout})$ are given by
\begin{equation}\label{eq:fork-fixed-points-pushout}
(\begin{tikzcd}[cramped,column sep=small]x\arrow[rr, bend right=25pt, looseness=1,shift right=2pt] & y\arrow[r]& z\end{tikzcd}) \hookleftarrow (x\quad y) \to *
\end{equation}
and the pushout of this is the category $y\rightrightarrows z$.

Note that the same example shows that the injective $\mathcal F$-model structure does not exist for general collections of subgroups $\mathcal F$ (as opposed to families). Namely, if the injective model structure for $G=\mathbb Z/2$ and $\mathcal F=\{\mathbb Z/2\}$ existed, then it would necessarily be left proper as all $G$-categories are injectively cofibrant. However, the inclusion of $(\ref{eq:fork-fixed-points-pushout})$ into $(\ref{eq:fork-pushout})$ is a levelwise $\mathcal F$-equivalence, while the induced map on pushouts isn't.
\end{warn}

\begin{rk}\label{rk:injectively-fibrant}
Let $M=G$ be a finite discrete group and let $\mathcal F$ be a family of finite subgroups of $G$; we now want to construct some fibrant objects of the injective $\mathcal F$-model structure on $\cat{$\bm G$-Cat}$ in more concrete terms.

To this end, we let $Q$ be a cofibrant replacement of the terminal category $*$ in the usual $\mathcal F$-model structure on $\cat{$\bm G$-Cat}$. Then $Q^H=\varnothing$ for any $H\notin\mathcal F$ by Lemma~\ref{lemma:charact-cof}, so $Q\times\blank$ sends $\mathcal F$-equivalences to $G$-equivalences. On the other hand, it clearly preserves injective cofibrations, so that we get a Quillen adjunction
\begin{equation*}
Q\times\blank\colon\cat{$\bm G$-Cat}_{\textup{injective $\mathcal F$}}\rightleftarrows\cat{$\bm G$-Cat}_{\mathcal A\ell\ell} :\Fun(Q,\blank);
\end{equation*}
in particular $\Fun(Q,C)$ is injectively fibrant for any $G$-category $C$. While we will not do this here, one can in fact show (using the general theory of Bousfield localizations of model categories) that a $G$-category $C$ is injectively fibrant if and only if the map $C\to\Fun(Q,C)$ induced by $Q\to*$ is an $\mathcal A\ell\ell$-equivalence, also cf.~\cite[discussion after Proposition~1.12]{hausmann-equivariant} for the corresponding (classical) statement for simplicial sets.

As one particular instance of this, let us consider the case that $\mathcal F=\mathcal T\!\textit{riv}$ consists only of the trivial group. Then we have a standard choice of $Q$ as the indiscrete category $EG$ with $G$-action induced by the left regular action of $G$; in particular, $\Fun(EG,C)$ is injectively fibrant for any $G$-category $C$. $G$-categories of this form play a central role in Merling's treatment of equivariant algebraic $K$-theory \cite{merling}, and several of the key properties established by her actually become formal consequences of injective fibrancy: in particular, Ken Brown's Lemma shows that $\Fun(EG,\blank)$ takes underlying equivalences to $G$-equivalences \cite[Proposition~2.16]{merling}, while the Abstract Whitehead Theorem shows that any underlying equivalence between categories of the form $\Fun(EG,C)$ is already a $G$-equivalence, which immediately implies Lemma~2.8 of {op.~cit.}
\end{rk}

\begin{rk}
We can also use the above to answer a question raised by Merling as \cite[Question~3.5]{merling}: namely, she shows in Proposition~3.3 of {op.~cit.} how $\Fun(EG,\blank)$ lifts to a functor from the category of $G$-objects and \emph{pseudoequivariant functors} (i.e.~pseudonatural transformations of functors $BG\to\cat{Cat}_2$ into the $2$-category of categories) to just $G$-objects and \emph{strictly} equivariant functors, and she asks whether all equivariant functors $\Fun(EG,C)\to\Fun(EG,D)$ arise this way.

This is indeed the case: first we observe that the non-equivariant equivalences in $\cat{$\bm G$-Cat}$ are the just the usual level weak equivalences, so \cite[Theorem~7.9.8 and Remark~7.9.7]{cisinski-book} together with \cite[1.4.3]{dk-modern} implies that the canonical map $\cat{$\bm G$-Cat}\to\Fun(\nerve(BG),\nerve_\Delta(\cat{Cat}))$ is a quasi-localization at the non-equivariant equivalences, where we view $\cat{Cat}$ as simplicially enriched in the same way as before. This map can be identified up to equivalence with the nerve of the inclusion of $\cat{$\bm G$-Cat}$ into the strict $2$-category $\Fun_2(BG,\cat{Cat}_2)$ of strict $G$-objects, pseudoequivariant functors, and pseudoequivariant natural isomorphisms (i.e.~invertible modifications); alternatively, one can use a standard argument due to Dwyer and Kan recalled in \cite[Proposition~A.1.10]{g-global} to directly prove that the latter map is a quasi-localization. In any case we in particular see that we have an ordinary $1$-categorical localization $\cat{$\bm G$-Cat}\to\h\Fun_2(BG,\cat{Cat}_2)$.

On the other hand, $\cat{$\bm G$-Cat}$ is a model category in which every object is cofibrant, so the natural map of the $1$-category of injectively fibrant $G$-objects into the category $\h\Fun_{2,\text{strict}}(BG,\cat{Cat}_{2})_\text{fibrant}$ of injectively fibrant (strict) $G$-objects and isomorphism classes of strictly $G$-equivariant maps is also a localization. By direct inspection, Merling's construction is compatible with invertible modifications, and accordingly it descends to a map $\h\Fun_2(BG,\cat{Cat}_2)\to\h\Fun_{2,\text{strict}}(BG,\cat{Cat}_{2})_\text{fibrant}$, yielding a commutative diagram
\begin{equation*}
\begin{tikzcd}
\cat{$\bm G$-Cat}\arrow[d]\arrow[r,"{\Fun(EG,\blank)}"] &[2.5em] \cat{$\bm G$-Cat}_\text{fibrant}\arrow[d]\\
\h\Fun_2(BG,\cat{Cat}_2)\arrow[r,"{\Fun(EG,\blank)}"'] &\h\Fun_{2,\text{strict}}(BG,\cat{Cat}_{2})_\text{fibrant}\rlap.
\end{tikzcd}
\end{equation*}
Thus, we can view her construction as the derived functor of $\Fun(EG,\blank)\colon\cat{$\bm G$-Cat}\to\cat{$\bm G$-Cat}_\text{fibrant}$. As the latter is homotopy inverse to the inclusion we see that Merling's construction is an equivalence, hence in particular fully faithful as desired.

Finally, we remark that a slightly more elaborate version of the above argument actually shows that her construction extends to induce for every $C,D\in\cat{$\bm G$-Cat}$ an equivalence between the category of pseudoequivariant functors $C\to D$ and pseudoequivariant transformations between such to the category of strictly equivariant functors and strictly natural transformations between $\Fun(EG,C)$ and $\Fun(EG,D)$.
\end{rk}

\subsection{Functoriality} We now discuss some functoriality properties of the model structures from Theorem~\ref{thm:equiv-model-structure} analogous to the situation for simplicial sets. Throughout, we let $M$ be a categorical monoid and $\mathcal F$ a family of finite subgroups of $\Ob(M)$.

\begin{lemma}\label{lemma:alpha-shriek}
Let $\alpha\colon H\to G$ be any group homomorphism. Then
\begin{equation*}
\alpha_!\colon \cat{$\bm{(M\times H)}$-Cat}_{\mathcal G_{\mathcal F,H}}\rightleftarrows\cat{$\bm{(M\times G)}$-Cat}_{\mathcal G_{\mathcal F,G}} :\!\alpha^*
\end{equation*}
is a Quillen adjunction with homotopical right adjoint.
\begin{proof}
One immediately checks from the definitions that $\alpha^*$ preserves weak equivalences as well as fibrations.
\end{proof}
\end{lemma}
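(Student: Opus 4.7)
The plan is to show that $\alpha^*$ preserves both weak equivalences and fibrations, which simultaneously gives the Quillen adjunction and the homotopicality of the right adjoint. The adjunction itself on underlying categories is automatic: $\alpha^*$ is just restriction of scalars along the monoid map $M\times\alpha\colon M\times H\to M\times G$ (viewing $H,G$ as discrete categorical monoids), so it preserves all limits and filtered colimits; since both sides are locally presentable, the adjoint functor theorem supplies $\alpha_!$ (which one can also describe concretely as $(M\times G)\times_{M\times H}(\blank)$).

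The key computation is the comparison of fixed points. Given an $(M\times G)$-category $X$ and a graph subgroup $\Gamma_{K,\phi}\in\mathcal G_{\mathcal F,H}$, i.e.\ $K\in\mathcal F$ and $\phi\colon K\to H$, an object or morphism of $\alpha^*X$ is fixed by $(k,\phi(k))$ under the restricted action precisely when it is fixed by $(k,\alpha\phi(k))$ under the original $(M\times G)$-action. Hence
\begin{equation*}
(\alpha^*X)^{\Gamma_{K,\phi}}=X^{\Gamma_{K,\alpha\phi}},
\end{equation*}
and since $K\in\mathcal F$, the graph subgroup $\Gamma_{K,\alpha\phi}$ lies in $\mathcal G_{\mathcal F,G}$.

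Now suppose $f\colon X\to Y$ is a $\mathcal G_{\mathcal F,G}$-weak equivalence (respectively fibration). By the above identification, $(\alpha^*f)^{\Gamma_{K,\phi}}$ agrees with $f^{\Gamma_{K,\alpha\phi}}$, which is an equivalence of categories (respectively an isofibration) by hypothesis. As this holds for every $\Gamma_{K,\phi}\in\mathcal G_{\mathcal F,H}$, the map $\alpha^*f$ is a $\mathcal G_{\mathcal F,H}$-weak equivalence (respectively fibration) by definition of the $\mathcal G_{\mathcal F,H}$-model structure from Theorem~\ref{thm:equiv-model-structure}. This proves simultaneously that $\alpha^*$ is right Quillen and homotopical.

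There is no real obstacle: the whole argument is an unwinding of what a graph subgroup is and of the compatibility of fixed points with restriction along a group homomorphism. The only point one must be careful about is that $\mathcal G_{\mathcal F,G}$ is defined using \emph{arbitrary} homomorphisms $K\to G$ out of members $K\in\mathcal F$, so the composite $\alpha\phi$ is always allowed regardless of what $\alpha$ does, and no further hypothesis on $\alpha$ (such as injectivity) is needed here---in contrast to the statement for $\alpha^*$ as a \emph{left} Quillen functor that will appear later.
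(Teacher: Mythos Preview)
Your proof is correct and follows exactly the approach the paper takes: the paper's proof is the one-line observation that $\alpha^*$ preserves weak equivalences and fibrations, and you have simply spelled out the fixed-point computation $(\alpha^*X)^{\Gamma_{K,\phi}}=X^{\Gamma_{K,\alpha\phi}}$ that makes this immediate.
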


\begin{lemma}\label{lemma:free-quotients}
Let $G$ be any discrete group and assume $f\colon C\to D$ is a $\mathcal G_{\mathcal F,G}$-equivalence in $\cat{$\bm{(M\times G)}$-Cat}$ such that $G$ acts freely on both $C$ and $D$. Then $f/G\colon C/G\to D/G$ is an $\mathcal F$-equivalence.
\begin{proof}
As before we reduce to the case that $M=H$ is a finite discrete group and $\mathcal F=\mathcal A\ell\ell$. As $G$ acts freely on $C$, the isotropy of any $c\in C$ intersects $G$ trivially, i.e.~it is an element of the family $\mathcal G_{H,G}$, and likewise for $D$. Thus, Lemma~\ref{lemma:charact-cof} shows that both $C$ and $D$ are cofibrant in the $\mathcal G_{H,G}$-model structure, and the claim follows by applying the previous lemma to the unique homomorphism $G\to 1$.
\end{proof}
\end{lemma}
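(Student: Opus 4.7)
The plan is to mimic the proof of the simplicial analogue Lemma~\ref{lemma:free-quotients-sset}: reduce to the case where $M=H$ is a finite discrete group and $\mathcal F=\mathcal A\ell\ell$, observe that $C$ and $D$ become cofibrant in the $\mathcal G_{H,G}$-model structure on $\cat{$\bm{(H\times G)}$-Cat}$, and then apply Lemma~\ref{lemma:alpha-shriek} to the unique homomorphism $G\to 1$.

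For the reduction, I would use that since $\mathcal F$ is a family (closed under subgroups), a map in $\cat{$\bm{(M\times G)}$-Cat}$ is a $\mathcal G_{\mathcal F,G}$-equivalence if and only if its restriction to $\cat{$\bm{(H\times G)}$-Cat}$ is a $\mathcal G_{\mathcal A\ell\ell,G}$-equivalence for every $H\in\mathcal F$, and similarly for $\mathcal F$-equivalences in $\cat{$\bm M$-Cat}$. Since the $G$-quotient commutes with restriction of the $M$-action (both being computed underlying in $\cat{Cat}$), and freeness of the $G$-action is preserved under restriction, we may replace $M$ by a finite discrete group $H$ and $\mathcal F$ by $\mathcal A\ell\ell$.

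In this setting, the key observation is that since $G$ acts freely, the isotropy subgroup of every object of $C$ intersects $\{1\}\times G$ trivially, hence projects injectively to $H$ and is therefore a graph subgroup $\Gamma_{K,\phi}$ for some $K\subset H$ and $\phi\colon K\to G$. Applying Lemma~\ref{lemma:charact-cof} with the family $\mathcal G_{H,G}$ in place of $\mathcal F$ then shows that $C$ (and likewise $D$) is cofibrant in the $\mathcal G_{H,G}$-model structure on $\cat{$\bm{(H\times G)}$-Cat}$.

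Finally, Lemma~\ref{lemma:alpha-shriek} applied to $\alpha\colon G\to 1$ (with $H$ playing the role of ``$M$'') gives a Quillen adjunction whose left adjoint $\alpha_!$ is precisely the $G$-quotient $\cat{$\bm{(H\times G)}$-Cat}_{\mathcal G_{\mathcal A\ell\ell,G}}\to\cat{$\bm H$-Cat}_{\mathcal A\ell\ell}$. Since $f$ is a weak equivalence between cofibrant objects, Ken Brown's Lemma implies that $f/G=\alpha_!(f)$ is an $H$-equivalence, completing the proof. The only real subtlety to check is the identification of $\alpha_!$ with the $G$-quotient, which follows from the standard formula for left adjoints to restriction along group homomorphisms; once that and the reduction are in place the argument is essentially formal.
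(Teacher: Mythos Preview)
Your proposal is correct and follows essentially the same route as the paper's proof: reduce to $M=H$ finite with $\mathcal F=\mathcal A\ell\ell$, use freeness of the $G$-action together with Lemma~\ref{lemma:charact-cof} to see that $C$ and $D$ are cofibrant in the $\mathcal G_{H,G}$-model structure, and then apply Lemma~\ref{lemma:alpha-shriek} to the homomorphism $G\to 1$. The paper's version is slightly terser (it does not spell out the reduction or invoke Ken Brown explicitly), but the argument is the same.
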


\begin{lemma}\label{lemma:alpha-star}
Let $\alpha\colon H\to G$ be an \emph{injective} homomorphism of discrete groups. Then
\begin{equation*}
\alpha^*\colon\cat{$\bm{(M\times G)}$-Cat}_{\mathcal G_{\mathcal F,G}}\rightleftarrows\cat{$\bm{(M\times H)}$-Cat}_{\mathcal G_{\mathcal F,H}} :\!\alpha_*
\end{equation*}
is a Quillen adjunction and $\alpha_*$ is fully homotopical.
\begin{proof}
It suffices to show that $\alpha_*$ preserves fibrations and weak equivalences, for which we may again reduce to the case that $M=K$ is a finite discrete group and $\mathcal F=\mathcal A\ell\ell$. As restriction along an \emph{injective} homomorphism preserves freeness, the same argument as in the previous lemma then shows that $\alpha^*$ preserves cofibrations; moreover, it is clearly homotopical, hence left Quillen.
Thus, $\alpha_*$ is right Quillen and hence homotopical by Ken Brown's Lemma.
\end{proof}
\end{lemma}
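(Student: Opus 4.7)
The plan is to show that $\alpha^*$ is not only right Quillen---as established without any injectivity assumption in Lemma~\ref{lemma:alpha-shriek}---but in the present situation also left Quillen. Once this is achieved, $\alpha_*$ is right Quillen by adjunction, and since every category is fibrant in the canonical model structure on $\cat{Cat}$, every object of $\cat{$\bm{(M\times G)}$-Cat}_{\mathcal G_{\mathcal F,G}}$ is fibrant; Ken Brown's Lemma then upgrades $\alpha_*$ from homotopical between fibrant objects to fully homotopical.

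Since $\alpha^*$ already preserves weak equivalences, being left Quillen reduces to preserving cofibrations, and I would check this on the generating cofibrations of the $\mathcal G_{\mathcal F,G}$-model structure, exploiting that $\alpha^*$ preserves all colimits (as a functor admitting a further right adjoint). Following the standard reduction scheme of the preceding lemmas, one may further reduce to the case where $M=K$ is a finite discrete group and $\mathcal F=\mathcal A\ell\ell$, so that Lemma~\ref{lemma:charact-cof} characterizes cofibrations as the injective-on-objects functors whose newly added objects have isotropy intersecting $\{1\}\times G$ trivially.

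In this reduced situation preservation of injectivity-on-objects is immediate, and for the isotropy condition one observes that the isotropy of any object $x$ in $\alpha^*C$ is the preimage under $\id_K\times\alpha$ of its isotropy in $C$; because $\alpha$ is injective, this preimage intersects $\{1\}\times H$ trivially whenever the original isotropy intersected $\{1\}\times G$ trivially. This is precisely the place where injectivity of $\alpha$ enters, and it is the only nontrivial input to the proof---everything else consists of formal observations about how restriction interacts with the transferred model structures and with the characterization of cofibrations from Lemma~\ref{lemma:charact-cof}.
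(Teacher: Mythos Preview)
Your proposal is correct and follows essentially the same route as the paper: reduce to the case $M=K$ finite discrete with $\mathcal F=\mathcal A\ell\ell$, use the isotropy characterization of cofibrations from Lemma~\ref{lemma:charact-cof} to show that $\alpha^*$ preserves cofibrations (your preimage argument is exactly the content of the paper's phrase ``restriction along an injective homomorphism preserves freeness''), conclude that $\alpha^*$ is left Quillen, and then invoke Ken Brown together with fibrancy of all objects. The only cosmetic difference is that you spell out the isotropy computation explicitly, whereas the paper defers to ``the same argument as in the previous lemma.''
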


Arguing as in the proof of Corollary~\ref{cor:twisted-products-sset} we get:

\begin{cor}\label{cor:twisted-products}
Let $G$ be any discrete group, let $n\ge 0$, and let $f\colon C\to D$ be a $\mathcal G_{\mathcal F,G}$-equivalence in $\cat{$\bm{(M\times G)}$-Cat}$. Then the map $f^{\times n}\colon C^{\times n}\to D^{\times n}$ is a $\mathcal G_{\mathcal F,G\times\Sigma_n}$-equivalence in $\cat{$\bm{(M\times G\times\Sigma_n)}$-Cat}$ with respect to the $\Sigma_n$-action permuting the factors.\qed
\end{cor}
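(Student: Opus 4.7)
The plan is to imitate the proof of Corollary~\ref{cor:twisted-products-sset} verbatim, since the only ingredients used there are formal consequences of Lemmas~\ref{lemma:alpha-shriek-sset} and~\ref{lemma:alpha-star-sset}, and we now have their categorical analogues in Lemmas~\ref{lemma:alpha-shriek} and~\ref{lemma:alpha-star}.

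First, I would absorb the $G$-action into the ambient monoid: by replacing $M$ with $M\times G$ and $\mathcal F$ with $\mathcal G_{\mathcal F,G}$, the family $\mathcal G_{\mathcal F, G\times\Sigma_n}$ identifies with $\mathcal G_{\mathcal G_{\mathcal F,G},\Sigma_n}$, so it suffices to prove the statement in the special case $G=1$: namely, that if $f\colon C\to D$ is an $\mathcal F$-equivalence in $\cat{$\bm M$-Cat}$, then $f^{\times n}$ is a $\mathcal G_{\mathcal F,\Sigma_n}$-equivalence in $\cat{$\bm{(M\times\Sigma_n)}$-Cat}$. The case $n=0$ is trivial since $f^{\times 0}$ is the identity on the terminal category.

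For $n\ge1$, I would denote by $\Sigma_n^1\subset\Sigma_n$ the stabilizer of $1\in\{1,\dots,n\}$, write $i\colon\Sigma_n^1\hookrightarrow\Sigma_n$ for the inclusion and $p\colon\Sigma_n^1\to 1$ for the unique homomorphism, and verify that the $n$-fold Cartesian product functor factors (up to natural isomorphism) as
\begin{equation*}
\cat{$\bm M$-Cat}\xrightarrow{p^*}\cat{$\bm{(M\times\Sigma_n^1)}$-Cat}\xrightarrow{i_*}\cat{$\bm{(M\times\Sigma_n)}$-Cat}.
\end{equation*}
This is a direct computation: since right Kan extension along a finite group inclusion is computed as $i_*(X)=\Fun^{\Sigma_n^1}(\Sigma_n,X)$, and since $\Sigma_n/\Sigma_n^1\cong\{1,\dots,n\}$ as $\Sigma_n$-sets, one has $i_*p^*(C)\cong C^{\times n}$ with the $\Sigma_n$-action permuting the factors, naturally in $C$.

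The conclusion is then immediate: Lemma~\ref{lemma:alpha-shriek} applied to $p$ gives that $p^*$ sends $\mathcal F$-equivalences to $\mathcal G_{\mathcal F,\Sigma_n^1}$-equivalences (as a right Quillen functor which is homotopical on all objects), while Lemma~\ref{lemma:alpha-star} applied to the injective homomorphism $i$ gives that $i_*$ sends $\mathcal G_{\mathcal F,\Sigma_n^1}$-equivalences to $\mathcal G_{\mathcal F,\Sigma_n}$-equivalences (since it is fully homotopical). The composite therefore preserves the required equivalences, which finishes the proof. There is no real obstacle here; the only point that warrants checking is the explicit identification of $i_*p^*$ with the $n$-fold product functor, and this is formal.
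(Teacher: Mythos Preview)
Your proposal is correct and follows essentially the same approach as the paper: the paper simply says ``Arguing as in the proof of Corollary~\ref{cor:twisted-products-sset},'' and your write-up is precisely that argument with Lemmas~\ref{lemma:alpha-shriek} and~\ref{lemma:alpha-star} substituted for their simplicial counterparts. The identification $i_*p^*\cong(\blank)^{\times n}$ and the reduction to $G=1$ are exactly as in the simplicial case, so nothing further is needed.
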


\subsection{Equivariant categorical operads}
We now want to study operads in $\cat{$\bm G$-Cat}$ and $\cat{$\bm{E\mathcal M}$-$\bm G$-Cat}$ for any (finite) group $G$, along with their algebras. Again, the basic theory works in greater generality, so we fix a categorical monoid $M$ together with a family $\mathcal F$ of finite subgroups of $\Ob(M)$.

\begin{constr}
Analogously to the simplicial situation, the forgetful functor
\begin{equation*}
    \forget\colon\Alg_{\mathcal O}(\cat{$\bm M$-Cat})\to\cat{$\bm M$-Cat}
\end{equation*}
has a left adjoint $\cat P$ given by the formula $\cat{P}C=\coprod_{n\ge 0} \mathcal O(n)\times_{\Sigma_n} C^{\times n}$ with the evident functoriality in $C$ and the $\mathcal O$-algebra structure induced by operad structure maps of $\mathcal O$.

Again, $\Alg_{\mathcal O}(\cat{$\bm M$-Cat})$ is enriched, tensored, and cotensored over $\cat{Cat}$ (with cotensors created in $\cat{$\bm M$-Cat}$), and the adjunction $\cat{P}\dashv\forget$ is then naturally a $\cat{Cat}$-enriched adjunction.
\end{constr}

\begin{thm}
Let $\mathcal O$ be any operad in $\cat{$\bm M$-Cat}$. Then there exists a unique model structure on $\Alg_{\mathcal O}(\cat{$\bm M$-Cat})$ in which a map is a weak equivalence or fibration if and only if it is so in the $\mathcal F$-model structure on $\cat{$\bm M$-Cat}$. We call this the \emph{$\mathcal F$-model structure} again. It is combinatorial, right proper, $\cat{Cat}$-enriched (hence simplicial), and filtered colimits in it are homotopical. Moreover, the free-forgetful adjunction
\begin{equation}
\cat{P}\colon\cat{$\bm M$-Cat}_{\mathcal F}\rightleftarrows\Alg_{\mathcal O}(\cat{$\bm M$-Cat}) :\!\forget
\end{equation}
is a Quillen adjunction with homotopical right adjoint.
\begin{proof}
It is clear that $\Alg_{\mathcal O}(\cat{$\bm M$-Cat})$ is locally presentable, so to prove that the transferred model structure along $\cat{P}\dashv\forget$ exists and is combinatorial, right proper, and $\cat{Cat}$-enriched, it is again enough to construct functorial path objects.

For this we observe that $\Fun(E\{0,1\},C)$ inherits a natural $\mathcal O$-algebra structure from $C$ as $\Fun(E\{0,1\},\blank)$ is $\cat{Cat}$-enriched and product-preserving. With respect to this algebra structure, the maps
\begin{equation*}
C\xrightarrow{\const}\Fun(E\{0,1\},C)\xrightarrow{(\ev_0,\ev_1)} C\times C
\end{equation*}
are then $\mathcal O$-algebra maps, so this provides the desired path object by the proof of Theorem~\ref{thm:equiv-model-structure}.
\end{proof}
\end{thm}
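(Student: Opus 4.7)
The plan is to transfer the $\mathcal F$-model structure from $\cat{$\bm M$-Cat}$ along the free-forgetful adjunction $\cat{P}\dashv\forget$, exactly paralleling the approach used for the simplicial case and for Theorem~\ref{thm:equiv-model-structure} itself. Since $\Alg_{\mathcal O}(\cat{$\bm M$-Cat})$ is locally presentable (as algebras over an accessible monad on a locally presentable category), and since every object of $\cat{$\bm M$-Cat}_{\mathcal F}$ is fibrant (fibrancy in the canonical model structure on $\cat{Cat}$ is trivial, hence passes to every $\cat{$\bm M$-Cat}_{\mathcal F}$), Quillen's Path Object Argument reduces the entire existence question to constructing functorial path objects for every $\mathcal O$-algebra.

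The key input is that the standard categorical path object
\begin{equation*}
C \xrightarrow{\const} \Fun(E\{0,1\}, C) \xrightarrow{(\ev_0, \ev_1)} C \times C
\end{equation*}
from the proof of Theorem~\ref{thm:equiv-model-structure} lifts to the algebra category. Indeed, $\Fun(E\{0,1\}, \blank)$ is the cotensor in the $\cat{Cat}$-enrichment, so in particular it preserves products; it therefore canonically lifts to an endofunctor of $\Alg_{\mathcal O}(\cat{$\bm M$-Cat})$, and the constant and (pairwise) evaluation maps then become $\mathcal O$-algebra maps. Because $\forget$ detects weak equivalences and fibrations by construction of the transfer, this really does provide the required path object, and so the transferred model structure exists and is combinatorial.

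The remaining assertions follow essentially formally. Right properness is inherited from $\cat{$\bm M$-Cat}_{\mathcal F}$ because pullbacks and fibrations in $\Alg_{\mathcal O}(\cat{$\bm M$-Cat})$ are created by $\forget$. The $\cat{Cat}$-enrichment transfers because $\cat{P}\dashv\forget$ is a $\cat{Cat}$-enriched adjunction and the path objects above arise from the cotensor. Filtered colimits are homotopical because $\forget$ preserves them—the monad $\cat{P}X=\coprod_{n\ge0}\mathcal O(n)\times_{\Sigma_n}X^{\times n}$ is finitary since each factor $(\blank)^{\times n}$ and the coproduct commute with filtered colimits—and filtered colimits in $\cat{$\bm M$-Cat}_{\mathcal F}$ are homotopical by Theorem~\ref{thm:equiv-model-structure}. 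Finally, the Quillen adjunction $\cat{P}\dashv\forget$ is tautological from the transfer, and $\forget$ is fully homotopical (not merely right Quillen) since it detects weak equivalences by definition. I do not anticipate any substantial obstacle here; the one step demanding care is verifying that $\Fun(E\{0,1\},C)$ genuinely carries an $\mathcal O$-algebra structure making the constant and evaluation maps algebra maps, but this reduces to a quick check using that cotensors by a fixed category are $\cat{Cat}$-enriched and product-preserving.
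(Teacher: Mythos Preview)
Your proposal is correct and follows essentially the same approach as the paper: transfer along $\cat{P}\dashv\forget$ using Quillen's Path Object Argument, with the path object supplied by the cotensor $\Fun(E\{0,1\},\blank)$ lifted to $\mathcal O$-algebras via its product-preservation. The paper's proof is terser on the remaining properties (right properness, $\cat{Cat}$-enrichment, filtered colimits), but your fuller justification of these is accurate and matches what is implicit there.
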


\begin{ex}
Let $G$ be a finite group. Analogously to the simplicial situation, an operad $\mathcal O$ in $\cat{$\bm G$-Cat}$ will be called a \emph{na\"ive $G$-$E_\infty$-operad} if each $\mathcal O(n)$ is $G$-equivariantly equivalent to the terminal category and $\mathcal O(n)^H=\varnothing$ for any $H\subset G\times\Sigma_n$ not contained in $G$. Such an operad is in particular $\Sigma$-free (i.e.~$\Sigma_n$ acts freely on $\mathcal O(n)$ for any $n\ge0$).

Again the prototypical example is an $E_\infty$-operad $\mathcal Q$ in $\cat{Cat}$ that we equip with the trivial $G$-action. A $\mathcal Q$-algebra in $\cat{$\bm G$-Cat}$ is then the same data as a $G$-object in $\Alg_{\mathcal Q}(\cat{Cat})$. A particularly important example of $\mathcal Q$ for us is the \emph{(categorical) Barratt-Eccles operad} $E\Sigma_*$ whose $n$-ary operators are given by the category $E\Sigma_n$ with the evident right $\Sigma_n$-action. There is then a \emph{unique} way to make $E\Sigma_*$ into an operad; we refer the reader to \cite[Proposition~4.2 and Lemma~4.4]{may-permutative} for details. Any permutative category gives rise to an algebra over $E\Sigma_*$ by \cite[Lemmas 4.3--4.5]{may-permutative}, and as observed e.g.~in \cite[Proposition~4.2]{guillou-may} without proof this yields an isomorphism between the category $\cat{PermCat}$ of permutative categories and \emph{strict} symmetric monoidal functors and $\Alg_{E\Sigma_*}(\cat{Cat})$.
As an upshot of all of this, we can identify $\Alg_{E\Sigma_*}(\cat{$\bm G$-Cat})$ with the category $\cat{$\bm G$-PermCat}$ of $G$-objects in $\cat{PermCat}$. Following \cite[4.1]{guillou-may}, we will refer to the objects of $\cat{$\bm G$-PermCat}$ as \emph{na\"ive permutative $G$-categories}.
\end{ex}

\begin{ex}\label{ex:genuine-G-E-infty}
In analogy to the terminology in the simplicial setting, a $\Sigma$-free operad $\mathcal O$ in $\cat{$\bm G$-Cat}$ will be called a \emph{genuine $G$-$E_\infty$-operad} if the unique map $\mathcal O\to *$ is a $G$-equivalence, i.e.~each $\mathcal O(n)$ is $\mathcal G_{G,\Sigma_n}$-equivalent to the terminal category.

If $\mathcal P$ is a $\Sigma$-free operad such that each $\mathcal P(n)$ is \emph{non-equivariantly} equivalent to the terminal category, then we can build a genuine $G$-$E_\infty$-operad $\mathcal P^{EG}$ from this as follows, also see~\cite[Definition~4.4]{guillou-may}: we set $\mathcal P^{EG}(n)=\Fun(EG,\mathcal P(n))$ with the induced $\Sigma_n$-action and $G$ acting via the diagonal of its action on $\mathcal P(n)$ and the left action induced by its right regular action on $EG$; the structure maps of $\mathcal P^{EG}$ are induced from the ones of $\mathcal P$ in the obvious way. In particular, we can apply this to the Barratt-Eccles operad (equipped with trivial $G$-action), yielding a genuine $G$-$E_\infty$-operad $E\Sigma_*^{EG}$ with $n$-ary operations given by $\Fun(EG,E\Sigma_*)$. Following \cite[Definition~4.5]{guillou-may}, we will refer to $E\Sigma_*^{EG}$-algebras as \emph{genuine permutative $G$-categories}.

As observed by Guillou and May in Proposition~4.6 of {op.~cit.}, also see \cite[Remark after Theorem $\text{A}'$]{shimakawa}, we can apply the same construction to algebras: since $\Fun(EG,\blank)$ preserves products, $\Fun(EG,C)$ carries a natural $\mathcal O^{EG}$-algebra structure for any $\mathcal O$-algebra $C$. In particular, $\Fun(EG,\blank)$ lifts to a functor from na\"ive permutative $G$-categories to genuine ones.
\end{ex}

\begin{rk}
While permutative categories are rare in practice, MacLane's strictification theorem implies that any symmetric monoidal category is equivalent to a permutative category, and in fact the quasi-localizations of $\cat{PermCat}$ and $\cat{SymMonCat}$ at the underlying equivalences of categories are equivalent.

In the equivariant setting, Guillou, May, Merling, and Osorno \cite{gmmo-sym-mon} introduced genuine and na\"ive \emph{symmetric monoidal $G$-categories} as pseudoalgebras over genuine and na\"ive $G$-$E_\infty$-operads, respectively. Generalizing the non-equivariant situation, they provide a strictification result showing that the homotopy theory of genuine and na\"ive symmetric monoidal $G$-categories (with respect to the underlying $G$-equivalences) is equivalent to the homotopy theory of genuine and na\"ive permutative $G$-categories, respectively, as defined above. In particular, while we will for simplicity talk exclusively about the above categories of strict algebras in this paper, our results carry over to the pseudoalgebra setting immediately.
\end{rk}

\begin{ex}
Let $G$ be a discrete group. A \emph{$G$-global operad} is an operad in $\cat{$\bm{E\mathcal M}$-$\bm G$-Cat}$. We call a $G$-global operad $\mathcal O$ a \emph{$G$-global $E_\infty$-operad} if it is $\Sigma$-free and moreover each $\mathcal O(n)$ is $(G\times\Sigma_n)$-globally equivalent to the $1$-point category, i.e.~the unique map $\mathcal O\to *$ is a $G$-global equivalence.

Analogously to Example~\ref{ex:genuine-G-E-infty}, we can easily construct these from ordinary $E_\infty$-operads: for this, let $\mathcal O$ be any operad in $\cat{$\bm G$-Cat}$ that is an underlying $E_\infty$-operad. The functor $\Fun(E\mathcal M,\blank)\colon\cat{$\bm{(G\times\Sigma_n)}$-Cat}\to\cat{$\bm{(E\mathcal M\times G\times\Sigma_n)}$-Cat}$ sends underlying equivalences to $(G\times\Sigma_n)$-global equivalences, so we obtain a $G$-global $E_\infty$-operad $\mathcal O^{E\mathcal M}$ via $\mathcal O^{E\mathcal M}(n)=\Fun(E\mathcal M,\mathcal O(n))$ with the induced operad structure maps, left $G$-actions, and right actions by the symmetric groups. In particular, we can apply this to the usual Barratt-Eccles operad (equipped with the trivial $G$-action). We call the resulting operad $E\Sigma_*^{E\mathcal M}$ the \emph{$G$-global Barratt-Eccles operad}. Its nerve (denoted by the same symbol) is then in particular a $G$-global $E_\infty$-operad of simplicial sets. We emphasize that $E\Sigma_*^{E\mathcal M}$ still has \emph{trivial} $G$-action.
\end{ex}

\subsection{Change of operad}
Our next goal is to prove that also in the categorical setting the model categories of algebras are invariant under suitable equivalences of $\Sigma$-free operads. The corresponding statement for simplicial sets ultimately relied on geometric realization preserving levelwise weak equivalences (so that it models $\Delta^\op$-shaped homotopy colimits) as well as finite products (as used in Proposition~\ref{prop:forget-geometric-realization}), and we begin by establishing the corresponding results for $\cat{Cat}$.

For this let us first recall that by definition of the $\cat{SSet}$-tensoring of $\cat{Cat}$, the geometric realization of a simplicial object $C_\bullet\colon\Delta^\op\to\cat{Cat}$ is given by the coend
\begin{equation*}
\int^{[n]\in\Delta^\op} \Delta^n\otimes C_n =
\int^{[n]\in\Delta^\op} \grpdfy(\h\Delta^n)\times C_n
\end{equation*}
with the evident $\cat{Cat}$-enriched functoriality; here $\h$ again denotes the left adjoint of the nerve (assigning to a simplicial set its homotopy category) and $\grpdfy$ is the left adjoint of the inclusion $\cat{Grpd}\hookrightarrow\cat{Cat}$.

More generally, if $M$ is any categorical monoid, then $\cat{$\bm M$-Cat}$ likewise acquires a notion of geometric realization, and this can be explicitly computed as the geometric realization in $\cat{Cat}$ together with the induced $M$-action.

\begin{prop}\label{prop:geometric-realization-homotopical}
The geometric realization functor
\begin{equation}\label{eq:geometric-realization}
|\blank|\colon\Fun(\Delta^\op,\cat{$\bm M$-Cat})\to\cat{$\bm M$-Cat}
\end{equation}
sends levelwise $\mathcal F$-equivalences to $\mathcal F$-equivalences for every family $\mathcal F$ of finite subgroups of $\Ob(M)$.
\begin{proof}
We equip $\cat{$\bm M$-Cat}$ with the \emph{injective} $\mathcal F$-model structure. As this is simplicial, $(\ref{eq:geometric-realization})$ is left Quillen with respect to the Reedy model structure on the source. To complete the proof it is then enough by Ken Brown's Lemma that every object of the source is Reedy-cofibrant.

For this we observe that the functor $\discr\circ\Ob\colon\cat{$\bm M$-Cat}\to\cat{SSet}$ is cocontinuous and creates cofibrations. In particular, $X\in\Fun(\Delta^\op,\cat{$\bm M$-Cat})$ is Reedy cofibrant if and only if $\discr\Ob X$ is a Reedy cofibrant bisimplicial set. The claim follows immediately as every bisimplicial set is Reedy cofibrant.
\end{proof}
\end{prop}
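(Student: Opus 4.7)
My plan is to mimic the standard strategy for showing fully homotopical behavior of geometric realization in a simplicial model category: equip the source with the Reedy model structure, the target with the injective $\mathcal F$-model structure of Theorem~\ref{thm:injective-equivariant-model}, verify that $|\blank|$ is left Quillen between these, and then invoke Ken Brown's Lemma together with the observation that every simplicial object is already Reedy cofibrant. Since the injective $\mathcal F$-model structure is simplicial by Theorem~\ref{thm:injective-equivariant-model}, the fact that geometric realization is left Quillen from the Reedy structure on $\Fun(\Delta^\op,\cat{$\bm M$-Cat})$ is the usual abstract nonsense for simplicial model categories.

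The substantive point is Reedy cofibrancy of every object. For this I would use that the cofibrations in the injective $\mathcal F$-model structure are precisely the functors injective on objects, so cofibrancy of a latching map can be detected after applying the object functor $\Ob$. Since $\Ob\colon\cat{$\bm M$-Cat}\to\cat{$\bm M$-Set}$ is a left adjoint (to the indiscrete-category functor $E$) and in particular cocontinuous, it commutes with the formation of latching objects. Discretizing further to bisimplicial sets (or, equivalently, using that $\discr\circ\Ob$ preserves colimits and creates cofibrations in the injective model structure on $\cat{$\bm M$-Cat}$), the problem reduces to the standard fact that every bisimplicial set is Reedy cofibrant, i.e.~that latching maps of arbitrary simplicial objects in $\cat{Set}$ are monomorphisms.

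Once these two ingredients are in place, the proof concludes cleanly: a levelwise $\mathcal F$-equivalence between levelwise (hence Reedy) cofibrant objects is a weak equivalence in the Reedy model structure, and Ken Brown's Lemma applied to the left Quillen functor $|\blank|$ shows that its image is an $\mathcal F$-equivalence.

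The only place I expect a potential snag is the verification that the injective $\mathcal F$-model structure really is $\cat{SSet}$-enriched in the right way so that the general criterion for geometric realization to be left Quillen applies; this is granted by Theorem~\ref{thm:injective-equivariant-model}, so in fact no genuinely hard step remains. The whole argument is really a formal package once one decides to work against the injective rather than the projective model structure, which is the natural move given that the stated conclusion concerns \emph{all} levelwise $\mathcal F$-equivalences without any cofibrancy hypothesis.
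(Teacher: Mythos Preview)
Your proposal is correct and follows essentially the same argument as the paper: use the injective $\mathcal F$-model structure (which is simplicial), observe that geometric realization is then left Quillen from the Reedy model structure, and reduce via Ken Brown's Lemma to showing every simplicial object is Reedy cofibrant by applying the cocontinuous, cofibration-creating functor $\discr\circ\Ob$ to land in bisimplicial sets. The paper's proof is slightly terser but uses exactly this route.
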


\begin{lemma}\label{lemma:geometric-prod}
Let $M$ be any categorical monoid. Then the geometric realization functor $|\blank|\colon\Fun(\Delta^\op,\cat{$\bm M$-Cat})\to\cat{$\bm M$-Cat}$ preserves finite products.
\begin{proof}
It suffices to prove the corresponding statement for $\cat{Cat}$. As the canonical natural transformation $|\blank\times\blank|\Rightarrow|\blank|\times|\blank|$ is $\cat{Cat}$-enriched and since both source and target preserve $\cat{Cat}$-tensors and colimits in each variable separately, it suffices to check this on pairs of represented functors, hence in particular on levelwise discrete simplicial categories. As the functor $\discr\colon\Fun(\Delta^\op,\cat{Set})\to\Fun(\Delta^\op,\cat{Cat})$ preserves products, it is then finally enough to prove the claim after restricting along $\discr$.
\begin{claim*}
The diagram
\begin{equation*}
\begin{tikzcd}
\Fun(\Delta^\op,\cat{Set})\arrow[d, equal]\arrow[r, "\discr"] & \Fun(\Delta^\op,\cat{Cat})\arrow[d, "|\blank|"]\\
\cat{SSet}\arrow[r, "\grpdfy\circ\h"'] & \cat{Cat}
\end{tikzcd}
\end{equation*}
commutes up to natural isomorphisms.
\begin{proof}
It suffices to construct the natural isomorphism after restricting to $\Delta$, where we just take the isomorphisms $\grpdfy(\h\Delta^m)\to\int^{[n]} \grpdfy(\h\Delta^n)\times\Hom([n],[m])$ provided by the co-Yoneda Lemma.
\end{proof}
\end{claim*}
The lemma follows as $\grpdfy\circ\h\colon\cat{SSet}\to\cat{Cat}$ preserves finite products.
\end{proof}
\end{lemma}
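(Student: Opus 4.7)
The plan is to reduce the statement to a density argument over simplicial sets that ultimately invokes the fact that $\grpdfy\circ\h$ preserves finite products, as recorded in the recollection of the canonical model structure on $\cat{Cat}$. First, I would immediately reduce to the case $M=1$: finite products and geometric realizations in $\cat{$\bm M$-Cat}$ are both computed by performing the corresponding operation in $\cat{Cat}$ and inheriting the $M$-action diagonally, so the general claim follows from the bare categorical version.

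Next, I would observe that both bifunctors $(X,Y)\mapsto|X\times Y|$ and $(X,Y)\mapsto|X|\times|Y|$ from $\Fun(\Delta^\op,\cat{Cat})^{\times 2}$ to $\cat{Cat}$ are cocontinuous in each variable separately: $|\blank|$ is a left adjoint, and $\cat{Cat}$ is Cartesian closed, so binary products distribute over colimits. Moreover, the canonical comparison map between them is $\cat{Cat}$-enriched. A density argument then reduces the claim to checking the comparison on pairs of levelwise discrete simplicial categories, i.e.\ on objects of the form $\discr K$ for $K\in\cat{SSet}$.

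For this discrete case, I would establish as an auxiliary claim that $|\discr(\blank)|\colon\cat{SSet}\to\cat{Cat}$ is naturally isomorphic to $\grpdfy\circ\h$. Both functors are cocontinuous, so it suffices to exhibit the natural isomorphism on the representable simplicial sets, where both sides are tautologically $\grpdfy(\h\Delta^n)$: the geometric realization of $\discr\Delta^m$ unwinds to the coend $\int^{[n]}\grpdfy(\h\Delta^n)\times\Hom([n],[m])$, which collapses to $\grpdfy(\h\Delta^m)$ via the co-Yoneda Lemma. With this identification in hand, the lemma becomes exactly the assertion that $\grpdfy\circ\h$ preserves finite products.

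The main obstacle, such as it is, lies in setting up the density step cleanly: one must confirm that the canonical comparison $|X\times Y|\to|X|\times|Y|$ is genuinely $\cat{Cat}$-enriched and natural in each variable, and that both source and target preserve $\cat{Cat}$-tensors and colimits in each variable, so that verifying the comparison on the representable sub-$2$-category suffices. Once this has been unwound, the remainder of the argument is essentially formal.
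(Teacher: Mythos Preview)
Your proposal is correct and follows essentially the same approach as the paper: reduce to $M=1$, use cocontinuity and $\cat{Cat}$-enrichment of the comparison map to reduce by density to levelwise discrete simplicial categories, identify $|\discr(\blank)|$ with $\grpdfy\circ\h$ via co-Yoneda on representables, and conclude from the fact that $\grpdfy\circ\h$ preserves finite products. The paper phrases the density step as reducing first to represented functors $\discr(\Delta^m)$ and then observing these are levelwise discrete, but this is a cosmetic difference.
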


With this established we can prove the desired homotopy invariance statement:

\begin{defi}
A map $f\colon\mathcal O\to\mathcal P$ of operads in $\cat{$\bm M$-Cat}$ is called an \emph{$\mathcal F$-equivalence} if $f(n)\colon \mathcal O(n)\to\mathcal P(n)$ is a $\mathcal G_{\mathcal F,\Sigma_n}$-equivalence for every $n\ge 0$ (where we turn the right $\Sigma_n$-action into a left one as usual).
\end{defi}

\begin{thm}\label{thm:change-of-operad-categories}
Let $f\colon\mathcal O\to\mathcal P$ be an $\mathcal F$-equivalence of $\Sigma$-free operads in $\cat{$\bm M$-Cat}$. Then the Quillen adjunction
\begin{equation}\label{eq:change-of-operad}
f_!\colon\Alg_{\mathcal O}(\cat{$\bm M$-Cat})_{\mathcal F}\rightleftarrows\Alg_{\mathcal P}(\cat{$\bm M$-Cat})_{\mathcal F} :\!f^*
\end{equation}
is a Quillen equivalence.
\end{thm}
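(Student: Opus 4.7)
The plan is to mirror the strategy of Theorem~\ref{thm:change-of-operad-sset} in the categorical setting, now that the required ingredients (Proposition~\ref{prop:geometric-realization-homotopical} and Lemma~\ref{lemma:geometric-prod}) have been established. First I would observe that $(\ref{eq:change-of-operad})$ is a Quillen adjunction with fully homotopical right adjoint, so it suffices to show that $(f^*)^\infty$ is an equivalence of associated quasi-categories.

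The central step is to prove a categorical analogue of Proposition~\ref{prop:free-monadic-sset}: the forgetful functor
\begin{equation*}
\forget^\infty\colon\Alg_{\mathcal O}(\cat{$\bm M$-Cat})_{\mathcal F}^\infty\to\cat{$\bm M$-Cat}_{\mathcal F}^\infty
\end{equation*}
is monadic, and analogously for $\mathcal P$. Conservativity is immediate since $\forget$ creates $\mathcal F$-equivalences. For preservation of $\Delta^\op$-shaped homotopy colimits, I would combine three inputs: Proposition~\ref{prop:geometric-realization-homotopical} shows that geometric realization in $\cat{$\bm M$-Cat}$ is fully homotopical, so $\Delta^\op$-homotopy colimits in $\cat{$\bm M$-Cat}_{\mathcal F}^\infty$ are computed by ordinary geometric realization of a Reedy cofibrant replacement; Lemma~\ref{lemma:geometric-prod} then verifies the hypothesis of Proposition~\ref{prop:forget-geometric-realization}, so geometric realization in $\Alg_{\mathcal O}(\cat{$\bm M$-Cat})$ commutes with the forgetful functor; and since geometric realization of algebras descends to $\Alg_{\mathcal O}(\cat{$\bm M$-Cat})_{\mathcal F}^\infty$ as the $\Delta^\op$-homotopy colimit for the same reason, the conclusion follows. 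Then the $\infty$-categorical Barr--Beck theorem applies.

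With monadicity in hand, I would consider the diagram
\begin{equation*}
\begin{tikzcd}
\Alg_{\mathcal P}(\cat{$\bm M$-Cat})_{\mathcal F}^\infty\arrow[r, "(f^*)^\infty"]\arrow[d, "\forget^\infty"'] & \Alg_{\mathcal O}(\cat{$\bm M$-Cat})_{\mathcal F}^\infty\arrow[d, "\forget^\infty"]\\
\cat{$\bm M$-Cat}_{\mathcal F}^\infty\twocell[ur]\arrow[r, equal] & \cat{$\bm M$-Cat}_{\mathcal F}^\infty
\end{tikzcd}
\end{equation*}
which commutes up to the identity $\forget\Rightarrow\forget\circ f^*$. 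Appealing to \cite[Corollary~4.7.3.16]{ha}, it suffices to check that the canonical mate $\cat{LP}_{\mathcal O}\Rightarrow (f^*)^\infty\circ\cat{LP}_{\mathcal P}$ is an equivalence. Unravelling the mate, this reduces to showing that for every cofibrant $C\in\cat{$\bm M$-Cat}_{\mathcal F}$ the induced map
\begin{equation*}
\coprod_{n\ge 0}\mathcal O(n)\times_{\Sigma_n} C^{\times n}\to\coprod_{n\ge 0}\mathcal P(n)\times_{\Sigma_n} C^{\times n}
\end{equation*}
is an $\mathcal F$-equivalence.

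The main obstacle, and the place where $\Sigma$-freeness is essential, is verifying this final reduction; here I would proceed summand-wise, observing that $\mathcal O(n)\times C^{\times n}\to\mathcal P(n)\times C^{\times n}$ is a $\mathcal G_{\mathcal F,\Sigma_n}$-equivalence in $\cat{$\bm{(M\times\Sigma_n)}$-Cat}$ by combining the hypothesis on $f$ with Corollary~\ref{cor:twisted-products} (applied to $C$ viewed as an $(M\times 1)$-category) and using that finite products of $\mathcal F$-equivalences are $\mathcal F$-equivalences (Theorem~\ref{thm:equiv-model-structure}). Since $\Sigma$-freeness guarantees a free $\Sigma_n$-action on both sides, Lemma~\ref{lemma:free-quotients} lets us pass to orbits and conclude that the map on the $n$-th summand is an $\mathcal F$-equivalence. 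The coproduct of $\mathcal F$-equivalences is again an $\mathcal F$-equivalence (coproducts in $\cat{$\bm M$-Cat}$ being computed pointwise in $\cat{Cat}$), which completes the argument.
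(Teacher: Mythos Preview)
Your proposal is correct and follows essentially the same approach as the paper: the paper isolates the monadicity statement as a separate Proposition~\ref{prop:free-monadic} (proved exactly as you sketch, via Propositions~\ref{prop:geometric-realization-homotopical}, \ref{prop:forget-geometric-realization} and Lemma~\ref{lemma:geometric-prod}) and then runs the identical mate argument, concluding with the same appeal to Lemma~\ref{lemma:free-quotients}. Your final paragraph spells out in more detail what the paper compresses into a single sentence, but the logic is the same.
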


This will again rely on a monadicity argument:

\begin{prop}\label{prop:free-monadic}
Let $\mathcal O$ be any operad in $\cat{$\bm M$-Cat}$. Then
\begin{equation}\label{eq:forget-inft-cat}
\forget^\infty\colon\Alg_{\mathcal O}(\cat{$\bm M$-Cat})^\infty_{\mathcal F}\to\cat{$\bm M$-Cat}_{\mathcal F}^\infty
\end{equation}
is conservative and preserves $\Delta^\op$-shaped homotopy colimits. In particular, the adjunction $\cat{L}\cat{P}\dashv{\forget}^\infty$ is monadic.
\begin{proof}
Arguing as in Proposition~\ref{prop:free-monadic-sset}, the only non-trivial statement is that $(\ref{eq:forget-inft-cat})$ preserves $\Delta^\op$-homotopy colimits. However, these can again simply be computed via geometric realization as geometric realizations in $\cat{$\bm M$-Cat}$ are fully homotopical by Proposition~\ref{prop:geometric-realization-homotopical}, and hence so are geometric realizations in $\Alg_{\mathcal O}(\cat{$\bm M$-Cat})$ by Proposition~\ref{prop:forget-geometric-realization} together with Lemma~\ref{lemma:geometric-prod}. The claim then simply follows from another application of Proposition~\ref{prop:forget-geometric-realization}.
\end{proof}
\end{prop}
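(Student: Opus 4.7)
The plan is to transpose the argument of Proposition~\ref{prop:free-monadic-sset} line-by-line into the categorical setting, substituting the appropriate categorical analogs of its key inputs. After establishing both conservativity and the preservation of $\Delta^\op$-shaped homotopy colimits, the monadicity assertion then follows directly from the $\infty$-categorical Barr-Beck Theorem \cite[Theorem~4.7.3.5]{ha}, whose hypotheses (conservativity together with preservation of $\forget^\infty$-split simplicial colimits) are subsumed by what we have shown.

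Conservativity of $\forget^\infty$ is essentially formal: by construction of the $\mathcal F$-model structure on $\Alg_{\mathcal O}(\cat{$\bm M$-Cat})$, the functor $\forget$ creates $\mathcal F$-equivalences, and the class of $\mathcal F$-equivalences is saturated in both categories, so equivalences in the associated quasi-categories are detected on underlying objects.

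The heart of the argument lies in the preservation of $\Delta^\op$-shaped homotopy colimits. I would appeal to \cite[Corollary~A.2.9.30]{htt}: in any simplicial model category, a $\Delta^\op$-shaped homotopy colimit can be computed as the strict geometric realization of a Reedy cofibrant replacement (of a chosen strictification). Both $\cat{$\bm M$-Cat}_{\mathcal F}$ and $\Alg_{\mathcal O}(\cat{$\bm M$-Cat})_{\mathcal F}$ are $\cat{Cat}$-enriched, hence simplicial via the adjunction $\grpdfy\circ\h\dashv\nerve$. By Proposition~\ref{prop:geometric-realization-homotopical}, geometric realization in $\cat{$\bm M$-Cat}$ is fully homotopical with respect to levelwise $\mathcal F$-equivalences; by Lemma~\ref{lemma:geometric-prod}, it furthermore preserves finite products. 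These are exactly the hypotheses of Proposition~\ref{prop:forget-geometric-realization} applied to $\mathscr C=\cat{$\bm M$-Cat}$, which yields that $\forget\colon\Alg_{\mathcal O}(\cat{$\bm M$-Cat})\to\cat{$\bm M$-Cat}$ strictly preserves geometric realizations. Combining this with the fact that $\forget$ creates $\mathcal F$-equivalences, we conclude that geometric realization in $\Alg_{\mathcal O}(\cat{$\bm M$-Cat})$ is also fully homotopical. Hence in either category $\Delta^\op$-shaped homotopy colimits can be computed by ordinary geometric realization of a Reedy cofibrant replacement, and a final application of Proposition~\ref{prop:forget-geometric-realization} shows that $\forget$ commutes with this strict geometric realization, delivering the desired preservation statement for $\forget^\infty$.

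The main obstacle, such as it is, is verifying that Proposition~\ref{prop:forget-geometric-realization} genuinely applies in our setting: one needs $\cat{$\bm M$-Cat}$ to be cocomplete, $\cat{SSet}$-enriched, tensored, and cotensored, and to have a finite-product-preserving geometric realization functor. All of this is built into the setup of Section~\ref{sec:equivariant-cat} together with Lemma~\ref{lemma:geometric-prod}, so the adaptation is routine; the real content has already been absorbed into the two preceding results Proposition~\ref{prop:geometric-realization-homotopical} and Lemma~\ref{lemma:geometric-prod}.
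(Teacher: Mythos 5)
Your proof is correct and follows the paper's own argument step for step: the same three ingredients (Proposition~\ref{prop:geometric-realization-homotopical}, Lemma~\ref{lemma:geometric-prod}, and Proposition~\ref{prop:forget-geometric-realization}) in the same roles, with Barr--Beck closing out the monadicity claim. One small wording quibble: full homotopicality is not itself a hypothesis of Proposition~\ref{prop:forget-geometric-realization} (only finite-product-preservation of realization is), but you in fact use the two facts for the two distinct purposes they serve, so the logic is fine.
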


\begin{proof}[Proof of Theorem~\ref{thm:change-of-operad-categories}]
It is again clear that $(\ref{eq:change-of-operad})$ is a a Quillen adjunction with homotopical right adjoint, so it suffices that $(f^*)^\infty$ is an equivalence of quasi-categories. For this we consider the diagram
\begin{equation*}
\begin{tikzcd}
\Alg_{\mathcal P}(\cat{$\bm M$-Cat})^\infty_{\mathcal F}\arrow[r, "(f^*)^\infty"]\arrow[d, "\forget^\infty"'] & \Alg_{\mathcal O}(\cat{$\bm M$-Cat})_{\mathcal F}^\infty\arrow[d, "\forget^\infty"]\\
\cat{$\bm M$-Cat}_{\mathcal F}^\infty\twocell[ur]\arrow[r, equal] & \cat{$\bm M$-Cat}_{\mathcal F}^\infty
\end{tikzcd}
\end{equation*}
which commutes up to the natural isomorphism induced by the identity transformation $\forget\Rightarrow \forget\circ f^*$. By monadicity, it will once more suffice that the canonical mate of the above transformation is an equivalence, which amounts to saying that for each (cofibrant) $C\in\cat{$\bm M$-Cat}$ the map
\begin{equation*}
\coprod_{n\ge 0}\mathcal O(n)\times_{\Sigma_n} C^n\to\coprod_{n\ge 0} \mathcal P(n)\times_{\Sigma_n} C^n
\end{equation*}
induced by $f$ is an $\mathcal F$-equivalence. This is in turn immediate from Lemma~\ref{lemma:free-quotients}.
\end{proof}

\subsection{A comparison functor for \texorpdfstring{$\bm{E_\infty}$}{E∞}-algebras}\label{subsec:explicit-comparison} Let $G$ be a finite group and let $\mathcal O,\mathcal P$ be genuine $G$-$E_\infty$-operads. As a consequence of Theorem~\ref{thm:change-of-operad-categories} we have equivalences of quasi-categories
\begin{equation}\label{eq:comparison-alg-cat-zig-zag}
\Alg_{\mathcal O}(\cat{$\bm G$-Cat})^\infty_{\mathcal F}\xrightarrow{(\pr_{\mathcal O}^*)^\infty} \Alg_{\mathcal O\times\mathcal P}(\cat{$\bm G$-Cat})^\infty_{\mathcal F}\xrightarrow{\cat{L}\pr_{\mathcal P!}}\Alg_{\mathcal P}(\cat{$\bm G$-Cat})^\infty_\mathcal F;
\end{equation}
as the final result in this section, we want to represent this equivalence by an explicit functor on the pointset level. This construction is a generalization of the functor from parsummable categories to permutative categories we constructed in \cite{perm-parsum-categorical}. In fact, everything we do here works in slightly greater generality without any extra cost, so we treat it accordingly.

\begin{defi}
Let $\mathcal F$ be a family of finite subgroups of a discrete group $G$. An \emph{$\mathcal F$-$E_\infty$-operad} is a $\Sigma$-free operad $\mathcal O$ in $\cat{$\bm G$-Cat}$ such that for every $n\ge0$ the unique map $\mathcal O(n)\to *$ is a $\mathcal G_{\mathcal F,\Sigma_n}$-equivalence.
\end{defi}

In particular, if $G$ is finite and $\mathcal F=\mathcal A\ell\ell$, this recovers the notion of genuine $G$-operads; on the other hand, for $\mathcal F=\mathcal T\!riv$ the family consisting only of the trivial subgroup, an $\mathcal F$-$E_\infty$-operad is the same as an underlying $E_\infty$-operad, i.e.~an operad in $\cat{$\bm G$-Cat}$ that becomes an $E_\infty$-operad in the usual sense after forgetting the $G$-action. In particular, any na\"ive $G$-$E_\infty$-operad is a $\mathcal T\!riv$-$E_\infty$-operad.

Throughout, fix a family $\mathcal F$, and let $\pi\colon\mathcal O\to\mathcal P$ be a map of $\mathcal F$-$E_\infty$-operads such that $\mathcal O(n)^\phi\to\mathcal P(n)^\phi$ is \emph{strictly} surjective on objects for every $H\in\mathcal F,\phi\colon H\to\Sigma_n$.

\begin{constr}
Given an $\mathcal O$-algebra $C$, we define a $G$-category $\pi_\lozenge(C)$ as follows:
\begin{enumerate}
\item The objects of $\pi_\lozenge(C)$ agree with the objects of $\cat{P}_{\mathcal P}(\forget_{\mathcal O}C)$, i.e.~they are given by equivalence classes $[P;X]$ with $P\in\mathcal P(n), X\in C^n,n\ge0$ with $[P;X]=[Q;Y]$ for $Q\in\mathcal P(n), Y\in C^n$ if and only if there exists a permutation $\sigma\in\Sigma_n$ (necessarily unique) such that $P.\sigma=Q$ and $\sigma.Y=X$.
\item Given two objects $a,b\in\pi_\lozenge(C)$, the homomorphisms $A\to B$ are given by equivalence classes $[O', X'; f; O, X]$ with
\begin{enumerate}
    \item $O\in\mathcal O(m), X\in C^m$ such that $a=[\pi(O);X]$
    \item $O'\in\mathcal O(n), X'\in C^n$ such that $b=[\pi(O');X']$
    \item $f\colon O_*(X)\to O'_*(X')$ a morphism in $C$.
\end{enumerate}
Here $(O', X'; f; O, X)$ and $(N', Y'; g; N, Y)$ represent the same equivalence class if and only if there are $\sigma\in\Sigma_m,\sigma'\in\Sigma_n$ such that
\begin{enumerate}
    \item $\pi(O)=\pi(N).\sigma$ (whence in particular $Y=\sigma.X$)
    \item $\pi(O')=\pi(N').\sigma'$ (whence in particular $Y'=\sigma'.X'$)
    \item $g$ agrees with the composite
    \begin{equation*}\hskip-41pt
        N_*(Y)=(N.\sigma)_*(X)\xrightarrow{[O,N.\sigma]} O_*(X)\xrightarrow{f} O'_*(X')=(O'.\sigma')_*(Y')\xrightarrow{[N',O'.(\sigma')^{-1}]} N'_*(Y').
    \end{equation*}
    where for every $r\ge0, A,B\in\mathcal O(r)$ we write $[B,A]\colon A_*\Rightarrow B_*$ for the action of the unique edge $(B,A)\colon A\to B$ in $\mathcal O(r)\simeq*$.
\end{enumerate}
Note that there are always \emph{unique} permutations $\sigma,\sigma'$ satisfying the first two conditions by $\Sigma$-freeness. We omit the routine verification that the above is indeed an equivalence relation.
\item The composition $[N',Y';g;N,Y]\circ[O',X';f;O,X]$ is $[N',Y';h;O,X]$ where $h$ is the composite
\begin{equation*}
O_*(X)\xrightarrow{f} O'_*(X')=(O'.\theta)_*(Y)\xrightarrow{[N,O'.\theta]} N_*(Y)\xrightarrow{g} N'_*(Y');
\end{equation*}
here $\theta$ is the unique permutation such that $\pi(O').\theta=\pi(N)$ (hence in particular $X'=\theta.Y$).
\item $G$ acts on both objects and morphisms diagonally, i.e.~$g.[P;X]=[g.P; g.X]$ and $g.[O,X';f;O,X]=[g.O,g.X',g.f, g.O,g.X]$.
\end{enumerate}
\end{constr}

\begin{lemma}\label{lemma:pi-lozenge-hom-sets}
The above is a well-defined $G$-category. Moreover, for every $O\in\mathcal O(m), X\in C^m$ the identity of $[\pi(O);X]$ is given by $[O,X;\id_{O_*(X)};O,X]$, and for any further $O'\in\mathcal O(n), X'\in C^n$ the map
\begin{equation}\label{eq:pi-lozenge-hom-bij}
\begin{aligned}
\Hom_{C}(O_*(X),O'_*(X'))&\to \Hom_{\pi_\lozenge C}([\pi(O);X],[\pi(O');X'])\\
f&\mapsto [O',X';f;O,X]
\end{aligned}
\end{equation}
is bijective.
\begin{proof}
First, we will show that composition is independent of the choices of representatives, for which we pick $a,b,c\in\pi_\lozenge(C)$ and let
\begin{align}
(N_1',Y_1';g_1;N_1,Y_1)&\sim(N_2',Y_2';g_2;N_2,Y_2)\label{eq:relation-g}\\
(O_1',X_1';f_1;O_1,X_1)&\sim(O_2',X_2';f_2;O_2,X_2)\label{eq:relation-f}
\end{align}
represent morphisms $b\to c$ and $a\to b$, respectively. We write $\sigma$ for the unique permutation with $\pi(O_1)=\pi(O_2).\sigma$ (whence $X_2=\sigma.X_1$), $\tau$ for the unique permutation with $\pi(N_1)=\pi(N_2).\tau$ (whence $Y_2=\tau.Y_1$), and analogously we define $\sigma'$ and $\tau'$. Moreover, as $[N_1;Y_1]=[O_1';X_1']$ there is a unique permutation $\theta$ with $\pi(O_1').\theta=\pi(N_1)$ (whence $X_1'=\theta.Y_1$), and similarly we get $\zeta$ with $\pi(O_2').\zeta=\pi(N_2)$ and $X_2'=\zeta.Y_2$. But then $\pi(N_1)=\pi(O_1').\theta=\pi(O_2').\sigma'\theta$ as well as $\pi(N_1)=\pi(N_2).\tau=\pi(O_2').\zeta\tau$, hence
\begin{equation}\label{eq:relation-permutations}
\sigma'\theta=\zeta\tau
\end{equation}
by $\Sigma$-freeness.

We now have to show that
\begin{equation*}
(N_1',Y_1';g_1 \circ[N_1,O'_1.\theta] \circ f_1; O_1,X_1)\sim
(N_2',Y_2';g_2 \circ[N_2,O'_2.\zeta] \circ f_2; O_2,X_2).
\end{equation*}
Plugging in the definition of the equivalence relation, this amounts to saying that the total rectangle in the diagram
\begin{equation*}\hskip-37.85pt\hfuzz=38pt
\begin{tikzcd}[cramped]
O_{2*}(X_2)\arrow[r, "f_2"]\arrow[d,equal] &[-1em] O_{2*}'(X_2')\arrow[r,equal] &[-1em] (O_2'.\zeta)_*(Y_2)\arrow[r, "{[N_2,O_2'.\zeta]}"] &[1.5em] N_{2*}(Y_2)\arrow[d,equal]\arrow[r, "g_2"] &[-1em] N_{2*}'(Y_2')\\
(O_2.\sigma)_*(X_1)\arrow[d, "{[O_1,O_2.\sigma]}"'] & (O_1'.(\sigma')^{-1})_*(X_2')\arrow[d,equal]\arrow[u, "{[O_2',O_1'.(\sigma')^{-1}]}"'] && (N_2.\tau)_*(Y_1)\arrow[u,equal]\arrow[d,"{[N_1,N_2.\tau]}"'] & (N_1'.(\tau')^{-1})_*(Y_2')\arrow[d,equal]\arrow[u, "{[N_2',N_1'.(\tau')^{-1}]}"']\\
O_{1*}(X_1)\arrow[r, "f_1"'] & O_{1*}'(X_1')\arrow[r,equal] & (O_1'.\theta)_*(Y_1)\arrow[r, "{[N_1,O_1'.\theta]}"'] & N_{1*}(Y_1) \arrow[r, "g_1"'] & N_{1*}'(Y_1')
\end{tikzcd}
\end{equation*}
commutes. But indeed, the left hand rectangle and the right hand rectangle commute by the relations $(\ref{eq:relation-f})$ and $(\ref{eq:relation-g})$, respectively, so it only remains to check commutativity of the rectangle in the middle. For this we compute
\begin{align*}
[N_2,O_2'.\zeta]_{Y_2}[O_2',O_1'.(\sigma')^{-1}]_{X_2'}\; &=\; [N_2.\tau, O_2'.\zeta\tau]_{Y_1}[O_2'.\sigma',O_1']_{X_1'}\\
&\stackrel{\kern-8pt(\ref{eq:relation-permutations})\kern-7pt}{=}\; [N_2.\tau,O_2'.\sigma'\theta]_{Y_1}[O_2'.\sigma'\theta, O_1'.\theta]_{Y_1}\\
&=\;[N_2.\tau,O_1'.\theta]_{Y_1}
\end{align*}
where the unlabelled equalities use the functoriality of the $\mathcal O$-action or its compatibility with the symmetric group actions. Plugging this in, we then get
\begin{align*}
[N_1,N_2.\tau]_{Y_1}[N_2,O_2'.\zeta]_{Y_2}[O_2',O_1'.(\sigma')^{-1}]_{X_2'}&=[N_1,N_2.\tau]_{Y_1}[N_2.\tau,O_1'.\theta]_{Y_1}\\
&=[N_1,O_1'.\theta]_{Y_1}
\end{align*}
as desired. This completes the proof that composition is well-defined.

Before we prove that composition is unital and associative, let us show that $(\ref{eq:pi-lozenge-hom-bij})$ is bijective.

For injectivity we have to show that $(O',X';f;O,X)\sim(O',X';g;O,X)$ only if $f=g$. Indeed, the permutations $\sigma,\sigma'$ from the definition of the equivalence relation are the respective identities in this case, whence $g=[O',O']\circ f\circ[O,O]=\id\circ f\circ \id=f$ by design.

For surjectivity, we let $[N',Y';g;N,Y]$ be any morphism $[\pi(O);X]\to[\pi(O');X']$. Then $[\pi(N);Y]=[\pi(O);X]$, so we find $\sigma$ with $\pi(N)=\pi(O).\sigma$ and $X=\sigma.Y$, and similarly we get $\sigma'$ with $\pi(N')=\pi(O').\sigma'$ and $X'=\sigma'.Y'$. But then
\begin{equation*}
[N',Y';g;N,Y]=[O',X';[O', N'.\sigma']\circ g\circ[N,O.\sigma];O,X]
\end{equation*}
by definition of the equivalence relation, proving surjectivity.

Now we can easily prove that $[O,X;\id;O,X]$ is a right unit for $[\pi(O);X]$: indeed, if $[\pi(O');X']$ is any other object, then any morphism $[\pi(O);X]\to[\pi(O');X']$ can be written as $[O',X';f;O,X]$ for $f\colon O_*(X)\to O'_*(X')$ by the above, and then
\begin{equation*}
[O',X';f;O,X][O,X;\id;O,X]=[O',X';f\circ\id;O,X]=[O',X';f;O,X]
\end{equation*}
by definition of the composition in the special case $N=O',Y=X'$. Likewise, one shows that $[O,X;\id;O,X]$ is also a left unit, and one deduces that composition in $\pi_\lozenge C$ is associative from the fact that it is so in $C$.

It remains to show that the above makes $\pi_\lozenge C$ into a $G$-category. For this we first check that $g.\blank\colon \pi_\lozenge C\to\pi_\lozenge C$ is well-defined. Indeed, well-definedness on objects is immediate; to check that this is also well-defined on morphisms, let $(O',X';f_1;O,X)\sim (N',Y';f_2;N,Y)$ and write $\sigma,\sigma'$ for the unique permutations with $\pi(O)=\pi(N).\sigma, Y=\sigma.X$ and $\pi(O')=\pi(N').\sigma', Y'=\sigma'.X'$. Then $f_2=[N',O'.(\sigma')^{-1}]\circ f_1\circ [O, N.\sigma]$, so
\begin{equation*}
g.f_2=[g.N', g.O'.(\sigma')^{-1}]\circ g.f_1\circ [g.O, g.N.\sigma]
\end{equation*}
as $C$ is an $\mathcal O$-algebra in $\cat{$\bm G$-Cat}$. Thus, as desired
\begin{equation*}
(g.O',g.X';g.f_1;g.O,g.X)\sim (g'.N',g.Y';g.f_2; g.N,g.Y).
\end{equation*}

With this established, the equality $g.[O,X;\id;O,X]=[g.O,g.X;\id;g.O,g.X]$ (following directly from the definition) shows that $g.\blank$ preserves identities, while
\begin{align*}
&g.([O'',X'';f_2;O',X'][O',X';f_1;O,X])\\
&\qquad=g.[O'',X'';f_2f_1;O,X]=[g.O'',g.X'';g.(f_2f_1);g.O,g.X]\\
&\qquad=[g.O'',g.X'';g.f_2;g.O',g.X'][g.O',g.X';g.f_1;g.O,g.X]\\
&\qquad=\big(g.[O'',X'';f_2;O',X']\big)\big(g.[O',X';f_1;O,X]\big)
\end{align*}
shows that $g.\blank$ is compatible with compositions, hence a functor. Finally, it is clear from the definition that $gh.\blank = (g.\blank)\circ (h.\blank)$ and $1.\blank=\id$, so this defines a $G$-action as desired.
\end{proof}
\end{lemma}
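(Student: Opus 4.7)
My plan is to break the lemma into four parts, corresponding to: (i) well-definedness of composition, (ii) unitality and associativity, (iii) bijectivity of the hom-set map, (iv) the $G$-category structure. I expect step (i) to be the main technical obstacle, while everything else should be essentially formal once (i) is in hand.

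For step (i), I would fix representatives $(O_i', X_i'; f_i; O_i, X_i)$ and $(N_i', Y_i'; g_i; N_i, Y_i)$ for $i=1,2$ of two pairs of composable morphisms and show that the resulting composites are equivalent. The crucial input is $\Sigma$-freeness of $\mathcal O$: this guarantees the \emph{uniqueness} of the permutations $\sigma, \sigma', \tau, \tau'$ relating the $O_i, O_i', N_i, N_i'$ pairwise, as well as of the mediating permutations $\theta, \zeta$ with $\pi(O_i').\theta = \pi(N_1)$ and $\pi(O_2').\zeta = \pi(N_2)$. From $\pi(O_1').\theta = \pi(N_1) = \pi(N_2).\tau = \pi(O_2').\sigma'\theta$ and a second expression via $\zeta\tau$, uniqueness forces the identity $\sigma'\theta = \zeta\tau$. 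With this identity at hand, the verification reduces to a diagram chase whose vertical maps are the canonical edges $[A,B] \colon A_* \Rightarrow B_*$ between operators in $\mathcal O(r) \simeq *$; the commutativity of the central square follows from functoriality of the $\mathcal O$-action and compatibility with the symmetric group actions, while the outer squares commute by the two assumed relations.

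For step (ii), unitality is immediate from the composition formula: specializing to $N = O'$, $Y = X'$ the permutation $\theta$ becomes the identity, so composing with $[O,X;\id;O,X]$ leaves the other factor unchanged, and the analogous argument works on the other side. Associativity reduces to associativity in $C$, since all the bookkeeping permutations compose in the expected way.

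For step (iii), bijectivity of $(\ref{eq:pi-lozenge-hom-bij})$, injectivity uses $\Sigma$-freeness once more: if $(O',X';f;O,X) \sim (O',X';g;O,X)$, then the permutations $\sigma, \sigma'$ required by the equivalence relation must equal the identity (since $\pi(O).\sigma = \pi(O)$ forces $\sigma = 1$), whence $g = [O',O'] \circ f \circ [O,O] = f$. For surjectivity, an arbitrary representative $[N',Y';g;N,Y]$ with source $[\pi(O);X]$ and target $[\pi(O');X']$ comes equipped with the required $\sigma, \sigma'$ by the object equivalence relation, and one then rewrites it as $[O',X';[O',N'.\sigma']\circ g\circ [N,O.\sigma];O,X]$. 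Finally for step (iv), well-definedness of the $G$-action on morphisms is checked exactly as for composition but is now much easier: the equivalence relation is preserved because $C$ is an $\mathcal O$-algebra in $G$-categories, so $g$ commutes with all action maps $[A,B]$; functoriality of $g.\blank$ and the group action axioms then follow by inspection of the formulas.
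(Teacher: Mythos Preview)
Your proposal is correct and follows essentially the same approach as the paper: the same permutations $\sigma,\sigma',\tau,\tau',\theta,\zeta$, the same key identity $\sigma'\theta=\zeta\tau$ from $\Sigma$-freeness, and the same diagram chase. The only difference is that the paper proves bijectivity of $(\ref{eq:pi-lozenge-hom-bij})$ \emph{before} unitality and associativity and uses surjectivity to reduce to convenient representatives there, whereas you reverse the order; either ordering works since composition has already been shown to be well-defined.
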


\begin{constr}
For $F\colon C\to D$ a map in $\Alg_{\mathcal O}(\cat{$\bm G$-Cat})$, we define $\pi_\lozenge F\colon\pi_\lozenge C\to\pi_\lozenge D$ via $(\pi_\lozenge F)[P;X]=[P;F(X)]$ and $(\pi_\lozenge F)[O',X';f;O,X]=[O',F(X');F(f);O,F(X)]$.
\end{constr}

\begin{lemma}\label{lemma:pi-lozenge-iota}
The above is a well-defined and $G$-equivariant functor $\pi_\lozenge C\to\pi_\lozenge D$. Moreover, this makes $\pi_\lozenge$ into a functor $\Alg_{\mathcal O}(\cat{$\bm G$-Cat})\to\cat{$\bm G$-Cat}$.
\begin{proof}
It is clear that $\pi_\lozenge F$ is well-defined on objects. To check that it is also well-defined on morphisms, we first observe that if $(O',X';f_1;O,X)$ represents a morphism in $\pi_\lozenge(C)$, then $F(f_1)$ is a morphism $O_*(F(X))=F(O_*(X))\to F(O_*(X'))=O_*(F(X'))$ as $F$ is a map of $\mathcal O$-algebras. To check that $\pi_\lozenge F$ is independent of the choice of representative, let $(O',X';f_1;O,X)\sim (N',Y';f_2;N,Y)$, i.e.~there are permutations $\sigma,\sigma'$ with $\pi(O)=\pi(N).\sigma, Y=\sigma.X$ and $\pi(O')=\pi(N').\sigma', Y'=\sigma'.X'$, and $f_2=[N',O'.(\sigma')^{-1}]f_1[O,N.\sigma]$. As $F$ is a map of $\mathcal O$-algebras, we then have $F(f_2)=[N',O'.(\sigma')^{-1}]F(f_1)[O,N.\sigma]$, whence $(O',F(X');F(f_1);O,F(X))\sim (N', F(Y');F(f_2);N,F(Y))$ as desired.

The equality
\begin{equation*}
(\pi_\lozenge F)[O,X;\id;O,X]=[O,F(X);F(\id);O,F(X)]=[O,F(X);\id;O,F(X)]
\end{equation*}
shows that $\pi_\lozenge F$ preserves identities. Similarly, one shows that it is compatible with compositions, hence a functor.

We have $(\pi_\lozenge F)(g.[P;X])=(\pi_\lozenge F)[g.P;g.X]=[g.P;F(g.X)]=[g.P, g.F(X)]=g.(\pi_\lozenge F)[P;X]$ by $G$-equivariance of $F$, i.e.~$\pi_\lozenge F$ commutes with the $G$-action on objects. Analogously, one shows that $\pi_\lozenge F$ commutes with the $G$-action on morphisms, i.e.~it is a $G$-equivariant functor as claimed.

Finally, it is clear from the definitions that $\pi_\lozenge(\id)=\id$ and $\pi_\lozenge(F_2F_1)=(\pi_\lozenge F_2)(\pi_\lozenge F_1)$, i.e.~$\pi_\lozenge$ indeed defines a functor to $\cat{$\bm G$-Cat}$.
\end{proof}
\end{lemma}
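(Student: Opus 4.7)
My plan is to verify four separate assertions in order: (1) $\pi_\lozenge F$ is well-defined on objects, (2) $\pi_\lozenge F$ is well-defined on morphisms and is functorial (preserves identities and composition in $\pi_\lozenge C$), (3) $\pi_\lozenge F$ is $G$-equivariant, (4) the assignment $F \mapsto \pi_\lozenge F$ respects identities and composition in $\Alg_{\mathcal O}(\cat{\bm G-Cat})$. The key input throughout is that $F$ is a morphism of $\mathcal O$-algebras in $\cat{\bm G-Cat}$, which means $F^{\times n}$ commutes with every operadic action map $O_*$ (on both objects and morphisms of $C$) and $F$ is $G$-equivariant.

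For (1), if $[P;X] = [Q;Y]$ then there is a (unique, by $\Sigma$-freeness) $\sigma \in \Sigma_n$ with $P.\sigma = Q$ and $\sigma.Y = X$. Since any functor between product categories commutes with the coordinate permutation $\Sigma_n$-action, $F(X) = F(\sigma.Y) = \sigma.F(Y)$, so $[P;F(X)] = [Q;F(Y)]$. For (2), the crucial point is that, for every pair $A,B \in \mathcal O(r)$, the natural transformation $[B,A]\colon A_* \Rightarrow B_*$ is the operadic image of the unique edge $(B,A)$ in $\mathcal O(r)$; since $F$ commutes with the operadic action on morphisms, we get $F([B,A]_X) = [B,A]_{F(X)}$ for every $X \in C^r$. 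Applied to the defining equation of the equivalence relation on morphisms, this shows that representatives related by $(\sigma,\sigma')$ in $C$ remain related by the same $(\sigma,\sigma')$ in $D$ after applying $F$. Functoriality on $\pi_\lozenge C$ is then immediate from $F$ preserving identities and composition in $C$, together with the explicit composition formula established in Lemma~\ref{lemma:pi-lozenge-hom-sets}.

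Parts (3) and (4) are essentially formal. For (3), on objects $(\pi_\lozenge F)(g.[P;X]) = [g.P; F(g.X)] = [g.P; g.F(X)] = g.(\pi_\lozenge F)[P;X]$ using $G$-equivariance of $F$ (and of the $\mathcal P(n)$-action, which is already the identity on the representative); the morphism case is identical. For (4), $\pi_\lozenge(\id_C)$ is the identity by inspection, and $\pi_\lozenge(F_2 F_1)[O',X';f;O,X] = [O',F_2F_1(X'); F_2F_1(f); O, F_2F_1(X)] = (\pi_\lozenge F_2)(\pi_\lozenge F_1)[O',X';f;O,X]$.

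The only nontrivial step is verifying well-definedness on morphisms in (2), and the difficulty is bookkeeping rather than conceptual: the equivalence relation is phrased via a composite involving two operadic transition maps $[N,O'.\theta]$ and $[B,A]$-type terms on either side, and one must carefully track that $F$ commutes with each of these individually. Since $F$ is a strict map of $\mathcal O$-algebras in $\cat{\bm G-Cat}$ (equivariant, strictly compatible with the operad action on both $0$- and $1$-cells), every piece of the composite transports through $F$ verbatim, so no further hypothesis on $\pi$ or on the family $\mathcal F$ enters.
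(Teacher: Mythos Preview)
Your proposal is correct and follows essentially the same approach as the paper: both verify well-definedness on objects and morphisms by using that $F$ commutes with the operadic action (hence with the transition isomorphisms $[B,A]$), then check functoriality, $G$-equivariance, and functoriality of $\pi_\lozenge$ by direct inspection. If anything, you are slightly more explicit than the paper in spelling out why $F$ commutes with the $\Sigma_n$-permutation and the $[B,A]$-transformations.
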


\begin{lemma}
Let $C$ be an $\mathcal O$-algebra. Then we have a natural $\mathcal F$-equivalence $\iota\colon C\to\pi_\lozenge C$ of $G$-categories sending an object $X$ to $[1;X]$ and a map $f\colon X\to Y$ to $[1,Y;f;1,X]$.
\begin{proof}
It follows immediately from the definitions that $\iota$ is well-defined, equivariant, and natural.

Now let $H\in\mathcal F$ arbitrary; we have to show that $\iota^H\colon C^H\to (\pi_\lozenge C)^H$ is an equivalence. For this we observe that $\iota$ is fully faithful by Lemma~\ref{lemma:pi-lozenge-hom-sets}, whence so is $\iota^H$ as limits of fully faithful functors are again fully faithful. It then only remains to show that $\iota^H$ is essentially surjective. For this we let $P\in\mathcal P(n),X\in C^n$ such that $[P;X]\in(\pi_\lozenge C)^H$. Then we have $h.(P;X)=(h.P;h.X)\sim (P;X)$ for every $h\in H$, i.e.~there exists a $\sigma(h)\in\Sigma_n$ (necessarily unique) such that $h.P=P.\sigma(h)$ and $h.X=\sigma(h)^{-1}.X$. For any further $k\in H$, we have
\begin{equation*}
P.\sigma(hk)=(hk).P=h.k.P=h.P.\sigma(k)=P.\sigma(h)\sigma(k)
\end{equation*}
whence $\sigma(hk)=\sigma(h)\sigma(k)$ by $\Sigma$-freeness, i.e.~$\sigma$ is a homomorphism $H\to\Sigma_n$. With this established, the relation $h.P=P.\sigma(h)$ precisely tells us that $P\in\mathcal P(n)^\sigma$.

Now let $O\in\mathcal O(n)^\sigma$ be a preimage of $P$ (which exists as $\pi^\sigma$ was assumed to be surjective). Then $h.(O_*(X))=(h.O)_*(h.X)=(O.\sigma)_*(h.X)=O_*(\sigma.h.X)=O_*(X)$, i.e.~$O_*(X)\in C^H$. We claim that $\iota(O_*(X))=[1;O_*X]$ is isomorphic to $[P;X]$ in $(\pi_\lozenge C)^H$, which will then complete the proof of the lemma. Indeed, we have a map $[1,O_*(X);\id;O,X]\colon [P;X]\to[1;O_*(X)]$ in $\pi_\lozenge C$ with inverse $[O,X;\id;1,O_*(X)]$, so it only remains to show that this is $H$-fixed, for which we compute
\begin{align*}
h.[1,O_*(X);\id;O,X]&=[h.1, h.(O_*(X));h.\id;h.O,h.X]=[1, O_*(X);\id; h.O,h.X]\\
&=[1, O_*(X); \id; O.\sigma(h), \sigma(h)^{-1}.X]=[1, O_*(X);\id; O, X]
\end{align*}
where the last step uses the definition of the equivalence relation.
\end{proof}
\end{lemma}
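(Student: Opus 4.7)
The plan is to separate the statement into its easy and substantive parts. The well-definedness of $\iota$ on objects and morphisms, its $G$-equivariance (from the diagonal $G$-action on $\pi_\lozenge C$), and its naturality in $C$ with respect to maps of $\mathcal O$-algebras should all follow immediately by unwinding the definitions given above, so I would dispatch these in a single opening paragraph.

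Next, I turn to proving that for each $H\in\mathcal F$ the induced functor $\iota^H\colon C^H\to(\pi_\lozenge C)^H$ is an equivalence of categories. Full faithfulness comes for free: specializing the bijection $(\ref{eq:pi-lozenge-hom-bij})$ from Lemma~\ref{lemma:pi-lozenge-hom-sets} to $O=O'=1\in\mathcal O(1)$ shows that $\iota$ itself is fully faithful on the nose, and since $(\blank)^H$ is a limit it preserves fully faithful functors; hence $\iota^H$ is fully faithful as well.

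The substantive work is essential surjectivity of $\iota^H$. Given $[P;X]\in(\pi_\lozenge C)^H$ with $P\in\mathcal P(n)$ and $X\in C^n$, the $H$-invariance says $[h.P;h.X]=[P;X]$ for each $h\in H$, which by $\Sigma$-freeness of $\mathcal P$ produces a unique $\sigma(h)\in\Sigma_n$ with $h.P=P.\sigma(h)$ and $h.X=\sigma(h)^{-1}.X$. Uniqueness combined with associativity of the $H$- and $\Sigma_n$-actions forces $\sigma$ to be a group homomorphism $H\to\Sigma_n$, which is equivalent to saying $P\in\mathcal P(n)^{\Gamma_{H,\sigma}}$. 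Invoking the standing hypothesis that $\pi$ is strictly surjective on $\Gamma_{H,\sigma}$-fixed objects---this is precisely why that assumption was imposed---I lift $P$ to some $O\in\mathcal O(n)^{\Gamma_{H,\sigma}}$. A short computation
\begin{equation*}
h.\bigl(O_*(X)\bigr)=(h.O)_*(h.X)=(O.\sigma(h))_*(\sigma(h)^{-1}.X)=O_*(X)
\end{equation*}
(using equivariance of the $\mathcal O$-action and the symmetric group action axiom) shows $O_*(X)\in C^H$. Finally, $[1,O_*(X);\id;O,X]\colon[P;X]\to\iota(O_*(X))$ is an isomorphism in $\pi_\lozenge C$ with inverse $[O,X;\id;1,O_*(X)]$, and a direct $H$-action calculation---applying $h.\blank$ componentwise and then using $h.O=O.\sigma(h)$, $h.X=\sigma(h)^{-1}.X$ inside the defining equivalence relation on morphisms---shows that this isomorphism is $H$-fixed.

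I expect the main obstacle to be the essential surjectivity bookkeeping: one must extract $\sigma$ using $\Sigma$-freeness of $\mathcal P$, promote it to a homomorphism, invoke the strict surjectivity hypothesis on $\pi^{\Gamma_{H,\sigma}}$ to lift $P$ to $O$, and then verify $H$-fixedness of the comparison morphism by carefully unpacking the equivalence relation on morphisms in $\pi_\lozenge C$. Each of these three steps depends on a different hypothesis of the setup ($\Sigma$-freeness of $\mathcal P$, the group structure on $\sigma$, and strict surjectivity of $\pi$), so the argument is essentially rigid once the data of $\sigma$ is identified.
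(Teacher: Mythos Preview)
Your proposal is correct and follows essentially the same route as the paper's proof: dispatch well-definedness, equivariance, and naturality immediately; obtain full faithfulness of $\iota^H$ from the hom-set bijection of Lemma~\ref{lemma:pi-lozenge-hom-sets} together with the fact that fixed points preserve fully faithful functors; and establish essential surjectivity by extracting the homomorphism $\sigma\colon H\to\Sigma_n$ via $\Sigma$-freeness, lifting $P$ to $O\in\mathcal O(n)^\sigma$ via the strict surjectivity hypothesis on $\pi$, and verifying that $[1,O_*(X);\id;O,X]$ is an $H$-fixed isomorphism.
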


By $2$-out-of-$3$ we immediately conclude:

\begin{cor}
The functor $\pi_\lozenge\colon\Alg_{\mathcal O}(\cat{$\bm G$-Cat})_{\mathcal F}\to\cat{$\bm G$-Cat}_{\mathcal F}$ is homotopical.\qed
\end{cor}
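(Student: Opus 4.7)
The plan is to invoke two-out-of-three applied to the naturality square for $\iota$. Concretely, given any morphism $F\colon C\to D$ in $\Alg_{\mathcal O}(\cat{$\bm G$-Cat})$ that is an $\mathcal F$-equivalence, the naturality of the comparison map $\iota$ constructed in the previous lemma yields a commutative square
\begin{equation*}
\begin{tikzcd}
C\arrow[r,"F"]\arrow[d,"\iota_C"'] & D\arrow[d,"\iota_D"]\\
\pi_\lozenge C\arrow[r,"\pi_\lozenge F"'] & \pi_\lozenge D
\end{tikzcd}
\end{equation*}
in $\cat{$\bm G$-Cat}$.

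By the preceding lemma, both vertical maps $\iota_C$ and $\iota_D$ are $\mathcal F$-equivalences (independently of any assumption on $F$). Since $F$ is an $\mathcal F$-equivalence by hypothesis, the composite $\iota_D\circ F=(\pi_\lozenge F)\circ\iota_C$ is an $\mathcal F$-equivalence by two-out-of-three. Applying two-out-of-three once more to this composite and to $\iota_C$, we conclude that $\pi_\lozenge F$ is an $\mathcal F$-equivalence. Hence $\pi_\lozenge$ sends $\mathcal F$-equivalences of $\mathcal O$-algebras to $\mathcal F$-equivalences of $G$-categories, which is precisely the claim.

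There is no real obstacle here: the entire content has already been packaged in the previous lemma (existence of the natural $\mathcal F$-equivalence $\iota$) and in Lemma~\ref{lemma:pi-lozenge-iota} (functoriality and $G$-equivariance of $\pi_\lozenge F$). The only thing one must check is that the $\mathcal F$-equivalences on both $\Alg_{\mathcal O}(\cat{$\bm M$-Cat})$ and $\cat{$\bm G$-Cat}$ satisfy two-out-of-three, which is immediate since in both model structures the weak equivalences are created by the fixed-point functors from equivalences of categories in $\cat{Cat}$.
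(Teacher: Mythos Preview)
Your proof is correct and is exactly the argument the paper has in mind: the paper simply writes ``By $2$-out-of-$3$ we immediately conclude'' and marks the corollary with \qed, relying on the natural $\mathcal F$-equivalence $\iota$ from the preceding lemma. Your explicit naturality square just spells this out.
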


Next, we will put a $\mathcal P$-algebra structure on $\pi_\lozenge C$:

\begin{constr}
For every $Q\in\mathcal P(r)$ we define a functor $Q_*\colon (\pi_\lozenge C)^r\to\pi_\lozenge C$ as follows: on objects, $Q_*$ is given by the usual action on $\Ob\cat{P}_{\mathcal P}\forget_\mathcal O C=\Ob\pi_\lozenge C$, i.e.
\begin{equation*}
    Q_*([P_1,X_1],\dots,[P_r;X_r])=[Q\circ(P_1,\dots,P_r);X_1,\dots,X_r].
\end{equation*}
Given morphisms $[O_i',X_i';f_i;O_i,X_i]\colon[P_i;X_i]\to[P_i';X_i']$ for $i=1,\dots,r$
we moreover define
\begin{align*}
&Q_*([O_1',X_1';f_1;O_1,X_1],\dots,[O_r',X_r';f_r;O_r,X_r])\\
&\qquad=[N\circ(O_1',\dots,O_r'), X_1',\dots,X_r'; N_*(f_1,\dots,f_r); N\circ (O_1,\dots,O_r); X_1,\dots,X_r]
\end{align*}
where $N\in\mathcal O(r)$ is some preimage of $Q$ under $\pi$.

Moreover, if $Q'$ is any other object of $\mathcal P(r)$, then we define a natural transformation $[Q',Q]\colon Q_*\Rightarrow Q'_*$ via
\begin{equation*}\hskip-17.13pt
\begin{aligned}
&[Q',Q]_{[P_1;X_1],\dots,[P_r;X_r]}\\
&\qquad=[N'\circ (O_1,\dots,O_r), X_\bullet; [N'\circ (O_1,\dots,O_r), N\circ (O_1,\dots,O_r)]_{X_\bullet}; N\circ (O_1,\dots, O_r);X_\bullet]
\end{aligned}
\end{equation*}
where $N',N\in\mathcal O(r)$ are preimages of $Q'$ and $Q$, respectively, $O_i$ is a preimage of $P_i$ for $i=1,\dots,r$, and we abbreviate $X_\bullet=X_1,\dots,X_r$.
\end{constr}

\begin{prop}
The above is independent of choices and makes $\pi_\lozenge C$ into a $\mathcal P$-algebra. This way, $\pi_\lozenge$ becomes a functor $\Alg_{\mathcal O}(\cat{$\bm G$-Cat})\to\Alg_{\mathcal P}(\cat{$\bm G$-Cat})$.
\begin{proof}
First, we show that $Q_*\colon (\pi_\lozenge C)^r\to\pi_\lozenge C$ is a well-defined functor, independent of all choices. This is clear on objects. On morphisms, we first observe that $\pi(N\circ (O_1,\dots,O_r))=\pi(N)\circ (\pi(O_1),\dots,\pi(O_r))=Q\circ (\pi(O_1),\dots,\pi(O_r))$ as $\pi$ is a map of operads, whence
\begin{equation*}
[\pi(N\circ (O_1,\dots, O_r)),X_\bullet]=Q_*([P_1;X_1],\dots,[P_r,X_r]),
\end{equation*}
and similarly
\begin{equation*}
    [\pi(N\circ (O_1',\dots, O_r'));X'_\bullet]=Q_*([P_1';X_1'],\dots,[P_r';X_r']).
\end{equation*}
As moreover
$N_*(O_{1*}(X_1),\dots)=(N\circ(O_1,\dots,O_r))(X_\bullet)$ and $N_*(O_{1*}'(X_1
),\dots)=(N\circ(O_1',\dots,O_r'))(X_\bullet)$ since $C$ is an $\mathcal O$-algebra, we see that the above indeed defines a morphism $Q_*([P_1;X_1],\dots)\to Q_*([P_1';X_1'],\dots)$.

This is independent of the choice of representatives: indeed, if $(\bar O_i',\bar X_i';\bar f_i;\bar O_i,\bar X_i)\sim(O_i',X_i';f_i;O_i,X_i)$, then we let $\sigma_i$ be the unique permutation with $\pi(O_i)=\pi(\bar O_i).\sigma_i$, $\bar X_i=\sigma.X_i$, and define $\sigma_i'$ analogously. If we then write $\sigma$ for the block sum $\sigma_1\oplus\cdots\oplus\sigma_r$, and $\sigma'=\sigma_1'\oplus\cdots\oplus\sigma'_r$, then $\bar X_\bullet=\sigma.X_\bullet$ and $\bar X'_\bullet=\sigma'.X_\bullet$ as well as
\begin{align*}
(N\circ (O_1,\dots,O_r)\big).\sigma&=N\circ(O_1.\sigma_1,\dots,O_r.\sigma_r)\\
(N\circ (O_1',\dots,O_r')\big).\sigma'&=N\circ(O_1'.\sigma_1',\dots,O_r'.\sigma_r')
\end{align*}
because $\mathcal O$ is an operad. Thus, the desired relation $[N\circ (O_1',\dots),X'_\bullet;N_*(f_1,\dots);\allowbreak N\circ(O_1,\dots), X_\bullet]=[N\circ (\bar O_1',\dots),\bar X_\bullet;N_*(\bar f_1,\dots);N\circ(\bar O_1,\dots),\bar X_\bullet]$ is equivalent to asking that $N_*(\bar f_1,\dots)$ agree with the composite
\begin{equation*}
[N\circ(\bar O_1',\dots), N\circ (O_1'.(\sigma_1')^{-1},\dots)]\circ
N_*(f_1,\dots,f_r) \circ
[N\circ(O_1,\dots), N\circ(\bar O_1.\sigma_1,\dots)]
\end{equation*}
which in turn follows from $\bar f_i=[\bar O', O'.(\sigma'_i)^{-1}]\circ f_i\circ[O_i,\bar O.\sigma_i]$ and $C$ being an $\mathcal O$-algebra.

The above is also independent of the choice of preimage $N$: if $\bar N$ is any other preimage, then
\begin{equation*}
\pi(\bar N\circ (O_1,\dots))=\pi(\bar N)\circ (\pi(O_1),\dots) = \pi(N) \circ (\pi(O_1),\dots) = \pi(N\circ (O_1,\dots))
\end{equation*}
and similarly $\pi(\bar N\circ (O_1',\dots))=\pi(N\circ (O_1',\dots))$, so the desired relation amounts to asking that
\begin{equation*}\hskip-10.48pt\hfuzz=11pt
\bar N_*(f_1,\dots,f_r)=[\bar N\circ(O_1',\dots),N\circ(O_1',\dots)]\circ N_*(f_1,\dots,f_r)\circ [N\circ (O_1,\dots),\bar N\circ(O_1,\dots)].
\end{equation*}
As
\begin{align*}
[\bar N\circ(O_1',\dots),N\circ(O_1',\dots)]_{X'_\bullet}&=[\bar N,N]_{O'_1(X'_1),\dots}\\
[N\circ(O_1,\dots),\bar N\circ (O_1,\dots)]_{X_\bullet}&=[\bar N,N]^{-1}_{O_{1*}(X_1),\dots},
\end{align*}
this follows from naturality of $[\bar N,N]$.

To show that $Q_*$ is a functor, consider maps $[O''_i,X''_i;f_i';O'_i,X_i']$ and  $[O'_i,X_i';f_i;O_i,X_i]$ for $i=1,\dots,r$; then
\begin{align*}
&Q_*([O''_1,X_1'';f_1';O_1',X_1'][O_1',X_1';f_1;O_1,X_1],\dots)\\
&\qquad=Q_*([O''_1,X_1'';f_1'f_1; O_1,X_1],\dots)\\
&\qquad=[N\circ (O_1'', \dots); X''_\bullet; N_*(f_1'f_1,\dots); N\circ (O_1,\dots);X_\bullet]\\
&\qquad=[N\circ (O_1'', \dots); X''_\bullet; N_*(f_1',\dots)N_*(f_1,\dots); N\circ (O_1,\dots);X_\bullet]\\
&\qquad=Q_*([O''_1,X_1'';f_1';O_1',X_1'],\dots)Q_*([O_1',X_1';f_1;O_1,X_1],\dots),
\end{align*}
so $Q_*$ is compatible with compositions. Analogously, one shows that $Q_*$ preserves identities.

A similar computation as the one for $Q_*$ shows that $[Q',Q]$ is independent of the choice of the lifts $N',N\in\mathcal O(n)$. With this established, naturality amounts to the relation
\begin{equation*}\hskip-32.65pt
\begin{aligned}
&[N'\circ (O_1',\dots), X'_\bullet;N'(f_1,\dots);N'\circ(O_1,\dots),X_\bullet][N'\circ (O_1,\dots);X_\bullet;[N',N];N\circ(O_1,\dots),X_\bullet]\\
&\qquad=[N'\circ (O_1',\dots), X'_\bullet;[N',N];N\circ(O_1',\dots),X'_\bullet][N\circ (O_1',\dots), X'_\bullet;N(f_1,\dots);N\circ(O_1,\dots),X_\bullet]
\end{aligned}
\end{equation*}
which follows immediately from naturality of $[N',N]$. Similarly, one shows that $[Q'',Q'][Q',Q]=[Q'',Q]$, using that $[N'',N'][N',N]=[N'',N]$ for all $N,N',N''\in\mathcal O(r)$.

Altogether, we have therefore constructed a functor $\alpha_r\colon\mathcal P(r)\times(\pi_\lozenge C)^{r}\to\pi_\lozenge C$ given on objects by
\begin{equation*}
(Q;[P_1;X_1],\dots)\mapsto Q_*([P_1;X_1],\dots)=[Q\circ(P_1,\dots);X_\bullet]
\end{equation*}
and on morphisms by
\begin{align*}
((Q',Q);[O_1',X_1';f_1;O_1,X_1],\dots)&\mapsto [Q',Q]\circ Q_*([O_1',X_1';f_1;O_1,X_1],\dots)\\&\qquad=Q'_*([O_1',X_1';f_1;O_1,X_1],\dots)\circ [Q',Q]
\end{align*}
which is in turn explicitly given by
\begin{align*}
&[N'\circ(O_1',\dots),X_\bullet'; [N',N]\circ N_*(f_1,\dots); N\circ(O_1,\dots),X_\bullet]\\
&\qquad=[N'\circ(O_1',\dots),X_\bullet'; N'_*(f_1,\dots)\circ[N',N]; N\circ(O_1,\dots),X_\bullet]
\end{align*}
for arbitrary preimages $N,N'\in\mathcal O(r)$ of $Q,Q'$.

This functor is $G$-equivariant: this is clear on objects where this agrees with the usual $\mathcal P$-action on $\cat{P}_{\mathcal P}\forget_{\mathcal O}(C)$. On morphisms, we explicitly compute that
\begin{equation*}
g.((Q',Q);[O_1',X_1';f_1;O_1,X_1],\dots)=((g.Q',g.Q);[g.O_1',g.X_1';g.f_1;g.O_1,g.X_1],\dots)
\end{equation*}
is sent to
\begin{align*}
&[(g.N')\circ(g.O_1',\dots), g.X'_\bullet; [g.N',g.N]\circ(g.N)_*(g.f_1,\dots,); g.N\circ(g.O_1,\dots); g.X_\bullet]\\
&\qquad=[g.(N'\circ(O_1',\dots)),g.X'_\bullet; g.([N',N]\circ N_*(f_1,\dots)); g.(N\circ (O_1,\dots)), g.X_\bullet]
\end{align*}
which agrees with $g.[N\circ(O_1',\dots), X_\bullet; [N',N]\circ N_*(f_1,\dots); N\circ(O_1,\dots), X_\bullet]$ as desired.

Next, we will check that these functors indeed make $\pi_\lozenge C$ into a $\mathcal P$-algebra, i.e.~that the above maps are unital, associative, and compatible with the symmetric group actions. Again, all the required identities hold on the level of objects because $\cat{P}_{\mathcal P}(\forget_{\mathcal O}C)$ is a $\mathcal P$-algebra, so it only remains to check these statements on the level of morphisms. For this we make the following observation that will slightly simplify some computations: if $(O',X_1';f_1;O,X_1)$ and $(O',X_2';f_2,O,X_2)$ represent morphisms \emph{between the same pair of objects}, then necessarily $X_1=X_2$ and $X_1'=X_2'$ (as $\mathcal P$ is $\Sigma$-free); we will therefore simply write $[O';f_1;O]$ and $[O';f_2;O]$ for the corresponding equivalence classes below.

Now we check that $\alpha_r$ is compatible with the symmetric group actions, i.e.
\begin{equation*}
\alpha_r([Q',Q].\sigma;[O_1';f_1;O_1],\dots)=\alpha_r([Q',Q]; [O_{\sigma^{-1}(1)}'; f_{\sigma^{-1}(1)}; O_{\sigma^{-1}(1)}],\dots)
\end{equation*}
for all $\sigma\in\Sigma_r$. Indeed, if $N,N'$ are preimages of $Q,Q'$, then $N.\sigma,N.\sigma'$ are preimages of $Q.\sigma,Q.\sigma'$, so the the left hand side is given by
\begin{equation*}
[(N'.\sigma)\circ(O_1',\dots); [N'.\sigma,N.\sigma]\circ(N.\sigma)_*(f_1,\dots,f_r); (N.\sigma)\circ(O_1,\dots)].
\end{equation*}
Similarly, the right hand side can be computed as
\begin{equation*}
[N'\circ (O_{\sigma^{-1}(1)}',\dots); [N',N]\circ N_*(f_{\sigma^{-1}(1)},\dots); N\circ (O_\sigma^{-1}(1),\dots)],
\end{equation*}
so we even have an equality of representatives by the compatibility of the operad structure maps of $\mathcal O$ with the symmetric group actions, and the compatiblity of the action maps on $C$ with the symmetric group actions.

Similarly, one deduces associativity of the action on $\pi_\lozenge C$ from associativity of the action on $C$. For unitality, it suffices to observe that a preimage of $1\in\mathcal P(1)$ is given by $1\in\mathcal O(1)$, so that $1_*[O';f;O]=[1\circ O';1_*(f);1\circ O]=[O';f;O]$ as desired.

Finally, we have to show that $\pi_\lozenge F$ is a map of $\mathcal P$-algebras for every $\mathcal O$-algebra map $F\colon C\to D$. Again, this is clear on objects, while on morphisms we compute
\begin{align*}
\hskip-2.24pt\alpha_r([Q',Q];[O_1';Ff_1;O_1],\dots)&=[N'\circ(O_1',\dots);[N',N] N_*(Ff_1,\dots); N\circ(O_1,\dots)]\\
&=[N'\circ (O_1',\dots); F([N',N] N_*(f_1,\dots)); N\circ(O_1,\dots)]\\
&=(\pi_\lozenge F)\big(\alpha_r([Q',Q];[O_1';f_1;O_1],\dots)\big)
\end{align*}
where the middle equality uses that $F$ is a map $\mathcal O$-algebras. This completes the proof of the proposition.
\end{proof}
\end{prop}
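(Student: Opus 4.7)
The plan is to reduce each claim to an already-established property of the $\mathcal{O}$-action on $C$, leveraging three structural features of the setup: (i) $\Sigma$-freeness of $\mathcal{O}$, which makes the permutations appearing in the equivalence relation on morphisms of $\pi_\lozenge C$ uniquely determined; (ii) the hypothesis that $\pi$ is strictly surjective on the relevant fixed points, so every operation in $\mathcal{P}$ (with appropriate isotropy) admits a compatible preimage in $\mathcal{O}$; and (iii) the uniqueness of edges between any two objects of $\mathcal{O}(r)$ (a consequence of $\mathcal{O}(r)\simeq *$), which makes the natural transformations $[B,A]\colon A_*\Rightarrow B_*$ canonical and composition-compatible, $[C,B]\circ[B,A]=[C,A]$.

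First, I would check that $Q_*$ on morphisms is well defined. Two independences must be verified: independence of the representatives of the input morphisms, and independence of the preimage $N$ of $Q$. For the former, if the $i$-th representative changes by a permutation $\sigma_i$, the block sum $\sigma=\sigma_1\oplus\cdots\oplus\sigma_r$ relates $N\circ(O_1,\ldots,O_r)$ to $N\circ(\bar O_1.\sigma_1,\ldots,\bar O_r.\sigma_r)$ via the operad equivariance axiom for $\mathcal{O}$, and the pointwise relation on the $f_i$ propagates to the expected relation on $N_*(f_1,\ldots,f_r)$ because $C$ is an $\mathcal{O}$-algebra. For the latter, naturality of the canonical edge $[\bar N,N]\colon N_*\Rightarrow\bar N_*$ evaluated at the relevant tuples produces exactly the comparison edges required by the equivalence relation. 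The same ingredients show that $[Q',Q]$ is well defined; functoriality of $Q_*$ and naturality of $[Q',Q]$ then reduce to the corresponding statements for $N_*$ and $[N',N]$ on $C$, while the composition law $[Q'',Q']\circ[Q',Q]=[Q'',Q]$ follows, via compatible choice of preimages, from $[N'',N']\circ[N',N]=[N'',N]$ in $\mathcal{O}(r)$.

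Together these data assemble into $\alpha_r\colon\mathcal{P}(r)\times(\pi_\lozenge C)^r\to\pi_\lozenge C$, and $G$-equivariance is immediate from the formulas since $g.N$ is a preimage of $g.Q$. The $\mathcal{P}$-algebra axioms on morphisms then reduce to their $\mathcal{O}$-analogues via a structured choice of preimages: $1\in\mathcal{O}(1)$ is a preimage of $1\in\mathcal{P}(1)$ (unitality), operadic composition of preimages is a preimage of operadic composition (associativity), and $N.\sigma$ is a preimage of $Q.\sigma$ ($\Sigma$-equivariance). Finally, $\pi_\lozenge F$ is a map of $\mathcal{P}$-algebras for any $\mathcal{O}$-algebra map $F\colon C\to D$, because the formula defining $\pi_\lozenge F$ on morphisms commutes with $N_*$ precisely when $F$ does.

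The principal obstacle here is not conceptual but bookkeeping: one must track preimages through every operadic composition, input substitution, and equivalence-class computation. What makes the argument go through without additional hypotheses is the double role played by the canonical edges $[B,A]$—they simultaneously witness the independence of preimage choice and furnish the natural transformations $[Q',Q]$ needed to upgrade each $Q_*$ from an operation on objects to a full functor, with the composition law $[C,B]\circ[B,A]=[C,A]$ collapsing all higher coherence automatically.
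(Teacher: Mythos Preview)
Your proposal is correct and follows essentially the same route as the paper: a direct verification that reduces every required identity to the $\mathcal O$-algebra structure on $C$ by choosing preimages along $\pi$, using $\Sigma$-freeness to make the relevant permutations unique and the contractibility of each $\mathcal O(r)$ to make the edges $[B,A]$ canonical and compose correctly. The only minor remark is that for the construction itself you only need surjectivity of $\pi$ on objects (i.e.\ the case $H=1$); the stronger fixed-point surjectivity hypothesis is what was used earlier for essential surjectivity of $\iota$, not here.
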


\begin{thm}
The functor $(\pi_\lozenge)^\infty\colon\Alg_{\mathcal O}(\cat{$\bm G$-Cat})_{\mathcal F}^\infty\to\Alg_{\mathcal P}(\cat{$\bm G$-Cat})_{\mathcal F}^\infty$ is an equivalence and quasi-inverse to $(\pi^*)^\infty$.
\end{thm}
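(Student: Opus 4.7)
My strategy is to combine Theorem~\ref{thm:change-of-operad-categories} with a direct pointset-level construction producing a one-sided inverse to $\pi^*$.

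First, I would observe that $\pi$ is itself an $\mathcal F$-equivalence of $\Sigma$-free operads: by hypothesis both $\mathcal O(n)$ and $\mathcal P(n)$ are $\mathcal G_{\mathcal F,\Sigma_n}$-equivalent to the terminal category, so each $\pi(n)\colon\mathcal O(n)\to\mathcal P(n)$ is a $\mathcal G_{\mathcal F,\Sigma_n}$-equivalence by $2$-out-of-$3$. Applying Theorem~\ref{thm:change-of-operad-categories} then shows that $(\pi^*)^\infty$ is already an equivalence of quasi-categories. It therefore suffices to exhibit a natural $\mathcal F$-equivalence $\pi_\lozenge\pi^* D\to D$ of $\mathcal P$-algebras for each $D\in\Alg_{\mathcal P}(\cat{$\bm G$-Cat})$; given such a transformation, $(\pi_\lozenge)^\infty\circ(\pi^*)^\infty\simeq\id$, and composing with the inverse of $(\pi^*)^\infty$ forces $(\pi_\lozenge)^\infty$ to be the quasi-inverse.

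Second, I would define an evaluation map $\epsilon_D\colon\pi_\lozenge\pi^* D\to D$ on objects by $[P;Y]\mapsto P_*(Y)$ and on morphisms by $[O',Y';f;O,Y]\mapsto f$, where we reinterpret $f\colon O_*(Y)\to O'_*(Y')$ in $\pi^* D$ as the morphism $\pi(O)_*(Y)\to\pi(O')_*(Y')$ in $D$ (which makes sense because the $\mathcal O$-action on $\pi^* D$ is the $\mathcal P$-action on $D$ pulled back along $\pi$). The key observation making this well-defined is that every reindexing morphism $[B,A]$ in $\mathcal O(r)$ appearing in the definition of $\pi_\lozenge$ collapses to an identity once pushed through $\pi$ and evaluated in $D$: indeed, if $(O_1',Y_1';f_1;O_1,Y_1)\sim(O_2',Y_2';f_2;O_2,Y_2)$ via witnesses $\sigma,\sigma'$, then by construction $\pi(O_2).\sigma=\pi(O_1)$ and $\pi(O_2').\sigma'=\pi(O_1')$ in $\mathcal P(r)$, so the reindexing morphisms $[O_1,O_2.\sigma]$ and $[O_2',O_1'.(\sigma')^{-1}]$ in the defining relation $f_2=[O_2',O_1'.(\sigma')^{-1}]\circ f_1\circ[O_1,O_2.\sigma]$ become identities in $D$, yielding $f_2=f_1$. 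The same principle verifies functoriality, $G$-equivariance, and compatibility with the $\mathcal P$-action: all reindexing morphisms appearing in the composition law and in the formula for the $\mathcal P$-action on $\pi_\lozenge$ collapse to identities in $D$ once pushed through $\pi$.

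Third, I would observe that the composite $D\xrightarrow{\iota_D}\pi_\lozenge\pi^* D\xrightarrow{\epsilon_D} D$ is the identity by direct inspection: $\epsilon_D[1;Y]=1_*(Y)=Y$ on objects and $\epsilon_D[1,Y';f;1,Y]=f$ on morphisms. Since $\iota_D$ has already been shown to be an $\mathcal F$-equivalence of underlying $G$-categories, $2$-out-of-$3$ implies that $\epsilon_D$ is an $\mathcal F$-equivalence as well. Naturality of $\epsilon$ in $D$ is manifest from the formulas, so we obtain the desired natural $\mathcal F$-equivalence $\pi_\lozenge\pi^*\simeq\id$, which completes the argument by the reduction in Step~1. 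The main obstacle is the bookkeeping in Step~2: while the underlying principle (that reindexing morphisms vanish after push-forward through $\pi$) is simple once isolated, applying it uniformly across the equivalence relation, composition, $G$-action, and $\mathcal P$-algebra structure of $\pi_\lozenge$ requires some notational care.
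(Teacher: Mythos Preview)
Your proposal is correct and follows essentially the same route as the paper: reduce via Theorem~\ref{thm:change-of-operad-categories} to constructing a natural $\mathcal F$-equivalence $\epsilon\colon\pi_\lozenge\pi^*D\to D$, define $\epsilon$ by $[P;Y]\mapsto P_*(Y)$ and $[O',Y';f;O,Y]\mapsto f$, and conclude by observing $\epsilon\circ\iota=\id$ together with $2$-out-of-$3$. One small imprecision: your blanket claim that ``all reindexing morphisms \dots\ collapse to identities in $D$ once pushed through $\pi$'' is not quite right for the $\mathcal P$-algebra compatibility, since the transformation $[N',N]$ with $\pi(N)=Q\neq Q'=\pi(N')$ does \emph{not} collapse but rather becomes $[Q',Q]$ --- which is exactly what is needed, so the verification still goes through (and the paper carries it out explicitly).
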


In particular, we see that $(\pi_\lozenge)^\infty$ is a model for $\cat{L}\pi_!$.

\begin{proof}
As we already know that $(\pi^*)^\infty$ is an equivalence (Theorem~\ref{thm:change-of-operad-categories}), it will suffice to show that $\pi_\lozenge\pi^*$ is homotopic to the identity. For this, we define for every $\mathcal P$-algebra $C$ a map $\epsilon\colon\pi_\lozenge\pi^*C\to C$ on objects via $[P;X]\mapsto P_*(X)$ and on morphisms via $[O',X';f;O,X]\mapsto \big(f\colon\pi(O)_*(X)\to \pi(O')_*(X')\big)$; we omit the routine verification that this is well-defined and a $G$-equivariant functor.

The functor $\epsilon$ is even a map of $\mathcal P$-algebras: this is clear on objects (where this is just the counit of $\cat{P}_{\mathcal P}\dashv \forget_{\mathcal P}$), and for the claim on morphisms we compute
\begin{align*}
&\epsilon(Q_*([O_1',X_1';f_1;O_1,X_1],\dots))\\
&\qquad=\epsilon[N\circ (O_1',\dots),X'_\bullet;\pi(N)_*(f_1,\dots); N\circ(O_1,\dots), X_\bullet]\\
&\qquad=\pi(N)_*(f_1,\dots)=Q_*(\epsilon[O_1',X_1';f_1;O_1,X_1],\dots)
\end{align*}
(where $N$ is a preimage of $Q$) and similarly
\begin{equation*}
\epsilon{[Q',Q]_{[P_1;X_1],\dots}}=[Q',Q]_{P_{1*}(X_1),\dots}.
\end{equation*}
Moreover, plugging in the definitions shows that $\epsilon$ is a natural transformation $\pi_\lozenge\pi^*\Rightarrow\id$, so it only remains to show that it is an $\mathcal F$-equivalence for every $\mathcal P$-algebra $C$. But as a map of $G$-categories $\epsilon$ is left-inverse to the $\mathcal F$-equivalence $\iota\colon C=\pi^*C\to\pi_\lozenge\pi^*C$ from Lemma~\ref{lemma:pi-lozenge-iota}, so the claim follows by $2$-out-of-$3$.
\end{proof}

\begin{defi}
Let $\mathcal O,\mathcal P$ be $E_\infty$-$\mathcal F$-operads. We write $\Sigma^{\mathcal O}_{\mathcal P}$ for the composite
\begin{equation*}
\Alg_{\mathcal O}(\cat{$\bm G$-Cat})\xrightarrow{\pr_{\mathcal O}^*} \Alg_{\mathcal O\times\mathcal P}(\cat{$\bm G$-Cat})\xrightarrow{\pr_{\mathcal P\lozenge}} \Alg_{\mathcal P}(\cat{$\bm G$-Cat}).
\end{equation*}
\end{defi}

\begin{cor}
The functor $\Sigma^{\mathcal O}_{\mathcal P}$ is homotopical. The induced functor on associated quasi-categories is an equivalence naturally equivalent to $(\ref{eq:comparison-alg-cat-zig-zag})$.\qed
\end{cor}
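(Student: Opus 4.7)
The plan is to reduce both assertions to the preceding theorem (about $(\pi_\lozenge)^\infty$ being a quasi-inverse to $(\pi^*)^\infty$) applied to the projection $\pi = \pr_{\mathcal P}\colon \mathcal O\times\mathcal P\to\mathcal P$, combined with the observation that $(\pr_{\mathcal O}^*)^\infty$ is an equivalence by Theorem~\ref{thm:change-of-operad-categories}.

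Homotopicality of $\Sigma^{\mathcal O}_{\mathcal P}$ is immediate from its factorisation. First, $\pr_{\mathcal O}^*$ preserves weak equivalences, since it is the identity on underlying $M$-categories and weak equivalences in the algebra model structures are created by the forgetful functors to $\cat{$\bm G$-Cat}_{\mathcal F}$. Second, $\pr_{\mathcal P\lozenge}$ is homotopical by the corollary immediately preceding this one (applied to $\pi = \pr_{\mathcal P}$): that corollary says the composite $\forget_{\mathcal G\text{-}\cat{Cat}}\circ\pr_{\mathcal P\lozenge}$ is homotopical, and the forgetful functor $\Alg_{\mathcal P}(\cat{$\bm G$-Cat})_{\mathcal F}\to\cat{$\bm G$-Cat}_{\mathcal F}$ creates weak equivalences.

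To apply the preceding theorem to $\pr_{\mathcal P}$, I need to check its hypotheses. The operad $\mathcal O\times\mathcal P$ is $\Sigma$-free (as both factors are), and taking $\phi$-fixed points commutes with products, so $(\mathcal O\times\mathcal P)(n)^\phi\cong\mathcal O(n)^\phi\times\mathcal P(n)^\phi$ is $\mathcal G_{\mathcal F,\Sigma_n}$-equivalent to $*\times*\simeq *$ for every $H\in\mathcal F$ and $\phi\colon H\to\Sigma_n$. Hence $\mathcal O\times\mathcal P$ is again an $\mathcal F$-$E_\infty$-operad, and $\pr_{\mathcal P}$ is a map of such. Moreover $\pr_{\mathcal P}(n)^\phi\colon\mathcal O(n)^\phi\times\mathcal P(n)^\phi\to\mathcal P(n)^\phi$ is strictly surjective on objects, as $\mathcal O(n)^\phi$ is non-empty (being equivalent to the terminal category). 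Thus the preceding theorem applies and shows that $(\pr_{\mathcal P\lozenge})^\infty$ is an equivalence, quasi-inverse to $(\pr_{\mathcal P}^*)^\infty$. Combined with the equivalence $(\pr_{\mathcal O}^*)^\infty$ from Theorem~\ref{thm:change-of-operad-categories}, the composite $(\Sigma^{\mathcal O}_{\mathcal P})^\infty$ is an equivalence of quasi-categories.

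For the natural equivalence to $(\ref{eq:comparison-alg-cat-zig-zag})$, the remark following the preceding theorem states that $(\pi_\lozenge)^\infty$ is a model for the left derived functor $\cat{L}\pi_!$, i.e.\ the two are naturally equivalent (both being left adjoints to $(\pi^*)^\infty$ in the $\infty$-categorical sense, hence canonically identified by uniqueness of adjoints). Specialising to $\pi=\pr_{\mathcal P}$ yields $(\pr_{\mathcal P\lozenge})^\infty\simeq\cat{L}\pr_{\mathcal P!}$, and whiskering with $(\pr_{\mathcal O}^*)^\infty$ produces the required natural equivalence between $(\Sigma^{\mathcal O}_{\mathcal P})^\infty$ and the zig-zag $(\ref{eq:comparison-alg-cat-zig-zag})$. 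The only genuinely new input is the verification of the surjectivity hypothesis on $\pr_{\mathcal P}$, which is the mildly fiddly point; everything else is bookkeeping on top of the preceding theorem.
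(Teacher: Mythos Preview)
Your proof is correct and takes precisely the approach the paper intends: the paper gives no proof at all (just a \qed), treating the corollary as immediate from the preceding theorem applied to $\pi=\pr_{\mathcal P}$ together with Theorem~\ref{thm:change-of-operad-categories} for $\pr_{\mathcal O}^*$. You have correctly spelled out the one point that actually needs checking, namely the surjectivity hypothesis on $\pr_{\mathcal P}(n)^\phi$, which follows from $\mathcal O(n)^\phi\simeq*$ being non-empty.
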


\section{\texorpdfstring{$G$}{G}-global vs.~\texorpdfstring{$G$}{G}-equivariant algebras}\label{sec:g-glob-vs-g-equiv}
In this section we will compare the $G$-global and $G$-equivariant algebras introduced above to each other. The argument for the simplicial and categorical cases will be almost entirely parallel, so we denote by $\mathscr C$ one of the categories $\cat{Cat}$ and $\cat{SSet}$.

\subsection{Globally twisted \texorpdfstring{$\bm G$}{G}-operads} For our comparison it will be useful to first introduce a variant of $G$-global operads that combines the operadic structure with the $\core(\mathcal M)$-action used to define the $G$-global weak equivalences:

\begin{defi}
A \emph{globally twisted $G$-operad} (or just \emph{twisted $G$-operad} for short) is an operad $\mathcal O$ in $G\text{-}\mathscr C$ together with a monoid homomorphism $\core(\mathcal M)\to \mathcal O(1)^G$.
\end{defi}

The attribute `twisted' refers to the homomorphism $\core(\mathcal M)\to \mathcal O(1)^G$ that provides an additional group action on any algebra over a twisted $G$-operad; in particular, we will see below (Construction~\ref{constr:external-twisted-diagonal}) that the correct notion of \emph{underlying $G$-space} or \emph{underlying $G$-category} of such an algebra will have to take this action into account.

As usual, however, we will keep the homomorphism from $\core(\mathcal M)$ implicit most of the time and just call $\mathcal O$ itself a twisted $G$-operad.

\begin{constr}\label{constr:external-actions}
If $\mathcal O$ is a twisted $G$-operad, then we obtain for any $n\ge0$ a left $\core(\mathcal M)$-action on $\mathcal O(n)$ by restricting the operad structure map $\mathcal O(1)\times\mathcal O(n)\to\mathcal O(n)$ along the given homomorphism $\core(\mathcal M)\to\mathcal O(1)$. By construction, this commutes with the $G$-action on $\mathcal O(n)$, making the latter into a $(\core(\mathcal M)\times G)$-object; we caution the reader however that $\mathcal O$ is typically \emph{not} an operad in $\core(\mathcal M)\text{-}G\text{-}\mathscr C$ as the structure maps won't be $\core(\mathcal M)$-equivariant.

Analogously, we get $n$ commuting right $\core(\mathcal M)$-actions on $\mathcal O(n)$ via restriction of the structure map $\mathcal O(n)\times\mathcal O(1)^{\times n}\to\mathcal O(n)$. Together with the right $\Sigma_n$-action that is part of the operad structure, this assembles into an action of the wreath product $\Sigma_n\wr\core(\mathcal M)$ that again commutes with the $G$-action.
\end{constr}

\begin{constr}
If $\mathcal O$ is any twisted $G$-operad and $A$ is an $\mathcal O$-algebra in $G\text{-}\mathscr C$, then we can similarly restrict the $\mathcal O(1)$-action on $A$ to $\core(\mathcal M)$ via the given homomorphism, thereby equipping $A$ with the structure of a $(\core(\mathcal M)\times G)$-object in $\mathscr C$. The resulting forgetful functor $\Alg_{\mathcal O}(G\text{-}\mathscr C)\to \core(\mathcal M)\text{-}G\text{-}\mathscr C$ admits a left adjoint $\cat{P}$ given on objects by
\begin{equation*}
\cat{P}X=\coprod_{n\ge0} \mathcal O(n)\times_{\Sigma_n\wr\core(\mathcal M)}X^{\times n}
\end{equation*}
(with respect to the above right $\Sigma_n\wr\core(\mathcal M)$-action on $\mathcal O(n)$ and the natural action on $X^{\times n}$) and likewise on morphisms.
\end{constr}

\begin{prop}\label{prop:ext-g-global-model-structure}
Let $\mathcal O$ be a twisted $G$-operad. Then there is a unique model structure on $\Alg_{\mathcal O}(G\text{-}\mathscr C)$ in which a map $f$ is a weak equivalence or fibration if and only if $\forget f$ is a weak equivalence or fibration, respectively, in the $G$-universal model structure on $\core(\mathcal M)\text{-}G\text{-}\mathscr C$ (see Remarks~\ref{rk:g-universal-sset} and~\ref{rk:g-universal-cat}). We call this the \emph{$G$-global model structure}. It is right proper, $\mathscr C$-enriched (hence simplicial), and filtered colimits in it are homotopical.
\begin{proof}
For $\mathscr C=\cat{Cat}$ it suffices as before to show that the model structure transferred along the forgetful functor $\Alg_{\mathcal O}(\cat{$\bm G$-Cat})\to\cat{$\bm{\core(\mathcal M)}$-$\bm{G}$-Cat}$ exists, for which one can take the same path objects as before.

The argument in the simplicial case is analogous, except that we additionally use $\Ex^\infty$ or $\Sing|\blank|$ again to construct functorial fibrant replacements.
\end{proof}
\end{prop}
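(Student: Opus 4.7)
The plan is to follow the same template as the proofs of the $\mathcal F$-model structures on $\Alg_{\mathcal O}(\cat{$\bm M$-SSet})$ and $\Alg_{\mathcal O}(\cat{$\bm M$-Cat})$ established earlier, namely to transfer along the forgetful functor
\begin{equation*}
\forget\colon\Alg_{\mathcal O}(G\text{-}\mathscr C)\to\core(\mathcal M)\text{-}G\text{-}\mathscr C
\end{equation*}
via Quillen's Path Object Argument. Since $\Alg_{\mathcal O}(G\text{-}\mathscr C)$ is locally presentable (the operadic algebras inside a locally presentable category form a locally presentable category), and since the free-forgetful adjunction provides all the cellular data, the only thing left to check is the existence of a fibrant replacement functor and of functorial path objects for fibrant objects, both compatibly with the transfer.

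For the path objects, take the usual cotensor $X\to X^{E\{0,1\}}\to X\times X$ (with $E\{0,1\}=\Delta^1$ in the simplicial case and the indiscrete groupoid on two objects in the categorical case). On $\mathscr C$ this factorization is built from product-preserving operations, so it passes first to $\core(\mathcal M)\text{-}G\text{-}\mathscr C$ by pulling through actions, and then, because product-preserving endofunctors preserve $\mathcal O$-algebra structures, lifts to $\Alg_{\mathcal O}(G\text{-}\mathscr C)$; the twist $\core(\mathcal M)\to\mathcal O(1)^G$ is carried along automatically because the $\core(\mathcal M)$-action on any $\mathcal O$-algebra is obtained by restricting the $\mathcal O(1)$-action. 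In the categorical case this alone suffices since every object is already fibrant. In the simplicial case I would additionally take the finite-limit-preserving fibrant replacement $\Ex^\infty$ (or equivalently the unit of $|\blank|\dashv\Sing$), lift it through the sequence $\mathscr C\to\core(\mathcal M)\text{-}G\text{-}\mathscr C$ (pulling the action back along $M\to P(M)$, as in the proof for $\cat{$\bm M$-SSet}$), and from there through the free-forgetful adjunction to $\Alg_{\mathcal O}(G\text{-}\mathscr C)$. Preservation of finite products means that the fixed-point conditions for the $G$-universal weak equivalences are preserved, so this is a genuine fibrant replacement.

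Once the model structure exists, combinatoriality, right properness, and the $\mathscr C$-enrichment are formal consequences of the transfer together with the analogous properties of the $G$-universal model structure on $\core(\mathcal M)\text{-}G\text{-}\mathscr C$, using that the free functor $\cat P$ is a $\mathscr C$-enriched left adjoint. Homotopicality of filtered colimits reduces to the same statement in $\core(\mathcal M)\text{-}G\text{-}\mathscr C$ (which in turn comes from Theorems~\ref{thm:g-global-model-sset} and~\ref{thm:equiv-model-structure}), since $\forget$ both creates filtered colimits and detects weak equivalences.

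The only subtlety I anticipate—and really the only substantive point that distinguishes this from the earlier analogues—is verifying that applying a product-preserving endofunctor $P$ of $\mathscr C$ to an $\mathcal O$-algebra $A$ produces something whose induced $\core(\mathcal M)$-action (obtained from the twist via the new $\mathcal O(1)$-action on $P(A)$) agrees with the $\core(\mathcal M)$-action obtained by applying $P$ to the original action on $A$. This is a diagram chase that commutes by naturality once one writes out the definitions, but it is the one place where the twist plays a genuine role; everything else is routine bookkeeping on top of the earlier arguments.
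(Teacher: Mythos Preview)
Your proposal is correct and follows essentially the same approach as the paper: transfer via Quillen's Path Object Argument using the cotensor path objects $X\to X^{E\{0,1\}}\to X\times X$ in both cases, supplemented by $\Ex^\infty$ or $\Sing|\blank|$ for fibrant replacement when $\mathscr C=\cat{SSet}$. The paper's proof is terser (it simply points back to the earlier arguments), and the compatibility subtlety you flag at the end is real but, as you say, a routine naturality check that the paper leaves implicit.
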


\begin{defi}
A twisted $G$-operad $\mathcal O$ is called \emph{$\Sigma$-free}, if the above $\Sigma_n\wr\core(\mathcal M)$-action on $\mathcal O(n)$ is free for all $n\ge 0$. A map of twisted $G$-operads is a map $f$ of $G$-operads such that $f(1)$ is compatible with the maps from $\core(\mathcal M)$. We call $f$ a \emph{$G$-global (weak) equivalence} if $f(n)$ is a $G\times(\Sigma_n\wr\core(\mathcal M))$-universal (weak) equivalence for every $n\ge 0$.
\end{defi}

\begin{prop}\label{prop:ext-g-global-change-of-operad}
Let $f\colon\mathcal O\to\mathcal P$ be a map of twisted $G$-operads. Then the adjunction
\begin{equation}\label{eq:ext-g-global-change-of-operad}
f_!\colon\Alg_{\mathcal O}(G\text{-}\mathscr C)\rightleftarrows\Alg_{\mathcal P}(G\text{-}\mathscr C) :\!f^*
\end{equation}
is a Quillen adjunction with respect to the model structures from Proposition~\ref{prop:ext-g-global-model-structure}. If $f$ is a $G$-global weak equivalence (for $\mathscr C=\cat{SSet})$ or $G$-global equivalence ($\mathscr C=\cat{Cat}$) and both $\mathcal O$ and $\mathcal P$ are $\Sigma$-free, then $(\ref{eq:ext-g-global-change-of-operad})$ is a Quillen equivalence.
\end{prop}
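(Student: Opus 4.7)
The plan is to adapt the proofs of Theorems~\ref{thm:change-of-operad-sset} and~\ref{thm:change-of-operad-categories} to the present twisted setting; the formal structure is entirely parallel, with the only difference being that the symmetric group $\Sigma_n$ is consistently replaced by the wreath product $\Sigma_n\wr\core(\mathcal M)$, which is precisely what the definition of $\Sigma$-freeness in this context was designed to accommodate. The Quillen adjunction part is then a formality: both weak equivalences and fibrations in the categories of algebras are created by the forgetful functors to the $G$-universal model structure on $\core(\mathcal M)\text{-}G\text{-}\mathscr C$, and $f^*$ visibly commutes with these forgetful functors; in particular $f^*$ is also homotopical.

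For the Quillen equivalence part it suffices to show that $(f^*)^\infty$ is an equivalence of associated quasi-categories. Exactly as in Propositions~\ref{prop:free-monadic-sset} and~\ref{prop:free-monadic}, I would first establish that the forgetful functor
\begin{equation*}
\forget^\infty\colon\Alg_{\mathcal O}(G\text{-}\mathscr C)^\infty\longrightarrow(\core(\mathcal M)\text{-}G\text{-}\mathscr C)^\infty
\end{equation*}
is monadic; the argument goes through verbatim once one observes that geometric realizations in $\core(\mathcal M)\text{-}G\text{-}\mathscr C$ are computed underlying (by diagonals for $\mathscr C=\cat{SSet}$, respectively via Proposition~\ref{prop:geometric-realization-homotopical} for $\mathscr C=\cat{Cat}$) and preserve finite products, so that Proposition~\ref{prop:forget-geometric-realization} applies. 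By the $\infty$-categorical Barr--Beck theorem, the equivalence claim thereby reduces to showing that the canonical mate of the identity $\forget\Rightarrow\forget\circ f^*$ is an equivalence, i.e.~that for every cofibrant $X\in\core(\mathcal M)\text{-}G\text{-}\mathscr C$ the comparison map
\begin{equation*}
\coprod_{n\ge 0}\mathcal O(n)\times_{\Sigma_n\wr\core(\mathcal M)}X^{\times n}\longrightarrow\coprod_{n\ge 0}\mathcal P(n)\times_{\Sigma_n\wr\core(\mathcal M)}X^{\times n}
\end{equation*}
is a $G$-universal (weak) equivalence.

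Since $\mathcal O$ and $\mathcal P$ are $\Sigma$-free, the wreath product $\Sigma_n\wr\core(\mathcal M)$ acts freely on $\mathcal O(n)$ and on $\mathcal P(n)$, and hence also freely on the products $\mathcal O(n)\times X^{\times n}$ and $\mathcal P(n)\times X^{\times n}$. Applying Lemma~\ref{lemma:free-quotients-sset} (respectively Lemma~\ref{lemma:free-quotients}) with simplicial monoid $\core(\mathcal M)\times G$ and quotient group $\Sigma_n\wr\core(\mathcal M)$ then reduces the problem to showing that $f(n)\times X^{\times n}$ is a weak equivalence before quotienting, with respect to the graph-subgroup family determined by the $G$-universal structure on $\core(\mathcal M)\times G$. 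This I would deduce from the hypothesis on $f(n)$ together with homotopicality of finite products (Theorems~\ref{thm:equiv-model-structure-sset} and~\ref{thm:equiv-model-structure}), combined with a wreath-product version of the formal argument behind Corollary~\ref{cor:twisted-products-sset} (resp.~\ref{cor:twisted-products}) to promote $X^{\times n}$ to a well-behaved object of the enlarged equivariant category.

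The main obstacle I anticipate is purely bookkeeping: one has to check carefully that the externally defined left $\core(\mathcal M)$-action on $\mathcal O(n)$ coming from the twist $\core(\mathcal M)\to\mathcal O(1)$ commutes with the right $(\Sigma_n\wr\core(\mathcal M))$-action (this is precisely the content of operadic associativity), and that the various graph-subgroup families line up correctly when $\mathcal O(n)\times X^{\times n}$ is regarded as an object carrying a compatible action of $\core(\mathcal M)\times G\times(\Sigma_n\wr\core(\mathcal M))$. Once these compatibilities are in place, every individual step is either formal or reduces directly to a lemma already established in Sections~\ref{sec:reminder} and~\ref{sec:equivariant-cat}.
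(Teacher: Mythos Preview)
Your proposal is correct and follows essentially the same route as the paper's proof: verify that $f^*$ preserves fibrations and weak equivalences (hence is right Quillen), establish monadicity of the forgetful functor to $\core(\mathcal M)\text{-}G\text{-}\mathscr C$ via the geometric-realization argument (the paper isolates this as a separate lemma immediately following the proposition), and then reduce to the comparison of free algebras, where $\Sigma$-freeness and Lemma~\ref{lemma:free-quotients-sset}/\ref{lemma:free-quotients} finish the job.

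One simplification relative to your sketch: you do \emph{not} need a wreath-product analogue of Corollary~\ref{cor:twisted-products-sset}/\ref{cor:twisted-products} to ``promote $X^{\times n}$.'' The hypothesis on $f$ already asserts that each $f(n)$ is a $G\times(\Sigma_n\wr\core(\mathcal M))$-universal (weak) equivalence, and $X^{\times n}$ is naturally an object of $\core(\mathcal M)\text{-}\big(G\times(\Sigma_n\wr\core(\mathcal M))\big)\text{-}\mathscr C$ (with $G$ acting diagonally, the wreath product acting in the evident way, and the extra copy of $\core(\mathcal M)$ acting trivially---this matches the left $\core(\mathcal M)$-action on $\cat{P}X$, which comes solely from the $\mathcal O(n)$-factor). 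Hence $f(n)\times X^{\times n}$ is a weak equivalence simply because finite products are homotopical; the paper records this in one line (``by assumption''). The bookkeeping concerns you raise about commuting actions are handled by operadic associativity, exactly as you say, but they do not require any new lemma.
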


The proof will again rely on a monadicity argument:

\begin{lemma}
Let $\mathcal O$ be a twisted $G$-operad. Then the forgetful functor $\Alg_{\mathcal O}(G\text{-}\mathscr C)^\infty\to{\core(\mathcal M)\text{-} G}\text{-}\mathscr C^\infty$ is monadic.
\begin{proof}
It is clear that the forgetful functor is conservative. To prove that it is monadic it will then be enough to show as before that the forgetful functor of simplicial categories $\Alg_{\mathcal O}(G\text{-}\mathscr C)\to{\core(\mathcal M)\text{-} G}\text{-}\mathscr C$ preserves geometric realizations. This is in turn immediate from Proposition~\ref{prop:forget-geometric-realization} as ${\core(\mathcal M)\text{-}G}\text{-}\mathscr C\to G\text{-}\mathscr C$ is conservative (as a functor of $1$-categories) and preserves both tensors and small colimits.
\end{proof}
\end{lemma}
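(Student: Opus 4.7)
The plan is to follow the template established for the $\mathcal F$-model structures in Propositions~\ref{prop:free-monadic-sset} and~\ref{prop:free-monadic}: conservativity is formal, and I will deduce monadicity from the $\infty$-categorical Barr-Beck Theorem by showing that the forgetful functor preserves $\Delta^\op$-shaped homotopy colimits. Conservativity holds because, by construction of the model structure in Proposition~\ref{prop:ext-g-global-model-structure}, the forgetful functor creates $G$-universal weak equivalences on the pointset level; saturation of weak equivalences on both sides then promotes this to conservativity of the induced functor on associated quasi-categories.

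For preservation of $\Delta^\op$-shaped homotopy colimits, the strategy is to show that ordinary geometric realization is fully homotopical on both sides and that the forgetful functor commutes with it strictly; the claim will then follow from the standard fact that homotopy colimits of shape $\Delta^\op$ can be computed as geometric realizations of Reedy cofibrant replacements of a strictification (cf.~\cite[Corollary~A.2.9.30]{htt}). First, I verify that geometric realization in $\core(\mathcal M)\text{-}G\text{-}\mathscr C$ is fully homotopical with respect to $G$-universal equivalences and preserves finite products: both statements follow because the forgetful functor $\core(\mathcal M)\text{-}G\text{-}\mathscr C\to G\text{-}\mathscr C$ creates $G$-universal weak equivalences, preserves finite products, and preserves tensors and small colimits (hence geometric realizations on the pointset level), combined with the corresponding statements for $G\text{-}\mathscr C$---namely Proposition~\ref{prop:geometric-realization-homotopical} and Lemma~\ref{lemma:geometric-prod} in the categorical case, and the diagonal description together with \cite[Lemma~1.2.57]{g-global} in the simplicial case.

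Second, with $\mathscr C'\mathrel{:=}\core(\mathcal M)\text{-}G\text{-}\mathscr C$ now known to be a cocomplete, $\cat{SSet}$-enriched, tensored and cotensored category in which geometric realization preserves finite products, Proposition~\ref{prop:forget-geometric-realization} applies to the operad $\mathcal O$ viewed in $\mathscr C'$: the forgetful functor $\Alg_{\mathcal O}(\mathscr C')\to\mathscr C'$ preserves geometric realizations strictly on the pointset level. But $\Alg_{\mathcal O}(G\text{-}\mathscr C)=\Alg_{\mathcal O}(\mathscr C')$ as $1$-categories (the twist just promotes the $\mathcal O(1)$-action to extra structure that is recorded by enlarging the ambient category of actions), and the forgetful functor of the lemma factors through this identification. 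Since it creates weak equivalences, homotopicality of geometric realization in $\mathscr C'$ then transports across to give homotopicality in $\Alg_{\mathcal O}(G\text{-}\mathscr C)$. Combining all this, $\Delta^\op$-shaped homotopy colimits on both sides reduce to ordinary geometric realizations, which the forgetful functor strictly commutes with; the monadicity conclusion then follows from \cite[Theorem~4.7.3.5]{ha}.

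I don't expect any serious obstacle: every step is a direct analogue of the earlier proofs, and the only mildly subtle point is bookkeeping around the twist, i.e.~checking that restricting the $\mathcal O(1)$-action along $\core(\mathcal M)\to\mathcal O(1)^G$ is compatible with the forgetful functor to $\core(\mathcal M)\text{-}G\text{-}\mathscr C$ in the sense required for Proposition~\ref{prop:forget-geometric-realization}. This compatibility is built into the definition of the transferred model structure and of the free functor $\cat{P}$, so it ultimately amounts to an unwinding of definitions rather than a genuine argument.
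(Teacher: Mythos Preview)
Your overall strategy matches the paper's, but there is a genuine gap in the step where you invoke Proposition~\ref{prop:forget-geometric-realization} with $\mathscr C'=\core(\mathcal M)\text{-}G\text{-}\mathscr C$. A twisted $G$-operad $\mathcal O$ is \emph{not} an operad in $\core(\mathcal M)\text{-}G\text{-}\mathscr C$: the paper explicitly warns in Construction~\ref{constr:external-actions} that the operad structure maps are typically not $\core(\mathcal M)$-equivariant. Consequently the identification ``$\Alg_{\mathcal O}(G\text{-}\mathscr C)=\Alg_{\mathcal O}(\mathscr C')$'' does not make sense as written, and Proposition~\ref{prop:forget-geometric-realization} cannot be applied with $\mathscr C'$ as ambient category. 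Your parenthetical remark that ``the twist just promotes the $\mathcal O(1)$-action to extra structure'' is exactly the point where this goes wrong: that extra structure is not part of an operad action in $\mathscr C'$.

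The paper's route around this is short and is what you should do instead: apply Proposition~\ref{prop:forget-geometric-realization} in $G\text{-}\mathscr C$, where $\mathcal O$ genuinely is an operad, to conclude that the composite $\Alg_{\mathcal O}(G\text{-}\mathscr C)\to\core(\mathcal M)\text{-}G\text{-}\mathscr C\to G\text{-}\mathscr C$ preserves geometric realizations. Then observe that the second factor $\core(\mathcal M)\text{-}G\text{-}\mathscr C\to G\text{-}\mathscr C$ is conservative and preserves tensors and small colimits, hence creates geometric realizations; it follows that the first factor $\Alg_{\mathcal O}(G\text{-}\mathscr C)\to\core(\mathcal M)\text{-}G\text{-}\mathscr C$ preserves them as well. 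Everything else in your outline (conservativity, homotopicality of realization, and the appeal to Barr--Beck) is fine.
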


\begin{proof}[Proof of Proposition~\ref{prop:ext-g-global-change-of-operad}]
It is clear that $f^*$ is compatible with the forgetful functors, so that it preserves (and reflects) weak equivalences as well as fibrations; in particular, it is right Quillen.

Now assume that $f$ is a $G$-global (weak) equivalence and $\mathcal O$ and $\mathcal P$ are $\Sigma$-free. As in the proof of Theorem~\ref{thm:change-of-operad-sset}, it suffices that for every $X\in \core(\mathcal M)\text{-}G\text{-}\mathscr C$ the map
\begin{equation*}
\coprod_{n\ge0} f(n)\times_{\Sigma_n\wr\core(\mathcal M)} X^{\times n}\colon\coprod_{n\ge 0} \mathcal O(n)\times_{\Sigma_n\wr\core(\mathcal M)} X^{\times n}\to \coprod_{n\ge 0} \mathcal P(n)\times_{\Sigma_n\wr\core(\mathcal M)} X^{\times n}
\end{equation*}
is a $G$-global (weak) equivalence. However, each $f(n)\times X^{\times n}$ is a $G\times(\Sigma_n\wr\core(\mathcal M))$-global (weak) equivalence by assumption and $\Sigma_n\wr\core(\mathcal M)$ acts freely on both source and target, so the claim follows from Lemma~\ref{lemma:free-quotients-sset}/\ref{lemma:free-quotients}.
\end{proof}

\begin{defi}
A twisted $G$-operad $\mathcal O$ is called a \emph{twisted $G$-global $E_\infty$-operad} if it is $\Sigma$-free and the unique map $\mathcal O\to *$ to the terminal object is a $G$-global (weak) equivalence.
\end{defi}

\begin{ex}\label{ex:ext-g-global-operad}
We define the (categorical or simplicial) \emph{injection operad} $\mathcal I$ as follows, also cf.~\cite[Construction~A.1]{I-vs-M-1-cat} where the underlying operad of sets is denoted $\mathcal M$:

For any $n\ge0$, the $n$-ary operations are given by $\mathcal I(n)=E\Inj(\bm n\times\omega,\omega)$ where as usual $\bm n=\{1,\dots,n\}$ with the tautological $\Sigma_n$-action. The structure maps on $\mathcal I$ are given by juxtaposition and precomposition, i.e.~they are induced under $E$ by the maps
\begin{align*}
\Inj(\bm r\times\omega,\omega)\times\Inj(\bm{n_1}\times\omega,\omega)\times\cdots\times\Inj(\bm{n_r}\times\omega,\omega)&\to\Inj(\bm n\times\omega,\omega)\\
(u;v_1,\dots,v_r)&\mapsto u\circ (v_1\amalg\cdots\amalg v_r)
\end{align*}
where $n=n_1+\cdots+n_r$ and we have identified $\coprod_{k=1}^r (\bm{n_k}\times\omega)$ with $\bm n\times\omega$ in the obvious way.

We can make $\mathcal I$ (equipped with the trivial $G$-action) into a twisted $G$-operad via the inclusion $\core(\mathcal M)\hookrightarrow E\mathcal M\cong\mathcal I(1)$. This is a twisted $G$-global $E_\infty$-operad: for the freeness it suffices to observe that the natural $(\Sigma_n\wr\core(\mathcal M))$-action on $\bm n\times\omega$ is faithful so that $\Sigma_n\wr\core(\mathcal M)$ acts freely on $\Inj(\bm n\times\omega,\omega)$, whence on $\mathcal I(n)$. To show that $\mathcal I(n)$ is $G\times(\Sigma_n\wr\core(\mathcal M))$-globally contractible it suffices to observe that for every universal $H\subset\mathcal M$ and every $\phi\colon H\to G\times(\Sigma_n\wr\core(\mathcal M))$
\begin{equation*}
\mathcal I(\bm n\times\omega,\omega)^\phi=\mathcal I(\bm n\times\omega,\omega)^{\pr_{\Sigma_n\wr\core(\mathcal M)}\circ\phi}=
E\big(\Inj\big((\pr_{\Sigma_n\wr \core(\mathcal M)}\circ\phi)^*(\bm n\times\omega),\omega\big)^H\big)
\end{equation*}
which is non-empty and hence contractible because there exists an equivariant injection $(\pr_{\Sigma_n\wr \core(\mathcal M)}\circ\phi)^*(\bm n\times\omega)\to\omega$ by universality of $H$.
\end{ex}

\begin{cor}\label{cor:comparison-ext-g-global-operads}
Let $\mathcal O$ be any twisted $G$-global $E_\infty$-operad. Then there exists an explicit zig-zag of Quillen equivalences $\Alg_{\mathcal O}(G\text{-}\mathscr C)\leftrightarrow\Alg_{\mathcal I}(G\text{-}\mathscr C)$.
\begin{proof}
By Proposition~\ref{prop:ext-g-global-change-of-operad}, it suffices to observe that also $\mathcal O\times\mathcal I$ is a twisted $G$-global $E_\infty$-operad and that the projections $\mathcal O\gets\mathcal O\times\mathcal I\to\mathcal I$ are $G$-global (weak) equivalences.
\end{proof}
\end{cor}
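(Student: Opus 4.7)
The plan is to realize the zig-zag explicitly as
\[
\mathcal O \xleftarrow{\pr_{\mathcal O}} \mathcal O\times\mathcal I \xrightarrow{\pr_{\mathcal I}} \mathcal I
\]
and then apply Proposition~\ref{prop:ext-g-global-change-of-operad} twice, so the only real work is to equip $\mathcal O\times\mathcal I$ with the structure of a twisted $G$-global $E_\infty$-operad in such a way that the projections become maps of twisted $G$-operads. First I would take the componentwise operad structure on the product and promote it to a twisted $G$-operad via the diagonal homomorphism $\core(\mathcal M)\to(\mathcal O\times\mathcal I)(1)^G=\mathcal O(1)^G\times\mathcal I(1)^G$. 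With this convention, both projections are manifestly maps of twisted $G$-operads.

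Next I would check that $\mathcal O\times\mathcal I$ is still $\Sigma$-free and $G$-globally contractible. For freeness, observe that under the diagonal twisting the induced right $\Sigma_n\wr\core(\mathcal M)$-action on $(\mathcal O\times\mathcal I)(n)=\mathcal O(n)\times\mathcal I(n)$ is the diagonal of the two Construction~\ref{constr:external-actions} actions; since the action on $\mathcal I(n)$ is free by Example~\ref{ex:ext-g-global-operad}, so is the diagonal action. For contractibility, both $\mathcal O(n)$ and $\mathcal I(n)$ are $G\times(\Sigma_n\wr\core(\mathcal M))$-universally contractible (the first by assumption, the second by Example~\ref{ex:ext-g-global-operad}), so the claim follows from the fact that finite products are homotopical in the relevant $\mathcal F$-model structures, as recorded in Theorems~\ref{thm:equiv-model-structure-sset} and~\ref{thm:equiv-model-structure}.

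The very same homotopicality of finite products then shows that each projection $\mathcal O(n)\times\mathcal I(n)\to\mathcal O(n)$ (respectively $\to\mathcal I(n)$) is a $G\times(\Sigma_n\wr\core(\mathcal M))$-universal (weak) equivalence, so the two projections in the zig-zag are $G$-global (weak) equivalences of $\Sigma$-free twisted $G$-global $E_\infty$-operads. Proposition~\ref{prop:ext-g-global-change-of-operad} applied to each projection then yields the advertised zig-zag of Quillen equivalences. I do not anticipate a real obstacle here, as the argument is a formal reduction to the proposition; the only thing one has to be slightly careful about is that the diagonal twisting is the correct choice to make the projections honest morphisms of twisted $G$-operads, but once this is fixed, freeness and contractibility of the product both pass through automatically.
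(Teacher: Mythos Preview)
Your proposal is correct and follows exactly the same approach as the paper: form the product $\mathcal O\times\mathcal I$ with the diagonal twisting, observe it is again a twisted $G$-global $E_\infty$-operad, and apply Proposition~\ref{prop:ext-g-global-change-of-operad} to the two projections. The paper's proof simply states this in one sentence without spelling out the freeness and contractibility verifications you carefully provide.
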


\begin{thm}\label{thm:external-vs-internal}
Let $\mathcal O$ be any $G$-global $E_\infty$-operad in $E\mathcal M\text{-}G\text{-}\mathscr C$ and let $\mathcal P$ be a twisted $G$-global $E_\infty$-operad in $G\text{-}\mathscr C$. Then there is an explicit zig-zag of Quillen equivalences $\Alg_{\mathcal O}(E\mathcal M\text{-}G\text{-}\mathscr C)\leftrightarrow\Alg_{\mathcal P}(G\text{-}\mathscr C)$ with respect to the $G$-global model structures on both sides.
\end{thm}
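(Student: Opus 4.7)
The plan is to reduce the theorem to a single concrete comparison between algebras over a standard pair of operads, and then build an explicit Quillen equivalence between those algebra categories.

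First, I would reduce to a single pair of operads using the change-of-operad machinery. On the internal side, for any fixed $G$-global $E_\infty$-operad $\mathcal O_0$ in $E\mathcal M\text{-}G\text{-}\mathscr C$, the product $\mathcal O\times\mathcal O_0$ is again $\Sigma$-free and $G$-global $E_\infty$ (using that finite products in the $G$-global model structure are homotopical), and the projections $\mathcal O\leftarrow\mathcal O\times\mathcal O_0\to\mathcal O_0$ are $G$-global weak equivalences of $\Sigma$-free operads. By Theorems \ref{thm:change-of-operad-sset}/\ref{thm:change-of-operad-categories}, this yields a zig-zag of Quillen equivalences $\Alg_{\mathcal O}\leftrightarrow\Alg_{\mathcal O_0}$. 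Symmetrically, Corollary \ref{cor:comparison-ext-g-global-operads} gives a zig-zag $\Alg_{\mathcal P}(G\text{-}\mathscr C)\leftrightarrow\Alg_{\mathcal I}(G\text{-}\mathscr C)$. So it suffices to establish an explicit zig-zag of Quillen equivalences between $\Alg_{\mathcal O_0}(E\mathcal M\text{-}G\text{-}\mathscr C)$ and $\Alg_{\mathcal I}(G\text{-}\mathscr C)$ for one well-chosen internal $\mathcal O_0$.

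Second, I would construct the bridge between these two algebra categories. The key structural observation is that an external $\mathcal I$-algebra $A$ automatically carries an $E\mathcal M$-action through its operadic $\mathcal I(1)=E\mathcal M$-action, which restricts to the $\core(\mathcal M)$-twist used to form the underlying $(\core(\mathcal M)\times G)$-object. This means external algebras naturally lift from $\core(\mathcal M)\text{-}G\text{-}\mathscr C$ to $E\mathcal M\text{-}G\text{-}\mathscr C$, providing the compatibility one needs to compare with internal algebras. I would construct a pair of adjoint functors whose underlying action mimics the forgetful/induction between $E\mathcal M\text{-}G\text{-}\mathscr C$ (with $G$-global model structure, transferred from $G$-universal per Remark \ref{rk:g-universal-sset}) and $\core(\mathcal M)\text{-}G\text{-}\mathscr C$ (with $G$-universal model structure), made compatible with the operad structures through a careful choice of $\mathcal O_0$. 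Monadic descent (Propositions \ref{prop:free-monadic-sset}/\ref{prop:free-monadic}), combined with the control over free-algebra quotients provided by Lemmas \ref{lemma:free-quotients-sset}/\ref{lemma:free-quotients}, should then lift the underlying adjunction to a Quillen equivalence at the algebra level.

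The main obstacle is identifying the correct intermediate operad $\mathcal O_0$ and making the bridge precise. The naive candidate — $\mathcal I$ equipped with the postcomposition $E\mathcal M$-action $m\cdot f:=m\circ f$ — fails to be an internal operad because operadic composition is not $E\mathcal M$-equivariant: a direct computation shows that $(m\cdot u)\circ((m\cdot v_1)\amalg\cdots\amalg(m\cdot v_r))$ and $m\cdot(u\circ(v_1\amalg\cdots\amalg v_r))$ differ whenever $m\neq\id$. Consequently, $\mathcal O_0$ must be a more elaborate enhancement of $\mathcal I$ in $E\mathcal M\text{-}G\text{-}\mathscr C$, such as a semidirect or Borel-theoretic thickening that separates the ``internal'' and ``operadic'' $E\mathcal M$-structures at the operad level before gluing them on algebras. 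The technical heart of the argument is showing that the algebras over this intermediate $\mathcal O_0$ can be identified Quillen-equivalently with both internal $\mathcal O$-algebras and external $\mathcal I$-algebras — the latter identification being the more delicate, since one must check that passage through $\mathcal O_0$ absorbs the twist without loss of homotopical information.
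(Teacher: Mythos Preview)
Your reduction step is correct, and you correctly diagnose that $\mathcal I$ with the postcomposition $E\mathcal M$-action is \emph{not} an operad in $E\mathcal M\text{-}G\text{-}\mathscr C$. You also correctly intuit that a semidirect-type construction is needed. However, the search for a special internal operad $\mathcal O_0$ whose algebras match $\mathcal I$-algebras is the wrong direction, and this is a genuine gap: there is no natural internal $\mathcal O_0$ with $\Alg_{\mathcal O_0}(E\mathcal M\text{-}G\text{-}\mathscr C)\simeq\Alg_{\mathcal I}(G\text{-}\mathscr C)$, since $\mathcal I(n)=E\Inj(\bm n\times\omega,\omega)$ does not split off the required $E\mathcal M^{\times n}$-factor.

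The paper's key move runs the other way. Starting from the \emph{given} internal operad $\mathcal O$, one forms the semidirect product $\mathcal O\rtimes E\mathcal M$ in $G\text{-}\mathscr C$ with $(\mathcal O\rtimes E\mathcal M)(n)=\mathcal O(n)\times E\mathcal M^{\times n}$ and structure maps twisted by the $E\mathcal M$-action on $\mathcal O$ (Construction~\ref{constr:twisted-product}). Two facts then finish the proof cleanly: first, there is a $1$-categorical equivalence $\Alg_{\mathcal O\rtimes E\mathcal M}(G\text{-}\mathscr C)\simeq\Alg_{\mathcal O}(E\mathcal M\text{-}G\text{-}\mathscr C)$, obtained by reading the $E\mathcal M$-factor of $(\mathcal O\rtimes E\mathcal M)(1)$ as an action and restricting the rest along $\mathcal O\hookrightarrow\mathcal O\rtimes E\mathcal M$ (Proposition~\ref{prop:ext-vs-internal-1-cat}); second, $\mathcal O\rtimes E\mathcal M$ is itself a twisted $G$-global $E_\infty$-operad (Proposition~\ref{prop:associated-external}), so Corollary~\ref{cor:comparison-ext-g-global-operads} supplies the zig-zag $\Alg_{\mathcal O\rtimes E\mathcal M}(G\text{-}\mathscr C)\leftrightarrow\Alg_{\mathcal P}(G\text{-}\mathscr C)$. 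No further monadic descent or special $\mathcal O_0$ is required; the entire bridge happens on the twisted side.
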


For the proof of the theorem, we will first relate $\Alg_{\mathcal O}(E\mathcal M\text{-}G\text{-}\mathscr C)$ to the algebras over a suitable {twisted} $G$-global $E_\infty$-operad built from $\mathcal O$ and then appeal to the previous result. This relies on the following construction:

\begin{constr}\label{constr:twisted-product}
Let $\mathcal O$ be any operad in $E\mathcal M\text{-}G\text{-}\mathscr C$. We define an operad $\mathcal O\rtimes E\mathcal M$ in $G\text{-}\mathscr C$ as follows: the $n$-ary operations are given by $(\mathcal O\rtimes E\mathcal M)(n)=\mathcal O(n)\times E\mathcal M^{\times n}$ with the diagonal right $\Sigma_n$-action and the induced $G$-action. For $\mathscr C=\cat{SSet}$, the operad structure maps $\gamma^{\mathcal O\rtimes E\mathcal M}$ of $\mathcal O\rtimes E\mathcal M$ are given by
\begin{align*}
&\gamma^{\mathcal O\rtimes E\mathcal M}\big((o,u_\bullet); (f^{(1)},v^{(1)}_\bullet),\dots,(f^{(r)},v^{(r)}_\bullet)\big)\\
&\quad{} =
\big(
\gamma^{\mathcal O}(o, u_1.f^{(1)},\dots,u_r.f^{(r)});
\gamma^{E\mathcal M^\bullet}(u_\bullet;v_\bullet^{(1)},\dots,v_\bullet^{(r)})\big)
\end{align*}
for all $m\ge0$ and $r,n_1,\dots,n_r\ge0$, $u_\bullet\in (E\mathcal M)_m^{r}, v_\bullet^{(i)}\in (E\mathcal M)_m^{n_i},o\in\mathcal O(r)_m,p_i\in\mathcal O(n_i)_m$, and similarly for $\mathscr C=\cat{Cat}$. Here $\gamma^{\mathcal O}$ denotes the operad structure map of $\mathcal O$ and $\gamma^{E\mathcal M^\bullet}$ is given by juxtaposition and precomposition analogously to the definition of $\mathcal I$. We omit the straightforward but lengthy verification that $\mathcal O\rtimes E\mathcal M$ is indeed an operad in $G\text{-}\mathscr C$, and that we have a map of $G$-operads $i\colon\mathcal O\to\mathcal O\rtimes E\mathcal M$ induced in degree $n$ by the inclusion $\{(1,1,\dots,1)\}\hookrightarrow E\mathcal M^n$. Similarly, we obtain a monoid homomorphism $k\colon E\mathcal M\to (\mathcal O\rtimes E\mathcal M)(1)$ and restricting this to $\core(\mathcal M)$ then makes $\mathcal O\rtimes E\mathcal M$ into a twisted $G$-operad.
\end{constr}

\begin{rk}
The above construction works more generally for every monoid $M$ in $\mathscr C$, also see \cite[Definition~2.1]{framed-wahl} or \cite[3.5]{framed-markl}, where this appears for groups acting on topological spaces.
\end{rk}

\begin{rk}
Plugging in the definitions, we see that the left $\core(\mathcal M)$-action on $(\mathcal O\rtimes E\mathcal M)(n)$ in the sense of Construction~\ref{constr:external-actions} is the diagonal of the natural $\core(\mathcal M)$-action on $E\mathcal M^n$ and the restriction of the given $E\mathcal M$-action on $\mathcal O(n)$. On the other hand, the right $\core(\mathcal M)^n$-action is given by acting in the evident way on $E\mathcal M^n$ and trivially on $\mathcal O(n)$.
\end{rk}

\begin{prop}\label{prop:ext-vs-internal-1-cat}
Let $\mathcal O$ be an operad in $E\mathcal M\text{-}G\text{-}\mathscr C$ and let $A$ be an $\mathcal O\rtimes E\mathcal M$-algebra in $G\text{-}\mathscr C$. Then $A$ defines an $\mathcal O$-algebra in $E\mathcal M\text{-}G\text{-}\mathscr C$ by restricting the $\mathcal O\rtimes E\mathcal M$-algebra structure along the above inclusion $i\colon\mathcal O\hookrightarrow\mathcal O\rtimes E\mathcal M$ and equipping $A$ with the $E\mathcal M$-action obtained by restricting along the above homomorphism $k\colon E\mathcal M\to (\mathcal O\rtimes E\mathcal M)(1)$. This construction extends to an equivalence of ordinary categories $\Alg_{\mathcal O\rtimes E\mathcal M}(G\text{-}\mathscr C)\to \Alg_{\mathcal O}(E\mathcal M\text{-}G\text{-}\mathscr C)$ by sending a map $f\colon C\to D$ of $(\mathcal O\rtimes E\mathcal M)$-algebras to the same map viewed as a morphism of $\mathcal O$-algebras.
\end{prop}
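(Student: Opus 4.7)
The plan is to explicitly construct an inverse functor $\Alg_{\mathcal O}(E\mathcal M\text{-}G\text{-}\mathscr C)\to\Alg_{\mathcal O\rtimes E\mathcal M}(G\text{-}\mathscr C)$ and then check that both composites are the identity. Given an $\mathcal O$-algebra $A$ in $E\mathcal M\text{-}G\text{-}\mathscr C$ with structure maps $\alpha_n\colon\mathcal O(n)\times A^{\times n}\to A$ and $E\mathcal M$-action $\mu\colon E\mathcal M\times A\to A$, I define an $\mathcal O\rtimes E\mathcal M$-algebra structure by the formula
\begin{equation*}
\tilde\alpha_n\colon(\mathcal O\rtimes E\mathcal M)(n)\times A^{\times n}\to A,\qquad \big((o,u_1,\dots,u_n);a_1,\dots,a_n\big)\mapsto \alpha_n\big(o;\mu(u_1,a_1),\dots,\mu(u_n,a_n)\big).
\end{equation*}
This is manifestly $G$-equivariant (as $\alpha_n$ and $\mu$ are) and is functorial/simplicial in the obvious way.

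Next I would verify that $\tilde\alpha$ satisfies the operad algebra axioms. Unitality is immediate from the fact that the unit of $\mathcal O\rtimes E\mathcal M$ is $(1,1)$ and that both $\alpha_1(1;\blank)$ and $\mu(1,\blank)$ are the identity. Compatibility with the right $\Sigma_n$-action follows from the definition of the diagonal $\Sigma_n$-action on $\mathcal O(n)\times E\mathcal M^{\times n}$ together with the $\Sigma_n$-equivariance of $\alpha_n$. For associativity, one unravels the definition of $\gamma^{\mathcal O\rtimes E\mathcal M}$ in Construction~\ref{constr:twisted-product} and uses that $\alpha$ is compatible with $\gamma^{\mathcal O}$, that $\mu$ is a monoid action, and crucially that the $E\mathcal M$-action on $\mathcal O(r)$ makes $\alpha_r$ into a map of $E\mathcal M$-objects, so that $\mu(u_i,\alpha_{n_i}(p_i;\blank))=\alpha_{n_i}(u_i.p_i;u_i.\blank)$; this is exactly the bookkeeping that produces the $u_i.f^{(i)}$ terms in the formula for $\gamma^{\mathcal O\rtimes E\mathcal M}$.

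Having defined the inverse on objects, I extend it to morphisms by sending an $\mathcal O$-algebra map $f$ (which is by definition $E\mathcal M$-equivariant and an $\mathcal O$-algebra map) to the same underlying morphism in $G\text{-}\mathscr C$, which is then automatically a map of $\mathcal O\rtimes E\mathcal M$-algebras by the explicit formula for $\tilde\alpha$. Finally, I check that the two constructions are mutually inverse: starting from an $\mathcal O\rtimes E\mathcal M$-algebra $A$, restricting along $i$ and $k$ and then forming $\tilde\alpha$ recovers the original structure because $(o,u_1,\dots,u_n)=\gamma^{\mathcal O\rtimes E\mathcal M}(i(o);k(u_1),\dots,k(u_n))$ in $\mathcal O\rtimes E\mathcal M$; conversely, starting from an $\mathcal O$-algebra in $E\mathcal M\text{-}G\text{-}\mathscr C$, restricting $\tilde\alpha$ along $i$ recovers $\alpha$ (set all $u_i=1$) and restricting along $k$ recovers $\mu$ (take $n=1, o=1$).

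The main obstacle is purely bookkeeping: carefully matching the formula for $\gamma^{\mathcal O\rtimes E\mathcal M}$ against the associativity diagram for $\tilde\alpha$, in particular verifying that the \emph{twist} encoded by the replacement $f^{(i)}\leadsto u_i.f^{(i)}$ in Construction~\ref{constr:twisted-product} is precisely what is needed to make $\tilde\alpha$ associative given that the $\mathcal O$-structure maps are $E\mathcal M$-equivariant. Everything else is formal and follows essentially tautologically.
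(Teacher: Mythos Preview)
Your approach is correct and genuinely different from the paper's. You build an explicit inverse functor and verify the algebra axioms for $\tilde\alpha$ directly, using the key identity $(o,u_1,\dots,u_n)=\gamma^{\mathcal O\rtimes E\mathcal M}\big(i(o);k(u_1),\dots,k(u_n)\big)$ to show that the two constructions are mutually inverse. The paper instead only verifies well-definedness of the forward functor $\Psi$ (essentially the same equivariance computation you allude to) and then argues via the $1$-categorical Barr--Beck Theorem: both forgetful functors to $G\text{-}\mathscr C$ are monadic, $\Psi$ lies over the identity, and the canonical mate $\cat P_{\mathcal O}X\to\Psi\cat P_{\mathcal O\rtimes E\mathcal M}X$ is the evident isomorphism $\coprod_{n\ge0}\mathcal O(n)\times_{\Sigma_n}(E\mathcal M\times X)^{\times n}\cong\coprod_{n\ge0}(\mathcal O(n)\times E\mathcal M^{n})\times_{\Sigma_n}X^{\times n}$. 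Your route is more elementary and self-contained, at the cost of checking the full associativity of $\tilde\alpha$ against the twisted composition in $\mathcal O\rtimes E\mathcal M$; the paper's route trades that bookkeeping for a single isomorphism of free algebras, and fits the pattern (used repeatedly elsewhere in the paper) of reducing comparisons of algebra categories to comparisons of the underlying monads.
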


The corresponding statement for groups acting on topological spaces also appears without proof as \cite[Proposition~2.3]{framed-wahl}.

\begin{proof}
Let us first show that this is well-defined, for which it only remains to show that the above $\mathcal O$-action on $A$ is $E\mathcal M$-equivariant. We will show this for $\mathscr C=\cat{SSet}$, the proof for categories being analogous. To this end we note that by definition for each $u\in (E\mathcal M)_m$, $o\in\mathcal O(n)_m$ and $a_1,\dots,a_n\in A_m$ the action maps satisfy
\begin{align*}
&\alpha^{\mathcal O}_n(u.o; u.a_1,\dots,u.a_n)\\
&\quad{}=\alpha^{\mathcal O\rtimes E\mathcal M}_n\big((u.o,\bm1); \alpha^{\mathcal O\rtimes E\mathcal M}_1((1,u);a_1),\dots,\alpha^{\mathcal O\rtimes E\mathcal M}_1((1,u);a_n)\big)
\end{align*}
for $\bm1=(1,\dots,1)\in E\mathcal M^n_m$. Using that $A$ was an $(\mathcal O\rtimes E\mathcal M)$-algebra together with the definition of the operad structure on $\mathcal O\rtimes E\mathcal M$ this then equals
\begin{align*}
&\alpha^{\mathcal O\rtimes E\mathcal M}_n\big(\gamma\big((u.o,\bm1);(1,u),\dots,(1,u)\big); a_1,\dots,a_n\big)\\
&\quad{}= \alpha^{\mathcal O\rtimes E\mathcal M}_n\big(\big(u.o,(u,\dots,u)\big); a_1,\dots,a_n\big)\\
&\quad{}= \alpha^{\mathcal O\rtimes E\mathcal M}_n\big(\gamma\big((1,u);\big(o,(1,\dots,1\big)\big); a_1,\dots,a_n\big)
\end{align*}
and again using that $A$ is an $(\mathcal O\rtimes E\mathcal M)$-algebra and plugging in the definitions this then agrees with $u.\alpha^{\mathcal O}(o;a_1,\dots,a_n)$ as desired.

To finish the proof, we now observe that the above functor $\Psi$ fits into a commutative diagram of $1$-categories
\begin{equation*}
\begin{tikzcd}
\Alg_{\mathcal O\rtimes E\mathcal M}(G\text{-}\mathscr C)\arrow[d, "\forget"']\arrow[r,"\Psi"] & \Alg_{\mathcal O}(E\mathcal M\text{-}G\text{-}\mathscr C)\arrow[d, "\forget"]\\
G\text{-}\mathscr C\arrow[r,equal] &G\text{-}\mathscr C
\end{tikzcd}
\end{equation*}
in which the vertical functors are monadic. By the (1-categorical) Barr-Beck Monadicity Theorem it is then again enough that the canonical mate of the identity transformation is an isomorphism $\cat{P}_{\mathcal O}X\to\cat{P}_{\mathcal O\rtimes E\mathcal M} X$ for every $X\in G\text{-}\mathscr C$. However, plugging in the definitions this is indeed just the canonical isomorphism $\coprod_{n\ge 0} \mathcal O(n)\times_{\Sigma_n}(E\mathcal M\times X)^{\times n}\to \coprod_{n\ge 0} (\mathcal O(n)\times E\mathcal M^n)\times_{\Sigma_n}X^{\times n}$.
\end{proof}

\begin{rk}
One immediately checks from the definitions that with respect to the $G$-global model structures on both sides, the above equivalence preserves and reflects fibrations and weak equivalences; thus, it also preserves and reflects cofibrations.
\end{rk}

\begin{prop}\label{prop:associated-external}
Let $\mathcal O$ be a $G$-global operad.
\begin{enumerate}
\item If $\mathcal O$ is $\Sigma$-free, then so is $\mathcal O\rtimes E\mathcal M$.
\item If $\mathcal O$ is a $G$-global $E_\infty$-operad, then $\mathcal O\rtimes E\mathcal M$ is a twisted $G$-global $E_\infty$-operad.\label{item:ae-e-infty}
\end{enumerate}
\begin{proof}
For the first statement we restrict to the case $\mathscr C=\cat{SSet}$; the proof for categories is analogous or alternatively follows by passing to the nerve. For this, let $(o;u_0,\dots,u_n)\in\mathcal O(n)_m\times E\mathcal M_m^n$ and $(\sigma;v_0,\dots,v_n)\in\Sigma_n\wr\core(\mathcal M)$ with $(o;u_0,\dots,u_n).(\sigma;v_0,\dots,v_n) = (o;u_0,\dots,u_n)$, i.e.
\begin{equation*}
(o.\sigma; u_{\sigma(0)}v_0,\dots,u_{\sigma(n)}v_n)=(o;u_0,\dots,u_n).
\end{equation*}
But then $\sigma=1$ as $\Sigma_n$ acts freely on $\mathcal O(n)$. Thus, $u_iv_i=u_i$ for all $i=0,\dots,n$ and hence also $v_i=1$ as desired since $\core(\mathcal M)$ acts freely on $\mathcal M$ from the right. This completes the proof of the first statement.

For the second statement, it remains to show that $(\mathcal O\rtimes E\mathcal M)(n)\to *$ is a  $\Sigma_n\wr\core(\mathcal M)$-global (weak) equivalence if $\mathcal O$ is a $G$-global $E_\infty$-operad. For this we let $H\subset\mathcal M$ be universal and $\phi\colon H\to G\times(\Sigma_n\wr\core(\mathcal M))$ be any homomorphism. If we write $p\colon G\times(\Sigma_n\wr\core(\mathcal M))\to G\times\Sigma_n$ and $q\colon G\times(\Sigma_n\wr\core(\mathcal M))\to\Sigma_n\wr\core(\mathcal M)$ for the projections, then
\begin{equation*}
(\mathcal O\rtimes E\mathcal M)(n)^\phi = \mathcal O(n)^{p\circ\phi}\times E\big((\mathcal M^n)^{q\circ\phi}\big).
\end{equation*}
But $\mathcal O(n)^{p\circ\phi}$ is (weakly) contractible as $\mathcal O$ was assumed to be a $G$-global $E_\infty$-operad, while $(\mathcal M^n)^{q\circ\phi}\supset\Inj(\bm n\times\omega,\omega)^{q\circ\phi}$ is non-empty as seen in Example~\ref{ex:ext-g-global-operad}, so that also the second factor is contractible as desired.
\end{proof}
\end{prop}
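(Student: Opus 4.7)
Both assertions should follow by direct inspection once one spells out how $\Sigma_n\wr\core(\mathcal M)$ acts on $(\mathcal O\rtimes E\mathcal M)(n)=\mathcal O(n)\times E\mathcal M^n$. For Part~(1), I would read off from Construction~\ref{constr:twisted-product} that the right action of $(\sigma;v_1,\dots,v_n)$ is given by $(o;u_1,\dots,u_n)\mapsto (o.\sigma;u_{\sigma(1)}v_1,\dots,u_{\sigma(n)}v_n)$. If this fixes $(o;u_1,\dots,u_n)$, then $o.\sigma=o$ forces $\sigma=1$ by $\Sigma$-freeness of $\mathcal O$, and then $u_iv_i=u_i$ for every $i$; since $u_i\in\mathcal M=\Inj(\omega,\omega)$ is injective, this in turn forces $v_i=\id$, so the right $\Sigma_n\wr\core(\mathcal M)$-action is free as claimed.

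For Part~(2), $\Sigma$-freeness is Part~(1), so what remains is to show for every $n\ge0$, every universal $H\subset\mathcal M$ and every homomorphism $\phi\colon H\to G\times(\Sigma_n\wr\core(\mathcal M))$ that the graph fixed points $(\mathcal O\rtimes E\mathcal M)(n)^\phi$ are (weakly) contractible. The crucial structural observation is that the right $\core(\mathcal M)^n\subset\Sigma_n\wr\core(\mathcal M)$ acts only on the $E\mathcal M^n$-factor, while the left $\core(\mathcal M)$-action coming from the twisting is diagonal; consequently, writing $p\colon G\times(\Sigma_n\wr\core(\mathcal M))\to G\times\Sigma_n$ and $q\colon G\times(\Sigma_n\wr\core(\mathcal M))\to\Sigma_n\wr\core(\mathcal M)$ for the projections, the fixed-point set splits as
\[
(\mathcal O\rtimes E\mathcal M)(n)^\phi\;\cong\;\mathcal O(n)^{p\circ\phi}\;\times\; E\!\left((\mathcal M^n)^{q\circ\phi}\right).
\]
The first factor is (weakly) contractible by hypothesis on $\mathcal O$. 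For the second, I would note that the subset $\Inj(\bm n\times\omega,\omega)\subset\mathcal M^n$ is stable under the $H$-action and that its graph fixed points consist of $H$-equivariant injections $\bm n\times\omega\hookrightarrow\omega$ for the $H$-set structure on $\bm n\times\omega$ induced by $q\circ\phi$; such injections exist because $H$ is universal and $\bm n\times\omega$ is a countable $H$-set. Hence $(\mathcal M^n)^{q\circ\phi}$ is non-empty, so $E$ of it is contractible.

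The only delicate point is the bookkeeping required to justify the product decomposition: one must keep straight the interaction between the left $\core(\mathcal M)$-action (from the twisting), the left $G$-action, the right $\Sigma_n$-action on $\mathcal O(n)$, and the right $\Sigma_n\wr\core(\mathcal M)$-action on $E\mathcal M^n$, and in particular use that on $\mathcal O(n)$ the right action factors through the quotient $\Sigma_n\wr\core(\mathcal M)\twoheadrightarrow\Sigma_n$. Once that is set up carefully everything else is a direct consequence of the definition of universal subgroup.
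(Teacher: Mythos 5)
Your proposal is correct and follows the paper's argument essentially verbatim: Part~(1) is the same freeness chase (your appeal to injectivity of $u_i$ is just the reason $\core(\mathcal M)$ acts freely on $\mathcal M$ from the right, which is what the paper cites), and Part~(2) is the same fixed-point product decomposition followed by the same non-emptiness argument for $\Inj(\bm n\times\omega,\omega)^{q\circ\phi}$ via universality of $H$.
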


\begin{proof}[Proof of Theorem~\ref{thm:external-vs-internal}]
The zig-zag is given by
\begin{equation*}
\Alg_{\mathcal O}(E\mathcal M\text{-}G\text{-}\mathscr C)\simeq\Alg_{\mathcal O\rtimes E\mathcal M}(G\text{-}\mathscr C)\leftrightarrows \Alg_{(\mathcal O\rtimes E\mathcal M)\times\mathcal P}(G\text{-}\mathscr C)\rightleftarrows \Alg_{\mathcal P}(G\text{-}\mathscr C)
\end{equation*}
where the equivalence on the left is the one from Proposition~\ref{prop:ext-vs-internal-1-cat} and the remaining two Quillen equivalences are induced by the projections as in Corollary~\ref{cor:comparison-ext-g-global-operads}.
\end{proof}

\subsection{\texorpdfstring{$\bm G$}{G}-global vs.~\texorpdfstring{$\bm G$}{G}-equivariant coherent commutativity}
Throughout, let $G$ be a finite group and fix an injective homomorphism $j\colon G\to\mathcal M$ with universal image. We write $\delta\colon G\to E\mathcal M\times G$ for the homomorphism sending $g$ to $(j(g),g)$.

If $\mathcal O$ is any $G$-global operad, then we can restrict the $E\mathcal M$-$G$-action along $\delta$ to yield a $G$-operad $\delta^*\mathcal O$, and likewise for algebras. In this subsection we will prove:

\begin{thm}\label{thm:internal-g-global-vs-g-equiv}
Let $\mathcal O$ be a $\Sigma$-free $G$-global operad. Then
\begin{equation}\label{eq:delta-star-int-g-global}
\delta^*\colon\Alg_{\mathcal O}(E\mathcal M\text{-}G\text{-}\mathscr C)\to\Alg_{\delta^*\mathcal O}(G\text{-}\mathscr C)
\end{equation}
induces a left and right Bousfield localization at the $G$-equivariant equivalences (for $\mathscr C=\cat{Cat}$) or $G$-equivariant weak equivalences (for $\mathscr C=\cat{SSet}$).
\end{thm}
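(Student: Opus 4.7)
\emph{Proof plan.} The strategy is to reduce to the corresponding statement about underlying objects (no operads) via the monadicity results of Propositions~\ref{prop:free-monadic-sset}/\ref{prop:free-monadic}, in the spirit of the proofs of Theorems~\ref{thm:change-of-operad-sset} and~\ref{thm:change-of-operad-categories}. First note that $\delta^*$ is fully homotopical: a $G$-global weak equivalence $f$ has the property that $f^\phi$ is a weak equivalence for every universal $H\subset\mathcal M$ and homomorphism $\phi\colon H\to G$, and specializing to $H=j(K)$ (which is universal as a subgroup of the universal $j(G)$) with $\phi=j^{-1}|_{j(K)}$ shows that $(\delta^*f)^K$ is a weak equivalence for every $K\subset G$. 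Accordingly, $(\delta^*)^\infty$ at the algebra level commutes with the monadic forgetful functors on either side.

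Second, I would establish the unstable analogue of the theorem, namely that $\delta^*\colon(E\mathcal M\text{-}G\text{-}\mathscr C)_{G\text{-global}}^\infty\to(G\text{-}\mathscr C)_{G\text{-equiv}}^\infty$ is itself both a left and a right Bousfield localization. This is either already present in \cite{g-global}/\cite{cat-g-global} or can be verified directly: both adjoints $\delta_!^\infty$ and $\delta_*^\infty$ exist by presentability and accessibility, so the claim reduces to their full faithfulness, i.e.~to showing that the underlying (co)unit maps are $G$-equivariant (weak) equivalences. This in turn follows from the universality of $j(G)\subset\mathcal M$ together with the $j(K)$-equivariant contractibility of $E\mathcal M$ for each $K\subset G$, which combine (via the arguments behind Lemmas~\ref{lemma:alpha-shriek-sset}/\ref{lemma:alpha-shriek} and~\ref{lemma:alpha-star-sset}/\ref{lemma:alpha-star}) to force the relevant fixed-point comparison maps to be equivalences.

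The final step is to lift this to algebras. Applying the $\infty$-categorical Barr--Beck criterion \cite[Corollary~4.7.3.16]{ha} to the commuting square of the $(\delta^*)^\infty$'s with and without operadic structure, the full faithfulness of the algebraic adjoints reduces to a Beck--Chevalley-type verification: the canonical comparison maps between $\cat{P}_{\mathcal O}\circ\delta_!$ and $\delta_!\circ\cat{P}_{\delta^*\mathcal O}$ (and the analogous one for $\delta_*$) must be equivalences on (cofibrant) underlying objects. Plugging in the free-algebra formula $\cat{P}X=\coprod_n \mathcal O(n)\times_{\Sigma_n}X^{\times n}$, this comes down to showing that $\delta_!$ commutes up to $G$-global weak equivalence with the relevant $\Sigma_n$-quotients, which the $\Sigma$-freeness of $\mathcal O$ combined with Lemmas~\ref{lemma:free-quotients-sset}/\ref{lemma:free-quotients} will make accessible. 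I expect this Beck--Chevalley check to be the main obstacle: it is more delicate than in Theorems~\ref{thm:change-of-operad-sset}/\ref{thm:change-of-operad-categories} since we are simultaneously changing the ambient category of the operad (via $\delta^*$) and not just the operad itself, and $\delta_!$ fails to preserve finite products on the pointset level.
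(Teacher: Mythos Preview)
Your approach diverges substantially from the paper's, and the obstacle you yourself flag is genuine and unresolved. The Beck--Chevalley step is not the comparison you describe: the algebraic left adjoint to $\delta^*$ is \emph{not} computed by applying the base-level $\delta_!$ to underlying objects (precisely because $\delta_!$ does not preserve products), so there is no a~priori map between $\cat{P}_{\mathcal O}\circ\delta_!$ and $\delta_!\circ\cat{P}_{\delta^*\mathcal O}$ to check. What one would actually need is that $\forget\circ\cat{L}(\delta_!)^{\text{alg}}\simeq\cat{L}\delta_!\circ\forget$, and verifying this (equivalently, that the derived algebraic unit is an equivalence on free algebras) is exactly as hard as the original problem. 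Lurie's Corollary~4.7.3.16 detects \emph{equivalences} across monadic squares, not Bousfield localizations; your plan lacks a tool to transport the localization property upward.

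The paper sidesteps this entirely via the twisted-operad machinery of \S4.1. The key point is that Proposition~\ref{prop:ext-vs-internal-1-cat} gives a $1$-categorical isomorphism $\Alg_{\mathcal O}(E\mathcal M\text{-}G\text{-}\mathscr C)\cong\Alg_{\mathcal O\rtimes E\mathcal M}(G\text{-}\mathscr C)$, absorbing the $E\mathcal M$-action into the operad. One then compares $\delta^*$ to the functor $\Delta$ of Construction~\ref{constr:external-twisted-diagonal} via a commutative square (\ref{diag:Delta-vs-delta}) whose remaining edge $i^*$ is a change-of-operad equivalence (the map $i\colon\delta^*\mathcal O\to(\mathcal O\rtimes E\mathcal M)_G$ is a $G$-equivariant equivalence of $\Sigma$-free operads, so Theorems~\ref{thm:change-of-operad-sset}/\ref{thm:change-of-operad-categories} apply). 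The crucial observation is Lemma~\ref{lemma:Delta-right-Bousfield}: $\Delta$ is an \emph{isomorphism of $1$-categories}, so $\Delta^\infty$ is trivially a quasi-localization, and it is right Quillen, hence a right Bousfield localization. The right adjoint of $(\delta^*)^\infty$ is then obtained directly from the product-preserving Quillen pair $\delta^*\dashv\delta_*$ on the base, which lifts to algebras because both adjoints preserve products---no mate comparison needed. In short, the paper trades your monadic reduction for a $1$-categorical trick that makes the localization statement tautological at the twisted-operad level.
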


\begin{rk}
If $\mathcal O$ is a $G$-global $E_\infty$-operad, then $\delta^*\mathcal O$ is evidently a genuine $G$-$E_\infty$-operad. Thus the above theorem in particular allows us to express the homotopy theory of (categorical or simplicial) genuine $G$-$E_\infty$-algebras as a Bousfield localization of the homotopy theory of $G$-global $E_\infty$-algebras.
\end{rk}

The proof of the theorem will occupy the rest of this subsection. For this, we will first prove an analogous result for \emph{twisted $G$-operads}, from which we will then in a second step deduce the above comparison for ordinary operads.

\begin{constr}
If $\mathcal O$ is a twisted $G$-operad, then we get induced left and right $\core(\mathcal M)$-actions on $\mathcal O(n)$ for every $n\ge 0$ that commute with each other and the $G$-action. While the structure maps of $\mathcal O(n)$ are usually not equivariant with respect to either of these $\core(\mathcal M)$-actions, one immediately checks that they are equivariant with respect to the \emph{conjugate} action, making $\mathcal O$ into a $(\core(\mathcal M)\times G)$-operad. We write $\mathcal O_G$ for the operad in $G\text{-}\mathscr C$ obtained via restricting this action along the homomorphism $\delta=(j,\id)\colon G\to\core(\mathcal M)\times G$.
\end{constr}

\begin{ex}
Let $\mathcal I$ be the categorical/simplicial injection operad from Example~\ref{ex:ext-g-global-operad}. Then the above $G$-action on $\mathcal I_G$ is given by $g.u= j(g)\circ u\circ(\bm n\times j(g^{-1}))$. Put differently, $\mathcal I_G(n)= E\Inj(\bm n\times\mathcal U_G,\mathcal U_G)$ where $\mathcal U_G\mathrel{:=}j^*\omega$ is a (specific) complete $G$-set universe. Operads of this form were considered for example by Guillou and May in \cite[Definition~7.4]{guillou-may}, and they are examples of genuine $G$-$E_\infty$-operads. In fact, if $\mathcal O$ is any twisted $G$-global $E_\infty$-operad, then $\mathcal O_G$ is a genuine $G$-$E_\infty$-operad as
\begin{equation*}
\mathcal O_G(n)^\phi=\mathcal O(n)^{(j^{-1},\phi j^{-1})\colon j(H)\to G\times\Sigma_n}\simeq*
\end{equation*}
for every $H\subset G$, $\phi\colon H\to\Sigma_n$.
\end{ex}

\begin{constr}\label{constr:external-twisted-diagonal}
Let $j\colon G\to\mathcal M$ be as above and let $\mathcal O$ be a twisted $G$-operad. Then any $\mathcal O$-algebra $A$ in $G\text{-}\mathscr C$ carries an `external' $G$-action by restricting the action of $\mathcal O(1)$ along
\begin{equation*}
G\xrightarrow j\core(\mathcal M)\xrightarrow k \mathcal O(1)^G,
\end{equation*}
and this commutes with the `internal' $G$-action coming from the fact that we started with an algebra in $G$-categories. Equipping $A$ with the diagonal of these two actions, we therefore obtain a functor $\delta\colon\Alg_{\mathcal O}(G\text{-}\mathscr C)\to G\text{-}\mathscr C$. One easily checks that the original action maps $\mathcal O(n)\times A^{\times n}\to A$ are $G$-equivariant when viewed as maps $\mathcal O_G(n)\times (\delta A)^n\to \delta A$, so that this construction lifts to a functor
\begin{equation}\label{eq:action-twist}
\Delta\colon\Alg_{\mathcal O}(G\text{-}\mathscr C)\to\Alg_{\mathcal O_G}(G\text{-}\mathscr C).
\end{equation}
\end{constr}

\begin{lemma}\label{lemma:Delta-right-Bousfield}
Let $\mathcal O$ be a twisted $G$-operad. Then $(\ref{eq:action-twist})$ induces a right Bousfield localization
\begin{equation*}
\Delta^\infty\colon\Alg_{\mathcal O}(G\text{-}\mathscr C)_{\textup{$G$-global}}^\infty\to\Alg_{\mathcal O_G}(G\text{-}\mathscr C)_{\textup{$G$-equivariant}}^\infty.
\end{equation*}
\begin{proof}
It is clear that $(\ref{eq:action-twist})$ precisely inverts the $G$-equivariant (weak) equivalences; it remains to show that $\Delta^\infty$ is a quasi-localization and that it admits a left adjoint.

For the first statement it suffices to observe that $\Delta$ is an isomorphism of $1$-categories: an inverse is given by equipping an $\mathcal O_G$-algebra $C$ with the $G$-action in which $g\in G$ acts via the composition
\begin{equation*}
A\xrightarrow{g.\blank} A\xrightarrow{\alpha(kj(g^{-1}),\blank)} A
\end{equation*}
for $k$ as in Construction~\ref{constr:twisted-product},
where $\blank.\blank$ denotes the internal $G$-action and $\alpha$ denotes the operadic action; note that this is indeed a $G$-action as $g.(kj(h))=kj(ghg^{-1})$ for all $g,h\in G$ \emph{with respect to the $G$-action on $\mathcal O_G$}.

Finally, to construct the left adjoint it suffices to observe that $\Delta$ is right Quillen with respect to the $G$-global model structure on the source and the $G$-equivariant one on the target.
\end{proof}
\end{lemma}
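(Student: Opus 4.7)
The plan is to verify the two criteria the paper isolates---that $\Delta^\infty$ is a quasi-localization at the $G$-equivariant (weak) equivalences, and that it admits a left adjoint---by first constructing an explicit $1$-categorical inverse to $\Delta$ and then exploiting a fixed-point identification to check that $\Delta$ is right Quillen.

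First I would define a functor $\Xi\colon\Alg_{\mathcal O_G}(G\text{-}\mathscr C)\to\Alg_{\mathcal O}(G\text{-}\mathscr C)$ by retaining the underlying object of an $\mathcal O_G$-algebra $A$ (with $G$-action written $(g,a)\mapsto g\cdot a$) and replacing the $G$-action by $g\ast a\mathrel{:=}\alpha^{\mathcal O}_1\bigl(k(j(g^{-1})),g\cdot a\bigr)$, where $\alpha^{\mathcal O}_1$ denotes the unary action map of $\mathcal O$ and $k$ is the structure homomorphism of the twisted $G$-operad. Using that the $G$-action on $\mathcal O_G(1)$ is the conjugation action, so that $g\cdot k(m)=k(j(g)mj(g^{-1}))$, one checks that $\ast$ really is a group action and that the operadic action maps of $\mathcal O$ are equivariant with respect to $\ast$ on $A$ and the \emph{untwisted} $G$-action on $\mathcal O$; direct verification then yields $\Delta\circ\Xi=\id$ and $\Xi\circ\Delta=\id$, so $\Delta$ is an isomorphism of $1$-categories.

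Granted this, for any subgroup $H\subset G$ and any $\mathcal O$-algebra $A$ one has $\Delta(A)^H = A^{\Gamma_{j(H),\iota}}$, where $\iota=j^{-1}|_{j(H)}\colon j(H)\to G$ and the right-hand side refers to the $(\core(\mathcal M)\times G)$-action on $A$. Since $j(H)$ is universal as a subgroup of the universal subgroup $j(G)$, the graph subgroup $\Gamma_{j(H),\iota}$ lies in the family controlling the $G$-global model structure. Hence $\Delta$ sends $G$-global fibrations and (weak) equivalences to $G$-equivariant fibrations and (weak) equivalences respectively. The first statement makes $\Delta$ right Quillen, so its left Quillen adjoint (which exists formally since $\Delta$ preserves all limits and filtered colimits between presentable categories) passes to the required left adjoint of $\Delta^\infty$; the second says the target $\infty$-category is obtained from the source by further inverting the $G$-equivariant (weak) equivalences, so $\Delta^\infty$ is the claimed quasi-localization. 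The main care needed throughout is the bookkeeping in the construction of $\Xi$---juggling the original $G$-action on $\mathcal O(1)$, its conjugation-twist used to form $\mathcal O_G$, and the interplay via $k\circ j$---so as to verify the cocycle identity $g\ast(h\ast a)=(gh)\ast a$; once this is settled, the remainder is essentially formal.
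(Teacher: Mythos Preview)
Your proposal is correct and follows essentially the same approach as the paper: both construct the explicit $1$-categorical inverse $\Xi$ via the formula $g\ast a=\alpha_1^{\mathcal O}(k(j(g^{-1})),g\cdot a)$ and then observe that $\Delta$ is right Quillen. Your version is slightly more explicit in justifying the right Quillen property via the fixed-point identification $\Delta(A)^H=A^{\Gamma_{j(H),\iota}}$, whereas the paper simply asserts it; otherwise the arguments coincide.
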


\begin{proof}[Proof of Theorem~\ref{thm:internal-g-global-vs-g-equiv}]
It is clear that $\delta^*$ inverts precisely the $G$-equivariant equivalences. It remains to show that $(\delta^*)^\infty$ is a right Bousfield localization and that it in addition admits a right adjoint.

\begin{claim*}
The map $i\colon\mathcal O\to\mathcal O\rtimes E\mathcal M$ defines a $G$-equivariant (weak) equivalence $\delta^*\mathcal O\to (\mathcal O\rtimes E\mathcal M)_G$ of operads.
\begin{proof}
Plugging in the definitions, we immediately see that $(\mathcal O\rtimes E\mathcal M)_G(n)=(\delta^*\mathcal O)(n)\times E(\mathcal M^{\textup{conj}})^n$ as $(G\times\Sigma_n)$-objects in $\mathscr C$, where $\mathcal M^{\textup{conj}}$ denotes $\mathcal M$ with $G$-action given by $g.u=j(g)uj(g^{-1})$ and $\Sigma_n$ acts in the evident way on each factor. In particular, $i$ indeed defines a map $\delta^*\mathcal O\to (\mathcal O\rtimes E\mathcal M)_G$ and it only remains to show that $E(\mathcal M^{\textup{conj}})^n$ has contractible $\phi$-fixed points for every $\phi\colon G\to\Sigma_n$. However, these are just the fixed points for
\begin{equation*}
j(G)\xrightarrow{j^{-1}} G\xrightarrow{(\phi;\id,\dots,\id)} \Sigma_n\wr G,
\end{equation*}
and as $j(G)$ is universal, the claim then follows from the proof of Proposition~\ref{prop:associated-external}-$(\ref{item:ae-e-infty})$.
\end{proof}
\end{claim*}

We now have a strictly commutative diagram
\begin{equation}\label{diag:Delta-vs-delta}
\begin{tikzcd}
\Alg_{\mathcal O}(E\mathcal M\text{-}G\text{-}\mathscr C)\arrow[d,"\cong"']\arrow[r, "\delta^*"] & \Alg_{\delta^*\mathcal O}(G\text{-}\mathscr C)\\
\Alg_{\mathcal O\rtimes E\mathcal M}(G\text{-}\mathscr C)\arrow[r, "\Delta"'] & \Alg_{(\mathcal O\rtimes E\mathcal M)_G}(G\text{-}\mathscr C)\arrow[u, "i^*"']
\end{tikzcd}
\end{equation}
where the unlabelled isomorphism is the one from Proposition~\ref{prop:ext-vs-internal-1-cat}. Upon passing to associated quasi-categories, the lower horizontal arrow induces a right Bousfield localization by Lemma~\ref{lemma:Delta-right-Bousfield}, while the right hand arrow induces an equivalence by Theorem~\ref{thm:change-of-operad-sset}/\ref{thm:change-of-operad-categories} together with Proposition~\ref{prop:associated-external}-$(\ref{item:ae-e-infty})$. Thus, also $(\delta^*)^\infty$ is a right Bousfield localization.

Finally, to see that $(\delta^*)^\infty$ admits a right adjoint, we observe that we have a Quillen adjunction
\begin{equation*}
\delta^*\colon E\mathcal M\text{-}G\text{-}\mathscr C\rightleftarrows G\text{-}\mathscr C :\!\delta_*;
\end{equation*}
while this is not quite an instance of Lemma~\ref{lemma:alpha-star-sset}/\ref{lemma:alpha-star}, the left adjoint is homotopical and it moreover preserves cofibrations as the ones on the right hand side are simply the injective cofibrations. Since both adjoints preserve products, this lifts to an adjunction
\begin{equation*}
\Alg_{\mathcal O}(E\mathcal M\text{-}G\text{-}\mathscr C)\rightleftarrows\Alg_{\delta^*\mathcal O}(G\text{-}\mathscr C),
\end{equation*}
which is again a Quillen adjunction as fibrations and weak equivalences on both sides are created in the underlying categories. The claim follows immediately.
\end{proof}

\begin{rk}
Conversely, we can now conclude from $(\ref{diag:Delta-vs-delta})$ that also $\Delta^\infty$ admits a right adjoint, exhibiting it as \emph{left} Bousfield localization as well.
\end{rk}

\section{Parsummability}\label{sec:parsummability}
Let $G$ be any discrete group. In this section we will compare the above categories of $G$-global $E_\infty$-algebras to the models of `$G$-globally coherently commutative monoids' studied in \cite[Chapter~2]{g-global} and in particular to so-called \emph{$G$-parsummable categories} and \emph{$G$-parsummable simplicial sets}.

\subsection{Tameness and the box product}
In order to talk about parsummability we first have to introduce a technical condition on $E\mathcal M$-actions that is called \emph{tameness}:

\begin{defi}
Let $X$ be an $\mathcal M$-set. An element $x\in X$ is \emph{supported} on a finite set $A\subset\omega$ if $u.x=x$ for all $u\in\mathcal M$ fixing $A$ pointwise; we write $X_{[A]}\subset X$ for the subset of all elements supported on $A$. Moreover, we say that $x$ is \emph{finitely supported} if it is supported on some finite set $A\subset\omega$, and we call $X$ \emph{tame} if all its elements are finitely supported.

If $X$ is an $E\mathcal M$-simplicial set, then we say that an $n$-simplex $x\in X_n$ is supported on the finite set $A\subset\omega$ if it is supported on $A$ as an element of $X_n$ equipped with the diagonal $\mathcal M$-action. Analogously, we define the support of a finitely supported $n$-simplex, and the notion of tameness of $E\mathcal M$-simplicial sets. We write $\cat{$\bm{E\mathcal M}$-SSet}^\tau\subset\cat{$\bm{E\mathcal M}$-SSet}$ for the full subcategory spanned by the tame $E\mathcal M$-simplicial sets.

Finally, a small $E\mathcal M$-category is \emph{tame} if its set of objects is so. We write $\cat{$\bm{E\mathcal M}$-Cat}^\tau\subset\cat{$\bm{E\mathcal M}$-Cat}$ for the full subcategory spanned by the tame $E\mathcal M$-categories.
\end{defi}

For a detailed treatment of the combinatorics of tame actions we refer the reader to \cite[Section~2]{I-vs-M-1-cat} and \cite[Section~1.3]{g-global}.

\begin{rk}\label{rk:tameness-strong}
While the above definition of support for $E\mathcal M$-simplicial sets might look too weak at first, one can actually show that if $X$ is an $E\mathcal M$-simplicial set and $x\in X_n$ is supported on $A$ in the above sense, then $(u_0,\dots,u_m).f^*x=f^*x$ for all $f\colon[m]\to[n]$ in $\Delta$ and $(u_0,\dots,u_m)\in (E\mathcal M)_m$ such that each $u_i$ fixes $A$ pointwise, see \cite[Theorem~1.3.17]{g-global}. A similar statement for $E\mathcal M$-categories can be found as \cite[Proposition~2.13-(ii)]{schwede-k-theory}.
\end{rk}

\begin{rk}\label{rk:coreflective}
The inclusions $\cat{$\bm{E\mathcal M}$-Cat}^\tau\hookrightarrow\cat{$\bm{E\mathcal M}$-Cat}$,  $\cat{$\bm{E\mathcal M}$-SSet}^\tau\hookrightarrow\cat{$\bm{E\mathcal M}$-SSet}$ both admit right adjoints $(\blank)^\tau$ given by passing to the full subcategory of finitely supported objects or the subcomplex of finitely supported simplices, respectively, see \cite[Example~2.15]{schwede-k-theory} and \cite[proof of Corollary~1.3.23]{g-global}. It follows formally, that these subcategories are closed under small colimits and that they possess small limits, which can be computed by forming limits in the ambient category and then applying $(\blank)^\tau$. Moreover, one immediately checks that both $\cat{$\bm{E\mathcal M}$-Cat}^\tau$ and $\cat{$\bm{E\mathcal M}$-SSet}^\tau$ are closed under all \emph{finite} limits.
\end{rk}

\begin{thm}\label{thm:positive-model-structure-sset}
The category $\cat{$\bm{E\mathcal M}$-$\bm G$-SSet}^\tau$ of $G$-objects in $\cat{$\bm{E\mathcal M}$-SSet}^\tau$ admits a unique cofibrantly generated model structure with weak equivalences the $G$-global weak equivalences and generating cofibrations given by
\begin{align*}
I=\big\{\big(E\Inj(A,\omega)\times_\phi G\big)\times (\del\Delta^n\hookrightarrow\Delta^n) : {}&\text{$H$ finite group,}\\ &\text{$A\not=\varnothing$ finite faithful $H$-set,}\\
&\text{$\phi\colon H\to G$, $n\ge0$}\big\}.
\end{align*}
We call this the \emph{positive $G$-global model structure}. It is combinatorial, simplicial, proper, and filtered colimits in it are homotopical. Moreover, the adjunction
\begin{equation*}
\incl\colon\cat{$\bm{E\mathcal M}$-$\bm G$-SSet}^\tau\rightleftarrows\cat{$\bm{E\mathcal M}$-$\bm G$-SSet}_\textup{injective $G$-global}:\!(\blank)^\tau
\end{equation*}
is a Quillen equivalence.
\end{thm}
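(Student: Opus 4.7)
The plan is to construct the positive $G$-global model structure via Smith's recognition theorem for combinatorial model categories, taking as generating acyclic cofibrations the set
\begin{equation*}
J=\big\{(E\Inj(A,\omega)\times_\phi G)\times(\Lambda^n_k\hookrightarrow\Delta^n): \text{as in $I$, $0\le k\le n$}\big\}
\end{equation*}
obtained from $I$ by replacing boundary inclusions with horn inclusions. The category $\cat{$\bm{E\mathcal M}$-$\bm G$-SSet}^\tau$ is locally presentable as a coreflective subcategory of $\cat{$\bm{E\mathcal M}$-$\bm G$-SSet}$ (Remark~\ref{rk:coreflective}), and the $G$-global weak equivalences between tame objects form an accessible class closed under $2$-out-of-$3$ and retracts by inheritance from the injective $G$-global model structure.

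The main input to Smith's hypotheses is a fixed-point analysis of the cells $E\Inj(A,\omega)\times_\phi G$ against graph subgroups $\Gamma_{K,\psi}$ for universal finite $K\subset\mathcal M$ and $\psi\colon K\to G$: the faithfulness of $A$ is precisely what makes the $H$-action on $\Inj(A,\omega)$ free, so each such fixed-point set forms the nerve of an indiscrete groupoid and is hence empty or contractible, being non-empty for exactly the right collection of pairs $(K,\psi)$ to detect $G$-global weak equivalences via the fixed-point criterion of Theorem~\ref{thm:g-global-model-sset}. From this one deduces that relative $J$-cell complexes are $G$-global weak equivalences and that maps with the right lifting property against $I$ are precisely the ambient acyclic fibrations. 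With the model structure in place, simplicial enrichment follows from the pushout-product axiom applied to horn inclusions, properness combines right-properness in the ambient injective $G$-global structure with the fact that the cofibrations in play are injective cofibrations and that $\cat{$\bm{E\mathcal M}$-$\bm G$-SSet}^\tau$ is closed in the ambient category under finite limits and all colimits (Remark~\ref{rk:coreflective}), and homotopy invariance of filtered colimits is inherited directly.

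For the Quillen equivalence, $\incl$ is left Quillen because it is homotopical and preserves the generating (acyclic) cofibrations (which are injective cofibrations of simplicial sets with tame domain and codomain). The non-trivial direction is that the derived unit is a weak equivalence, equivalently that for every injectively $G$-globally fibrant $Y$ the inclusion $Y^\tau\hookrightarrow Y$ is a $G$-global weak equivalence. Via the $\phi$-fixed-point criterion this reduces to showing that every $\phi$-fixed simplex of $Y$, for $\phi\colon H\to G$ from a universal finite $H\subset\mathcal M$, is simplicially homotopic inside $Y^\phi$ to a finitely supported one. Using the strengthened characterization of tameness recalled in Remark~\ref{rk:tameness-strong}, one builds such a homotopy by invoking universality of $H$ to produce a tame replacement and then fibrancy of $Y$ to connect the two via an equivariant homotopy. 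This ``moving fixed points into the tame part,'' which crucially uses universality, is where I expect the main technical difficulty to lie.
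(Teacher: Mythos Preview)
The paper itself only cites \cite[Theorem~1.4.60]{g-global} for this statement, but it proves the categorical analogue in detail (Theorems~\ref{thm:positive-model-structure} and~\ref{thm:inclusion-tame-cat}), so I compare against those.

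Your construction of the model structure via Smith's theorem is essentially what the paper does (it invokes Lurie's variant \cite[Proposition~A.2.6.13]{htt}). One point is misstated: the maps with the right lifting property against $I$ are \emph{not} the acyclic fibrations of the ambient injective model structure---the latter class is much smaller. What you actually need, and what the paper verifies, is only that $I\text{-inj}\subset W$. The argument is that if $f$ has the right lifting property against $I$ then $f^\phi_{[A]}$ is an acyclic Kan fibration for every universal $H$, every non-empty finite $H$-subset $A\subset\omega$, and every $\phi\colon H\to G$; passing to the filtered colimit over $A$ (and using tameness) then shows $f^\phi$ is a weak equivalence. Your fixed-point analysis of the cells is the right input here, but the colimit step is what closes the argument.

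There is a genuine gap in your treatment of the Quillen equivalence. You reduce to showing that ``every $\phi$-fixed simplex of $Y$ is simplicially homotopic inside $Y^\phi$ to a finitely supported one,'' but this is not a valid reduction: knowing that each simplex of a space is individually homotopic to one in a subspace does not make the inclusion a weak equivalence (think of a point inside a circle). The paper's approach is instead to show, for each non-empty finite $H$-set $A\subset\omega$, that the inclusion $Y^\phi_{[A]}\hookrightarrow Y^\phi$ is a weak equivalence, and then pass to the filtered colimit over $A$. The key observation is that this inclusion is conjugate to the map
\[
\Maps^{E\mathcal M\times G}\big(E\Inj(A,\omega)\times_\phi G,\,Y\big)\longrightarrow\Maps^{E\mathcal M\times G}\big(E\mathcal M\times_\phi G,\,Y\big)
\]
induced by restriction $E\mathcal M\to E\Inj(A,\omega)$. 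Since $Y$ is injectively fibrant and the injective model structure is simplicial, it suffices that $E\mathcal M\times_\phi G\to E\Inj(A,\omega)\times_\phi G$ is a $G$-global weak equivalence, which follows from universality of $H$ and Lemma~\ref{lemma:free-quotients-sset}. This corepresentability argument replaces your ad hoc ``moving'' construction and is where injective fibrancy is actually used.
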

Strictly speaking, we of course have to restrict to \emph{sets} of finite sets $A$ and finite groups $H$ (covering all isomorphism classes) in the definition of the generating cofibrations to get an honest set, but we will ignore this technicality below.

We noreover remark that there is also an \emph{absolute $G$-global model structure} with the same weak equivalences as above but where we additionally allow $A=\varnothing$ in the definition of the generating cofibrations. While this absolute model structure is more natural in most contexts, the positive one will be necessary for our applications below.
\begin{proof}[Proof of Theorem~\ref{thm:positive-model-structure-sset}]
See \cite[Theorem~1.4.60]{g-global}.
\end{proof}

Our next goal now is to construct an analogous model structure on $\cat{$\bm{E\mathcal M}$-$\bm G$-Cat}^\tau$ and to show that the inclusion $\cat{$\bm{E\mathcal M}$-$\bm G$-Cat}^\tau\hookrightarrow\cat{$\bm{E\mathcal M}$-$\bm G$-Cat}$ is similarly part of a Quillen equivalence.

\begin{thm}\label{thm:positive-model-structure}
The category $\cat{$\bm{E\mathcal M}$-$\bm G$-Cat}^\tau$ admits a unique cofibrantly generated model structure with weak equivalences the $G$-global equivalences and generating cofibrations given by
\begin{align*}
I=\big\{\big(E\Inj(A,\omega)\times_\phi G\big)\times i : {}&\text{$H$ finite group, $A\not=\varnothing$ finite faithful $H$-set,}\\
&\text{$\phi\colon H\to G$, $i\in I_{\cat{Cat}}$}\big\}.
\end{align*}
We call this the \emph{positive $G$-global model structure}. It is left proper, $\cat{Cat}$-enriched (hence in particular simplicial), and combinatorial.
\end{thm}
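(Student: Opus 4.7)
The plan is to apply Kan's recognition theorem for cofibrantly generated model categories (e.g.\ \cite[Theorem~A.2.17]{g-global}) to the given set $I$ together with the candidate set of generating acyclic cofibrations
\[
J=\big\{(E\Inj(A,\omega)\times_\phi G)\times j : \text{$H$ finite, $A\ne\varnothing$ faithful $H$-set, $\phi\colon H\to G$, $j\in J_{\cat{Cat}}$}\big\}
\]
and the class $W$ of $G$-global equivalences. The ambient category $\cat{$\bm{E\mathcal M}$-$\bm G$-Cat}^\tau$ is locally presentable, being coreflective in the locally presentable category $\cat{$\bm{E\mathcal M}$-$\bm G$-Cat}$ (Remark~\ref{rk:coreflective}); the domains of $I$ and $J$ are small, and $W$ is closed under retracts and satisfies $2$-out-of-$3$ because the corresponding statements hold in $\cat{Cat}$. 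Moreover, Remark~\ref{rk:coreflective} ensures that colimits (in particular, cell attachments) in $\cat{$\bm{E\mathcal M}$-$\bm G$-Cat}^\tau$ are created in the ambient category.

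The heart of the argument will be a corepresentation formula: for every finite group $H$, every finite faithful $H$-set $A$ (with a chosen $H$-equivariant injection $A\hookrightarrow\omega$), every homomorphism $\phi\colon H\to G$, and every tame $E\mathcal M$-$G$-category $X$, one has a natural isomorphism
\[
\Hom_{\cat{$\bm{E\mathcal M}$-$\bm G$-Cat}^\tau}^{\cat{Cat}}\!\big(E\Inj(A,\omega)\times_\phi G,\,X\big)\cong X^\phi_{[A]},
\]
where $X^\phi_{[A]}\subseteq X^\phi$ denotes the full subcategory of $\Gamma_{H,\phi}$-fixed objects supported on (the image of) $A$ in the sense of Remark~\ref{rk:tameness-strong}; this is the direct categorical analogue of the simplicial computation in \cite[Section~1.4]{g-global}. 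Combined with the tame decomposition $X^\phi=\colim_A X^\phi_{[A]}$ as a filtered colimit over finite faithful $H$-subsets $A\subseteq\omega$, and using that equivalences and isofibrations of categories are closed under filtered colimits, this shows that $f$ is $I$-injective iff $f^\phi$ is an acyclic fibration in $\cat{Cat}$ for every universal $H\subseteq\mathcal M$ and every $\phi\colon H\to G$, while $f$ is $J$-injective iff each $f^\phi$ is an isofibration. Thus the $I$-injectives coincide with the $J$-injective $G$-global equivalences, as required.

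It then remains to verify that every relative $J$-cell complex is a $G$-global equivalence. Each generator is injective on objects, hence an injective cofibration, and it is a $G$-global equivalence because $E\Inj(A,\omega)\times_\phi G$ is cofibrant in the $G$-global model structure on $\cat{$\bm{E\mathcal M}$-$\bm G$-Cat}$ of Corollary~\ref{cor:g-global-model-cat} and $j\in J_{\cat{Cat}}$ is a $\cat{Cat}$-acyclic cofibration, so the product is a $G$-global acyclic cofibration by the $\cat{Cat}$-enrichment recorded there. Proposition~\ref{prop:inj-po} together with the homotopy invariance of filtered colimits from Corollary~\ref{cor:g-global-model-cat} then shows that $J$-cell complexes are $G$-global equivalences; the same pair of inputs gives left properness, because every cofibration of the new model structure is in particular an injective cofibration. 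Finally, for the $\cat{Cat}$-enrichment, the pushout-product of a generator in $I$ with a generator of $\cat{Cat}$ unravels to $(E\Inj(A,\omega)\times_\phi G)\times(i\,\ppo\, j)$, reducing the pushout-product axiom to the Cartesian Quillen structure on $\cat{Cat}$.

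\textbf{Main obstacle:} The subtle step is the corepresentation formula and the resulting filtered decomposition of $X^\phi$. The positivity condition $A\ne\varnothing$ and the faithfulness of the $H$-action cause no loss of generality, since any $\phi$-fixed object is supported on some finite subset of $\omega$ that may be enlarged to contain a free $H$-orbit---possible because universality of $H\subseteq\mathcal M$ forces $H$ to act freely on a cofinite part of $\omega$. Once these compatibilities are in place, the remaining bookkeeping follows the simplicial template \cite[Theorem~1.4.60]{g-global} with $\cat{Cat}$ in place of $\cat{SSet}$.
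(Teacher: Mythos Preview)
Your approach via Kan's recognition theorem with the explicit candidate set $J$ does not go through: the claimed equality $I\text{-inj}=W\cap J\text{-inj}$ is false, and with it condition~(5) of the recognition theorem fails. The corepresentation formula gives
\[
f\in I\text{-inj}\iff \text{each } f^\phi_{[A]} \text{ is an acyclic fibration},\qquad
f\in J\text{-inj}\iff \text{each } f^\phi_{[A]} \text{ is an isofibration},
\]
but your further step---passing to the filtered colimit over $A$ to rewrite these as conditions on $f^\phi$---only yields one implication. The converse (``$f^\phi$ acyclic fibration $\Rightarrow$ each $f^\phi_{[A]}$ acyclic fibration'') fails: a full-subcategory restriction of an acyclic fibration need not be surjective on objects. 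Concretely, take $G=1$ and $f\colon E\Inj(\{0,1\},\omega)\to *$. Every object of the source has support of size~$2$, so $f_{[\{0\}]}$ is the map $\varnothing\to *$, which is a (vacuous) isofibration but not an acyclic fibration; meanwhile each $f^H$ is an acyclic fibration since $\omega^H$ is infinite for universal~$H$. Thus $f\in W\cap J\text{-inj}$ but $f\notin I\text{-inj}$. In particular your $J$ does \emph{not} generate the acyclic cofibrations of the positive model structure, and the fibrations are not simply the maps with each $f^\phi_{[A]}$ an isofibration.

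This is precisely why the paper takes a different route: it applies Lurie's criterion \cite[Proposition~A.2.6.13]{htt}, which needs only that the weak equivalences are perfect, that pushouts along $I$-maps preserve~$W$ (your Proposition~\ref{prop:inj-po} argument), and that $I\text{-inj}\subseteq W$ (the forward direction of your filtered-colimit argument, which \emph{is} valid). No explicit set of generating acyclic cofibrations is ever identified. Your verification of $J\text{-cell}\subseteq W$, left properness, and the $\cat{Cat}$-enrichment are all fine and match the paper; the only genuine error is the identification of fibrations. (A smaller point: coreflectivity alone does not guarantee local presentability; the paper instead exhibits $\cat{$\bm{E\mathcal M}$-$\bm G$-Cat}^\tau$ as an accessible localization of $\cat{$\bm{E\mathcal M}$-$\bm G$-SSet}^\tau$ via the $\h\dashv\nerve$ adjunction.)
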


Before we can prove the theorem, we need the following technical lemma:

\begin{lemma}
The adjunction $\h\colon\cat{SSet}\rightleftarrows\cat{Cat} :\!\nerve$ lifts to an adjunction
\begin{equation}\label{eq:tame-cat-vs-sset}
\h\colon\cat{$\bm{E\mathcal M}$-$\bm G$-SSet}^\tau\rightleftarrows\cat{$\bm{E\mathcal M}$-$\bm G$-Cat}^\tau :\nerve.
\end{equation}
\begin{proof}
Since both $\h$ and $\nerve$ preserve finite products, they naturally lift to an adjunction $\cat{$\bm{E\mathcal M}$-$\bm G$-SSet}\rightleftarrows\cat{$\bm{E\mathcal M}$-$\bm G$-Cat}$, and it only remains to show that they preserve tameness. For the nerve this appears in \cite[Example~2.7]{sym-mon-global}, while the claim for $\h$ follows immediately from the definitions.
\end{proof}
\end{lemma}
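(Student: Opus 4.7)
The plan is to verify the two assertions implicit in the statement: that $\h$ and $\nerve$ both preserve finite products (to obtain the lift to $(E\mathcal M\times G)$-objects), and that each restricts along the inclusions into the tame subcategories.

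First I would handle product preservation. The functor $\nerve$ is a right adjoint and so automatically preserves all limits, in particular finite products. For $\h$, I would give a direct argument: the comparison map $\h(X\times Y)\to\h(X)\times\h(Y)$ is bijective on objects, full because any pair of composable zig-zags in $X$ and $Y$ pairs componentwise, and faithful because the generating relations for morphisms in $\h$ factor through products. (Alternatively one can read this off from a $2$-categorical argument using that $\grpdfy\circ\h\dashv\nerve$ was already observed to be product-preserving in the proof of Lemma~\ref{lemma:geometric-prod}, combined with the fact that the inclusion $\cat{Grpd}\hookrightarrow\cat{Cat}$ preserves finite products.) Since both functors preserve finite products, they automatically preserve monoid objects and their actions, so they lift to a pair of functors between $(E\mathcal M\times G)$-objects, and since the unit and counit of $\h\dashv\nerve$ are natural in product-preserving fashion, the lifts again form an adjunction.

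Next I would check tameness preservation separately for each functor. For $\nerve$, I would cite \cite[Example~2.7]{sym-mon-global}; the underlying point is that in a tame $E\mathcal M$-category $C$, every morphism $f\colon x\to y$ is supported on $\supp(x)\cup\supp(y)$, so any $n$-simplex $x_0\to x_1\to\cdots\to x_n$ of $\nerve C$ is supported on the finite set $\supp(x_0)\cup\cdots\cup\supp(x_n)$. For $\h$, the argument is essentially trivial: by construction of the homotopy category, $\Ob(\h X)=X_0$ as an $\mathcal M$-set, and tameness of an $E\mathcal M$-simplicial set is defined simplex-wise, so in particular tameness of $X_0$ is part of tameness of $X$; therefore $\h X$ is a tame $E\mathcal M$-category whenever $X$ is a tame $E\mathcal M$-simplicial set.

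There is no real obstacle here: once one has unwound the respective definitions of tameness for simplicial sets and for categories and recalled the support estimate for morphisms in a tame $E\mathcal M$-category, both directions follow immediately. The only step requiring any verification is product-preservation of $\h$, which is standard.
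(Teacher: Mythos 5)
Your proposal is correct and follows essentially the same route as the paper: lift the adjunction via product-preservation of $\h$ and $\nerve$, then check tameness preservation separately (citing the same reference for $\nerve$, and observing for $\h$ that $\Ob(\h X)=X_0$ as an $\mathcal M$-set). One small caveat: the parenthetical ``alternative'' argument for product-preservation of $\h$ does not actually work — knowing that $\grpdfy\circ\h$ preserves finite products (and its right adjoint is $\nerve\circ\core$, not $\nerve$) tells you nothing about $\h$ on its own, since you cannot cancel $\grpdfy$ from the left; your primary direct argument is the correct one, and it is also what the paper implicitly relies on.
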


\begin{proof}[Proof of Theorem~\ref{thm:positive-model-structure}]
The category $\cat{$\bm{E\mathcal M}$-$\bm G$-Cat}^\tau$ is locally presentable as the adjunction $(\ref{eq:tame-cat-vs-sset})$ exhibits it as an accessible Bousfield localization of the category $\cat{$\bm{E\mathcal M}$-$\bm G$-SSet}^\tau$ which in turn is locally presentable by \cite[Theorem~1.4.60]{g-global}.

We will now verify the assumptions of \cite[Proposition~A.2.6.13]{htt}: first, we have to show that the $G$-global equivalences on $\cat{$\bm{E\mathcal M}$-$\bm G$-Cat}^\tau$ are perfect in the sense of \cite[Definition~A.2.6.10]{htt}. However, the $G$-global equivalences \emph{on the whole category $\cat{$\bm{E\mathcal M}$-$\bm G$-Cat}$} are part of a combinatorial model structure and closed under filtered colimits (Corollary~\ref{cor:g-global-model-cat}), hence perfect by \cite[Remark~A.2.6.14]{htt}. Thus, the claim follows from \cite[Corollary~A.2.6.12]{htt} applied to the inclusion $\cat{$\bm{E\mathcal M}$-$\bm G$-Cat}^\tau\hookrightarrow\cat{$\bm{E\mathcal M}$-$\bm G$-Cat}$.

Next, let $j$ be any pushout of a map in $I$ (along an arbitrary map); we have to show that pushouts along $j$ preserves $G$-global equivalences. But $j$ is in particular an injective cofibration, so this is simply an instance of Proposition~\ref{prop:inj-po}.

Finally, we have to show that every map $f$ with the right lifting property against $I$ is a $G$-global equivalence. But indeed, looking at corepresented functors shows that in this case $f^\phi_{[A]}$ is an acyclic fibration in $\cat{Cat}$ for each universal subgroup $H$, each finite non-empty $H$-subset $A\subset\omega$, and each homomorphism $\phi\colon H\to G$. Passing to the filtered colimit over $A$ (and using that $f$ is a map of tame $E\mathcal M$-$G$-categories), then shows that $f^\phi$ is an equivalence of categories (as a filtered colimit of equivalences) as desired.

Thus, \cite[Proposition~A.2.6.13]{htt} shows that the model structure exists and that it is combinatorial and left proper. It only remains to show that it is $\cat{Cat}$-enriched as a model category, i.e.~to verify the Pushout Product Axiom. For the statement about cofibrations, we may restrict to generating cofibrations, where this follows easily from the fact that $\cat{Cat}$ itself is $\cat{Cat}$-enriched. The part about acyclic cofibrations then follows as in the proof of Theorem~\ref{thm:injective-equivariant-model}.
\end{proof}

\begin{thm}\label{thm:inclusion-tame-cat}
The adjunction
\begin{equation*}
\incl\colon\cat{$\bm{E\mathcal M}$-$\bm G$-Cat}^\tau\rightleftarrows\cat{$\bm{E\mathcal M}$-$\bm G$-Cat}_{\textup{injective $G$-global}} :\!(\blank)^\tau
\end{equation*}
is a Quillen equivalence.
\begin{proof}
It is clear that the left adjoint preserves cofibrations and creates weak equivalences. It therefore only remains to show that the counit $C^\tau\hookrightarrow C$ is a $G$-global equivalence for each injectively fibrant $E\mathcal M$-$G$-category $C$.

This will be analogous to the argument we gave in the simplicial situation as part of \cite[Corollary~1.3.28]{g-global}. Namely, we will show that $C^\phi_{[A]}\hookrightarrow C^\phi$ is an equivalence of categories for each universal $H\subset\mathcal M$, non-empty finite faithful $H$-set $A\subset\omega$, and homomorphism $\phi\colon H\to G$; the claim will then again follow by passing to the filtered colimit over all such $A$.

To prove the claim we consider for each such $C$ the map
\begin{equation}\label{eq:corep-A-phi}
\Fun^{E\mathcal M\times G}(E\Inj(A,\omega)\times_\phi G,C)\to\Fun^{E\mathcal M\times G}(E\mathcal M\times_\phi G,C)
\end{equation}
induced by the restriction $E\mathcal M\to E\Inj(A,\omega)$. It is clear that $(\ref{eq:corep-A-phi})$ is conjugate to $C_{[A]}^\phi\hookrightarrow C^\phi$. As $C$ is injectively fibrant and since the injective $G$-global model structure is $\cat{Cat}$-enriched, it is therefore enough to show that $E\mathcal M\times_\phi G\to E\Inj(A,\omega)\times_\phi G$ is a $G$-global equivalence.

Clearly, the $H$-actions via precomposition on $E\mathcal M$ and $E\Inj(A,\omega)$ are free. Unravelling the definition of $\blank\times_\phi G$ and appealing to Lemma~\ref{lemma:free-quotients} it is then enough to show that $E\mathcal M\to E\Inj(A,\omega)$ is an $H$-global equivalence, for which it in turn suffices that $\big(E\Inj(B,\omega)\big)^\psi$ is equivalent to the terminal category for every universal $K\subset\mathcal M$, every $\psi\colon K\to H$, and each countable $H$-set $B$. As before this just amounts to saying that $\Inj(B,\omega)^\psi$ is non-empty, i.e.~that there exists an $K$-equivariant injection $\psi^*B\rightarrowtail\omega$. This is in turn immediate from universality of $K$ (also see \cite[Example~1.2.35]{g-global}).
\end{proof}
\end{thm}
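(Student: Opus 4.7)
The plan is as follows. First, observe that the adjunction is a Quillen adjunction essentially tautologically: the left adjoint $\incl$ creates weak equivalences (both sides are the $G$-global equivalences by Theorem~\ref{thm:positive-model-structure}), and the generating cofibrations of the positive $G$-global model structure are all injective on objects, hence injective cofibrations in the target. Since the unit is the identity transformation, checking that it is a Quillen equivalence amounts to showing that the counit $C^\tau\hookrightarrow C$ is a $G$-global equivalence for every injectively fibrant $E\mathcal M$-$G$-category $C$, and this is the nontrivial content of the theorem.

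For this I would fix a universal subgroup $H\subset\mathcal M$ together with a homomorphism $\phi\colon H\to G$ and aim to show that $(C^\tau)^\phi\to C^\phi$ is an equivalence of categories. Since $C^\tau=\bigcup_A C_{[A]}$ where $A$ ranges over the (filtered) poset of nonempty finite $H$-invariant faithful subsets of $\omega$, and fixed points for the finite group $H$ commute with filtered colimits in $\cat{Cat}$, it suffices to show that $C^\phi_{[A]}\hookrightarrow C^\phi$ is an equivalence of categories for each such $A$.

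To tackle this I would appeal to the $\cat{Cat}$-enrichment. One checks that $E\mathcal M\times_\phi G$ and $E\Inj(A,\omega)\times_\phi G$ corepresent the functors $C^\phi$ and $C^\phi_{[A]}$ respectively, and under these identifications the inclusion $C^\phi_{[A]}\hookrightarrow C^\phi$ becomes the restriction map induced by the natural quotient $E\mathcal M\to E\Inj(A,\omega)$. Since the injective $G$-global model structure is $\cat{Cat}$-enriched and $C$ is fibrant, the problem reduces to showing that $E\mathcal M\times_\phi G\to E\Inj(A,\omega)\times_\phi G$ is itself a $G$-global equivalence.

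The main obstacle is this final assertion, and it is the single place where the faithfulness of $A$ plays a role. Because $A$ is a faithful $H$-set, $H$ acts freely on both $\mathcal M$ and $\Inj(A,\omega)$ via precomposition, and hence it acts freely on $E\mathcal M\times G$ and $E\Inj(A,\omega)\times G$ via the $\phi$-twisted diagonal action; Lemma~\ref{lemma:free-quotients} therefore reduces the problem to showing that $E\mathcal M\to E\Inj(A,\omega)$ is an $H$-global equivalence of $(E\mathcal M\times H)$-categories. Unpacking fixed points one final time, this comes down to showing that for every universal $K\subset\mathcal M$ and every $\psi\colon K\to H$ there exists a $K$-equivariant injection $\psi^*A\hookrightarrow\omega$, which is exactly what the universality of $K$ provides.
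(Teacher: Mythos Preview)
Your proof is correct and follows essentially the same route as the paper's: reduce to the counit being a $G$-global equivalence on injectively fibrant objects, then use corepresentability and the $\cat{Cat}$-enrichment to reduce to $E\mathcal M\times_\phi G\to E\Inj(A,\omega)\times_\phi G$ being a $G$-global equivalence, and finally apply Lemma~\ref{lemma:free-quotients} and universality. One tiny point: in the last step you only explicitly verify that $(E\Inj(A,\omega))^\psi$ is nonempty, but you also need $(E\mathcal M)^\psi$ nonempty (i.e.\ a $K$-equivariant injection $\psi^*\omega\hookrightarrow\omega$), which of course follows from universality just as easily---the paper handles both cases uniformly by letting $B$ range over countable $H$-sets.
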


Now we are ready to define \emph{parsummable categories} as first introduced in \cite{schwede-k-theory} as well as their simplicial counterpart, the \emph{parsummable simplicial sets} \cite{sym-mon-global}.

\begin{defi}
Let $C,D\in\cat{$\bm{E\mathcal M}$-Cat}^\tau$. Their \emph{box product} is the full subcategory $C\boxtimes D\subset C\times D$ of the Cartesian product spanned by all those $(c,d)\in C\times D$ such that $c$ and $d$ are \emph{disjointly supported}, i.e.~$\supp(c)\cap\supp(d)=\varnothing$.
\end{defi}

One can show that $C\boxtimes D$ is again tame and that $\boxtimes$ defines a subfunctor of the Cartesian product. The usual coherence isomorphisms of the Cartesian symmetric monoidal structure then restrict to make $\boxtimes$ the monoidal product of a preferred symmetric monoidal structure on $\cat{$\bm{E\mathcal M}$-Cat}^\tau$, see~\cite[Proposition~2.35]{schwede-k-theory}. Taking diagonal $G$-actions this then more generally provides a symmetric monoidal structure on $\cat{$\bm{E\mathcal M}$-$\bm G$-Cat}^\tau$ for every (discrete) group $G$.

\begin{defi}
We write
\begin{equation*}
\cat{$\bm G$-ParSumCat}\mathrel{:=}\CMon(\cat{$\bm{E\mathcal M}$-$\bm G$-Cat}^\tau,\boxtimes)
\end{equation*}
for the category with objects the commutative monoids (with respect to the box product) in $\cat{$\bm{E\mathcal M}$-$\bm G$-Cat}^\tau$ and morphisms the monoid homomorphisms. We call its objects \emph{$G$-parsummable categories}.
\end{defi}

Similarly, there is a box product of tame $E\mathcal M$-simplicial sets which is however slightly more intricate to define:

\begin{defi}
Let $X,Y$ be tame $E\mathcal M$-simplicial sets. We call $x\in X_n,y\in Y_n$ \emph{disjointly supported} if they are disjointly supported as elements of the $\mathcal M$-sets $i_k^*X_n$ and $i_k^*Y_n$, respectively, for every $0\le k\le n$, where $i_k\colon\mathcal M\to\mathcal M^{n+1}$ denotes the inclusion of the $(k+1)$-th factor; note that these are indeed tame $\mathcal M$-sets by Remark~\ref{rk:tameness-strong}. The \emph{box product} $X\boxtimes Y$ is the subsimplicial set of $X\times Y$ given by all pairs of disjointly supported simplices.
\end{defi}

One can show that this is indeed a subsimplicial set, that it is preserved by the diagonal $E\mathcal M$-action \cite[Proposition~2.17]{sym-mon-global}, and that this yields a symmetric monoidal structure analagous to the above \cite[Proposition~2.18]{sym-mon-global}.

\begin{lemma}\label{lemma:nerve-symmetric-monoidal}
The usual strong symmetric monoidal structure for the Cartesian products restricts to a strong symmetric monoidal structure on $\nerve\colon\cat{$\bm{E\mathcal M}$-$\bm G$-Cat}^\tau\to\cat{$\bm{E\mathcal M}$-$\bm G$-SSet}^\tau$ with respect to the box products. In particular, the nerve lifts to a functor $\cat{$\bm G$-ParSumCat}\to\cat{$\bm G$-ParSumSSet}$.
\begin{proof}
See \cite[Proposition~2.20]{sym-mon-global}.
\end{proof}
\end{lemma}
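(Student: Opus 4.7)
The plan is to reduce this to a direct comparison of the box products, given that strong symmetric monoidality of the nerve for Cartesian products is classical and that $\nerve$ is $E\mathcal M\times G$-equivariant (hence sends the diagonal action on $C\times D$ to the diagonal action on $\nerve C\times\nerve D$). Concretely, I would first invoke the canonical natural isomorphism $\nerve(C\times D)\cong\nerve(C)\times\nerve(D)$ of $E\mathcal M\times G$-simplicial sets and then check that it restricts to a natural isomorphism $\nerve(C\boxtimes D)\cong\nerve(C)\boxtimes\nerve(D)$. Once this restriction is established, the associator, symmetry, and (left/right) unitor for $\boxtimes$ on both sides are by definition the Cartesian ones restricted to the box product, so all coherence diagrams commute automatically. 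The final clause is then formal: any strong symmetric monoidal functor preserves commutative monoids.

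The core step is therefore the identification of $n$-simplices on both sides, and I would carry it out in two stages. First, I would establish the following support criterion: for a tame $E\mathcal M$-category $C$, an $n$-simplex $\sigma=(c_0\to c_1\to\dots\to c_n)$ of $\nerve C$ is supported on a finite $A\subset\omega$ with respect to $i_k\colon\mathcal M\hookrightarrow\mathcal M^{n+1}$ if and only if the vertex $c_k\in C$ is supported on $A$ in the sense of tame $E\mathcal M$-categories. The ``only if'' direction is immediate (act by $u$ in the $k$-th slot for $u\in\mathcal M_{[A]}$ and read off the $k$-th vertex), while the ``if'' direction is where the strong tameness statement of Remark~\ref{rk:tameness-strong} (or rather its categorical counterpart \cite[Proposition~2.13(ii)]{schwede-k-theory}) does the work: once $c_k$ is fixed by $u$, equivariance of the structure functors forces $\sigma$ itself to be fixed by $i_k(u)$.

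With this criterion in hand, matching the two descriptions of $n$-simplices becomes a tautology. An $n$-simplex of $\nerve(C\boxtimes D)$ is a chain $((c_0,d_0)\to\dots\to (c_n,d_n))$ of morphisms in the full subcategory $C\boxtimes D\subseteq C\times D$, i.e.\ a pair of chains in $C$ and $D$ such that $\supp(c_k)\cap\supp(d_k)=\varnothing$ for every $0\le k\le n$. By the support criterion just established, this is exactly the condition that the corresponding pair of $n$-simplices in $\nerve C$ and $\nerve D$ be disjointly supported via each $i_k$, i.e.\ an $n$-simplex of $\nerve(C)\boxtimes\nerve(D)$. Naturality in $C$ and $D$ is automatic from naturality of the Cartesian isomorphism. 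Finally, since the monoidal unit of $\boxtimes$ in both $\cat{$\bm{E\mathcal M}$-$\bm G$-Cat}^\tau$ and $\cat{$\bm{E\mathcal M}$-$\bm G$-SSet}^\tau$ agrees with the Cartesian unit, the classical unitality of $\nerve$ supplies the unit isomorphism.

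The only genuinely substantive piece is therefore the support criterion, and even that is essentially a packaging of the strong tameness result from \cite{schwede-k-theory}; everything else is a formal consequence of $\nerve$ being strong symmetric monoidal for Cartesian products and of $\boxtimes$ being defined as the restriction of $\times$ to the disjoint-support locus. Accordingly, the last assertion—that $\nerve$ lifts to $\cat{$\bm G$-ParSumCat}\to\cat{$\bm G$-ParSumSSet}$—follows because commutative monoid objects are preserved by any strong symmetric monoidal functor between symmetric monoidal categories.
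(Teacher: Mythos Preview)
Your argument is correct. The paper does not give its own proof here but merely cites \cite[Proposition~2.20]{sym-mon-global}, so there is no detailed argument in the paper to compare against; your proposal is precisely the kind of direct verification that such a citation stands in for. The support criterion you isolate is the heart of the matter, and your justification of the ``if'' direction via \cite[Proposition~2.13(ii)]{schwede-k-theory} (that for $c$ supported on $A$ and $u$ fixing $A$ pointwise the structure isomorphism $[u,1]_c$ is the identity, not merely that $u.c=c$) is exactly what is needed to ensure the modified edges in the chain agree with the originals.
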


\begin{ex}\label{ex:box-corep}
Let $A,B$ be finite sets. The restriction maps $E\Inj(A\amalg B,\omega)\to E\Inj(A,\omega)$ and $E(A\amalg B,\omega)\to E\Inj(B,\omega)$ induce an isomorphism $E(A\amalg B,\omega)\cong E\Inj(A,\omega)\boxtimes E\Inj(B,\omega)$ in $\cat{$\bm{E\mathcal M}$-Cat}^\tau$ (and hence also in $\cat{$\bm{E\mathcal M}$-SSet}^\tau$): namely, both $E\Inj(A\amalg B,\omega)$ and the box product are indiscrete, so it suffices to show this on the level of objects, where this follows from \cite[Example~2.15]{I-vs-M-1-cat}.
\end{ex}

\begin{rk}
The iterated box products on $\cat{$\bm{E\mathcal M}$-Cat}^\tau$ on $\cat{$\bm{E\mathcal M}$-SSet}^\tau$ each admit a natural \emph{unbiased} description that we will tacitly use below. Namely, if $C_1,\dots,C_n$ are any tame $E\mathcal M$-categories, then for any bracketing of the left hand side $C_1\boxtimes\cdots\boxtimes C_n\to C_1\times\cdots\times C_n$ induces an isomorphism onto the full subcategory spanned by the tuples $(c_1,\dots,c_n)\in C_1\times\cdots\times C_n$ with pairwise disjoint support, and analogously in the simplicial case.
\end{rk}

\subsection{The box product as an operadic product}
In order to compare $G$-parsum\-mable categories and $G$-parsummable simplicial sets to algebras over (twisted) $G$-global $E_\infty$-operads, we will first devise an alternative, `operadic' description of iterated box products.

Throughout the remainder of this section, let $\mathscr C\in\{\cat{Cat},\cat{SSet}\}$ again; we will write $\text{ParSum}\text{-}{\mathscr C}$ for $\CMon(E\mathcal M\text{-}\mathscr C^\tau,\boxtimes)$ and refer to its objects as \emph{parsummable $\mathscr C$-objects}.

\begin{constr}
Let $n\ge 0$. Then $\mathcal M^n$ acts from the right on $\Inj(\bm n\times\omega,\omega)$ via
\begin{equation*}
f.(u^{(1)},\dots,u^{(n)})=f \circ (u^{(1)}\amalg\cdots\amalg u^{(n)}),
\end{equation*}
which as usual induces a right action of $E\mathcal M^n$ on $E\Inj(\bm n\times\omega,\omega)$.

Now let $X_1,\dots,X_n$ be $E\mathcal M$-objects, which then yields a left $E\mathcal M^n$-action on $X_1\times\cdots\times X_n$. We write $E\Inj(\bm n\times\omega,\omega)\times_{E\mathcal M^n}(X_1\times\cdots\times X_n)$ for the $E\mathcal M$-object obtained from $E\Inj(\bm n\times\omega,\omega)\times (X_1\times\cdots X_n)$ by coequalizing the right $E\mathcal M^n$-action on the first factor with the left $E\mathcal M^n$-action on the second one. Acting with $E\Inj(\bm n\times\omega,\omega)\subset E\mathcal M^n$ then induces a natural map
\begin{equation}\label{eq:Phi-comparison}
\Phi\colon E\Inj(\bm n\times\omega,\omega)\times_{E\mathcal M^n}(X_1\times\cdots\times X_n)\to X_1\times\cdots X_n.
\end{equation}
\end{constr}

\begin{thm}\label{thm:box-product-operadic}
For all $X_1,\dots,X_n\in E\mathcal M\text{-}G\text{-}\mathscr C^\tau$ the natural map $(\ref{eq:Phi-comparison})$ restricts to an isomorphism onto $X_1\boxtimes\cdots\boxtimes X_n$.
\begin{proof}
For $\mathscr C=\cat{SSet}$ this appears as \cite[Theorem~2.1.19]{g-global}; as the nerve is fully faithful and strong symmetric monoidal (Lemma~\ref{lemma:nerve-symmetric-monoidal}), the corresponding statement for categories then follows formally by applying this to $\nerve(X_1),\dots,\nerve(X_n)$ and passing to homotopy categories.
\end{proof}
\end{thm}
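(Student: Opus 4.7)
The plan is to reduce the categorical statement to the simplicial one and then check the latter directly at each simplicial level. By Lemma~\ref{lemma:nerve-symmetric-monoidal}, the nerve $\nerve\colon\cat{$\bm{E\mathcal M}$-$\bm G$-Cat}^\tau\to\cat{$\bm{E\mathcal M}$-$\bm G$-SSet}^\tau$ is fully faithful and strong symmetric monoidal for the box product; it also preserves Cartesian products and the $E\mathcal M^n$-action used to build $\Phi$. Applied to $X_1,\dots,X_n$ it therefore translates $\Phi$ into the $\Phi$ for $\nerve(X_1),\dots,\nerve(X_n)$, and, being fully faithful, it reflects isomorphisms. So it suffices to treat the simplicial case.

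In the simplicial case, the first observation is that the image of $\Phi$ always lies inside the box product: any $f\in\Inj(\bm n\times\omega,\omega)$ decomposes as a tuple $(f_1,\dots,f_n)$ of self-injections $f_i=f|_{\{i\}\times\omega}\in\mathcal M$ with pairwise disjoint images, so $\supp(f_i.x_i)\subset\im(f_i)$ forces the components of $\Phi(f;x_1,\dots,x_n)$ to be pairwise disjointly supported on every simplicial level. For surjectivity onto the box product, given $(x_1,\dots,x_n)$ with disjoint supports $A_1,\dots,A_n$, one partitions $\omega\setminus\bigcup_i A_i$ into $n$ countably infinite pieces and builds an $f\in\Inj(\bm n\times\omega,\omega)$ whose components $f_i$ fix $A_i$ pointwise and send the complement bijectively onto the $i$-th piece; then $f_i.x_i=x_i$ by the definition of support (applied levelwise via Remark~\ref{rk:tameness-strong}), so $\Phi(f;x_1,\dots,x_n)=(x_1,\dots,x_n)$ as required.

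The main obstacle is injectivity of the restricted map, i.e.~showing that two representatives $(f;x_\bullet)$ and $(g;y_\bullet)$ mapping to the same disjointly supported tuple are identified by the $E\mathcal M^n$-coequalizer relation. Given the componentwise identities $f_i.x_i=g_i.y_i$, I would construct self-injections $u_i\in\mathcal M$ with $f_iu_i=g_i$ and $u_i.y_i=x_i$: since both $x_i$ and $y_i$ are finitely supported, tameness forces $\supp(y_i)\subset\im(g_i)$ and pins down $u_i$ essentially uniquely on this finite set as $f_i^{-1}g_i$, and one then extends freely to $\omega$ by any bijection between the infinite complements of $\supp(y_i)$ and its image. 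This produces the desired element of $E\mathcal M^n$ exhibiting the coequalizer relation, and coherence across simplicial degrees is controlled by the strengthened fixed-point characterization of support in Remark~\ref{rk:tameness-strong}, completing the proof.
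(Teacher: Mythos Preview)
Your reduction from categories to simplicial sets is close in spirit to the paper's but not the same, and as written it has a gap. The paper deduces the categorical statement by \emph{passing to homotopy categories}, i.e.\ applying $\h$ to the simplicial isomorphism: since $\h$ is a left adjoint that preserves finite products, it takes the simplicial coequalizer $E\Inj(\bm n\times\omega,\omega)\times_{E\mathcal M^n}(\nerve X_1\times\cdots\times\nerve X_n)$ to the categorical one and the simplicial box product to the categorical one. You instead argue via full faithfulness of $\nerve$, but the nerve does \emph{not} preserve the coequalizer defining the source of $\Phi$, so there is no a priori identification of $\nerve(\Phi_{\cat{Cat}})$ with $\Phi_{\cat{SSet}}$. (This can be repaired once one knows the simplicial coequalizer is itself a nerve, but that is exactly what the simplicial theorem provides; the paper's $\h$-route avoids this circularity.)

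The more serious problem is your injectivity argument for the simplicial case, which the paper simply cites. You attempt to relate $(f;x_\bullet)$ and $(g;y_\bullet)$ by a \emph{single} step of the coequalizer relation, solving $f_iu_i=g_i$ in $\mathcal M$. But this equation has a solution only when $\im(g_i)\subset\im(f_i)$, which is not given; your extension ``by any bijection between the infinite complements'' produces an injection $u_i$ with $u_i.y_i=x_i$, but $f_iu_i$ will differ from $g_i$ off $\supp(y_i)$. (The claim ``$\supp(y_i)\subset\im(g_i)$'' is also false in general and not the containment you need; what is true is $g_i(\supp(y_i))=\supp(g_i.y_i)\subset\im(f_i)$.) The standard fix is a two-step zig-zag through a canonical representative: set $z_i=f_i.x_i=g_i.y_i$ and choose each $h_i\in\mathcal M$ to be a \emph{bijection} fixing $\supp(z_i)$ pointwise. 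Then $u_i\mathrel{:=}h_i^{-1}f_i$ and $v_i\mathrel{:=}h_i^{-1}g_i$ lie in $\mathcal M$, and one checks $(f;x_\bullet)=(h.u;x_\bullet)\sim(h;u.x_\bullet)=(h;z_\bullet)$ and symmetrically $(g;y_\bullet)\sim(h;z_\bullet)$, giving the required identification.
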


Let us draw some immediate consequences from this description:

\begin{cor}\label{cor:box-prod-cocont}
The box product on $E\mathcal M\text{-}G\text{-}\mathscr C^\tau$ is cocontinuous in each variable. Thus, the symmetric monoidal structure on $E\mathcal M\text{-}G\text{-}\mathscr C^\tau$ described above is closed.
\begin{proof}
For $\mathscr C=\cat{SSet}$ this is \cite[Corollary~2.1.21]{g-global}, and the same proof works for $\mathscr C=\cat{Cat}$: namely, the first claim follows from the previous theorem as $E\Inj(\bm2\times\omega,\omega)\times_{E\mathcal M^2}(\blank\times\blank)$ is clearly cocontinuous in each variable. The second claim is then immediate as $E\mathcal M\text{-}G\text{-}\mathscr C^\tau$ is locally presentable (Theorem~\ref{thm:positive-model-structure-sset}/\ref{thm:positive-model-structure}).
\end{proof}
\end{cor}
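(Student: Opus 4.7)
The plan is to leverage Theorem~\ref{thm:box-product-operadic}, which identifies iterated box products with quotients of Cartesian products, in order to reduce cocontinuity of $\boxtimes$ to the transparent cocontinuity of two simpler constructions. Specializing to $n=2$ gives a natural isomorphism
\[
X \boxtimes Y \cong E\Inj(\bm 2 \times \omega, \omega) \times_{E\mathcal M^2} (X \times Y),
\]
exhibiting $\blank \boxtimes Y$ as the composite of $\blank \times Y$ with the tensor $E\Inj(\bm 2 \times \omega, \omega) \times_{E\mathcal M^2} (\blank)$, and analogously in the first variable.

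For the Cartesian product factor I would argue that the ambient category $E\mathcal M\text{-}G\text{-}\mathscr C$ is Cartesian closed, inheriting an internal hom (namely $\Maps$ or $\Fun$) from $\mathscr C$ through the diagonal action, so that $\blank \times Y$ preserves all colimits there. By Remark~\ref{rk:coreflective} small colimits in the tame subcategory are computed in the ambient category, and a Cartesian product of tame objects remains tame because $\supp(x,y) \subseteq \supp(x) \cup \supp(y)$; hence the restriction of $\blank \times Y$ to $E\mathcal M\text{-}G\text{-}\mathscr C^\tau$ still preserves colimits. The second factor, being a coequalizer of free $E\mathcal M^2$-actions (i.e.\ itself a colimit construction), automatically commutes with arbitrary colimits.

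For the closedness statement I would appeal to the adjoint functor theorem for locally presentable categories: by the proof of Theorem~\ref{thm:positive-model-structure-sset} (respectively Theorem~\ref{thm:positive-model-structure}) the category $E\mathcal M\text{-}G\text{-}\mathscr C^\tau$ is locally presentable, and any cocontinuous endofunctor of a locally presentable category admits a right adjoint. Applying this to $\blank \boxtimes Y$ for each fixed $Y$ produces the desired internal mapping objects. I do not anticipate any genuine obstacle in this argument, since the real work sits entirely inside Theorem~\ref{thm:box-product-operadic}; the remaining steps amount to standard bookkeeping about Cartesian closedness of the ambient category, the behavior of colimits under the coreflection $(\blank)^\tau$, and the adjoint functor theorem.
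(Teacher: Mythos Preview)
Your proposal is correct and follows essentially the same approach as the paper: both arguments use Theorem~\ref{thm:box-product-operadic} to identify $\boxtimes$ with $E\Inj(\bm2\times\omega,\omega)\times_{E\mathcal M^2}(\blank\times\blank)$, observe this is cocontinuous in each variable, and then invoke local presentability for the adjoint functor theorem. The paper simply declares the cocontinuity ``clear,'' whereas you spell out the factorization through the Cartesian product and the coequalizer and appeal to Remark~\ref{rk:coreflective}; this is a reasonable elaboration but not a different strategy.
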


\begin{prop}
\begin{enumerate}
\item Let $X_1,\dots,X_n\in E\mathcal M\text{-}G\text{-}\mathscr C$. Then the natural map $\Phi\colon E\Inj(\bm n\times\omega,\omega)\times_{E\mathcal M^n}(X_1\times\cdots\times X_n)\to X_1\times\cdots\times X_n$ is a $G$-global (weak) equivalence.
\item Let $n\ge 0$ and let $X\in E\mathcal M\text{-}G\text{-}\mathscr C$. We equip $E\Inj(\bm n\times\omega,\omega)\times_{E\mathcal M^n}X^{\times n}$ with the $\Sigma_n$-action via $\sigma.[u_0,\dots,u_m;x_1,\dots,x_n]=[u_0\circ(\sigma^{-1}\times\id),\dots,u_m\circ(\sigma^{-1}\times\id); x_{\sigma^{-1}(1)},\dots,x_{\sigma^{-1}(n)}]$. Then the natural map $E\Inj(\bm n\times\omega,\omega)\times_{E\mathcal M^n} X^{\times n}\to X^{\times n}$ is a $(G\times\Sigma_n)$-global (weak) equivalence.
\end{enumerate}
\begin{proof}
For the first statement, it suffices to show that $E\Inj(\bm n\times\omega,\omega)\hookrightarrow E\mathcal M^n$ is a right-$E\mathcal M^n$-left-$H$-equivariant equivalence of categories for every universal $H\subset\mathcal M$, for which suffices to give a left-$H$-right-$\mathcal M^n$-equivariant map $\mathcal M^n\to\Inj(\bm n\times\omega,\omega)$. For this we simply pick an $H$-equivariant injection $u\colon\bm n\times\omega\rightarrowtail\omega$ (which exists by universality of the target) and consider the map $(v_1,\dots,v_n)\mapsto u\circ(v_1\amalg\cdots\amalg v_n)$.

For the second statement it suffices similarly to construct for each given $H$-action on $\bm n$ a left-$H$-right-$\mathcal M^n$-equivariant map $\mathcal M^n\to\Inj(\bm n\times\omega,\omega)$, which can be done in the same way.
\end{proof}
\end{prop}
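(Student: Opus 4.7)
The plan is to reduce both parts to a purely equivariant statement about the inclusion $i\colon E\Inj(\bm n\times\omega,\omega)\hookrightarrow E\mathcal M^n$, exploiting universality to produce an equivariant section. The key preliminary observation is that $\mathcal M^n$ acts freely from the right on $\Inj(\bm n\times\omega,\omega)$ via $f.(v_1,\dots,v_n)=f\circ(v_1\amalg\cdots\amalg v_n)$, because the composite of injections is injective. Consequently the twisted product $E\Inj(\bm n\times\omega,\omega)\times_{E\mathcal M^n}(X_1\times\cdots\times X_n)$ is a genuine quotient, and using $E\mathcal M^n\times_{E\mathcal M^n}(X_1\times\cdots\times X_n)\cong X_1\times\cdots\times X_n$ we can rewrite $\Phi$ as the map induced by $i$ after applying $\blank\times_{E\mathcal M^n}(X_1\times\cdots\times X_n)$.

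For part~(1), fix a universal $H\subset\mathcal M$ and a homomorphism $\phi\colon H\to G$. By universality, I can pick an $H$-equivariant injection $u\colon\bm n\times\omega\hookrightarrow\omega$, where $H$ acts trivially on $\bm n$. The formula $(v_1,\dots,v_n)\mapsto u\circ(v_1\amalg\cdots\amalg v_n)$ then defines a left-$H$-right-$\mathcal M^n$-equivariant map $s\colon\mathcal M^n\to\Inj(\bm n\times\omega,\omega)$. Applying $E$, both $Es\circ Ei$ and $Ei\circ Es$ are endofunctors of indiscrete categories, hence uniquely naturally isomorphic to the identity, and these natural isomorphisms inherit the biequivariance. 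Passing to $\blank\times_{E\mathcal M^n}(X_1\times\cdots\times X_n)$ (which makes sense because the right $\mathcal M^n$-action on the coequalizer pairing matches up) then gives an $H$-equivariant equivalence, and hence an equivalence on $\phi$-fixed points. In the simplicial setting one can either reduce to the categorical setting by noting that $E$ sends nonempty sets to weakly contractible simplicial sets, or equivalently apply the $(E\mathcal M,G)$-analogue of Lemma~\ref{lemma:free-quotients-sset} to free right $\mathcal M^n$-actions.

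For part~(2), I would essentially repeat the argument, now with a homomorphism $\phi\colon H\to G\times\Sigma_n$. Writing $\psi\colon H\to\Sigma_n$ for the projection, I equip $\bm n$ with the corresponding $H$-action and again appeal to universality of $H$ to pick an $H$-equivariant injection $u\colon\bm n\times\omega\hookrightarrow\omega$; the same formula as before then yields a left-$H$-right-$\mathcal M^n$-equivariant section $s$, and because $s$ was built from an $H$-equivariant map $u$ that honors the permutation action on $\bm n$, the resulting natural isomorphisms are automatically $\Sigma_n$-equivariant as well. The rest of the argument is identical.

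The only subtle point I anticipate is bookkeeping: keeping straight how the $\Sigma_n$-action on $\Inj(\bm n\times\omega,\omega)$ via permuting the summands of $\bm n$ interacts with the $H$-action via $\psi$, and checking that the twisting in the definition of $\Phi$ is compatible with the right $\mathcal M^n$-action on $E\Inj(\bm n\times\omega,\omega)$ (so that the coequalizer is taken with respect to matched actions). Once that is unwound, the proof is almost formal, relying only on the two ingredients universality $\Leftrightarrow$ existence of equivariant injections, and contractibility of $E$ on nonempty sets, both of which are already standard in this framework.
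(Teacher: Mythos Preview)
Your proposal is correct and follows essentially the same approach as the paper: reduce to showing that $E\Inj(\bm n\times\omega,\omega)\hookrightarrow E\mathcal M^n$ is a biequivariant equivalence by producing a left-$H$-right-$\mathcal M^n$-equivariant section via an $H$-equivariant injection $u\colon\bm n\times\omega\hookrightarrow\omega$ (with the $H$-action on $\bm n$ trivial in part~(1) and coming from the projection to $\Sigma_n$ in part~(2)). The paper's argument is simply a terser version of what you wrote, omitting your explicit remarks about freeness, indiscreteness, and the bookkeeping.
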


We immediately conclude the following comparisons between the box product and the Cartesian product, which for $G=1$ and $\mathscr C=\cat{Cat}$ also appear in \cite[proof of Theorem~2.33]{schwede-k-theory} and \cite[proof of Theorem~4.13]{schwede-k-theory}, respectively, while the corresponding simplicial statements can be found in \cite[2.1.2.1]{g-global}:

\begin{cor}\label{cor:box-prod-vs-product}
\begin{enumerate}
\item Let $X_1,\dots,X_n\in E\mathcal M\text{-}G\text{-}\mathscr C^\tau$. Then the inclusion $X_1\boxtimes\cdots\boxtimes X_n\hookrightarrow X_1\times\cdots\times X_n$ is a $G$-global (weak) equivalence.
\item Let $X\in E\mathcal M\text{-}G\text{-}\mathscr C^\tau$. Then the inclusion $X^{\boxtimes n}\hookrightarrow X^{\times n}$ is a $(G\times\Sigma_n)$-global (weak) equivalence for every $n\ge0$.\qedhere\qed
\end{enumerate}
\end{cor}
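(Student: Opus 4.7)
The plan is to derive both statements of the corollary as direct consequences of the preceding proposition combined with Theorem~\ref{thm:box-product-operadic}. Recall that Theorem~\ref{thm:box-product-operadic} asserts that the natural map $\Phi$ restricts to an isomorphism onto the box product, so that we have a factorization
\begin{equation*}
\Phi\colon E\Inj(\bm n\times\omega,\omega)\times_{E\mathcal M^n}(X_1\times\cdots\times X_n)\xrightarrow{\;\cong\;}X_1\boxtimes\cdots\boxtimes X_n\hookrightarrow X_1\times\cdots\times X_n
\end{equation*}
in which the first map is an isomorphism of $E\mathcal M$-$G$-objects. This identification turns the weak equivalence statements about $\Phi$ from the preceding proposition into weak equivalence statements about the box product inclusion.

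For part (1), the first clause of the preceding proposition says that $\Phi$ is a $G$-global (weak) equivalence. Since the first arrow of the factorization above is an isomorphism (hence a $G$-global (weak) equivalence), the two-out-of-three axiom immediately gives that the inclusion $X_1\boxtimes\cdots\boxtimes X_n\hookrightarrow X_1\times\cdots\times X_n$ is a $G$-global (weak) equivalence as claimed.

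For part (2), the strategy is the same, specialized to $X_1=\cdots=X_n=X$ and using the second clause of the proposition, which upgrades the conclusion to a $(G\times\Sigma_n)$-global (weak) equivalence with respect to the $\Sigma_n$-action specified there. What requires a brief additional check is that the isomorphism from Theorem~\ref{thm:box-product-operadic} is $\Sigma_n$-equivariant, intertwining the action prescribed in the proposition with the standard permutation action on $X^{\boxtimes n}\subset X^{\times n}$. Unwinding the definition of $\Phi$, namely $[f;x_1,\dots,x_n]\mapsto (f|_{\{1\}\times\omega}.x_1,\dots,f|_{\{n\}\times\omega}.x_n)$, the effect of $\sigma\in\Sigma_n$ on the source replaces $f$ by $f\circ(\sigma^{-1}\times\id)$ and permutes the $x_i$, which on the target precisely amounts to permuting the factors. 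Once this equivariance is noted, two-out-of-three concludes exactly as in part (1). The only potential obstacle is this $\Sigma_n$-equivariance check, but it is essentially bookkeeping once the formula for $\Phi$ is unwound; the substantive homotopical content has already been carried out in the preceding proposition and in Theorem~\ref{thm:box-product-operadic}.
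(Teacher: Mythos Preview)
Your argument is correct and is exactly the approach the paper has in mind: the corollary is stated with a bare \qed immediately after the proposition, so the paper is simply invoking the factorization of $\Phi$ through the box product provided by Theorem~\ref{thm:box-product-operadic} together with the preceding proposition and $2$-out-of-$3$. Your explicit verification of the $\Sigma_n$-equivariance of the isomorphism in part~(2) is a detail the paper leaves implicit, but your computation is correct.
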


\begin{cor}\label{cor:box-product-homotopical}
\begin{enumerate}
\item The box product is homotopical in each variable.
\item If $f\colon X\to Y$ is a $G$-global (weak) equivalence in $E\mathcal M\text{-}G\text{-}\mathscr C^\tau$, then $f^{\boxtimes n}$ is a $(G\times\Sigma_n)$-global (weak) equivalence.
\end{enumerate}
\begin{proof}
The first statement follows immediately from the first part of the previous corollary as the Cartesian product is homotopical. The second statement follows similarly from Corollary~\ref{cor:twisted-products-sset}/\ref{cor:twisted-products}.
\end{proof}
\end{cor}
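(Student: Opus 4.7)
The plan is to reduce both statements to the corresponding statements for the Cartesian product via the comparison established in Corollary~\ref{cor:box-prod-vs-product}.

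For the first part, fix a $G$-global (weak) equivalence $f\colon X\to X'$ in $E\mathcal M\text{-}G\text{-}\mathscr C^\tau$ and an arbitrary object $Y$; I want to show that $f\boxtimes Y$ is again a $G$-global (weak) equivalence. I would consider the commutative square
\begin{equation*}
\begin{tikzcd}
X\boxtimes Y\arrow[r, hook]\arrow[d, "f\boxtimes Y"'] & X\times Y\arrow[d, "f\times Y"]\\
X'\boxtimes Y\arrow[r, hook] & X'\times Y
\end{tikzcd}
\end{equation*}
in which the horizontal inclusions are $G$-global (weak) equivalences by Corollary~\ref{cor:box-prod-vs-product}-(1) (applied with $n=2$). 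The right vertical map is also a $G$-global (weak) equivalence since finite products preserve $G$-global (weak) equivalences (Theorem~\ref{thm:g-global-model-sset} and Corollary~\ref{cor:g-global-model-cat}). Thus $f\boxtimes Y$ is a $G$-global (weak) equivalence by $2$-out-of-$3$. Homotopicality in the other variable then follows by symmetry of $\boxtimes$.

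For the second part, I would consider the analogous commutative square
\begin{equation*}
\begin{tikzcd}
X^{\boxtimes n}\arrow[r, hook]\arrow[d, "f^{\boxtimes n}"'] & X^{\times n}\arrow[d, "f^{\times n}"]\\
Y^{\boxtimes n}\arrow[r, hook] & Y^{\times n}
\end{tikzcd}
\end{equation*}
in $E\mathcal M\text{-}G\text{-}\Sigma_n\text{-}\mathscr C^\tau$ equipped with the $\Sigma_n$-action permuting the factors. The horizontal inclusions are $(G\times\Sigma_n)$-global (weak) equivalences by Corollary~\ref{cor:box-prod-vs-product}-(2), while the right vertical map is a $(G\times\Sigma_n)$-global (weak) equivalence by Corollary~\ref{cor:twisted-products-sset} (for $\mathscr C=\cat{SSet}$) or Corollary~\ref{cor:twisted-products} (for $\mathscr C=\cat{Cat}$), applied with $M=E\mathcal M$ and $\mathcal F$ the family of universal subgroups. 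Another application of $2$-out-of-$3$ then yields the claim.

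Neither step presents any real obstacle given the tools already developed: the main input, namely the comparison of $\boxtimes$ and $\times$ through the quotient $E\Inj(\bm n\times\omega,\omega)\times_{E\mathcal M^n}(\blank)$, has been established, and everything else is formal manipulation of the model-categorical homotopy invariance of Cartesian products.
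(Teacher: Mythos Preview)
Your proof is correct and follows essentially the same approach as the paper: both arguments reduce to the Cartesian product via the comparison inclusions of Corollary~\ref{cor:box-prod-vs-product} and then apply $2$-out-of-$3$, using that finite products are homotopical for part~(1) and Corollary~\ref{cor:twisted-products-sset}/\ref{cor:twisted-products} for part~(2). You have simply spelled out the commutative squares that the paper leaves implicit.
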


\subsection{\texorpdfstring{$\bm{\mathcal I}$}{I}-algebras vs.~parsummability} As mentioned without proof in \cite[Remark~4.20]{schwede-k-theory}, parsummable categories can be identified with tame $\mathcal I$-algebras, also see~\cite[Theorem~A.13]{I-vs-M-1-cat} for the corresponding $\cat{Set}$-level statement. Using Theorem~\ref{thm:box-product-operadic} we can now easily give a full proof of this as well as of its simplicial counterpart, for which we begin with the following observation:

\begin{lemma}\label{lemma:free-tame-vs-free-general}
The free-forgetful adjunction $\cat{P}\colon E\mathcal M\text{-}G\text{-}\mathscr C\rightleftarrows\Alg_{\mathcal I}(G\text{-}\mathscr C) :\!\forget$ restricts to an adjunction
\begin{equation}\label{eq:free-forgetful-tame-I}
\cat{P}\colon E\mathcal M\text{-}G\text{-}\mathscr C^\tau\rightleftarrows\Alg_{\mathcal I}(G\text{-}\mathscr C)^\tau :\!\forget
\end{equation}
\begin{proof}
We have to show that $\cat{P}X$ is tame for every $X\in E\mathcal M\text-G\text{-}\mathscr C^\tau$, for which we note that as $E\mathcal M$-$G$-objects $\cat{P}X=\coprod_{n\ge 0}(\mathcal I(n)\times_{E\mathcal M^n}X^{\times n})/\Sigma_n$, which is isomorphic to $\coprod_{n\ge0} X^{\boxtimes n}/\Sigma_n$ by Theorem~\ref{thm:box-product-operadic}; the claim follows immediately as $E\mathcal M\text{-}G\text{-}\mathscr C^\tau$ is closed under colimits and the box product.
\end{proof}
\end{lemma}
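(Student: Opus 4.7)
The plan is to give an explicit description of $\cat{P}X$ as an $E\mathcal M$-$G$-object and then recognize each summand as a symmetric power under the box product via Theorem~\ref{thm:box-product-operadic}; tameness of $\cat{P}X$ will then follow from closure of $E\mathcal M\text{-}G\text{-}\mathscr C^\tau$ under $\boxtimes$ and colimits.

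First I would handle the forgetful direction, which is essentially tautological: the monoid homomorphism $\core(\mathcal M)\to\mathcal I(1)^G$ defining the twisted structure on $\mathcal I$ is the restriction of the identification $\mathcal I(1)=E\mathcal M$, so every $\mathcal I$-algebra $A$ in $G\text{-}\mathscr C$ carries a canonical $E\mathcal M$-action via $\mathcal I(1)$ that refines the twisted $\core(\mathcal M)$-action. The standard forgetful $\Alg_{\mathcal I}(G\text{-}\mathscr C)\to\core(\mathcal M)\text{-}G\text{-}\mathscr C$ therefore factors through a refined $\forget\colon\Alg_{\mathcal I}(G\text{-}\mathscr C)\to E\mathcal M\text{-}G\text{-}\mathscr C$, and $A$ is by definition tame as an $\mathcal I$-algebra precisely when its refined image is tame.

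For the left adjoint, the main step is to establish the formula
\[
    \cat{P}X\;\cong\;\coprod_{n\ge 0}\bigl(\mathcal I(n)\times_{E\mathcal M^n}X^{\times n}\bigr)/\Sigma_n
\]
as $E\mathcal M$-$G$-objects, where the residual $E\mathcal M$-structure arises from postcomposition on $\mathcal I(n)$ and the quotient coequalizes the full right $E\mathcal M^n$-action by precomposition on $\mathcal I(n)$ against the diagonal action on $X^{\times n}$. This refines the standard formula $\coprod_{n}\mathcal I(n)\times_{\Sigma_n\wr\core(\mathcal M)}X^{\times n}$, which only coequalizes the $\core(\mathcal M)^n$-actions: the extra identification $[u\circ(v_1\amalg\cdots\amalg v_n);x_\bullet]=[u;v_1.x_1,\ldots,v_n.x_n]$ for arbitrary $v_i\in\mathcal M$ is exactly what is forced by the universal property once one requires the unit $X\to\cat{P}X$ to be $E\mathcal M$-equivariant rather than only $\core(\mathcal M)$-equivariant. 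Granting this formula, Theorem~\ref{thm:box-product-operadic} identifies each summand with $X^{\boxtimes n}/\Sigma_n$, and since $X$ is tame each $X^{\boxtimes n}$ is tame by construction of $\boxtimes$; the $\Sigma_n$-coinvariants and the coproduct remain tame because $E\mathcal M\text{-}G\text{-}\mathscr C^\tau$ is closed under small colimits (Remark~\ref{rk:coreflective}). The main obstacle is justifying the refined formula above, which requires a careful unpacking of the twisted free-forgetful adjunction; all remaining steps are then formal.
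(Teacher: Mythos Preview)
Your proposal is correct and follows essentially the same route as the paper: identify $\cat{P}X$ with $\coprod_{n\ge0}X^{\boxtimes n}/\Sigma_n$ via Theorem~\ref{thm:box-product-operadic} and conclude tameness from closure of $E\mathcal M\text{-}G\text{-}\mathscr C^\tau$ under $\boxtimes$ and colimits. The paper simply states the formula $\cat{P}X=\coprod_{n\ge 0}(\mathcal I(n)\times_{E\mathcal M^n}X^{\times n})/\Sigma_n$ without comment, whereas you take the extra care to explain why the quotient is by the full $E\mathcal M^n$ rather than merely by $\Sigma_n\wr\core(\mathcal M)$ (namely because the unit must be $E\mathcal M$-equivariant, using $\mathcal I(1)=E\mathcal M$); this is a genuine point the paper leaves implicit, so your expanded justification is an improvement rather than a deviation.
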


\begin{constr}
Let $A$ be a tame $\mathcal I$-algebra. We define $+\colon A\boxtimes A\to A$ via
\begin{equation*}
A\boxtimes A\xrightarrow{\Phi^{-1}} E\Inj(\bm2\times\omega,\omega)\times_{E\mathcal M^2} A^{\times 2}\to A
\end{equation*}
where the right hand map is induced by the action map. Moreover, we define $0\in A$ as the image of the unique $0$-ary operation.
\end{constr}

\begin{prop}\label{prop:parsum-vs-tame-I}
For any tame $\mathcal I$-algebra $A$, the above defines the structure of a parsummable $\mathscr C$-object. Moreover, this construction extends to an equivalence of ordinary categories $\Alg_{\mathcal I}(G\text{-}\mathscr C)^\tau\to G\textup{-ParSum-}\mathscr C$ by sending an $\mathcal I$-algebra homomorphism $A\to B$ to the monoid homomorphism with the same underlying map.
\begin{proof}
For clarity, let us write $\cat{P}_\boxtimes$ for the left adjoint of the forgetful functor $G\text{-ParSum-}\mathscr C\to E\mathcal M\text{-}G\text{-}\mathscr C^\tau$ and $\cat{P}_{\mathcal I}$ for the left adjoint in $(\ref{eq:free-forgetful-tame-I})$.

The forgetful functor $\Alg_{\mathcal I}(G\text{-}\mathscr C)\to G\text{-}\mathscr C$ preserves reflective coequalizers, hence so does $\Alg_{\mathcal I}(G\text{-}\mathscr C)^\tau\to E\mathcal M\text{-}G\text{-}\mathscr C$ as colimits on the right hand side are created in $G\text{-}\mathscr C$. As it is moreover clearly conservative, we see that $(\ref{eq:free-forgetful-tame-I})$ is monadic, i.e.~the canonical functor $\Alg_{\mathcal I}(G\text{-}\mathscr C)^\tau\to\Alg_{\cat{P}_{\mathcal I}}$ is an equivalence.

On the other hand, we have an equivalence of categories $\Alg_{\cat{P}_{\boxtimes}}\to G\text{-ParSum-}\mathscr C$ that sends an algebra $(A,\cat{P}_{\boxtimes}A\to A)$ to $A$ equipped with the sum induced by the restriction of $\cat{P}_{\boxtimes}A\to A$ to the summand $A^{\boxtimes 2}/\Sigma_2$ and $0$-object the image of the zeroth summand, and that sends a map $f$ of $\cat{P}_{\boxtimes}$-algebras to the monoid homomorphism with the same underlying map.

Finally, Theorem~\ref{thm:box-product-operadic} shows that the natural transformation $\coprod_{n\ge 0}\Phi\colon\cat{P}_{\mathcal I}\to\cat{P}_{\boxtimes}$ is an isomorphism. Moreover, this is a map of monads either by direct inspection, or alternatively using that both sides are naturally submonads of the monad $\cat{P}_{\times}\colon X\mapsto\coprod_{n\ge0} X^{\times n}/\Sigma_n$ for strictly commutative monoids (viewing $\mathcal I$ as a suboperad of $*\rtimes E\mathcal M$). Thus, pulling back along the inverse isomorphism yields an equivalence $\Alg_{\cat{P}_{\mathcal I}}\to \Alg_{\cat{P}_{\boxtimes}}$. The resulting equivalence
\begin{equation*}
\Alg_{\mathcal I}(G\text{-}\mathscr C)^\tau\to\Alg_{\cat{P}_{\mathcal I}}\to \Alg_{\cat{P}_{\boxtimes}}\to G\text{-ParSum-}\mathscr C
\end{equation*}
then clearly admits the above description.
\end{proof}
\end{prop}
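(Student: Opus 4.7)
The plan is to prove this via a double application of monadicity, identifying both $\Alg_{\mathcal I}(G\text{-}\mathscr C)^\tau$ and $G\text{-}\textup{ParSum-}\mathscr C$ with categories of algebras for monads on $E\mathcal M\text{-}G\text{-}\mathscr C^\tau$, and then using Theorem~\ref{thm:box-product-operadic} to show these two monads are isomorphic.

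First, I would check that the adjunction $\cat{P}_{\mathcal I}\dashv\forget$ from Lemma~\ref{lemma:free-tame-vs-free-general} is monadic. The forgetful functor is conservative (it creates identities), and it preserves reflexive coequalizers: these are computed in $E\mathcal M\text{-}G\text{-}\mathscr C^\tau$, where colimits are inherited from $E\mathcal M\text{-}G\text{-}\mathscr C$ (Remark~\ref{rk:coreflective}), and the forgetful functor $\Alg_{\mathcal I}(G\text{-}\mathscr C)\to G\text{-}\mathscr C$ preserves reflexive coequalizers by the standard argument for operadic algebras. By Barr--Beck, $\Alg_{\mathcal I}(G\text{-}\mathscr C)^\tau\simeq\Alg_{\cat{P}_{\mathcal I}}$. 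Second, by definition the category $G\text{-}\textup{ParSum-}\mathscr C$ of commutative monoids in $(E\mathcal M\text{-}G\text{-}\mathscr C^\tau,\boxtimes)$ is equivalent to $\Alg_{\cat{P}_\boxtimes}$ for the monad $\cat{P}_\boxtimes X=\coprod_{n\ge 0}X^{\boxtimes n}/\Sigma_n$, where this monad structure exists because $\boxtimes$ is a closed symmetric monoidal structure (Corollary~\ref{cor:box-prod-cocont}).

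The key step is identifying these two monads. From the explicit description of $\cat{P}_{\mathcal I}$ in the proof of Lemma~\ref{lemma:free-tame-vs-free-general} we have, as a $E\mathcal M$-$G$-object,
\begin{equation*}
\cat{P}_{\mathcal I}X=\coprod_{n\ge 0}(\mathcal I(n)\times_{E\mathcal M^n}X^{\times n})/\Sigma_n,
\end{equation*}
and applying Theorem~\ref{thm:box-product-operadic} summand by summand yields a natural isomorphism $\Phi\colon\cat{P}_{\mathcal I}\xrightarrow{\cong}\cat{P}_\boxtimes$. To upgrade this to a monad isomorphism, I would use the route suggested by the statement: both $\cat{P}_{\mathcal I}$ and $\cat{P}_\boxtimes$ sit inside the free strictly commutative monoid monad $\cat{P}_\times X=\coprod_{n\ge 0}X^{\times n}/\Sigma_n$ on $G\text{-}\mathscr C$ (coming from viewing $*$ as the terminal twisted $G$-operad, into which $\mathcal I$ maps, and noting that $X^{\boxtimes n}\subset X^{\times n}$). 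Since the inclusions are monomorphisms and compatible with the multiplications and units (the unit is the inclusion of the $n=0$ summand; the multiplications are both induced from concatenation via the operadic composition in $\mathcal I$ on the one side and from the symmetric monoidal structure on the other), $\Phi$ is an isomorphism of submonads of $\cat{P}_\times$.

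Pulling back algebras along $\Phi^{-1}$ then gives an equivalence $\Alg_{\cat{P}_{\mathcal I}}\xrightarrow{\simeq}\Alg_{\cat{P}_\boxtimes}$, and stringing the three equivalences together yields the desired functor $\Alg_{\mathcal I}(G\text{-}\mathscr C)^\tau\to G\text{-}\textup{ParSum-}\mathscr C$. It remains to unravel the definitions to check that the composite agrees with the explicit construction from the statement, i.e.~that the induced addition $+\colon A\boxtimes A\to A$ is precisely $\Phi^{-1}$ followed by the binary $\mathcal I$-action and that $0\in A$ is the image of the unique $0$-ary operation; this is a direct verification from the identification of the binary summand of $\cat{P}_\boxtimes$ with $A\boxtimes A$ via Theorem~\ref{thm:box-product-operadic}. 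The main obstacle is the compatibility of $\Phi$ with the monad multiplications; the submonad-of-$\cat{P}_\times$ trick is what makes this bookkeeping painless, avoiding an explicit comparison of the associativity data on both sides.
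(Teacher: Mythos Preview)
Your proposal is correct and follows essentially the same approach as the paper: both sides are identified with algebras over monads on $E\mathcal M\text{-}G\text{-}\mathscr C^\tau$ via Barr--Beck, and Theorem~\ref{thm:box-product-operadic} together with the ``submonads of $\cat{P}_\times$'' trick gives the monad isomorphism. The paper's proof is slightly terser but uses exactly the same ingredients in the same order.
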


In order to achieve the desired comparison between $G\text{-ParSum-}\mathscr C$ and all of $\Alg_{\mathcal I}(G\text{-}\mathscr C)$ we will introduce suitable model structures next.

\begin{thm}\label{thm:parsumcat-model-structure}
There is a unique model structure on $G\textup{-ParSum-}\mathscr C$ in which a map is weak equivalence or fibration if and only if it is so in the positive $G$-global model structure on $E\mathcal M\text{-}G\text{-}\mathscr C^\tau$. We call this the \emph{positive $G$-global model structure}. It is proper, $\mathscr C$-enriched (hence in particular simplicial), and combinatorial. Moreover, the adjunction
\begin{equation}\label{eq:free-forgetful-parsumcat}
\cat{P}\colon E\mathcal M\text{-}G\text{-}\mathscr C^\tau\rightleftarrows G\textup{-ParSum-}\mathscr C :\!\forget
\end{equation}
is a Quillen adjunction.
\begin{proof}
For $\mathscr C=\cat{SSet}$ this appears as \cite[Theorem~2.1.36]{g-global}; the proof for $\mathscr C=\cat{Cat}$ is analogous, so we will be somewhat terse here.

We will verify the assumptions of \cite[Theorem~3.2]{white-cmon}, whose terminology we follow. The smallness assumptions are automatically satisfied as $\cat{$\bm{E\mathcal M}$-$\bm G$-Cat}^\tau$ is locally presentable.

Next, let us show that $\cat{$\bm{E\mathcal M}$-$\bm G$-Cat}^\tau$ is a symmetric monoidal model category with respect to the box product. Indeed, the box product provides a closed symmetric monoidal structure by Corollary~\ref{cor:box-prod-cocont}; moreover, the Unit Axiom is satisfied as the box product is fully homotopical (Corollary~\ref{cor:box-product-homotopical}). In order to verify the Pushout Product Axiom for cofibrations we may restrict to the generating cofibrations, in which case we note that we can identify the pushout product of standard generating cofibrations $E\Inj(A,\omega)\times_\phi G\times i$ and $E\Inj(B,\omega)\times_\psi G\times i'$ (where $A$ is a finite non-empty faithful $H$-set and $B$ is a finite non-empty faithful $K$-set for some finite groups $H,K$) up to isomorphism with
\begin{equation}\label{eq:generating-ppo}
\big(E\Inj(A\amalg B,\omega)\times G^2\times (i\ppo i')\big)/(H\times K)
\end{equation}
where $H\times K$ acts on $A\amalg B$ in the obvious way (which is then clearly faithful) and on $G^2$ via $\phi$ and $\psi$, and where the pushout product of $i$ and $i'$ is formed with respect to the Cartesian product on $\cat{Cat}$. As $\cat{Cat}$ is Cartesian, $i\ppo i'$ is a cofibration, and as $G$ acts freely from the left on $G^2$, we conclude from the following claim that $(\ref{eq:generating-ppo})$ is a cofibration as desired:

\begin{claim*}
Let $L$ be a finite group, let $C$ be a finite faithful $L$-set, and let $f\colon X\to Y$ be a map in $\cat{$\bm{(G\times L)}$-Cat}$ such that $f$ is injective on objects and $G$ acts freely on $\Ob(Y)\setminus f(\Ob X)$. Then $E\Inj(C,\omega)\times_L f$ is a cofibration in $\cat{$\bm{E\mathcal M}$-$\bm G$-Cat}^\tau$.
\begin{proof}
The functor $E\Inj(C,\omega)\times_L\blank$ is a left adjoint and it sends generating cofibrations of the $\mathcal G_{L,G}$-model structure to generating cofibrations in the $G$-global model structure by direct inspection. Thus the claim follows from Lemma~\ref{lemma:charact-cof}.
\end{proof}
\end{claim*}

The Pushout Product Axiom for acyclic cofibrations then follows again as in the proof of Theorem~\ref{thm:injective-equivariant-model}. Thus, $\cat{$\bm{E\mathcal M}$-$\bm G$-Cat}^\tau$ is symmetric monoidal.

For the Monoid Axiom we observe that for any acyclic cofibration $j$ and any tame $E\mathcal M$-$G$-category $C$ the map $C\boxtimes j$ is a $G$-global equivalence and an injective cofibration. Thus, also any pushout of $C\boxtimes j$ is a $G$-global equivalence by Proposition~\ref{prop:inj-po}, and as the $G$-global equivalences are stable under filtered colimits, so is any transfinite composition of such.

Next, we verify the Strong Commutative Monoid Axiom, for which we again may restrict to generating (acyclic) cofibrations by \cite[Lemma~A.1]{white-cmon}. For the part about cofibrations, we therefore have to show that the map $i^{\ppo n}/\Sigma_n$ is a cofibration for each of the standard generating cofibrations $(E\Inj(A,\omega)\times_\phi G)\times i'$ ($i'\in I_{\cat{Cat}}$, $H$ finite, $A\not=\varnothing$ a finite faithful $H$-set, $\phi\colon H\to G$), where $i^{\ppo n}$ denotes the iterated pushout product and $\Sigma_n$ acts by permuting the factors. However, using Example~\ref{ex:box-corep}, this map agrees up to conjugation by isomorphisms with
\begin{equation}\label{eq:generating-cmon-ax}
(E\Inj(\bm n\times A,\omega)\times G^n \times (i')^{\ppo n})/(\Sigma_n\wr H)
\end{equation}
where the wreath product acts on $\Inj(\bm n\times A,\omega)$ via the action on $\bm n\times A$ given by
\begin{equation*}
(\sigma;h_1,\dots,h_n).(k,a)=(\sigma(k),h_k.a)
\end{equation*}
and similarly on $G^n\times (i')^{\ppo n}$. One easily checks that the $(\Sigma_n\wr H)$-action on $\bm n\times A$ is faithful (which uses $A\not=\varnothing$). Thus, we again conclude from the above claim that $(\ref{eq:generating-cmon-ax})$ is a cofibration.

The sources of the standard generating cofibrations of $\cat{$\bm{E\mathcal M}$-$\bm G$-Cat}^\tau$ are cofibrant, so we can pick a set of generating \emph{acyclic} cofibrations with cofibrant sources by  \cite[Corollary~2.7]{barwick-tractable}. Thus, to verify the Strong Commutative Monoid Axiom for acyclic cofibrations, it is enough by \cite[Corollaries~10 and~23]{sym-powers} to show that $j^{\boxtimes n}/\Sigma_n$ is a $G$-global equivalence for any $G$-global equivalence $j\colon C\to D$ between cofibrant objects. As $j^{\boxtimes n}$ is a $(G\times\Sigma_n)$-global equivalence by Corollary~\ref{cor:box-product-homotopical}, it suffices by Lemma~\ref{lemma:free-quotients} to show that $\Sigma_n$ acts freely on $A^{\boxtimes n}$ for every cofibrant $A$. But indeed, one easily checks inductively that such an $A$ has no $\mathcal M$-fixed objects (hence no $\mathcal M$-fixed points at all), so the claim follows by applying the argument from \cite[proof of Corollary~2.1.17]{g-global} to the nerve of $A$.

Altogether, this shows that we may apply \cite[Theorem~3.2]{white-cmon} to conclude that the transferred model structure along $(\ref{eq:free-forgetful-parsumcat})$ exists, which easily implies all of the above claims except for the left properness, which is instead an instance of \cite[Theorem~4.17]{white-cmon}.
\end{proof}
\end{thm}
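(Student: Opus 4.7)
The plan is to construct the model structure as a transfer along the free-forgetful adjunction $\cat{P}\dashv\forget$, and for this I would invoke White's transfer theorem for commutative monoids~\cite{white-cmon}. Concretely, I would verify the following four hypotheses for the positive $G$-global model structure on $E\mathcal M\text{-}G\text{-}\mathscr C^\tau$ equipped with the box product: (i) it is a closed symmetric monoidal model category, (ii) it satisfies the Monoid Axiom, (iii) it satisfies the Strong Commutative Monoid Axiom, and (iv) the usual local-presentability/smallness hypotheses.

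The easy half goes as follows. Local presentability is immediate (Theorems~\ref{thm:positive-model-structure-sset} and~\ref{thm:positive-model-structure}), closedness of $\boxtimes$ is Corollary~\ref{cor:box-prod-cocont}, and the unit axiom is automatic since $\boxtimes$ is fully homotopical (Corollary~\ref{cor:box-product-homotopical}). For the Pushout Product Axiom, I would restrict to generating (acyclic) cofibrations, where I would use Example~\ref{ex:box-corep} to rewrite the box product of two corepresentables $E\Inj(A,\omega)\times_\phi G$ and $E\Inj(B,\omega)\times_\psi G$ as $E\Inj(A\amalg B,\omega)$ acted on by the product group $H\times K$, so that the pushout product of two generators is (isomorphic to) a single corepresented generator smashed with the pushout product of the two $\mathscr C$-factors. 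I would then isolate a general lemma: whenever $L$ acts faithfully on a finite set $C$ and $f$ is a map in $\cat{$\bm{(G\times L)}$-}\mathscr C$ which is an injective (underlying) cofibration with $G$-free action on the complement of its image, then $E\Inj(C,\omega)\times_L f$ is a cofibration in the positive $G$-global model structure. This follows by checking the generators and invoking Lemma~\ref{lemma:charact-cof} (resp.\ Lemma~\ref{lemma:charact-cof-sset}). The acyclic-cofibration half of the Pushout Product Axiom then reduces to the cofibration half by a standard two-out-of-three argument (as in Theorem~\ref{thm:injective-equivariant-model}) together with homotopy-invariance of $\boxtimes$. For the Monoid Axiom, I note that for any acyclic cofibration $j$ and any $C$, the map $C\boxtimes j$ is both an injective cofibration and a $G$-global equivalence (homotopy-invariance again), whence Proposition~\ref{prop:inj-po} and stability under filtered colimits handle pushouts and transfinite compositions.

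The step I expect to be the main obstacle is the Strong Commutative Monoid Axiom, i.e.\ showing that the iterated symmetric pushout product $i^{\ppo n}/\Sigma_n$ is a cofibration for every generating cofibration~$i$, and that the analogous statement for acyclic cofibrations yields acyclic cofibrations. By~\cite[Lemma~A.1]{white-cmon} I may work with generators. For a generator $i=(E\Inj(A,\omega)\times_\phi G)\times i'$ with $i'\in I_{\cat{Cat}}$ (or the simplicial analogue) and $A$ a nonempty finite faithful $H$-set, I would apply Example~\ref{ex:box-corep} to identify $i^{\ppo n}/\Sigma_n$ up to conjugation by isomorphism with
\[
\big(E\Inj(\bm n\times A,\omega)\times G^n\times (i')^{\ppo n}\big)\big/(\Sigma_n\wr H),
\]
where $\Sigma_n\wr H$ acts on $\bm n\times A$ by $(\sigma;h_\bullet).(k,a)=(\sigma(k),h_k.a)$. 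Since $A\neq\varnothing$ this action is faithful, the $i'$-factor is a $\cat{Cat}$- (resp.\ $\cat{SSet}$-) cofibration by the Cartesian Pushout Product Axiom, and $G$ acts freely on its coordinate, so the auxiliary lemma above applies to produce a cofibration, settling the cofibration half. For the acyclic half I would choose a set of generating acyclic cofibrations with cofibrant sources~\cite[Corollary~2.7]{barwick-tractable} and invoke~\cite[Corollaries~10 and~23]{sym-powers}, reducing the claim to: for any $G$-global equivalence $j$ between cofibrant objects, $j^{\boxtimes n}/\Sigma_n$ is again a $G$-global equivalence. Corollary~\ref{cor:box-product-homotopical} provides that $j^{\boxtimes n}$ is a $(G\times\Sigma_n)$-global equivalence, so by Lemma~\ref{lemma:free-quotients}/\ref{lemma:free-quotients-sset} it suffices that $\Sigma_n$ acts freely on $A^{\boxtimes n}$ for every cofibrant~$A$. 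This reduces inductively to the statement that a cofibrant object has no $\mathcal M$-fixed points, which follows from the corresponding statement for cofibrant tame $E\mathcal M$-simplicial sets~\cite[proof of Corollary~2.1.17]{g-global} by applying the nerve if $\mathscr C=\cat{Cat}$.

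With all four axioms verified, White's theorem produces the transferred model structure and the Quillen adjunction~(\ref{eq:free-forgetful-parsumcat}); combinatoriality, right properness, and $\mathscr C$-enrichment are inherited in the standard way, while left properness is a separate output of~\cite[Theorem~4.17]{white-cmon}.
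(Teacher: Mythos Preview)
Your proposal is correct and follows essentially the same approach as the paper: both transfer the model structure via White's theorem~\cite{white-cmon} by verifying the symmetric monoidal model axioms, the Monoid Axiom, and the Strong Commutative Monoid Axiom for the box product, with the same key reductions (Example~\ref{ex:box-corep}, the auxiliary lemma on $E\Inj(C,\omega)\times_L f$, the wreath-product identification of $i^{\ppo n}/\Sigma_n$, and the $\Sigma_n$-freeness argument via absence of $\mathcal M$-fixed points). The only cosmetic difference is that the paper dispatches the $\mathscr C=\cat{SSet}$ case by citing~\cite[Theorem~2.1.36]{g-global} and writes out only $\mathscr C=\cat{Cat}$, whereas you treat both in parallel.
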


\begin{cor}
There is a unique model structure on $\Alg_{\mathcal I}(G\text{-}\mathscr C)^\tau$ in which a map is a weak equivalence or fibration if and only if it so in the positive $G$-global model structure on $E\mathcal M\text{-}G\text{-}\mathscr C^\tau$. We call this the \emph{positive $G$-global model structure} again. It is combinatorial, right proper, and simplicial. Moreover, the adjunction $(\ref{eq:free-forgetful-tame-I})$ is a Quillen adjunction.
\begin{proof}
As the forgetful functors are compatible with the equivalence (of ordinary categories) from Proposition~\ref{prop:parsum-vs-tame-I}, Theorem~\ref{thm:parsumcat-model-structure} implies that the model structure transferred along $(\ref{eq:free-forgetful-tame-I})$ exists. The claims follow as before.
\end{proof}
\end{cor}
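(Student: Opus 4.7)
The proof is essentially a formal transport of structure along the equivalence of ordinary categories
\[
\Psi\colon\Alg_{\mathcal I}(G\text{-}\mathscr C)^\tau\xrightarrow{\ \simeq\ } G\text{-ParSum-}\mathscr C
\]
established in Proposition~\ref{prop:parsum-vs-tame-I}. The plan is to first observe that $\Psi$ intertwines the two forgetful functors to $E\mathcal M\text{-}G\text{-}\mathscr C^\tau$, which is immediate from the construction of $\Psi$ (a map $f$ in $\Alg_{\mathcal I}(G\text{-}\mathscr C)^\tau$ is sent to the same underlying map, now viewed as a monoid homomorphism). Consequently $\Psi$ also intertwines the left adjoints $\cat P_{\mathcal I}$ and $\cat P_\boxtimes$ up to the natural isomorphism $\coprod_n\Phi$ from the proof of Proposition~\ref{prop:parsum-vs-tame-I}.

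Given this, I would simply declare a map $f$ in $\Alg_{\mathcal I}(G\text{-}\mathscr C)^\tau$ to be a weak equivalence (respectively fibration) iff $\Psi(f)$ is one in the positive $G$-global model structure on $G\text{-ParSum-}\mathscr C$ from Theorem~\ref{thm:parsumcat-model-structure}; by that theorem this is in turn equivalent to $\forget\Psi(f)=\forget f$ being a weak equivalence (respectively fibration) in $E\mathcal M\text{-}G\text{-}\mathscr C^\tau$, so this indeed yields the desired characterization. Since $\Psi$ is an equivalence of $1$-categories, the lifting properties, $2$-out-of-$3$, and retract closure are transported verbatim, so this defines a model structure, manifestly unique with the stated characterization.

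All remaining properties follow immediately by transport along $\Psi$: combinatoriality because $\Psi$ is an equivalence of locally presentable categories carrying generating (acyclic) cofibrations to generating (acyclic) cofibrations; right properness because pullbacks in $\Alg_{\mathcal I}(G\text{-}\mathscr C)^\tau$ correspond to pullbacks in $G\text{-ParSum-}\mathscr C$ under $\Psi$, and fibrations and weak equivalences are detected by $\forget$; and the simplicial structure because $\Psi$ is compatible with the enrichments, tensors, and cotensors inherited via the cotensor-preserving forgetful functors. Finally, the adjunction $(\ref{eq:free-forgetful-tame-I})$ is a Quillen adjunction because under $\Psi$ it corresponds (up to the natural isomorphism $\coprod_n\Phi$) to $(\ref{eq:free-forgetful-parsumcat})$, which is Quillen by Theorem~\ref{thm:parsumcat-model-structure}.

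There is essentially no obstacle: all the hard work was already done in establishing Theorem~\ref{thm:parsumcat-model-structure} (verifying the Strong Commutative Monoid Axiom and the Monoid Axiom for the box product) and in proving that $\Psi$ is an equivalence compatible with the forgetful functors and their left adjoints. The only point requiring a moment of attention is confirming that $\Psi$ is compatible with \emph{all} the relevant structure on the nose---underlying objects, left adjoints (up to the canonical isomorphism of monads), and the simplicial enrichment---after which every claim is purely formal.
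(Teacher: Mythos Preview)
Your proposal is correct and takes essentially the same approach as the paper: transport the model structure along the equivalence $\Psi$ of Proposition~\ref{prop:parsum-vs-tame-I}, using that $\Psi$ is compatible with the forgetful functors to $E\mathcal M\text{-}G\text{-}\mathscr C^\tau$, and then invoke Theorem~\ref{thm:parsumcat-model-structure}. The paper's proof is simply a terser version of what you wrote.
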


\begin{prop}
\begin{enumerate}
\item Geometric realization in $E\mathcal M\text{-}G\text{-}\mathscr C^\tau$ and $E\mathcal M\text{-}G\text{-}\mathscr C$ is homotopical.
\item The forgetful functors $\Alg_{\mathcal I}(G\text{-}\mathscr C)^\tau\to E\mathcal M\text{-}G\text{-}\mathscr C^\tau$, $\Alg_{\mathcal I}(G\text{-}\mathscr C)\to E\mathcal M\text{-}G\text{-}\mathscr C$ preserve geometric realizations.
\end{enumerate}
\begin{proof}
The first statement for $E\mathcal M\text{-}G\text{-}\mathscr C$ is an instance of Proposition~\ref{prop:geometric-realization-homotopical} (or its classical simplicial analogue), and this immediately yields the corresponding statement for $E\mathcal M\text{-}G\text{-}\mathscr C^\tau$ as the inclusion preserves tensors and colimits.

Likewise, the second statement for $E\mathcal M\text{-}G\text{-}\mathscr C$ is an instance of Proposition~\ref{prop:forget-geometric-realization} (using that geometric realization is created in $\mathscr C$ and $\Alg_{\mathcal I}(\mathscr C)$, respectively), and for the tame statement it suffices to show that $\Alg_{\mathcal I}(G\text{-}\mathscr C)^\tau$ is closed under geometric realizations. However, as geometric realizations in $\Alg_{\mathcal I}(G\text{-}\mathscr C)$ can be computed in $E\mathcal M\text{-}\mathscr C$, this follows as for the first statement.
\end{proof}
\end{prop}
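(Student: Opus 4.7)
The plan is to reduce both parts to the ambient categorical/simplicial results already established in this section, and then use the coreflection from Remark~\ref{rk:coreflective} to pass to the tame subcategories. I treat $\mathscr C=\cat{Cat}$ and $\mathscr C=\cat{SSet}$ in parallel throughout.

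For the first statement on the ambient category $E\mathcal M\text{-}G\text{-}\mathscr C$: when $\mathscr C=\cat{Cat}$ I would apply Proposition~\ref{prop:geometric-realization-homotopical} with $M=E\mathcal M\times G$ and the family of graph subgroups $\Gamma_{H,\phi}$ with $H\subset\mathcal M$ universal and $\phi\colon H\to G$, so that the resulting $\mathcal F$-equivalences coincide with the $G$-global equivalences by Corollary~\ref{cor:g-global-model-cat}; when $\mathscr C=\cat{SSet}$ the geometric realization in $\cat{$\bm M$-SSet}$ agrees with the diagonal of the underlying bisimplicial set by \cite[Proposition~B.1]{bousfield-friedlander}, and the diagonal is fully homotopical for $G$-global weak equivalences by \cite[Lemma~1.2.57]{g-global} (as already used in Proposition~\ref{prop:free-monadic-sset}). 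The tame version then follows because the inclusion $E\mathcal M\text{-}G\text{-}\mathscr C^\tau\hookrightarrow E\mathcal M\text{-}G\text{-}\mathscr C$ is a left adjoint by Remark~\ref{rk:coreflective}, hence commutes with the $\mathscr C$-tensoring and with all colimits; it therefore strictly preserves geometric realizations, and since weak equivalences agree on the two sides, homotopicality transfers.

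For the second statement, I would apply Proposition~\ref{prop:forget-geometric-realization} with the cocomplete, enriched/tensored/cotensored category $E\mathcal M\text{-}G\text{-}\mathscr C$ playing the role of the target category. The one substantive hypothesis to check is that its geometric realization functor preserves finite products: for $\mathscr C=\cat{Cat}$ this is Lemma~\ref{lemma:geometric-prod} pulled through the $M$-action (the $M$-action is created from the underlying one, and $\cat{Cat}$-level realization preserves finite products by that lemma), and for $\mathscr C=\cat{SSet}$ it again follows from the description via diagonals of bisimplicial sets. For the tame variant $\Alg_{\mathcal I}(G\text{-}\mathscr C)^\tau\to E\mathcal M\text{-}G\text{-}\mathscr C^\tau$, I would combine this with the first part: the ambient geometric realization of a simplicial tame $\mathcal I$-algebra is sent by the ambient forgetful functor to a geometric realization in $E\mathcal M\text{-}G\text{-}\mathscr C$, which is already tame since $E\mathcal M\text{-}G\text{-}\mathscr C^\tau$ is closed under colimits (Remark~\ref{rk:coreflective}); hence the ambient realization is itself a tame $\mathcal I$-algebra, and the comparison of tame and ambient realizations established in the previous paragraph identifies it with the tame realization.

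The main (and rather minor) obstacle is the bookkeeping around the interplay between tame and ambient realizations, in particular keeping track of in which category each realization is formed and ensuring that the coreflective inclusions are applied in the correct direction. Once the individual pieces from Remark~\ref{rk:coreflective}, Lemma~\ref{lemma:geometric-prod}, and Propositions~\ref{prop:geometric-realization-homotopical} and~\ref{prop:forget-geometric-realization} are lined up, the argument is essentially formal.
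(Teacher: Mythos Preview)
Your proposal is essentially the same as the paper's proof, with one small slip worth flagging. In the second statement you say you will apply Proposition~\ref{prop:forget-geometric-realization} with $E\mathcal M\text{-}G\text{-}\mathscr C$ playing the role of the base category, but $\mathcal I$ is a \emph{twisted} $G$-operad, i.e.\ an operad in $G\text{-}\mathscr C$ (with an extra map $\core(\mathcal M)\to\mathcal I(1)^G$), not an operad in $E\mathcal M\text{-}G\text{-}\mathscr C$; so Proposition~\ref{prop:forget-geometric-realization} does not literally apply in that form. The fix is exactly what the paper does: apply Proposition~\ref{prop:forget-geometric-realization} with $G\text{-}\mathscr C$ as the base (where $\mathcal I$ genuinely lives) to see that $\Alg_{\mathcal I}(G\text{-}\mathscr C)\to G\text{-}\mathscr C$ preserves geometric realizations, and then observe that geometric realizations in $E\mathcal M\text{-}G\text{-}\mathscr C$ and in $\Alg_{\mathcal I}(G\text{-}\mathscr C)$ are both created in $G\text{-}\mathscr C$ (the forgetful functors preserve tensors and colimits), so the factored forgetful functor $\Alg_{\mathcal I}(G\text{-}\mathscr C)\to E\mathcal M\text{-}G\text{-}\mathscr C$ preserves realizations as well. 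Everything else in your outline---the appeal to Proposition~\ref{prop:geometric-realization-homotopical} or the diagonal argument for $\cat{SSet}$, the use of Remark~\ref{rk:coreflective} to transfer to the tame subcategories, and the closure argument for $\Alg_{\mathcal I}(G\text{-}\mathscr C)^\tau$---matches the paper.
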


The same argument as in the proof of Proposition~\ref{prop:free-monadic-sset} now shows:

\begin{prop}\label{prop:g-global-monadic}
The adjunctions
\begin{align*}
\cat{LP}\colon E\mathcal M\text{-}G\text{-}\mathscr C^\infty&\rightleftarrows\Alg_{\mathcal I}(G\text{-}\mathscr C)^\infty :\!{\forget}^\infty\\
\cat{LP}\colon (E\mathcal M\text{-}G\text{-}\mathscr C^\tau)^\infty&\rightleftarrows(\Alg_{\mathcal I}(G\text{-}\mathscr C)^\tau)^\infty :\!{\forget}^\infty
\end{align*}
are monadic.\qed
\end{prop}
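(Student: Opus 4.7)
The plan is to apply the $\infty$-categorical Barr-Beck Theorem \cite[Theorem~4.7.3.5]{ha} to both adjunctions, following the same pattern as Proposition~\ref{prop:free-monadic-sset}. Two conditions need to be verified: conservativity of $\forget^\infty$ and preservation of $\Delta^\op$-shaped homotopy colimits.

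For conservativity, in all four cases the relevant model structures on $\Alg_{\mathcal I}(G\text{-}\mathscr C)$ and $\Alg_{\mathcal I}(G\text{-}\mathscr C)^\tau$ are transferred so that $\forget$ strictly creates weak equivalences. As the classes of weak equivalences are saturated, the induced functors on associated quasi-categories are conservative.

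For preservation of $\Delta^\op$-shaped homotopy colimits, the argument proceeds as in the proof of Proposition~\ref{prop:free-monadic-sset}. By \cite[Corollary~A.2.9.30]{htt}, these homotopy colimits can be computed as geometric realizations of a Reedy cofibrant replacement of a chosen strictification. The preceding proposition tells us exactly what we need: geometric realization is homotopical both in $E\mathcal M\text{-}G\text{-}\mathscr C$ and $E\mathcal M\text{-}G\text{-}\mathscr C^\tau$, and the forgetful functors $\Alg_{\mathcal I}(G\text{-}\mathscr C)\to E\mathcal M\text{-}G\text{-}\mathscr C$ and $\Alg_{\mathcal I}(G\text{-}\mathscr C)^\tau\to E\mathcal M\text{-}G\text{-}\mathscr C^\tau$ preserve geometric realizations. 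Combining these, geometric realization is also fully homotopical on the algebra sides, so $\Delta^\op$-shaped homotopy colimits on all four quasi-categories can be computed by ordinary (i.e.~pointset-level) geometric realization. The commutation $\forget\circ|\blank|\cong|\blank|\circ\Fun(\Delta^\op,\forget)$ then yields the desired preservation statement after passing to associated quasi-categories.

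I do not foresee any serious obstacle here: the real content has already been packaged into the preceding proposition (that geometric realization is homotopical on the tame side and that $\forget$ commutes with it), and the remainder is just a standard application of Barr-Beck exactly as in Proposition~\ref{prop:free-monadic-sset}. The only minor point worth checking carefully is that the Reedy cofibrant replacement functor used to model $\Delta^\op$-shaped homotopy colimits lands in the tame subcategory when one starts with a tame simplicial object, but this follows formally from tameness being closed under colimits (Remark~\ref{rk:coreflective}) together with the fact that the Reedy latching objects involved are built from iterated colimits of tame pieces.
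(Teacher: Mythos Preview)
Your proposal is correct and follows exactly the route the paper indicates: the paper simply says ``The same argument as in the proof of Proposition~\ref{prop:free-monadic-sset} now shows'' and appeals to the preceding proposition for the needed ingredients (homotopicality of geometric realization and its preservation by $\forget$), which is precisely what you have unpacked. Your final worry about Reedy cofibrant replacements landing in the tame subcategory is unnecessary, since $E\mathcal M\text{-}G\text{-}\mathscr C^\tau$ and $\Alg_{\mathcal I}(G\text{-}\mathscr C)^\tau$ carry their own (positive $G$-global) simplicial model structures, so the Reedy replacement is performed entirely within these categories to begin with.
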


\begin{thm}\label{thm:taming-I-algebras}
The inclusion $G\textup{-ParSum-}\mathscr C\simeq\Alg_{\mathcal I}(G\text{-}\mathscr C)^\tau\hookrightarrow\Alg_{\mathcal I}(G\text{-}\mathscr C)$ descends to an equivalence of associated quasi-categories. In particular, if $\mathcal O$ is any twisted $G$-global $E_\infty$-operad, then $G\textup{-ParSum-}\mathscr C^\infty\simeq \Alg_\mathcal O(G\text{-}\mathscr C)^\infty$.
\begin{proof}
By Corollary~\ref{cor:comparison-ext-g-global-operads} it suffices to prove the first statement. For this we consider the diagram
\begin{equation*}
\begin{tikzcd}
(\Alg_{\mathcal I}(G\text{-}\mathscr C)^\tau)^\infty\arrow[r, "\incl^\infty"]\arrow[d, "\forget^\infty"']&\Alg_{\mathcal I}(G\text{-}\mathscr C)^\infty\arrow[d,"\forget^\infty"]\\
(E\mathcal M\text{-}G\text{-}\mathscr C^\tau)^\infty\arrow[r, "\incl^\infty"]\arrow[d,"\incl^\infty"']\twocell[ur] & E\mathcal M\text{-}G\text{-}\mathscr C^\infty\arrow[d,equal]\\
E\mathcal M\text{-}G\text{-}\mathscr C\arrow[r,equal]\twocell[ur]& E\mathcal M\text{-}G\text{-}\mathscr C^\infty
\end{tikzcd}
\end{equation*}
where the two squares commute up to the natural equivalences induced by the respective identity transformations. The vertical composites are monadic by the previous proposition together with Theorem~\ref{thm:positive-model-structure-sset}/\ref{thm:inclusion-tame-cat}. As in the proof of Theorem~\ref{thm:change-of-operad-sset} it then suffices to show that the canonical mate of the total rectangle is an equivalence. As the functors in the bottom square are equivalences, it is enough to prove this for the top square, where this is immediate from the construction of the adjunctions (cf.~Lemma~\ref{lemma:free-tame-vs-free-general}).
\end{proof}
\end{thm}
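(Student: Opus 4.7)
The plan is to deduce the second claim from the first via Corollary~\ref{cor:comparison-ext-g-global-operads}, so the work is in proving that the inclusion $\iota\colon \Alg_{\mathcal I}(G\text{-}\mathscr C)^\tau\hookrightarrow\Alg_{\mathcal I}(G\text{-}\mathscr C)$ induces an equivalence of associated quasi-categories. The strategy is a standard monadicity argument: realize both sides as monadic over the same base (namely, the quasi-category of $E\mathcal M$-$G$-objects, which will be $(E\mathcal M\text-G\text-\mathscr C^\tau)^\infty$ viewed via Theorem~\ref{thm:positive-model-structure-sset}/\ref{thm:inclusion-tame-cat} as equivalent to $(E\mathcal M\text-G\text-\mathscr C)^\infty$) and check that the corresponding monads agree.

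Concretely, first I would set up the two-square diagram
\begin{equation*}
\begin{tikzcd}
(\Alg_{\mathcal I}(G\text-\mathscr C)^\tau)^\infty\arrow[r,"\iota^\infty"]\arrow[d,"\forget^\infty"'] & \Alg_{\mathcal I}(G\text-\mathscr C)^\infty\arrow[d,"\forget^\infty"]\\
(E\mathcal M\text-G\text-\mathscr C^\tau)^\infty\arrow[d,"\incl^\infty"']\arrow[r,"\incl^\infty"] \twocell[ur] & (E\mathcal M\text-G\text-\mathscr C)^\infty\arrow[d,equal]\\
(E\mathcal M\text-G\text-\mathscr C)^\infty\arrow[r,equal] \twocell[ur] & (E\mathcal M\text-G\text-\mathscr C)^\infty
\end{tikzcd}
\end{equation*}
where the top square commutes up to the natural equivalence induced by the identity, and the bottom one likewise. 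By Proposition~\ref{prop:g-global-monadic}, the two vertical composites are monadic, and the bottom row is an equivalence by Theorem~\ref{thm:positive-model-structure-sset}/\ref{thm:inclusion-tame-cat}. By the $\infty$-categorical Barr-Beck theorem \cite[Corollary~4.7.3.16]{ha}, it therefore suffices to show that the canonical mate of the outer rectangle is an equivalence; equivalently, since the lower square's horizontals are equivalences, that the mate of the top square is an equivalence.

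Unwinding, this amounts to showing that for every (cofibrant) tame $X$ the canonical map $\cat{P}^\tau X\to\iota^*\cat{P}X$ from the free tame $\mathcal I$-algebra to the underlying object of the free $\mathcal I$-algebra is a $G$-global weak equivalence (resp.\ equivalence). But Lemma~\ref{lemma:free-tame-vs-free-general} shows that $\cat{P}X$ is already tame when $X$ is tame, so this map is even an isomorphism; equivalently, one reads off from the formula $\cat{P}X\cong\coprod_{n\ge0}X^{\boxtimes n}/\Sigma_n$ (Theorem~\ref{thm:box-product-operadic}) that both sides compute the same thing. This finishes the comparison.

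No genuine obstacle arises here—the technical heavy lifting (monadicity of the free/forget adjunctions, including on the tame side; the equivalence between tame and all $E\mathcal M$-$G$-objects at the quasi-categorical level; preservation of geometric realizations) has already been done in the preceding propositions. The only thing to verify carefully is the identification of $\cat{P}^\tau X$ with $\cat{P}X$ for tame $X$, which is immediate from the explicit description. Once this is in place, the second statement follows formally: Corollary~\ref{cor:comparison-ext-g-global-operads} identifies $\Alg_{\mathcal O}(G\text-\mathscr C)^\infty$ with $\Alg_{\mathcal I}(G\text-\mathscr C)^\infty$ for any twisted $G$-global $E_\infty$-operad $\mathcal O$, and composing with $(\iota^\infty)^{-1}$ and the equivalence $G\text{-ParSum-}\mathscr C\simeq\Alg_{\mathcal I}(G\text-\mathscr C)^\tau$ of Proposition~\ref{prop:parsum-vs-tame-I} yields the desired $G\text{-ParSum-}\mathscr C^\infty\simeq\Alg_{\mathcal O}(G\text-\mathscr C)^\infty$.
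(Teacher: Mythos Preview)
Your proposal is correct and follows essentially the same approach as the paper: both set up the identical two-square diagram, invoke monadicity via Proposition~\ref{prop:g-global-monadic} together with Theorem~\ref{thm:positive-model-structure-sset}/\ref{thm:inclusion-tame-cat}, reduce to the mate of the top square, and conclude via Lemma~\ref{lemma:free-tame-vs-free-general}. Your version is slightly more explicit in spelling out what the mate computation amounts to (the map $\cat P^\tau X\to\cat PX$ being an isomorphism for tame $X$), but the argument is the same.
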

For $\mathscr C=\cat{SSet}$ this implies the following result which in particular subsumes Theorem~\ref{introthm:genuine-e-infty-vs-gamma} from the introduction:

\begin{thm}\label{thm:genuine-e-infty-vs-gamma}
The following quasi-categories are equivalent:
\begin{itemize}
\item the quasi-category $\Alg_{\mathcal O}(\cat{$\bm{E\mathcal M}$-$\bm G$-SSet})^\infty$ for any $G$-global $E_\infty$-operad $\mathcal O$,
\item the quasi-category $(\cat{$\bm\Gamma$-$\bm G$-$\bm{E\mathcal M}$-SSet}^\textup{special}_*)^\infty$ of special $G$-global $\Gamma$-spaces in the sense of \cite[Definition~2.2.50]{g-global}, and
\item the quasi-category $\cat{$\bm G$-UCom}^\infty$ of $G$-ultra-commutative monoids \cite[Definition~2.1.25]{g-global}.
\end{itemize}
\begin{proof}
Let $\mathcal O$ be a $G$-global $E_\infty$-operad; the previous theorem together with Theorem~\ref{thm:external-vs-internal} implies that $\Alg_{\mathcal O}(\cat{$\bm{E\mathcal M}$-$\bm G$-SSet})^\infty$ is equivalent to the quasi-category of $G$-parsummable simplicial sets. These are in turn equivalent to special $G$-global $\Gamma$-spaces by \cite[Theorem~2.3.1 and Corollary~2.2.53]{g-global} and to $G$-ultra-commutative monoids by Corollary~2.1.38 of {op.~cit.}
\end{proof}
\end{thm}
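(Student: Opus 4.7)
The plan is to chain together the comparison theorems already established in the paper with the existing results from \cite{g-global}. The intermediate object connecting everything is the quasi-category of $G$-parsummable simplicial sets.

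First, starting from an arbitrary $G$-global $E_\infty$-operad $\mathcal O$ in $\cat{$\bm{E\mathcal M}$-$\bm G$-SSet}$, I would invoke Theorem~\ref{thm:external-vs-internal} (applied with $\mathscr C=\cat{SSet}$) to exhibit a zig-zag of Quillen equivalences between $\Alg_{\mathcal O}(\cat{$\bm{E\mathcal M}$-$\bm G$-SSet})$ and $\Alg_{\mathcal P}(\cat{$\bm G$-SSet})$ for any twisted $G$-global $E_\infty$-operad $\mathcal P$ in $\cat{$\bm G$-SSet}$. A convenient explicit choice here is $\mathcal P=\mathcal I$ (the simplicial injection operad from Example~\ref{ex:ext-g-global-operad}), which is indeed a twisted $G$-global $E_\infty$-operad as verified there. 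At the level of associated quasi-categories this gives $\Alg_{\mathcal O}(\cat{$\bm{E\mathcal M}$-$\bm G$-SSet})^\infty\simeq\Alg_{\mathcal I}(\cat{$\bm G$-SSet})^\infty$.

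Next, Theorem~\ref{thm:taming-I-algebras} (in the simplicial case) provides an equivalence $\Alg_{\mathcal I}(\cat{$\bm G$-SSet})^\infty\simeq\cat{$\bm G$-ParSumSSet}^\infty$ by showing that the inclusion of the tame $\mathcal I$-algebras (which are identified with $G$-parsummable simplicial sets via Proposition~\ref{prop:parsum-vs-tame-I}) into all $\mathcal I$-algebras is a quasi-localization inverse to the taming functor. Combining the two previous steps therefore identifies the operadic side of Theorem~\ref{thm:genuine-e-infty-vs-gamma} with $\cat{$\bm G$-ParSumSSet}^\infty$.

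The remaining two equivalences are then imported directly from \cite{g-global}: the equivalence $\cat{$\bm G$-ParSumSSet}^\infty\simeq(\cat{$\bm\Gamma$-$\bm G$-$\bm{E\mathcal M}$-SSet}^\textup{special}_*)^\infty$ is (a combination of) \cite[Theorem~2.3.1 and Corollary~2.2.53]{g-global}, and the equivalence with $G$-ultra-commutative monoids is \cite[Corollary~2.1.38]{g-global}. No step is really the main obstacle since all the hard work has been absorbed into Theorems~\ref{thm:external-vs-internal} and~\ref{thm:taming-I-algebras}; the only subtlety to double-check is that the specific operad $\mathcal I$ is both a twisted $G$-global $E_\infty$-operad (already done in Example~\ref{ex:ext-g-global-operad}) and that the tame $\mathcal I$-algebras used in Theorem~\ref{thm:taming-I-algebras} really match the definition of $G$-parsummable simplicial sets used in \cite{g-global}, which is precisely the content of Proposition~\ref{prop:parsum-vs-tame-I}.
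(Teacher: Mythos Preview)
Your proposal is correct and follows essentially the same approach as the paper's proof: both chain Theorem~\ref{thm:external-vs-internal} with Theorem~\ref{thm:taming-I-algebras} to reach $\cat{$\bm G$-ParSumSSet}^\infty$, and then import the remaining equivalences from \cite{g-global} via exactly the same references. The only cosmetic difference is that you spell out the choice $\mathcal P=\mathcal I$ and the role of Proposition~\ref{prop:parsum-vs-tame-I}, whereas the paper absorbs these into the ``in particular'' clause of Theorem~\ref{thm:taming-I-algebras}.
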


Assume now that $G$ is finite. As a consequence of the above, we get the following result which for $\mathscr C=\cat{Cat}$ generalizes Theorem~\ref{introthm:genuine-sym-mon-vs-parsum-cat} from the introduction:

\begin{thm}\label{thm:genuine-sym-vs-parsum}
The composition
\begin{equation*}
(G\textup{-ParSum-}\mathscr C)\hookrightarrow\Alg_{\mathcal I}(G\text-\mathscr C)\xrightarrow{\Delta}\Alg_{\mathcal I_G}(G\text-\mathscr C)
\end{equation*}
induces a quasi-localization at the $G$-equivariant (weak) equivalences. In particular, if $\mathcal O$ is any genuine $G$-$E_\infty$-operad, then we have an equivalence
\begin{equation*}
(G\textup{-ParSum-}\mathscr C)_{\textup{$G$-equivariant}}^\infty\simeq
\Alg_{\mathcal O}(G\text-\mathscr C)_{\textup{$G$-equivariant}}^\infty.
\end{equation*}
\begin{proof}
The first statement is immediate from the Theorem~\ref{thm:taming-I-algebras} together with Lemma~\ref{lemma:Delta-right-Bousfield}. With this established, the second statement then follows from Theorem~\ref{thm:change-of-operad-categories}.
\end{proof}
\end{thm}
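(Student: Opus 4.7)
The plan is to assemble the theorem from three earlier results, following the route hinted at by the brevity of the theorem statement: Theorem~\ref{thm:taming-I-algebras} to pass from parsummable objects to $\mathcal I$-algebras, Lemma~\ref{lemma:Delta-right-Bousfield} to pass from the $G$-global to the $G$-equivariant world, and finally Theorem~\ref{thm:change-of-operad-categories} to exchange $\mathcal I_G$ for an arbitrary genuine $G$-$E_\infty$-operad $\mathcal O$.

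For the first statement, I would begin by applying Theorem~\ref{thm:taming-I-algebras} to identify $(G\textup{-ParSum-}\mathscr C)^\infty_{\textup{$G$-global}}$ with $\Alg_{\mathcal I}(G\text-\mathscr C)^\infty_{\textup{$G$-global}}$ via the inclusion. Next, I would apply Lemma~\ref{lemma:Delta-right-Bousfield} to the twisted $G$-global $E_\infty$-operad $\mathcal I$ of Example~\ref{ex:ext-g-global-operad}, which tells me that
\begin{equation*}
\Delta^\infty\colon\Alg_{\mathcal I}(G\text-\mathscr C)^\infty_{\textup{$G$-global}}\longrightarrow\Alg_{\mathcal I_G}(G\text-\mathscr C)^\infty_{\textup{$G$-equivariant}}
\end{equation*}
is a right Bousfield localization, and in particular a quasi-localization. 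The class of maps it inverts is, by construction, precisely the $G$-equivariant (weak) equivalences, since $\Delta$ is the identity on underlying $G$-objects and both sides' weak equivalences are created on underlying $G$-objects. Composing this with the inclusion from the previous step yields the desired quasi-localization.

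For the second statement, I would invoke Theorem~\ref{thm:change-of-operad-categories}. Both $\mathcal O$ and $\mathcal I_G$ are genuine $G$-$E_\infty$-operads, hence so is $\mathcal O\times\mathcal I_G$ (its $\Sigma$-freeness is inherited from either factor, and its $\mathcal G_{G,\Sigma_n}$-contractibility from both), and the projections $\mathcal O\xleftarrow{} \mathcal O\times\mathcal I_G\xrightarrow{} \mathcal I_G$ are $\mathcal A\ell\ell$-equivalences between $\Sigma$-free operads. Theorem~\ref{thm:change-of-operad-categories} therefore provides a zig-zag of Quillen equivalences
\begin{equation*}
\Alg_{\mathcal O}(G\text-\mathscr C)_{\textup{$G$-equivariant}}\rightleftarrows\Alg_{\mathcal O\times\mathcal I_G}(G\text-\mathscr C)_{\textup{$G$-equivariant}}\rightleftarrows\Alg_{\mathcal I_G}(G\text-\mathscr C)_{\textup{$G$-equivariant}},
\end{equation*}
which together with the first statement produces the claimed equivalence.

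The only real subtlety — and hence the place where the argument needs to be written up carefully rather than being truly immediate — is the bookkeeping of weak equivalences: one must check that the notion of ``$G$-equivariant (weak) equivalence in $G\text{-ParSum-}\mathscr C$'' (inherited from the underlying $(E\mathcal M\times G)$-object) matches the class of morphisms inverted by the composite $\incl\circ\Delta$. This is clean because both are created on underlying $G$-objects, so no additional compatibility has to be established; the remaining verifications are all direct consequences of the cited results.
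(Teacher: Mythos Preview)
Your proposal is correct and follows essentially the same route as the paper's proof: Theorem~\ref{thm:taming-I-algebras} composed with Lemma~\ref{lemma:Delta-right-Bousfield} for the first statement, and the standard zig-zag $\mathcal O\leftarrow\mathcal O\times\mathcal I_G\rightarrow\mathcal I_G$ together with Theorem~\ref{thm:change-of-operad-categories} for the second. One small imprecision: $\Delta$ is not the identity on underlying $G$-objects (it twists the $G$-action to the diagonal one), but the fact you actually need---that $\Delta$ inverts precisely the $G$-equivariant (weak) equivalences---is stated explicitly in the proof of Lemma~\ref{lemma:Delta-right-Bousfield}, so the argument goes through unchanged.
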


\begin{rk}\label{rk:what-we-can-say-about-the-equivalence}
For $G=1$ and $\mathcal O=E\Sigma_*$ the above in particular yields an equivalence between permutative categories and parsummable categories, both viewed with respect to the underlying equivalences of categories. On the other hand, we previously proved in \cite{perm-parsum-categorical} that a specific functor $\Phi$ constructed by Schwede \cite[Construction~11.1]{schwede-k-theory} from permutative to parsummable categories descends to an equivalence of associated quasi-categories. This induced functor is in fact inverse to the above equivalence for abstract reasons: namely, both $\Phi$ and the above equivalence preserve underlying categories in the sense that they come with an equivalence ${\forget}\circ\Phi\simeq\forget$ and similarly for the above composition. It then follows formally from \cite[Corollary~2.5-(iii)]{ggn} that there are essentially unique equivalences between the two composites and the respective identities that are compatible with the chosen equivalences for the underlying categories.

On the other hand, while the results of Subsection~\ref{subsec:explicit-comparison} allow us to make the comparison between $G$-parsummable categories and genuine permutative $G$-categories explicit, the resulting functor is quite complicated. I do not know of any simple comparison in this case, or in fact even an interesting direct functor from genuine permutative $G$-categories to $G$-parsummable ones.

However, we can at least describe the $H$-fixed points ($H\subset G$) of the genuine permuative $G$-category associated to a $G$-parsummable category concisely: namely, by the same trick as in the case $H=G=1$ it will suffice to give a functor $\Psi_H\colon\cat{$\bm G$-ParSumCat}\to\cat{SymMonCat}$ that on underlying categories agrees with the $H$-fixed points of the above equivalence. For this we will in fact give a functor $\Psi$ to $\cat{$\bm G$-SymMonCat}$ that on underlying $G$-categories agrees with our equivalence, i.e.~that lifts the functor $\delta^*\colon\cat{$\bm{E\mathcal M}$-$\bm G$-Cat}^\tau\to\cat{$\bm G$-Cat}$.

This is a straightforward adaption of the construction for $G=1$ presented in \cite[Section~5]{schwede-k-theory}: for any $n\ge0$ and any injection $\phi\colon\bm n\times\omega\to\omega$ we define $\phi_*\colon C^{\times n}\to C$ on objects via $\phi_*(x_1,\dots,x_n)=\sum_{i=1}^n\phi(i,\blank).x_i$ and analogously on morphisms; note that the sum is indeed well-defined as $\phi(i,\blank).x_i$ and $\phi(j,\blank).x_j$ are disjointly supported for $i\not=j$ by \cite[Proposition~2.13-(iii)]{schwede-k-theory}. For any other such injection $\psi$ we then get a natural transformation $[\psi,\phi]\colon\phi_*\Rightarrow\psi$ given on $(x_1,\dots,x_n)$ by $\sum_{i=1}^n(\psi(i,\blank),\phi(i,\blank)).x_i$. One immediately checks from the definitions that if $H\subset\mathcal M$ is any subgroup such that $\phi$ is $H$-equivariant with respect to the tautological $H$-action on $\omega$, then $\phi_*$ is $H$-equivariant, and similarly for $[\psi,\phi]$. Moreover, $\phi_*$, $\psi_*$ and $[\psi,\phi]$ are clearly $G$-equivariant (without any assumptions on $\phi$ and $\psi$) because the $E\mathcal M$-action on $C$ commutes with the $G$-action. In particular, we see that if $\phi$ and $\psi$ are $j(G)$-equivariant (for our chosen embedding $j\colon G\to\mathcal M$), then $\phi_*$ and $\psi_*$ define $G$-equivariant functors $(\delta^*C)^{\times n}\to\delta^*C$ and $[\psi,\phi]$ is a $G$-equivariant natural transformation between them.

Now \cite[Construction~5.5]{schwede-k-theory} associates to any choice of an injection $\mu\colon\bm2\times\omega\to\omega$ a symmetric monoidal category with underlying category $C$ and tensor product $C^{\times 2}\to C$ given by $\mu_*$. The tensor unit is the object $0$ and the structure isomorphisms are given by the canonical isomorphisms provided by the $E\mathcal M$-action: for example, the left unitality isomorphism $\mu_*(0,x)=\mu(2,\blank)_*x\to x$ is simply given by $[\mu(2,\blank),1]_x$; we refer the reader to {loc.cit.} for further details. As an upshot of the above, we see that if $\mu$ is $j(G)$-equivariant, then this equips $\delta^*C$ with the structure of a symmetric monoidal $G$-category $C$. Moreover, for any map $f\colon C\to D$ of $G$-parsummable categories the induced $G$-equivariant functor $\delta^*C\to\delta^*D$ is actually strict symmetric monoidal with respect to these symmetric monoidal structures. Thus, any choice of a $j(G)$-equivariant $\mu$ (which is possible as $j(G)$ is universal) provides a lift of $\delta^*$ to a functor $\cat{$\bm G$-ParSumCat}\to\cat{$\bm G$-SymMonCat}$ as desired.
\end{rk}

\begin{rk}\label{rk:comp-on-sat-promise}
We can also make the comparison functor explicit on the genuine permutative $G$-categories arising from na\"ive ones via the Guillou-May-Shimakawa construction (Example~\ref{ex:genuine-G-E-infty}), which covers most examples from practice. However, as this requires some additional terminology and techniques that we will only establish later, this description is given in the appendix as Proposition~\ref{prop:comparison-on-saturated}.
\end{rk}

Finally, Theorem~\ref{thm:genuine-sym-vs-parsum} together with \cite[Theorem~2.3.18]{g-global} implies the following result, which was previously proven (using quite different and more explicit means) by May, Merling, and Osorno \cite[10.2]{may-merling-osorno}:

\begin{cor}
Let $\mathcal O$ be any genuine $G$-$E_\infty$-operad in $\cat{$\bm G$-SSet}$. Then there is an equivalence of quasi-categories $\Alg_{\mathcal O}(\cat{$\bm G$-SSet})^\infty\simeq\cat{$\bm\Gamma$-$\bm G$-SSet}_*^\textup{special}$ between $\mathcal O$-algebras and Shimakawa's special $\Gamma$-$G$-spaces \cite{shimakawa}.\qed
\end{cor}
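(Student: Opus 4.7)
My plan is to deduce this corollary by chaining together the operadic comparison established in Theorem~\ref{thm:genuine-sym-vs-parsum} with the $\Gamma$-space comparison already available in the literature, so the work is purely in identifying the right instance of each.

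First, I would specialize Theorem~\ref{thm:genuine-sym-vs-parsum} to the simplicial case $\mathscr C=\cat{SSet}$. The parenthetical in the statement of that theorem is meant exactly to cover the distinction between the $G$-equivariant equivalences of categories (for $\mathscr C=\cat{Cat}$) and the $G$-equivariant weak equivalences of simplicial sets (for $\mathscr C=\cat{SSet}$), the latter being the equivalences one sees by passing to fixed-point nerves and taking weak homotopy equivalences. Thus for any genuine $G$-$E_\infty$-operad $\mathcal O$ in $\cat{$\bm G$-SSet}$ I obtain a preferred equivalence
\begin{equation*}
\Alg_{\mathcal O}(\cat{$\bm G$-SSet})^\infty_{\textup{$G$-equivariant}}\simeq(G\textup{-ParSumSSet})^\infty_{\textup{$G$-equivariant}}.
\end{equation*}

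Second, I would invoke the cited result \cite[Theorem~2.3.18]{g-global}, which provides an equivalence between the quasi-category of $G$-parsummable simplicial sets (with respect to the $G$-equivariant weak equivalences) and the quasi-category $\cat{$\bm\Gamma$-$\bm G$-SSet}^{\textup{special}}_*$ of Shimakawa's special $\Gamma$-$G$-spaces. Composing this with the equivalence from the previous paragraph yields the desired equivalence $\Alg_{\mathcal O}(\cat{$\bm G$-SSet})^\infty\simeq\cat{$\bm\Gamma$-$\bm G$-SSet}_*^{\textup{special}}$.

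There really is no obstacle here, since both inputs are already in the form needed; the only subtlety is bookkeeping the notion of weak equivalence one works with on each side, and checking that the operadic model structure on the left (created from the $\mathcal A\ell\ell$-model structure on $\cat{$\bm G$-SSet}$) indeed encodes the $G$-equivariant weak equivalences that match those used in \cite[Theorem~2.3.18]{g-global}, which follows at once from the definitions.
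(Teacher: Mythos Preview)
Your proposal is correct and follows precisely the paper's own argument: the corollary is marked \qed and the preceding sentence says it is obtained by combining Theorem~\ref{thm:genuine-sym-vs-parsum} (specialized to $\mathscr C=\cat{SSet}$) with \cite[Theorem~2.3.18]{g-global}, which is exactly the two-step composition you describe.
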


\section{Equivariant algebraic \texorpdfstring{$K$}{K}-theory and the \texorpdfstring{$K$}{K}-theory of group rings}\label{sec:k-theory-group-rings} As alluded to in the introduction, $G$-parsummable categories are arguably easier to construct than genuine symmetric monoidal $G$-categories. To demonstrate this, we will use Theorem~\ref{thm:genuine-sym-vs-parsum} to produce certain genuine symmetric monoidal $G$-categories with interesting equivariant algebraic $K$-theory of which I do not know any direct construction avoiding the use of $G$-parsummable categories.

To put this into context, we recall that Guillou and May defined the \emph{equivariant algebraic $K$-theory} $\cat{K}_G(C)$ of any genuine permutative $G$-category $C$ in \cite[Definition~4.12]{guillou-may}, which is a genuine $G$-spectrum in the sense of equivariant stable homotopy theory; we will not need any specifics of their construction. This was then used by Merling to define the equivariant algebraic $K$-theory of a $G$-ring $R$ \cite[Definition~5.23]{merling}:

\begin{constr}
    For a ring $R$, let $\mathscr P(R)$ denote the symmetric monoidal category of finitely generated projective $R$-modules and $R$-linear isomorphisms under direct sum. For technical convenience, we insist that the direct sums be obtained by fixing a choice of coproducts of abelian groups once and for all, and then equipping the chosen coproduct of underlying abelian groups with the usual $R$-module structure; as a consequence of this specific choice, the underlying abelian group of $M\oplus N$ only depends on the underlying groups of $M$ and $N$ \emph{up to equality}, and not just up to isomorphism.

    Now assume $G$ acts on $R$ through ring automorphisms. Then we define a $G$-action on $\mathscr P(R)$ by sending an $R$-module $M$ to the module $g.M$ with the same underlying abelian group, but with scalar multiplication
    \begin{align*}
        R\times (g.M)&\longrightarrow g.M\\
        (r,m)&\longmapsto (g^{-1}.r)m;
    \end{align*}
    moreover, an $R$-linear isomorphism $f\colon M\to N$ is sent to the same map of underlying abelian groups, considered as a morphism $g.M\to g.N$. As an upshot of our specific choices of direct sums, this defines a $G$-action through \emph{strictly} symmetric monoidal functors.

    The \emph{equivariant algebraic $K$-theory} $\cat K_G(R)$ of the $G$-ring $R$ is then defined by taking a (small) na\"ive $G$-permutative replacement $ \mathfrak P(R)\to\mathscr P(R)$ and then applying the equivariant algebraic $K$-theory functor $\cat{K}_G$ of Guillou and May to the genuine permutative $G$-category $\Fun(EG,\mathfrak P(R))$ (see Example~\ref{ex:genuine-G-E-infty}).
\end{constr}

For her construction, Merling provided a description of the categorical fixed points in terms of the $K$-theory of \emph{twisted group rings}:

\begin{constr}
    We write $R_GG$ for the ring with underlying abelian group $\bigoplus_{g\in G}R$ and with multiplication given by
    \begin{equation*}
        \bigg(\sum_{g\in G} r_gg\bigg)\bigg(\sum_{h\in G} s_hh\bigg)=\sum_{g,h\in G} \big(r_g (g.s_h)\big)(gh).
    \end{equation*}
    In particular, when $G$ acts trivially on $R$ this recovers the usual group algebra $RG$. Beware however that for non-trivial actions this is \emph{not} an $R$-algebra as the multiplication maps $g.\blank\colon R_GG\to R_GG$ are not $R$-linear, but instead \emph{$R$-semilinear}, i.e.~they are additive and satisfy $g(rx)=(g.r)(gx)$.

    More generally, given any $R_GG$-module $M$, the multiplication maps $g.\blank\colon M\to M$ define a $G$-action through $R$-semilinear maps, and conversely any such action on an $R$-module extends uniquely to an $R_GG$-module structure (in the obvious way). Moreover, an $R$-linear map $M\to N$ is $R_GG$-linear if and only if it commutes with the action maps, also see \cite[Observation 4.3]{merling}.
\end{constr}

\begin{thm}[Merling]\label{thm:merling-k-theory-group-rings}
If $H\subset G$ is a subgroup such that $|H|\in R^\times$, then we have a preferred equivalence
\begin{equation*}
F^H\cat{K}_G(R)\simeq\cat{K}(R_HH).
\end{equation*}
\begin{proof}
See \cite[Theorem~5.28]{merling}.
\end{proof}
\end{thm}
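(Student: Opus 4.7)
The plan is to reduce the computation of $F^H\cat{K}_G(R)$ to the ordinary algebraic $K$-theory of a categorical fixed-point category, and then to identify that category with the projective modules over the twisted group ring $R_HH$.

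First, I would invoke the Guillou--May machinery (together with Shimakawa's fixed-point analysis for special $\Gamma$-$G$-spaces): for any genuine permutative $G$-category $C$, the categorical $H$-fixed points of the equivariant algebraic $K$-theory spectrum satisfy an equivalence of the shape $F^H\cat{K}_G(C)\simeq\cat{K}(C^H)$, where the right-hand side is non-equivariant $K$-theory applied to the $H$-fixed subcategory. Applied to $C=\Fun(EG,\mathfrak P(R))$, this produces $F^H\cat{K}_G(R)\simeq \cat{K}(\Fun(EG,\mathfrak P(R))^H)$. Moreover, since the na\"ive $G$-permutative replacement $\mathfrak P(R)\to\mathscr P(R)$ is a $G$-equivalence of na\"ive permutative $G$-categories, a cofinality/Ken-Brown argument lets us replace $\mathfrak P(R)$ by $\mathscr P(R)$ when computing $K$-theory of the fixed-point category.

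Second, I would unwind the fixed-point category $\Fun(EG,\mathscr P(R))^H$ algebraically. Since $EG$ is indiscrete, evaluation at $1\in G$ gives an equivalence between the underlying (non-equivariant) category and $\mathscr P(R)$; under this equivalence, an $H$-fixed object corresponds to a finitely generated projective $R$-module $M$ together with an action of $H$ by $R$-semilinear automorphisms (with semilinearity dictated by the given $G$-action on $R$). Such data is precisely an $R_HH$-module structure on $M$ whose underlying $R$-module lies in $\mathscr P(R)$, and morphisms correspond to $R_HH$-linear maps. Hence $\Fun(EG,\mathscr P(R))^H$ is equivalent to the full subcategory $\mathscr P_R(R_HH)\subset R_HH\text{-Mod}$ of $R_HH$-modules that are finitely generated projective over $R$.

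Third, I would use the hypothesis $|H|\in R^\times$ to invoke a Maschke-style argument identifying $\mathscr P_R(R_HH)$ with $\mathscr P(R_HH)$. One direction is immediate: $R_HH$ is free of rank $|H|$ over $R$, so any finitely generated projective $R_HH$-module is finitely generated projective as an $R$-module. Conversely, given $M\in\mathscr P_R(R_HH)$, choose an $R_HH$-linear surjection $p\colon (R_HH)^n\twoheadrightarrow M$ and an $R$-linear section $s$ of $p$; the averaged map $\tilde s\mathrel{:=}\frac{1}{|H|}\sum_{h\in H}h\cdot s\cdot h^{-1}$ is $R_HH$-linear and still a section of $p$, so $M$ is a summand of $(R_HH)^n$. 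Combining the three steps yields the desired equivalence $F^H\cat{K}_G(R)\simeq\cat{K}(R_HH)$.

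The main obstacle is the first step: the fixed-point formula $F^H\cat{K}_G(C)\simeq \cat{K}(C^H)$ is not formally provided by the machinery developed earlier in the paper, and establishing it cleanly requires either revisiting Guillou--May's explicit construction of $\cat{K}_G$ or routing through an intermediate model (for instance, passing to $G$-parsummable categories via Theorem~\ref{thm:genuine-sym-vs-parsum} and invoking a fixed-point comparison there). The second and third steps are, by contrast, essentially bookkeeping combined with the classical Maschke averaging trick.
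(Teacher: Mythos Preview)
The paper does not actually prove this statement: its entire proof is the single line ``See \cite[Theorem~5.28]{merling}.'' So there is no argument in the paper to compare your proposal against; the result is simply quoted from Merling's paper as background for the subsequent construction.

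That said, your sketch is essentially Merling's original argument, and the ingredients you identify are correct. The fixed-point formula $F^H\cat{K}_G(C)\simeq\cat{K}(C^H)$ you flag as the ``main obstacle'' is precisely \cite[Theorem~4.14]{guillou-may}, which the present paper invokes explicitly a few paragraphs later in the proof of its own Theorem on $\mathbb P_G(R)$; so you do not need to route through parsummable categories for this. Your identification of $\Fun(EG,\mathscr P(R))^H$ with $R_HH$-modules that are finitely generated projective over $R$ is exactly what Merling proves (and what the paper alludes to immediately after the theorem statement), and the Maschke averaging in your third step is the standard way to upgrade ``projective over $R$'' to ``projective over $R_HH$'' under the invertibility hypothesis. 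In short: your proposal is a faithful reconstruction of the cited proof, not an alternative to anything the paper does.
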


The assumptions of the theorem are in particular satisfied if $R$ is a $\mathbb Q$-algebra. However, in absence of the above invertibility condition the $H$-fixed points only recover the $K$-theory of finitely generated $R_HH$-modules which are projective \emph{over $R$} (as opposed to $R_HH$), which yields the wrong result already for $R=\mathbb Z$ and $H=G$ the cyclic group of order $2$ (acting trivially).

We will now define a genuine permutative $G$-category $\mathbb P_G(R)$ whose equivariant algebraic $K$-theory upon taking fixed points recovers the algebraic $K$-theory of twisted group rings over $R$ without any such invertibility assumption. To this end, we first begin with a concrete construction of a $G$-parsummable category $\mathcal P_G(R)$, which is a variant of the parsummable category used in Schwede's construction \cite[Construction~10.1]{schwede-k-theory} of the global algebraic $K$-theory of rings.

\begin{constr}
Let $R[\omega\times G]$ be the free left $R$-module with basis $\omega\times G$; we let $\mathcal M\times G$ act on $R[\omega\times G]$ via
\begin{equation*}
    (u,g).\bigg(\sum_{(v,h)\in\mathcal M\times G}r_{(v,h)}(v,h)\bigg)=
    \sum_{(v,h)\in\mathcal M\times G}(g.r_{(v,h)}) (uv, gh).
\end{equation*}
In particular, $\mathcal M$ acts $R$-linearly, while $G$ acts semilinearly; note that this is still enough to ensure that $g.\blank$ maps $R$-submodules to $R$-submodules.

We will now write $\mathcal P_G(R)$ for the following category: an object of $\mathcal P_G(R)$ is a pair $(M,r)$ of a finitely generated $R$-submodule $M\subset R[\omega\times G]$ together with an $R$-linear retraction $r\colon R[\omega\times G]\to M$ of the inclusion such that $r$ sends almost all of the standard basis vectors to zero. A map $\phi$ from $(M,r)$ to $(N,s)$ is an abstract $R$-linear isomorphism $M\cong N$; no compatibility of $\phi$ with the chosen retractions (or the embeddings) is required.

If $u\in\mathcal M$ is any injection, then acting with $u$ on $R[\omega\times G]$ sends any finitely generated $R$-submodule $M$ to a finitely generated submodule $u.M$. Moreover, if $r\colon R[\omega\times G]\to M$ is any $R$-linear map, then we get a map $r^u$ defined by $R$-linearly extending the map $\omega\times G\to M$ sending $(u(x),g)$ to $u.(r(x,g))$ and $(y,g)$ to $0$ for any $y\notin\im(u)$. Clearly $r^u\circ (u.\blank)=(u.\blank)\circ r$, and in particular $r^u$ is a retraction to the inclusion $u.M\hookrightarrow R[\omega\times G]$. As moreover $r^u$ again sends almost all basis vectors to zero by direct inspection, we may now define $u.(M,r)=(u.M,r^u)$ for every $(M,r)\in \mathcal P_G$. One easily checks that this defines an $\mathcal M$-action on $\Ob\mathcal P_G(R)$. Moreover, we have a natural isomorphism $u_\circ^{(M,r)}\colon(M,r)\to (u.M,r^u)=u.(M,r)$ given by acting with $u$; these maps clearly satisfy the relations
\begin{equation*}
v_\circ^{u.(M,r)}u_\circ^{(M,r)}=(vu)^{(M,r)}_\circ
\end{equation*}
for all $v,u\in\mathcal M$, so there is by \cite[Proposition~2.6]{schwede-k-theory} a unique way to define an $E\mathcal M$-action on $\mathcal P_G(R)$ such that the underlying $\mathcal M$-action on objects is as above and such that in addition the natural isomorphism $\id\Rightarrow (u.\blank)$ induced by the map $(u,1)$ in $E\mathcal M$ is given for each $u\in\mathcal M$ by the maps $u_\circ$.

In addition, we define a $G$-action on $\mathcal P_G$ as follows: for any $g\in G$, an object $(M,r)$ is sent to $(g.M,r^g)$ where $g.M$ is again given as the image of $M$ under the $G$-action on $R[\omega\times G]$, while $r^g=(g.\blank)\circ r\circ (g^{-1}.\blank)$. If now $\phi\colon (M,r)\to (N,s)$ is any map in $\mathcal P_G(R)$, then we define $g.\phi$ as the map $(g.\blank)\circ\phi\circ(g^{-1}.\blank)$; one easily checks that this defines a $G$-action on $\mathcal P_G(R)$. Moreover, the $G$-action on objects clearly commutes with the $\mathcal M$-action, and moreover $g.(u^{(M,r)}_\circ)=u^{g.(M,r)}$ as both sides are given as maps of $R$-modules simply by $(g.\blank)\circ (u.\blank) \circ (g^{-1}.\blank)=u.\blank$. Thus, \cite[Corollary~1.3]{perm-parsum-categorical} shows that $g.\blank$ is a map of $E\mathcal M$-categories, i.e.~we altogether get an $(E\mathcal M\times G)$-action.
\end{constr}

\begin{lemma}
The $E\mathcal M$-$G$-category $\mathcal P_G(R)$ is tame. The support of an element $(M,r)$ is given by $\supp(M)\cup\supp(r)$ where we define
\begin{align*}
\supp(M)&\mathrel{:=}\{ i\in\omega : \pr_{(i,g)}(M)\not=0 \text{ for some $g\in G$}\}\\
\supp(r)&\mathrel{:=}\{ i\in\omega : r(i,g)\not=0 \text{ for some $g\in G$}\};
\end{align*}
here $\pr_{(i,g)}\colon R[\omega\times G]\to R$ denotes the projection onto the basis vector $(i,g)$.
\begin{proof}
First observe that $\supp(M)$ is finite as $M$ is finitely generated, while $\supp(r)$ is finite by assumption on $r$.

If $u$ fixes $\supp(M)$ pointwise, then clearly $u.M=M$. Moreover, if $u$ fixes $\supp(r)$ pointwise, then $r(u(i),g)=r(i,g)$ for all $(i,g)\in\omega\times G$: namely, if $i\in\supp(r)$, then already $(u(i),g)=(i,g)$, while otherwise also $u(i)\notin\supp(r)$ by injectivity, so that $r(u(i),g)=0=r(i,g)$. Thus, if $u$ also fixes $\supp(M)$ pointwise, then $r^u(u(i),g)=u.r(i,g)=r(i,g)=r(u(i),g)$ for all $(i,g)\in\omega\times G$, while $r^u(i,g)=0=r(i,g)$ when $i\notin\im(u)$ as $\im(u)\supset\supp(r)$.

Conversely, let $A\subset\omega$ be a finite set such that $(M,r)$ is supported on $A$. We will show that $A\supset\supp(r)$; the argument that $A\supset\supp(M)$ is similar. To this end, assume for contradiction that $A\not\supset\supp(r)$ and pick $i\in\supp(r)\setminus A$ as well as $u\in\mathcal M$ with $u(a)=a$ for all $a\in A$, but $i\notin\im u$. Since $i\in\supp(r)$, there exists a $g\in G$ with $r(i,g)\not=0$. On the other hand $r^u(i,g)=0$ as $i\notin\im(u)$ so $r^u\not=r$ contradicting the assumption that $(M,r)$ be supported on $A$.
\end{proof}
\end{lemma}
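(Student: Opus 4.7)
The goal is to show that every object $(M,r)$ of $\mathcal P_G(R)$ is supported on the finite set $S \mathrel{:=} \supp(M)\cup\supp(r)$, and that no strictly smaller subset of $\omega$ supports $(M,r)$. Finiteness of $S$ is immediate from the hypotheses on $(M,r)$: the submodule $M$ is finitely generated, so only finitely many basis vectors can have nontrivial $(i,g)$-component for varying $i$; and $r$ sends almost all standard basis vectors to $0$ by assumption.

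The plan for the first inclusion is to show that any $u\in\mathcal M$ fixing $S$ pointwise satisfies $u.(M,r)=(M,r)$, i.e.~$u.M=M$ and $r^u=r$. For $u.M=M$, note that $M$ is $R$-linearly spanned by finitely many vectors, each of which is supported on $\supp(M)\times G$ in terms of standard basis vectors; since $u$ fixes $\supp(M)$ pointwise and acts on $R[\omega\times G]$ by $(i,g)\mapsto(u(i),g)$, it fixes each such generator and hence $M$. The equality $r^u=r$ is a case analysis on $i\in\omega$ for fixed $g\in G$: if $i\in S$ then $u(i)=i$ and $r(i,g)\in M$ is $u$-fixed (by the argument just given), so $r^u(i,g)=u.r(i,g)=r(i,g)$; if $i\in\im(u)\setminus S$, write $i=u(j)$ and observe that $j\notin S$ as $u$ fixes $S$, whence $r(j,g)=0$ and $r^u(i,g)=u.r(j,g)=0=r(i,g)$ (the last equality because $i\notin\supp(r)$); and if $i\notin\im(u)$ then $r^u(i,g)=0$ and also $i\notin S\supset\supp(r)$, so $r(i,g)=0$.

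For the converse direction, one must show that if $A$ is any finite support set for $(M,r)$ then $A\supset S$. Suppose $i\in\supp(r)\setminus A$; pick $u\in\mathcal M$ fixing $A$ pointwise with $i\notin\im(u)$ (such $u$ exists because $\omega\setminus A$ is infinite). Then $r^u(i,g)=0$ for all $g$, while by definition of $\supp(r)$ there exists $g$ with $r(i,g)\neq 0$, contradicting $r^u=r$. Analogously, if $i\in\supp(M)\setminus A$, the same choice of $u$ forces every element of $u.M$ to have vanishing $(i,g)$-coordinate for all $g$, while by definition of $\supp(M)$ some element of $M$ has a nonzero $(i,g)$-coordinate; thus $u.M\neq M$.

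I do not expect any real obstacle here: the entire argument is just a careful bookkeeping of how the $\mathcal M$-action on $R[\omega\times G]$ interacts with submodules and retractions. The only mildly subtle point is the case analysis for $r^u=r$, and specifically the observation that we need both $\supp(M)\subset A$ (to know the image of $r$ is $u$-fixed pointwise) and $\supp(r)\subset A$ (to handle the preimages of elements outside $A$) simultaneously in order to control $r^u$ everywhere, which is precisely why the support is the \emph{union} of the two.
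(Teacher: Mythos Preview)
Your proof is correct and follows essentially the same approach as the paper's: both verify finiteness, show $u.M=M$ from $u$ fixing $\supp(M)$, prove $r^u=r$ by a case split over $i\in\omega$ (using that $u$ fixes elements of $M$ pointwise and that $r$ vanishes outside $\supp(r)$), and handle the converse by producing a $u$ fixing $A$ with $i\notin\im u$. The only cosmetic difference is that the paper parametrises the case analysis by preimages (writing $r^u(u(i),g)$) while you parametrise by $i$ directly, and you spell out the $\supp(M)$ half of the converse rather than leaving it as ``similar.''
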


\begin{constr}
We make the tame $E\mathcal M$-$G$-category $\mathcal P_G(R)$ into a $G$-parsum\-mable category as follows: the neutral element is given by the pair $(0,0)$ of the zero submodule and the zero map $R[\omega\times G]\to 0$; this has empty support by the previous lemma. If now $(M,r)$ and $(N,s)$ are disjointly supported, then we define $(M,r)+(N,s)$ as $(M+N,r+s)$ where $M+N$ is the internal sum as submodules and $(r+s)(x)=r(x)+s(x)$ as usual. Note that the sum $M+N$ is actually direct as $\supp(M)\cap\supp(N)=\varnothing$; moreover, $r+s$ is indeed a retraction: we will show that $s(x)=0$ for every $x\in M$; analogously one shows that $r(x)=0$ for every $x\in N$ which then easily yields the claim. To this end, we observe that if $x\in M$ and $\pr_{(i,g)}(x)\not=0$, then $i\in\supp(M)$ and hence $i\notin\supp(s)$ by the previous lemma, which shows $s(i,g)=0$. As we can express $x$ as an $R$-linear combination of $(i,g)$'s with $\pr_{(i,g)}(x)\not=0$, the claim follows. Finally, $M+N$ is clearly finitely generated while $r+s$ still sends almost all standard basis vectors to $0$, so that $(M+N,r+s)$ is a well-defined element of $\mathcal P_G(R)$. If $\phi\colon (M,r)\to (N,s)$ and $\phi'\colon (M',r')\to (N',s')$ are maps in $\mathcal P_G(R)$ such that $\supp(M,r)\cap\supp(M',r')=\varnothing=\supp(N,s)\cap\supp(N,s')$, then we define $(\phi+\phi')(m+m')=\phi(m)+\phi'(m')$; this is well-defined as the internal sum $M+M'$ is direct, and it is again bijective as also the internal sum $N+N'$ is direct by assumption. Altogether, we have defined a map $\mathcal P_G(R)\boxtimes\mathcal P_G(R)\to\mathcal P_G(R)$, and one trivially verifies that this is strictly unital, associative, and commutative.

Finally, the sum is clearly $\mathcal M$-equivariant on objects and it satisfies the relation $u^{(M,r)}_\circ+u^{(N,s)}_\circ=u^{(M+N,r+s)}_\circ$ whenever the sum is defined as both sides are simply given by restricting the action of $u$ on $R[\omega\times G]$. We conclude from another application of \cite[Corollary~1.3]{perm-parsum-categorical} that the sum is $E\mathcal M$-equivariant; as it is moreover $G$-equivariant by an similar computation, we have altogether defined a $G$-parsummable category $\mathcal P_G(R)$.
\end{constr}

Applying our equivalence $\cat{$\bm G$-ParSumCat}^\infty\simeq\Alg_{E\Sigma_*^{EG}}(\cat{$\bm G$-Cat})^\infty$ from Theorem~\ref{thm:genuine-sym-vs-parsum} now produces a genuine permutative $G$-category $\mathbb P_G(R)$ from this, which we can feed into the Guillou-May machinery:

\begin{thm}
For every subgroup $H\subset G$ there is a preferred equivalence
\begin{equation}\label{eq:k-theory-group-rings}
F^H\cat{K}_G(\mathbb P_G(R))\simeq\cat{K}(R_HH).
\end{equation}
\end{thm}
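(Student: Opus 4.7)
The strategy mirrors Merling's proof of Theorem~\ref{thm:merling-k-theory-group-rings}: identify the $H$-fixed genuine permutative subcategory $\mathbb P_G(R)^H$ with a model for the permutative category $\mathscr P(R_HH)$ of finitely generated projective $R_HH$-modules and $R_HH$-linear isomorphisms, and then invoke Guillou--May's description of the categorical $H$-fixed points of $\cat{K}_G$. The key advantage over Merling's construction is that the retractions $r$ built into the definition of $\mathcal P_G(R)$ will be automatically $R_HH$-linear on fixed objects, so that $\mathbb P_G(R)^H$ produces projective $R_HH$-modules directly, without needing to invert $|H|$.

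Concretely, by Remark~\ref{rk:what-we-can-say-about-the-equivalence} I may pick a $j(G)$-equivariant injection $\mu\colon\bm 2\times\omega\rightarrowtail\omega$ and represent $\mathbb P_G(R)$ by the genuine permutative $G$-category with underlying $G$-category $\delta^*\mathcal P_G(R)$ and sum $\mu_*$. Since such a $\mu$ is in particular $j(H)$-equivariant, the fixed-point permutative category $\mathbb P_G(R)^H$ has underlying category $\mathcal P_G(R)^{\Gamma_H}$ for $\Gamma_H=\{(j(h),h):h\in H\}\subset\mathcal M\times G$. The combined action $\psi(h)=(j(h),h)$ on $R[\omega\times G]$ satisfies $\psi(h).(rx)=(h.r)(\psi(h).x)$, so it extends the $R$-module structure to a free $R_HH$-module structure on $R[\omega\times G]$; note that $\Gamma_H$ acts freely on $\omega\times G$ because $j$ is injective, and that one has countably many disjointly supported free $R_HH$-summands available thanks to the universality of $j(H)\subset\mathcal M$. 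A pair $(M,r)$ is $\Gamma_H$-fixed iff $M$ is an $R_HH$-submodule and $r\colon R[\omega\times G]\to M$ is $R_HH$-linear, so every fixed object is finitely generated projective as $R_HH$-module; conversely any such module $P$ may be presented this way by first embedding $R_HH^n$ $R_HH$-linearly along $n$ disjoint $\Gamma_H$-orbit representatives (with the obvious finite-support projection as $R_HH$-linear retraction) and then composing with any split surjection $R_HH^n\twoheadrightarrow P$. Morphisms in the fixed category are precisely $R_HH$-linear isomorphisms, and the sum $\mu_*$ of two disjointly supported fixed objects coincides with their direct sum as $R_HH$-modules because the disjoint-support condition forces the internal sum of submodules to be direct. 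This yields the desired equivalence of permutative categories $\mathbb P_G(R)^H\simeq\mathscr P(R_HH)$.

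The proof concludes by invoking Guillou--May's identification of the categorical $H$-fixed points of equivariant algebraic $K$-theory of a genuine permutative $G$-category with the non-equivariant $K$-theory of the $H$-fixed permutative category, giving
\begin{equation*}
F^H\cat{K}_G(\mathbb P_G(R))\simeq\cat{K}(\mathbb P_G(R)^H)\simeq\cat{K}(\mathscr P(R_HH))=\cat{K}(R_HH).
\end{equation*}
The main technical obstacle is the essential surjectivity in the identification $\mathcal P_G(R)^{\Gamma_H}\simeq\mathscr P(R_HH)$: one has to produce, for each finitely generated projective $R_HH$-module, a $\Gamma_H$-fixed $R_HH$-submodule of $R[\omega\times G]$ realizing it together with a finite-support $R_HH$-linear retraction. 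Everything else is then bookkeeping, but one should also check that the naturality of the constructions is sufficient to turn these pointwise equivalences into a genuine equivalence of spectra (rather than a mere isomorphism at each $H$).
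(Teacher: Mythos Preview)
Your proposal is correct and follows essentially the same approach as the paper: reduce via Guillou--May's theorem $F^H\cat{K}_G(C)\simeq\cat{K}(C^H)$ to identifying $(\delta^*\mathcal P_G(R))^H$ symmetric monoidally with $\mathscr P(R_HH)$, using the explicit description from Remark~\ref{rk:what-we-can-say-about-the-equivalence}. The paper carries out exactly your outline (characterization of $\Gamma_H$-fixed objects and morphisms, essential surjectivity via embedding a finite free $R_HH$-module along disjoint $H$-orbits and splitting), and handles the symmetric monoidal comparison by choosing coproducts in $\mathscr P(R_HH)$ so that the forgetful functor becomes \emph{strict} symmetric monoidal; your final worry about naturality across varying $H$ is not needed since the theorem only asserts the equivalence for each fixed $H$.
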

We emphasize again that unlike for Merling's construction (Theorem~\ref{thm:merling-k-theory-group-rings}) there is no invertibility condition on $|H|$ here; in particular, we can apply this to $R=\mathbb Z$ and any finite $G$ (acting trivially) to get a genuine $G$-spectrum whose $H$-fixed points for $H\subset G$ recover the $K$-theory of the integral group ring $\mathbb ZH$.
\begin{proof}
By \cite[Theorem~4.14]{guillou-may} we can identify the left hand side of $(\ref{eq:k-theory-group-rings})$ with the $K$-theory of the $H$-fixed points $\mathbb P_G(R)^H$ viewed as a permutative category in the usual way. On the other hand, Remark~\ref{rk:what-we-can-say-about-the-equivalence} gives an explicit description of an equivalent symmetric monoidal structure on $(\delta^*\mathcal P_G(R))^H$. We will now show that the latter is symmetric monoidally equivalent to $\mathscr P(R_HH)$, which will then complete the proof of the theorem.

To this end, observe that restricting the $(\mathcal M\times G)$-action on $R[\omega\times G]$ to $H$ along $\delta$ yields a semilinear $H$-action, so we may view $R[\omega\times G]$ as an $R_HH$-module $R_H[\omega\times G]$, also cf.~\cite[Proposition~4.5]{merling}. By definition, an object $(M,r)$ of $\delta^*\mathcal P_G(R)$ is now fixed by $h\in H$ if and only if acting with $h$ on $R_H[\omega\times G]$ sends $M$ to itself (not necessarily identically) and $r$ commutes with $h.\blank$; thus, $(M,r)$ is $H$-fixed if and only if $M$ is an $R_HH$-submodule of $R_H[\omega\times G]$ and $r$ is $R_HH$-linear. Similarly, a morphism $\phi\colon (M,r)\to (N,s)$ of $H$-fixed objects is $H$-fixed if and only if it is $R_HH$-linear. Thus, we get a well-defined functor from $\mathcal P_G(R)^H$ into the category of $R_HH$-modules and $R_HH$-linear isomorphisms by sending an $H$-fixed object $(M,r)$ to $M$ with the above $H$-action and an $H$-fixed morphism $\phi$ simply to $\phi$. This actually factors through $\mathscr P(R_HH)$: namely, $M$ is finitely generated as an $R$-module by assumption, and hence also as an $R_HH$-module, and the $R_HH$-linear map $r\colon R_H[\omega\times G]\to M$ exhibits $M$ as an $R_HH$-linear summand of $R_H[\omega\times G]$; the latter is a free $R_HH$-module as $\delta^*(\omega\times G)$ is a free $H$-set, so $M$ is projective over $R_HH$ as desired.

We now claim that this functor $\mathcal P_G(R)^H\to\mathscr P(R_HH)$ is an equivalence of categories. Indeed, it is fully faithful by the above discussion, so it only remains to show essential surjectivity. For this we let $M$ be any finitely generated projective $R_HH$-module, and we pick generators $m_1,\dots,m_r$. We now choose $r$ distinct $H$-orbits $O_1,\dots,O_r$ of $\delta^*(\omega\times G)$, which yields an $R_HH$-linear surjection $R_H[\omega\times G]\to R_H[O_1\cup\cdots\cup O_r]\cong (R_HH)^r$ which we can postcompose with the map $(R_HH)^r\to M$ sending the $i$-th standard basic vector to $m_i$ to yield an epimorphism $p\colon R_H[\omega\times G]\to M$, which then admits a section $s$ by projectivity. As $M\cong s(M)$ in $\mathscr P(R_HH)$ via $s$, it will then be enough to show that $(s(M),sp)$ defines an element of $\mathcal P_G(R)^H$. Indeed, $s(M)$ is finitely generated as an $R_HH$-module and hence also as an $R$-module as $H$ is finite; moreover, $O_1\cup\cdots\cup O_r\subset\omega\times G$ is finite, so $sp(i,g)=0$ for almost all $(i,g)$ and hence $(s(M),sp)$ is an object of $\mathcal P_G(R)$. However, $s(M)$ is an $R_HH$-submodule as $s$ is $R_HH$-linear, and since also $p$ is $R_HH$-linear, so is $sp$, whence $(s(M),sp)$ is indeed $H$-fixed by the above description of the fixed points.

It remains to show that the forgetful functor $(\delta^*\mathcal P_G(R))^H\to\mathscr P(R_HH)$ is naturally a strong symmetric monoidal functor with respect to the symmetric monoidal structure from Remark~\ref{rk:what-we-can-say-about-the-equivalence}. For this we will make a clever choice of coproducts of $R_HH$-modules: namely, if $\mu\colon\bm 2\times\omega\to\omega$ is our chosen injection and $M,N\subset R[\omega\times G]$ are any subgroups, then a coproduct of $M$ and $N$ in the category of abelian groups is given by $\mu_*(M,N)$ with structure maps
\begin{equation}\label{ex:explicit-coprod}
M\xrightarrow{[\mu(1,\blank),1]}\mu_*(M,0)\xrightarrow{\mu_*(\id_M,0)}\mu_*(M,N)\xleftarrow{\mu_*(0,\id_N)} \mu_*(0,N)\xleftarrow{[\mu(2,\blank),1]} N.
\end{equation}
If we now simply agree to take the coproducts on the category of $R_HH$-modules defining the symmetric monoidal structure on $\mathscr P(R_HH)$ to be given by $(\ref{ex:explicit-coprod})$ whenever $M$ and $N$ are literally $R_HH$-submodules of $R_H[\omega\times G]$, then our above functor is even strict symmetric monoidal by direct inspection, also cf.~\cite[Remark~4.1.37]{g-global}. This completes the proof of the theorem.
\end{proof}

\begin{rk}
We end this section by giving a comparison map $\cat{K}_G(\mathbb P_G(R))\to\cat K_G(R)$ in the quasi-category of $G$-spectra that on $H$-fixed points recovers the inclusion of finitely generated $R_HH$-modules that are projective over $R_HH$ into those modules that are only required to be projective over $R$.

Given the complicated nature of the genuine permutative $G$-category $\mathbb P_G(R)$, this is more easily done using the language of $G$-parsummable categories instead. We first define a $G$-parsummable category $\mathcal Q_G(R)$ analogously to $\mathcal P_G(R)$ where now its objects are simply finitely generated submodules of $R[\omega\times G]$ such that there \emph{exists} an $R$-linear retraction to the inclusion (but the retraction is no longer part of the data). This then receives a natural fully faithful map from $\mathcal P_G(R)$ given by forgetting the retraction, and by an analogous computation to the proof of the above theorem the effect on $\delta^*(\blank)^H$ can be identified with the inclusion of modules that are projective over $R_HH$ into those projective over $R$.

On the other hand, $\mathcal Q_G(R)$ receives a map from the $G$-parsummable category $\mathcal P(R)$ modelling the $G$-global algebraic $K$-theory of rings (defined in the same way as $\mathcal Q_G$, but using the set $\omega$ instead of $\omega\times G$ as a basis) induced by the embedding $R[\omega]\to R[\omega\times G], i\mapsto\sum_{g\in G}(i,g)$. One then easily checks that this is even a $G$-equivalence.

Applying our comparison between $G$-parsummable categories and genuine permutative $G$-categories, we then obtain a map in the underlying quasi-category from $\mathbb P_G(R)$ into the genuine permutative $G$-category $P'$ associated to $\mathcal P(R)$ that on fixed points models the above inclusion. However, by Theorem~\ref{thm:comparison-k-theory} in the appendix together with \cite[Remark~4.1.43]{g-global} the equivariant $K$-theory spectrum $\cat{K}_G(P')$ is equivalent to $\cat{K}_G(R)$, yielding the desired comparison map. (In fact, we already have an equivalence between $P'$ and $\Fun(EG,\mathfrak P(R))$ by Proposition~\ref{prop:comparison-on-saturated} together with \cite[Remark~4.1.37]{g-global}).
\end{rk}

\section{Categorical vs.~simplicial algebras}\label{sec:cat-vs-simpl}
A classical result of Quillen (appearing for example in \cite[VI.3]{illusie}) says that the nerve provides an equivalence between the homotopy theory of small categories (with respect to weak homotopy equivalences) and the usual homotopy theory of spaces. This comparison was later lifted to a comparison between the corresponding notions of $E_\infty$-algebras by Mandell \cite[Theorem~1.9]{mandell}. In \cite[Section~4.3]{g-global} we proved analogous comparisons between permutative $G$-categories (i.e.~\emph{na\"ive} categorical $G$-$E_\infty$-algebras) and various models of genuine $G$-equivariantly or $G$-globally `coherently commutative monoids.' As an upshot of the above results we now also get a corresponding statement for \emph{genuine} categorical algebras:

\begin{thm}\label{thm:g-global-mandell}
Let $\mathcal O$ be a $G$-global $E_\infty$-operad in $\cat{$\bm{E\mathcal M}$-$\bm G$-Cat}$. Then the nerve
\begin{equation*}
\nerve\colon\Alg_{\mathcal O}(\cat{$\bm{E\mathcal M}$-$\bm G$-Cat})_{\textup{$G$-global w.e.}}\to\Alg_{\nerve\mathcal O}(\cat{$\bm{E\mathcal M}$-$\bm G$-SSet})_{\textup{$G$-global w.e.}}
\end{equation*}
descends to an equivalence of quasi-localizations; here we again call a functor of $E\mathcal M$-$G$-categories a \emph{$G$-global weak equivalence} if its nerve is a $G$-global weak equivalence in the usual sense.
\begin{proof}
Appealing to Propositions~\ref{prop:ext-vs-internal-1-cat} and~\ref{prop:associated-external}-$(\ref{item:ae-e-infty})$, it will be enough to prove that $\nerve\colon\allowbreak\Alg_{\mathcal P}(\cat{$\bm G$-Cat})\to\Alg_{\nerve\mathcal P}(\cat{$\bm G$-SSet})$ induces an equivalence for every twisted $G$-global $E_\infty$-operad $\mathcal P$.

To this end, let $\mathfrak P$ be the class of all twisted $G$-global $E_\infty$-operads $\mathcal P$ for which this holds. We will show that $\mathfrak P$ is not empty and that it is closed under $G$-global equivalences; as any two twisted $G$-global $E_\infty$-operads can be connected by a zig-zag of equivalences, this will then imply that $\mathfrak P$ indeed consists of all twisted $G$-global operads.

The closure under $G$-global equivalences is immediate from Proposition~\ref{prop:ext-g-global-change-of-operad}. For the remaining statement we will show that $\mathfrak P$ contains the categorified injection operad $\mathcal I$. For this we observe that we have a commutative diagram
\begin{equation*}
\begin{tikzcd}
\cat{$\bm G$-ParSumCat}\arrow[r, "\nerve"]\arrow[d, hook] & \cat{$\bm G$-ParSumSSet}\arrow[d,hook]\\
\Alg_{\mathcal I}(\cat{$\bm G$-Cat})\arrow[r, "\nerve"'] & \Alg_{\nerve\mathcal I}(\cat{$\bm G$-SSet})
\end{tikzcd}
\end{equation*}
where the vertical inclusions come from Proposition~\ref{prop:parsum-vs-tame-I}. By Theorem~\ref{thm:taming-I-algebras} we are therefore reduced to showing that the top horizontal arrow induces an equivalence after localizing at the $G$-global weak equivalences, which we proved as \cite[Theorem~5.8]{sym-mon-global}.
\end{proof}
\end{thm}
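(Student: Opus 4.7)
The plan is to reduce the problem through a series of equivalences to a statement about $G$-parsummable categories versus $G$-parsummable simplicial sets that has already been proven in prior work. As a first step, I would invoke Theorem~\ref{thm:external-vs-internal} to replace the $G$-global $E_\infty$-operad $\mathcal{O}$ in $\cat{$\bm{E\mathcal M}$-$\bm G$-Cat}$ by a twisted $G$-global $E_\infty$-operad $\mathcal{P}$ in $\cat{$\bm G$-Cat}$, and simultaneously $\nerve\mathcal{O}$ by a corresponding twisted $G$-global $E_\infty$-operad in $\cat{$\bm G$-SSet}$. Since the nerve is compatible with the internal-versus-external translation (Proposition~\ref{prop:ext-vs-internal-1-cat} is functorial in obvious ways, and $\nerve$ commutes with $\rtimes E\mathcal M$ as both the free-forgetful descriptions are compatible), this would reduce us to showing that
\begin{equation*}
\nerve\colon\Alg_{\mathcal{P}}(\cat{$\bm G$-Cat})\to\Alg_{\nerve\mathcal{P}}(\cat{$\bm G$-SSet})
\end{equation*}
induces an equivalence of quasi-localizations for every twisted $G$-global $E_\infty$-operad $\mathcal{P}$.

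Next, I would observe that the class $\mathfrak{P}$ of twisted $G$-global $E_\infty$-operads for which this holds is closed under $G$-global equivalences: given such a weak equivalence $\mathcal{P}\to\mathcal{P}'$, both Proposition~\ref{prop:ext-g-global-change-of-operad} and its simplicial analogue produce Quillen equivalences on algebras, and since the nerve is clearly compatible with restriction along an operad map, the two-out-of-three property shows that $\mathcal{P}\in\mathfrak{P}$ iff $\mathcal{P}'\in\mathfrak{P}$. Because any two twisted $G$-global $E_\infty$-operads $\mathcal{P},\mathcal{P}'$ are connected by the zig-zag $\mathcal{P}\gets\mathcal{P}\times\mathcal{P}'\to\mathcal{P}'$ of $G$-global equivalences, it suffices to verify membership for one single explicit choice.

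For this, I would use the categorical injection operad $\mathcal{I}$ from Example~\ref{ex:ext-g-global-operad}, since this is set up so that $\nerve\mathcal{I}$ is precisely the corresponding simplicial injection operad. The key leverage is then the identification (Proposition~\ref{prop:parsum-vs-tame-I} and Theorem~\ref{thm:taming-I-algebras}) of $\Alg_{\mathcal{I}}(\cat{$\bm G$-Cat})^\infty$ with $\cat{$\bm G$-ParSumCat}^\infty$, and likewise in the simplicial case. Since $\nerve$ is strong symmetric monoidal for the box products (Lemma~\ref{lemma:nerve-symmetric-monoidal}) and preserves tameness, it passes to a functor $\cat{$\bm G$-ParSumCat}\to\cat{$\bm G$-ParSumSSet}$ that is compatible under the above identifications with the nerve on operadic algebras. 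Thus it will be enough to show that this functor $\nerve\colon\cat{$\bm G$-ParSumCat}\to\cat{$\bm G$-ParSumSSet}$ induces an equivalence of quasi-localizations at the $G$-global weak equivalences, which is precisely the main theorem of \cite{sym-mon-global}.

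The main obstacle I expect is not any individual step but rather keeping the bookkeeping straight: one must verify that the nerve genuinely commutes with all the reduction functors in play (the twist $\mathcal{O}\mapsto\mathcal{O}\rtimes E\mathcal{M}$, the action-twist functor $\Delta$, the restriction along operad maps, and the passage between tame algebras and parsummable objects) up to the coherent isomorphisms needed to make the homotopical comparisons pasteable. Once these compatibilities are checked, the bulk of the argument is pure formal manipulation of already established Quillen equivalences together with invocation of the parsummable comparison from \cite{sym-mon-global}.
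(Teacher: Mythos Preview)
Your proposal is correct and follows essentially the same approach as the paper: reduce to twisted $G$-global $E_\infty$-operads via the $\mathcal O\rtimes E\mathcal M$ construction (Propositions~\ref{prop:ext-vs-internal-1-cat} and~\ref{prop:associated-external}), use closure under $G$-global equivalences (Proposition~\ref{prop:ext-g-global-change-of-operad}) to reduce to the single operad $\mathcal I$, and then identify both sides with parsummable objects (Proposition~\ref{prop:parsum-vs-tame-I}, Theorem~\ref{thm:taming-I-algebras}) to invoke \cite[Theorem~5.8]{sym-mon-global}. The only cosmetic difference is that the paper cites Propositions~\ref{prop:ext-vs-internal-1-cat} and~\ref{prop:associated-external} directly rather than the packaged Theorem~\ref{thm:external-vs-internal}, which makes the nerve compatibility more transparent.
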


For finite $G$, we also get a version for $G$-equivariant algebras:

\begin{thm}\label{thm:g-equiv-mandell}
Let $\mathcal O$ be a genuine $G$-$E_\infty$-operad in $\cat{$\bm G$-Cat}$. Then the nerve
\begin{equation*}
\nerve\colon\Alg_{\mathcal O}(\cat{$\bm G$-Cat})\to\Alg_{\nerve\mathcal O}(\cat{$\bm G$-SSet})
\end{equation*}
descends to an equivalence of the quasi-localizations at the $G$-weak equivalences.
\begin{proof}
We pick a homomorphism $j\colon G\to\mathcal M$ with universal image, and we consider the homomorphism $\delta\colon G\to E\mathcal M\times G,\delta(g)=(j(g),g)$. If now $\mathcal P$ is any $G$-global $E_\infty$-operad, then $\delta^*\mathcal P$ is a genuine $G$-$E_\infty$-operad, and arguing as before it suffices to prove the theorem for $\mathcal O=\delta^*\mathcal P$. But in the commutative diagram
\begin{equation*}
\begin{tikzcd}
\Alg_{\mathcal P}(\cat{$\bm{E\mathcal M}$-$\bm G$-Cat})_{\textup{$G$-global w.e.}}\arrow[d,"\delta^*"']\arrow[r,"\nerve"] & \Alg_{\nerve\mathcal P}(\cat{$\bm{E\mathcal M}$-$\bm G$-SSet})_{\textup{$G$-global w.e.}}\arrow[d,"\delta^*"]\\
\Alg_{\delta^*\mathcal P}(\cat{$\bm G$-Cat})_{\textup{$G$-equivariant w.e.}}\arrow[r, "\nerve"']&\Alg_{\nerve(\delta^*\mathcal P)}(\cat{$\bm G$-SSet})_{\textup{$G$-equivariant w.e.}}
\end{tikzcd}
\end{equation*}
the vertical arrows induce quasi-localizations by Theorem~\ref{thm:internal-g-global-vs-g-equiv}, while the top horizontal arrow is an equivalence by the previous theorem. The claim follows immediately as the lower horizontal arrow preserves and reflects weak equivalences by definition and as the weak equivalences on the target are saturated, being part of a model structure.
\end{proof}
\end{thm}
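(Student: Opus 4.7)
The plan is to mimic the strategy used for the $G$-global version (Theorem~\ref{thm:g-global-mandell}), namely to transport the comparison along the Bousfield localization from $G$-global to $G$-equivariant homotopy theory established in Theorem~\ref{thm:internal-g-global-vs-g-equiv}. First, I would fix an injective homomorphism $j\colon G\to\mathcal M$ with universal image and consider $\delta\colon G\to E\mathcal M\times G$ given by $\delta(g)=(j(g),g)$. The key observation is that if $\mathcal P$ is any $G$-global $E_\infty$-operad, then $\delta^*\mathcal P$ is automatically a genuine $G$-$E_\infty$-operad (its $n$-th level has contractible fixed points for every graph subgroup of $G\times\Sigma_n$ by universality of $j(G)$, and $\Sigma$-freeness is preserved by restriction).

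Next, I would use Theorem~\ref{thm:change-of-operad-categories} (and its simplicial analogue Theorem~\ref{thm:change-of-operad-sset}) to reduce the comparison for an arbitrary genuine $G$-$E_\infty$-operad $\mathcal O$ to one of the special form $\delta^*\mathcal P$: concretely, via the zig-zag $\mathcal O\leftarrow\mathcal O\times\delta^*\mathcal P\to\delta^*\mathcal P$ of $G$-equivalences of $\Sigma$-free operads, which induces Quillen equivalences on algebra categories in both $\cat{$\bm G$-Cat}$ and $\cat{$\bm G$-SSet}$, and both zig-zags are compatible with taking nerves. Hence it suffices to prove the theorem for $\mathcal O=\delta^*\mathcal P$ where $\mathcal P$ is a $G$-global $E_\infty$-operad of our choosing (any $G$-global $E_\infty$-operad in $\cat{$\bm{E\mathcal M}$-$\bm G$-Cat}$ works, for instance $E\Sigma_*^{E\mathcal M}$).

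With this reduction in hand, I would contemplate the square
\begin{equation*}
\begin{tikzcd}
\Alg_{\mathcal P}(\cat{$\bm{E\mathcal M}$-$\bm G$-Cat})_{\textup{$G$-global w.e.}}\arrow[d,"\delta^*"']\arrow[r,"\nerve"] & \Alg_{\nerve\mathcal P}(\cat{$\bm{E\mathcal M}$-$\bm G$-SSet})_{\textup{$G$-global w.e.}}\arrow[d,"\delta^*"]\\
\Alg_{\delta^*\mathcal P}(\cat{$\bm G$-Cat})_{\textup{$G$-equiv.\ w.e.}}\arrow[r, "\nerve"']&\Alg_{\nerve(\delta^*\mathcal P)}(\cat{$\bm G$-SSet})_{\textup{$G$-equiv.\ w.e.}},
\end{tikzcd}
\end{equation*}
which commutes on the nose since $\delta^*$ and $\nerve$ commute (both being restriction or levelwise functors that preserve products). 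The top horizontal arrow induces an equivalence of quasi-localizations by Theorem~\ref{thm:g-global-mandell}, and the two vertical arrows induce Bousfield localizations of the associated quasi-categories (both left and right) by Theorem~\ref{thm:internal-g-global-vs-g-equiv}. The bottom arrow $\nerve$ preserves and reflects the $G$-equivariant weak equivalences by the very definition of the latter; moreover, both classes on the bottom row are saturated (being defined via the nerve from a class in $\cat{$\bm G$-SSet}$ that is part of a model structure).

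The main (small) obstacle is a purely formal diagram chase: one must verify that in such a commutative square of quasi-categories, with both vertical arrows being (say) left Bousfield localizations at explicitly matching classes of morphisms, and the top arrow an equivalence, the bottom arrow must descend to an equivalence of quasi-localizations. Concretely, the $\delta^*$ on the right inverts precisely those $G$-global weak equivalences of $\nerve\mathcal P$-algebras that become $G$-equivariant weak equivalences, and analogously on the left; since $\nerve$ matches these classes on the nose, commutativity of the square plus the equivalence on top forces the bottom map to be a quasi-localization of the top map at these matching classes, hence an equivalence. This is essentially the argument spelled out in the existing proof, and no genuinely new input is required beyond the results already established in Sections~\ref{sec:g-glob-vs-g-equiv} and~\ref{sec:cat-vs-simpl}.
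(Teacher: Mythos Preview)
Your proposal is correct and follows essentially the same approach as the paper's proof: reduce to $\mathcal O=\delta^*\mathcal P$ via the change-of-operad theorems, then use the commutative square relating $\nerve$ and $\delta^*$ together with Theorem~\ref{thm:g-global-mandell} and Theorem~\ref{thm:internal-g-global-vs-g-equiv}. You have in fact made the ``arguing as before'' step (the zig-zag reduction) more explicit than the paper does.
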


\section{Modelling genuine \texorpdfstring{$G$}{G}-\texorpdfstring{$E_\infty$}{E ͚}-algebras by na\"ive ones}\label{sec:main-thm}
Let $G$ be a finite group. In this section we will finally prove that the homotopy theory of genuine permutative $G$-categories with respect to the $G$-equivariant \emph{weak} equivalences can already be modelled by the \emph{na\"ive} ones. For this, we first introduce the following new notion of weak equivalence:

\begin{defi}\label{defi:Ghfp}
A map $f\colon C\to D$ in $\cat{$\bm G$-Cat}$ is called a \emph{$G$-`homotopy' fixed point weak equivalence} (or \textit{$G$-`h'fp weak equivalence} for short) if $\Fun(EH,f)^H$ is a weak homotopy equivalence for every $H\subset G$.
\end{defi}

\begin{warn}
The scare quotes around `homotopy' refer to the fact that the above are homotopy fixed points with respect to the \emph{underlying equivalences} of categories, not with respect to the ($G$-equivariant) weak equivalences, also see Example~\ref{ex:weak-vs-hfp}.
\end{warn}

\begin{rk}\label{rk:g-global-we-g-cat}
The $G$-`h'fp weak equivalences are part of a model structure on $\cat{$\bm G$-Cat}$ modelling ordinary $G$-equivariant unstable homotopy theory, see \cite[Theorem~4.13]{cat-g-global}.

Moreover, we could also impose the stronger condition that $\Fun(EH,\phi^*f)^H$ be a weak homotopy equivalence for {every} finite group $H$ and every homomorphism $\phi\colon H\to G$. This yields the so-called \emph{$G$-global weak equivalences} (or maybe more systematically \emph{$G$-global `h'fp weak equivalences}), and $\cat{$\bm G$-Cat}$ with respect to the $G$-global weak equivalences is another model of unstable $G$-global homotopy theory, see \cite[Theorem~4.3]{cat-g-global}.
\end{rk}

We can now formally state the main result of this section, which for $\mathcal O=E\Sigma_*$ in particular gives a precise version of Theorem~\ref{introthm:main} from the introduction:

\begin{thm}\label{thm:main}
Let $\mathcal O$ be an operad in $\cat{$\bm G$-Cat}$ whose underlying non-equivariant operad is an $E_\infty$-operad. Then the Guillou-May-Shimakawa construction
\begin{equation*}
\Fun(EG,\blank)\colon\Alg_{\mathcal O}(\cat{$\bm G$-Cat})_{\textup{$G$-`h'fp w.e.}}\to\Alg_{\mathcal O^{EG}}(\cat{$\bm G$-Cat})_{\textup{$G$-w.e.}}
\end{equation*}
(see Example~\ref{ex:genuine-G-E-infty}) induces an equivalence of associated quasi-categories.
\end{thm}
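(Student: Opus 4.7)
The plan is to follow the outline's overall strategy: first work entirely in the setting of $G$-equivariant \emph{equivalences of categories} (where all the preceding model-categorical machinery applies cleanly), and only at the very end pass to the finer $G$-equivariant weak equivalences. Concretely, I would factor the comparison through the $G$-global framework and then invoke the main result of \cite{g-global}.

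First I would reduce to the simplicial setting. Applying Theorem~\ref{thm:g-equiv-mandell} to the target and a $G$-`h'fp-weak version of it (which follows along the same lines since $\Fun(EH,-)$ intertwines nerve and $\Fun(EH,-)$ up to canonical isomorphism) to the source, it suffices to prove the analogous theorem for simplicial sets, i.e.~that $\Fun(EG,-)\colon\Alg_{\nerve\mathcal O}(\cat{$\bm G$-SSet})_{\text{$G$-`h'fp w.e.}}\to\Alg_{\nerve\mathcal O^{EG}}(\cat{$\bm G$-SSet})_{\text{$G$-w.e.}}$ is a quasi-localization.

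Next I would lift both sides to the $G$-global world. The $G$-global $E_\infty$-operad $\mathcal P\mathrel{:=}\nerve\mathcal O^{E\mathcal M}$ together with the restriction $EG\hookrightarrow E\mathcal M$ (induced by the chosen $j\colon G\hookrightarrow\mathcal M$) gives a map of operads in $\cat{$\bm G$-SSet}$ from $\delta^*\mathcal P$ to $\nerve\mathcal O^{EG}$ that is a $G$-equivariant weak equivalence by an argument as in the proof of Theorem~\ref{thm:internal-g-global-vs-g-equiv}. Together with Theorem~\ref{thm:change-of-operad-sset} and the weak-equivalence analogue of Theorem~\ref{thm:internal-g-global-vs-g-equiv}, this identifies the target with a right Bousfield localization of $\Alg_{\mathcal P}(\cat{$\bm{E\mathcal M}$-$\bm G$-SSet})$ at the $G$-equivariant weak equivalences. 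On the source side, the Guillou--May--Shimakawa construction $\Fun(EG,-)$ factors through the $G$-global refinement $\Fun(E\mathcal M,-)\colon\Alg_{\nerve\mathcal O}(\cat{$\bm G$-SSet})\to\Alg_{\mathcal P}(\cat{$\bm{E\mathcal M}$-$\bm G$-SSet})$, followed by restriction along $\delta$; moreover a direct check shows that this first arrow takes $G$-`h'fp weak equivalences to $G$-global weak equivalences.

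Now I would invoke the main comparison theorem from \cite{g-global}: via the chain of equivalences $G$-global $E_\infty$-algebras $\simeq$ $G$-parsummable simplicial sets (Theorem~\ref{thm:taming-I-algebras}) $\simeq$ special $G$-global $\Gamma$-spaces (Theorem~\ref{thm:genuine-e-infty-vs-gamma}), the source quasi-category $\Alg_{\nerve\mathcal O}(\cat{$\bm G$-SSet})_{\text{$G$-`h'fp w.e.}}^\infty$ becomes identified with a corresponding quasi-category of special $\Gamma$-spaces of Borel type, and the $G$-global refinement $\Fun(E\mathcal M,-)$ realizes the localization from the Borel level to the $G$-global one. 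The main result of \cite{g-global} then feeds in to assert that the composite right Bousfield localization from $G$-global $E_\infty$-algebras through the $\delta^*$-fixed $G$-equivariant picture and further down to $G$-equivariant weak equivalences is exactly realized by $\Fun(EG,-)$, yielding the claimed equivalence.

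The main obstacle will be matching up the two weak-equivalence structures. On the source side, the $G$-`h'fp weak equivalences only remember the Borel homotopy type; in the $G$-global framework one naturally meets the $G$-global weak equivalences, which a priori carry more information. The key point is that, after further localization via $\delta^*$ and then at the $G$-equivariant weak equivalences, all of this extra $G$-global information collapses: any $G$-global $E_\infty$-algebra becomes equivalent, in this doubly-localized quasi-category, to one in the image of $\Fun(E\mathcal M,-)$. Verifying this—and checking that the resulting equivalence is realized pointset-theoretically by $\Fun(EG,-)$—is the technical heart of the argument, which reduces via the simplicial and operadic comparison theorems established in the earlier sections to the corresponding na\"ive-vs-$\Gamma$-space statement of \cite{g-global}.
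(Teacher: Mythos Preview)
Your proposal takes a very different route from the paper, and as written it has a genuine gap.

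The paper's argument never leaves $\cat{$\bm G$-Cat}$ and never reduces to simplicial sets or $\Gamma$-spaces. Instead it introduces the notion of a \emph{$G$-equivariantly saturated} $G$-category (one for which $C^H\to\Fun(EH,C)^H$ is a weak homotopy equivalence for all $H\subset G$) and proves two things: (a) the inclusion of saturated $\mathcal O^{EG}$-algebras into all $\mathcal O^{EG}$-algebras is an equivalence for the $G$-weak equivalences (this is the hard step, proved by reducing to $\mathcal I_G$ and invoking \cite[Theorem~5.9]{sym-mon-global}); and (b) on saturated algebras the $G$-weak equivalences are coarser than the underlying equivalences, so one may replace both weak-equivalence structures by the underlying equivalences of categories, where the comparison is the trivial Lemma~\ref{lemma:genuine-vs-naive-non-equivariant}. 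The entire difficulty of matching the two notions of weak equivalence is thus absorbed by the saturation theorem.

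Your proposal, by contrast, pushes this difficulty into two unproven steps. First, you invoke a ``$G$-`h'fp weak version'' of Theorem~\ref{thm:g-equiv-mandell} for the source; but that theorem is proved via the $\delta^*$-localization at $G$-\emph{equivariant} weak equivalences, and there is no analogous $\delta^*$-description of the $G$-`h'fp weak equivalences available---establishing such a version would itself require something like the saturation argument. Second, and more seriously, your final appeal to ``the main result of \cite{g-global}'' is not a precise citation: the results actually used from that reference in this paper concern $K$-theory or $\Gamma$-space models, not a direct Borel-vs-genuine comparison of the shape you need. What you are asking \cite{g-global} to supply is essentially the content of the theorem you are proving. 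The key idea you are missing is saturation: it is exactly the device that lets one collapse the gap between $G$-`h'fp and $G$-equivariant weak equivalences without having to chase through the $\Gamma$-space picture.
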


\subsection{Comparison of weak equivalences}
By now, we have encountered a variety of different classes of weak equivalences we can put on $\cat{$\bm G$-Cat}$, in particular:
\begin{enumerate}
\item the $G$-equivariant equivalences (functors inducing \emph{equivalences of categories} on all fixed points)
\item the $G$-equivariant \emph{weak} equivalences (functors inducing \emph{weak homotopy equivalences} on all fixed points, or equivalently functors that induce $G$-weak equivalences on nerves)
\item the underlying equivalences of categories (i.e.~$\mathcal T\!riv$-equivalences)
\item the $G$-`h'fp weak equivalences (Definition~\ref{defi:Ghfp} above).
\end{enumerate}
As the interplay of these notions will be crucial to the proof of Theorem~\ref{thm:main}, let us pause for a moment to make the relationship between these different classes precise. Clearly, every $G$-equivariant equivalence is a $G$-equivariant weak equivalence and an underlying equivalence, and moreover any underlying equivalence is a $G$-`h'fp weak equivalence by Remark~\ref{rk:injectively-fibrant}, yielding the following diagram of implications:
\begin{equation}\label{diag:we-comparison}
\begin{tikzcd}
\text{$G$-equivariant equivalence} \arrow[d, Rightarrow]\arrow[dr, Rightarrow] \arrow[r, Rightarrow] & \text{underlying equivalence}\arrow[d, Rightarrow]\\
\text{$G$-equivariant weak equivalence} & \text{$G$-`h'fp weak equivalence}
\end{tikzcd}
\end{equation}
These are in fact \emph{all} implications between these notions unless $G=1$:

\begin{ex}
The unique functor $EG\to *$ is an underlying equivalence, but not a $G$-equivariant (weak) equivalence.
\end{ex}

\begin{ex}\label{ex:weak-vs-hfp}
Not every $G$-equivariant weak equivalence is a $G$-`h'fp weak equivalence, as the following example shows:

Let $f\colon C\to BG$ be a cofibrant replacement in the Thomason model structure on $\cat{Cat}$; in particular, $C$ is a poset \cite[Proposition~5.7]{thomason}. If we now equip both sides with the trivial $G$-action, then $f$ becomes a $G$-equivariant weak equivalence in $\cat{$\bm G$-Cat}$, and we claim that this is not a $G$-`h'fp weak equivalence. Indeed, as both sides carry trivial $G$-action, this would in particular mean that $\Fun(BG,f)\colon \Fun(BG,C)\to\Fun(BG,BG)$ again were a weak homotopy equivalence. But the left hand side is equivalent to $C$ (as $C$ contains no non-trivial isomorphisms), hence in particular has connected nerve, while the identity of $BG$ and the trivial functor $BG\to *\to BG$ belong to different components of the right hand side.
\end{ex}

\begin{ex}
Let $f\colon[0]\to[1]$ be the inclusion of either object, considered as a map of $G$-categories with trivial actions. Obviously, $f$ is not an underlying equivalence of categories; however, it is a $G$-`h'fp weak equivalence: any functor from a groupoid into a poset is constant, so $f^{\myh H}$ is again isomorphic to $f$ for any $H\subset G$, whence in particular a weak homotopy equivalence.
\end{ex}

\subsection{Saturation} We now return to the proof of Theorem~\ref{thm:main}. For this let us first observe that we can easily prove a variant of the theorem with respect to another of the above notions of weak equivalence:

\begin{lemma}\label{lemma:genuine-vs-naive-non-equivariant}
The functor $\Fun(EG,\blank)\colon\Alg_{\mathcal O}(\cat{$\bm G$-Cat})\to\Alg_{\mathcal O^{EG}}(\cat{$\bm G$-Cat})$ descends to an equivalence between the localizations at the \emph{underlying equivalences of categories}.
\begin{proof}
The functor $\rho$ given by restricting along the inclusion $\mathcal O\to\mathcal O^{EG}$ is a left homotopy inverse; on the other hand, $\mathcal O\to\mathcal O^{EG}$ is a $\mathcal T\!\textit{riv}$-equivalence of $\Sigma$-free operads, so Theorem~\ref{thm:change-of-operad-categories} shows that $\rho$ induces an equivalence. The claim follows by $2$-out-of-$3$.
\end{proof}
\end{lemma}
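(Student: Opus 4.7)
The plan is to relate $F := \Fun(EG, \blank)$ to the restriction $i^*$ along the operad map $i\colon\mathcal O\to\mathcal O^{EG}$, which sends each $o\in\mathcal O(n)$ to the constant functor $\const_o\in\Fun(EG,\mathcal O(n))$ and is itself induced by the projection $EG\to *$. The functor $i^*$ will turn out to be an equivalence on localizations and to satisfy $i^*\circ F\simeq\id$, which then forces $F$ to be an equivalence as well.

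First I would verify that $i$ is an $\mathcal F$-equivalence of $\Sigma$-free operads for $\mathcal F=\mathcal T\!\textit{riv}$. The $\Sigma_n$-action on $\mathcal O^{EG}(n)=\Fun(EG,\mathcal O(n))$ is free: if a functor $\Phi\colon EG\to\mathcal O(n)$ satisfies $\Phi.\sigma=\Phi$, evaluating on any object of $EG$ forces $\sigma=1$ by $\Sigma$-freeness of $\mathcal O$. Moreover, each $i(n)\colon\mathcal O(n)\to\Fun(EG,\mathcal O(n))$ underlies an equivalence of categories since $EG$ is indiscrete and non-empty. Applying Theorem~\ref{thm:change-of-operad-categories} with $M=G$ and $\mathcal F=\mathcal T\!\textit{riv}$ then shows that $i^*$ descends to an equivalence of the quasi-localizations at the underlying equivalences.

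Next I would construct a natural $\mathcal O$-algebra map $\eta_C\colon C\to i^*F(C)$ sending $c\in C$ to $\const_c$. Compatibility with the $\mathcal O$-action reduces to the identity $o_*(\const_{c_1},\dots,\const_{c_n})=\const_{o_*(c_1,\dots,c_n)}$, which follows directly from the definition of the restricted $\mathcal O$-action on $\Fun(EG,C)$; $G$-equivariance is immediate as the conjugation action sends $\const_c$ to $\const_{g.c}$. Since $\eta_C$ underlies an equivalence of categories (again because $EG$ is indiscrete and non-empty), $\eta$ provides the desired natural equivalence $\id\simeq i^*\circ F$ on the quasi-localizations, and two-out-of-three applied to $i^*$ then forces $F$ to be an equivalence as well.

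The argument is essentially formal given Theorem~\ref{thm:change-of-operad-categories}; I do not foresee a real obstacle. The only point requiring care is to check that $\eta$ refines from a map of $G$-categories to a map of $\mathcal O$-algebras, but this follows at once from the explicit description of the restricted $\mathcal O$-action on $\Fun(EG,C)$.
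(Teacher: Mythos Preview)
Your proposal is correct and follows essentially the same approach as the paper: both arguments show that the restriction $i^*$ along $\mathcal O\to\mathcal O^{EG}$ is a $\mathcal T\!\textit{riv}$-equivalence of $\Sigma$-free operads (hence an equivalence on localizations by Theorem~\ref{thm:change-of-operad-categories}), observe that $i^*\circ\Fun(EG,\blank)\simeq\id$ via the constant-functor inclusion, and conclude by $2$-out-of-$3$. The paper states the latter point tersely as ``$\rho$ is a left homotopy inverse,'' whereas you spell out the natural transformation $\eta$ explicitly.
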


Unfortunately, this of course does not yet tell us anything about Theorem~\ref{thm:main}: while any underlying equivalence is a $G$-`h'fp weak equivalence, the underlying equivalences are unrelated to the $G$-equivariant weak equivalences in general. Nevertheless, we will be able to reduce the theorem to the above lemma; the crucial insight for this is that there is an interesting and wide class of objects for which the notions of $G$-`h'fp weak equivalences and $G$-equivariant weak equivalences coincide:

\begin{defi}
A $G$-category is called \emph{$G$-equivariantly saturated} if the natural map $C^H\to C^{\myh H}=\Fun(EH,C)^H$ is a weak homotopy equivalence for every $H\subset G$.
\end{defi}

\begin{lemma}\label{lemma:sat-vs-all-cat}
Let $\mathcal O$ be any underlying $E_\infty$-operad in $\cat{$\bm G$-Cat}$. Then the inclusion
\begin{equation*}
\Alg_{\mathcal O}(\cat{$\bm G$-Cat})^{\textup{sat}}\hookrightarrow\Alg_{\mathcal O}(\cat{$\bm G$-Cat})
\end{equation*}
of $G$-equivariantly saturated $\mathcal O$-algebras is a homotopy equivalence with respect to the \emph{underlying equivalences of categories} on both sides.
\begin{proof}
A homotopy inverse is given by applying the functor $\Fun(EG,\blank)$ and restricting the operad action along the natural map $\mathcal O\to\mathcal O^{EG}$.
\end{proof}
\end{lemma}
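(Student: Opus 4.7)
The plan is to follow the author's hint and construct an explicit homotopy inverse $F$ to the inclusion as follows. Given an $\mathcal O$-algebra $A$, the $G$-category $\Fun(EG,A)$ carries a natural $\mathcal O^{EG}$-algebra structure by Example~\ref{ex:genuine-G-E-infty}, and restricting this along the canonical map $\mathcal O\to\mathcal O^{EG}$ turns it into an $\mathcal O$-algebra $F(A)$. There is a natural map $\iota_A\colon A\to F(A)$ induced by the unique functor $EG\to *$; this is a map of $\mathcal O$-algebras (the relevant diagrams commute because the restricted $\mathcal O$-structure on $F(A)$ agrees with the original $\mathcal O$-structure on constant functors) and an underlying equivalence of categories, $EG$ being indiscrete and connected.

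The key step is to verify that $F(A)$ is always $G$-equivariantly saturated, so that $F$ factors through $\Alg_{\mathcal O}(\cat{$\bm G$-Cat})^{\textup{sat}}$. For each $H\subset G$ the map $\Fun(EG,A)^H\to \Fun(EH,\Fun(EG,A))^H$ induced by $EH\to *$ identifies under adjunction with the map $\Fun^H(\res^G_H EG, A)\to\Fun^H(EH\times\res^G_H EG, A)$ induced by the projection $p\colon EH\times\res^G_H EG\to\res^G_H EG$. I would show that $p$ is an $\mathcal A\ell\ell$-equivalence in $\cat{$\bm H$-Cat}$: both sides are indiscrete with free $H$-action on objects (the action being right translation on $G$ and diagonal right translation on $H\times G$), so for any non-trivial $K\subset H$ both $K$-fixed categories are empty, while for $K=1$ the projection is an underlying equivalence of connected indiscrete categories. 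Since every object of $\cat{$\bm H$-Cat}_{\mathcal A\ell\ell}$ is both cofibrant (Lemma~\ref{lemma:charact-cof}) and fibrant, and since the $\cat{Cat}$-enriched pushout-product axiom makes $\Fun^H(\blank, A)$ right Quillen as a contravariant functor $\cat{$\bm H$-Cat}^{\op}_{\mathcal A\ell\ell}\to\cat{Cat}$, Ken Brown's Lemma then yields that $p^*$ is an equivalence of categories.

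With $F$ factoring through the saturated subcategory, the natural transformation $\iota\colon\id\Rightarrow F$ provides a levelwise underlying equivalence from the identity to $\textup{incl}\circ F$ on $\Alg_{\mathcal O}(\cat{$\bm G$-Cat})$, and likewise (by restriction) a levelwise underlying equivalence $\id\Rightarrow F\circ\textup{incl}$ on saturated algebras, exhibiting $F$ as the desired homotopy inverse. The main subtlety is the saturation check; if one is willing to invoke the (unproved but acknowledged) characterization from Remark~\ref{rk:injectively-fibrant}, a slicker route is available: $\Fun(EG,A)$ is injectively $\mathcal T\!riv$-fibrant as a $G$-category, hence also as an $H$-category after restriction (restriction being right Quillen between the respective injective $\mathcal T\!riv$-model structures), so the remark applied to $H$ directly gives that $\Fun(EG,A)\to\Fun(EH,\Fun(EG,A))$ is an $\mathcal A\ell\ell$-equivalence of $H$-categories, from which saturation is immediate by passing to $H$-fixed points.
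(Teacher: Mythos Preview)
Your argument is correct and follows exactly the approach the paper indicates: the paper's proof consists of the single sentence naming $\Fun(EG,\blank)$ (with restricted $\mathcal O$-action) as a homotopy inverse, and you have simply supplied the details the paper omits, most notably the verification that $\Fun(EG,A)$ is $G$-equivariantly saturated (which the paper later treats as ``clear'' in the proof of Theorem~\ref{thm:main}). Your saturation check via the $\cat{Cat}$-enriched model structure on $\cat{$\bm H$-Cat}_{\mathcal A\ell\ell}$ is clean and even yields the stronger conclusion that the comparison map is an equivalence of categories rather than merely a weak homotopy equivalence.
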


\begin{thm}
Let $\mathcal P$ be any genuine $G$-$E_\infty$-operad in $\cat{$\bm G$-Cat}$. Then the inclusion
\begin{equation}\label{eq:sat-vs-all}
\Alg_{\mathcal P}(\cat{$\bm G$-Cat})^{\textup{sat}}\hookrightarrow\Alg_{\mathcal P}(\cat{$\bm G$-Cat})
\end{equation}
induces an equivalence after quasi-localizing at the \emph{$G$-equivariant weak equivalences} on both sides.
\begin{proof}
Let us consider the class $\mathfrak P$ of all genuine $G$-$E_\infty$-operads $\mathcal P$ for which the statement of the theorem holds. It again suffices to show that $\mathfrak P$ is non-empty and closed under $G$-equivariant equivalences.

For the first statement, we will show that $\mathfrak P$ contains the operad $\mathcal I_G$ where we have identified $G$ with a universal subgroup of $\mathcal M$. For this we apply the isomorphism of $1$-categories $\Alg_{\mathcal I_G}(\cat{$\bm G$-Cat})\cong \Alg_{\mathcal I}(\cat{$\bm G$-Cat})$ from Lemma~\ref{lemma:Delta-right-Bousfield}. Under this identification, the saturated $\mathcal I_G$-algebras correspond precisely to those $\mathcal I$-algebras $C$ for which the natural map
\begin{equation}\label{eq:Alg-I-sat-comparison}
C^\iota\to C^{\myh\iota}=\Fun(EH,C)^\iota
\end{equation}
is a weak homotopy equivalence for every universal $H\subset\mathcal M$ and every \emph{injective} homomorphism $\iota$. We call such $\mathcal I$-algebras \emph{$G$-equivariantly saturated} again, and it will then be enough to show that the inclusion $\Alg_{\mathcal I}(\cat{$\bm G$-Cat})^{\textup{sat}}\hookrightarrow\Alg_{\mathcal I}(\cat{$\bm G$-Cat})$ is an equivalence with respect to the $G$-equivariant weak equivalences. For this we will prove more generally:

\begin{claim*}
Consider the commutative diagram
\begin{equation}\label{diag:tame-sat-inclusions}
\begin{tikzcd}
{\Alg_{\mathcal I}(\cat{$\bm G$-Cat})^{\tau,\textup{sat}}}\arrow[r,hook]\arrow[d,hook]&{\Alg_{\mathcal I}(\cat{$\bm G$-Cat})^{\tau}}\arrow[d,hook]\\
{\Alg_{\mathcal I}(\cat{$\bm G$-Cat})^{\textup{sat}}}\arrow[r,hook]&{\Alg_{\mathcal I}(\cat{$\bm G$-Cat})}
\end{tikzcd}
\end{equation}
of inclusions, where $\Alg_{\mathcal I}(\cat{$\bm G$-Cat})^{\tau,\textup{sat}}$ denotes the subcategory of those $\mathcal I$-algebras that are both tame and saturated. Then all of the above functors induce equivalences (a) after quasi-localizing at the underlying equivalences of categories, as well as (b) after quasi-localizing at the $G$-equivariant weak equivalences.
\begin{proof}
For the first statement we observe that the right hand arrow induces an equivalence with respect to the $G$-global equivalences by Theorem~\ref{thm:taming-I-algebras}, hence in particular also with respect to the underlying equivalences. Moreover, the lower horizontal arrow has a homotopy inverse induced by $\Fun(EG,\blank)$ according to Lemma~\ref{lemma:sat-vs-all-cat}, and one immediately checks that this also provides a homotopy inverse to the top horizontal arrow. Thus, also the left hand vertical arrow induces an equivalence by $2$-out-of-$3$.

For the second statement, we conclude in the same way as before that the right hand vertical arrow descends to an equivalence, and so does the left hand vertical arrow by the first statement since the $G$-equivariant weak equivalences \emph{between saturated $G$-categories} are coarser than the underlying equivalences of categories. By another application of $2$-out-of-$3$ it will then be enough to show that the top horizontal arrow is a homotopy equivalence.

For this we observe that the equivalence $\Alg_{\mathcal I}(\cat{$\bm G$-Cat})\simeq\cat{$\bm G$-ParSumCat}$ from Proposition~\ref{prop:parsum-vs-tame-I} preserves underlying categories, so that it suffices to show that the inclusion $\cat{$\bm G$-ParSumCat}^{\textup{sat}}\hookrightarrow\cat{$\bm G$-ParSumCat}$ of the $G$-equivariantly saturated $G$-parsummable categories is a homotopy equivalence. This is however immediate from \cite[Theorem~5.9]{sym-mon-global}.
\end{proof}
\end{claim*}

It remains to show that $\mathfrak P$ is closed under $G$-equivariant equivalences. For this we consider any $G$-equivariant equivalence $f\colon\mathcal P\to\mathcal Q$, which induces a commutative square
\begin{equation*}
\begin{tikzcd}
\Alg_{\mathcal Q}(\cat{$\bm G$-Cat})^{\textup{sat}}\arrow[r,hook]\arrow[d, "f^*"'] & \Alg_{\mathcal Q}(\cat{$\bm G$-Cat})\arrow[d, "f^*"]\\
\Alg_{\mathcal P}(\cat{$\bm G$-Cat})^{\textup{sat}}\arrow[r,hook] & \Alg_{\mathcal P}(\cat{$\bm G$-Cat}).
\end{tikzcd}
\end{equation*}
The vertical arrow on the right induces an equivalence after quasi-localizing at the $G$-equivariant equivalences by Theorem~\ref{thm:change-of-operad-categories}, hence in particular with respect to the $G$-equivariant weak equivalences or with respect to the underlying equivalences of categories. By $2$-out-of-$3$ it will then suffice to show that also the left hand vertical arrow induces an equivalence with respect to the $G$-equivariant weak equivalences, for which it is as before enough to prove this for the underlying equivalences of categories. This is however in turn immediate from Lemma~\ref{lemma:sat-vs-all-cat} and $2$-out-of-$3$.
\end{proof}
\end{thm}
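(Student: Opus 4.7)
The plan is to reduce the statement to a single concretely describable genuine $G$-$E_\infty$-operad and then import prior results about $G$-parsummable categories. Let $\mathfrak{P}$ denote the class of genuine $G$-$E_\infty$-operads for which the theorem holds. Since any two such $\mathcal P, \mathcal Q$ are connected by the zig-zag $\mathcal P \leftarrow \mathcal P\times\mathcal Q \rightarrow \mathcal Q$ of $G$-equivariant equivalences of $\Sigma$-free operads, Theorem~\ref{thm:change-of-operad-categories} together with a $2$-out-of-$3$ argument---using that restriction $f^*$ along a map $f\colon\mathcal P\to\mathcal Q$ of operads commutes with $\Fun(EG,\blank)$ and hence, by Lemma~\ref{lemma:sat-vs-all-cat}, respects the saturated subcategories---reduces the theorem to producing a single element of $\mathfrak{P}$.

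For this single element, I would choose the operad $\mathcal I_G$ obtained from the categorical injection operad $\mathcal I$ of Example~\ref{ex:ext-g-global-operad} by restricting its twisted $G$-global structure along a universal embedding $j\colon G\hookrightarrow\mathcal M$. By (the proof of) Lemma~\ref{lemma:Delta-right-Bousfield}, there is an isomorphism of $1$-categories $\Alg_{\mathcal I_G}(\cat{$\bm G$-Cat}) \cong \Alg_{\mathcal I}(\cat{$\bm G$-Cat})$ given by twisting the $G$-action, and one checks by direct unpacking that the $G$-equivariantly saturated $\mathcal I_G$-algebras correspond to those $\mathcal I$-algebras $C$ such that $C^\iota \to \Fun(EH,C)^\iota$ is a weak homotopy equivalence for every universal $H\subset\mathcal M$ and every injection $\iota$ with image $H$. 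So the theorem reduces to showing that the inclusion of this notion of saturated $\mathcal I$-algebras into all $\mathcal I$-algebras is an equivalence of $G$-weak-equivalence localizations.

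The main obstacle is that saturation mixes strict fixed points with homotopy fixed points (the latter taken with respect to \emph{underlying} equivalences of categories), while the target localization is with respect to the $G$-equivariant weak equivalences. I would resolve this by first passing to the tame subcategory: Theorem~\ref{thm:taming-I-algebras} tells us the inclusion of tame $\mathcal I$-algebras into all $\mathcal I$-algebras is an equivalence of $G$-global localizations, and in the tame world $\mathcal I$-algebras coincide with $G$-parsummable categories by Proposition~\ref{prop:parsum-vs-tame-I}. In that setting the desired saturation statement is essentially \cite[Theorem~5.9]{sym-mon-global}, which gives the saturated inclusion as an equivalence with respect to the \emph{underlying} equivalences of categories; this upgrades for free to an equivalence under $G$-weak equivalences, since between saturated objects the two classes coincide. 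Assembling these results into the evident $2\times2$ square of inclusions indexed by tameness and saturation then shows that three of its four edges are equivalences of localizations, whence so is the fourth.
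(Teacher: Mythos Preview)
Your proposal is correct and follows essentially the same route as the paper: reduce to a single operad via closure under $G$-equivalences (using Theorem~\ref{thm:change-of-operad-categories} and Lemma~\ref{lemma:sat-vs-all-cat}), choose $\mathcal I_G$, translate to $\mathcal I$-algebras via Lemma~\ref{lemma:Delta-right-Bousfield}, and then run the $2\times 2$ square argument (tame/saturated) with the key input \cite[Theorem~5.9]{sym-mon-global}. One small correction: on saturated $G$-categories the underlying equivalences are \emph{contained in} the $G$-equivariant weak equivalences (the paper says ``coarser''), but they do not literally coincide; fortunately containment is all the upgrade argument needs.
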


\begin{proof}[Proof of Theorem~\ref{thm:main}]
It is clear that $\Fun(EG,\blank)$ factors through the full subcategory $\Alg_{\mathcal O^{EG}}(\cat{$\bm G$-Cat})^\textup{sat}$ of $G$-equivariantly saturated $\mathcal O^{EG}$-algebras. As $\mathcal O^{EG}$ is a genuine $G$-$E_\infty$-operad, it is therefore enough to show by the previous theorem that
\begin{equation}\label{eq:Fun-EG-factorization}
\Fun(EG,\blank)\colon\Alg_{\mathcal O}(\cat{$\bm G$-Cat})\to\Alg_{\mathcal O^{EG}}(\cat{$\bm G$-Cat})^{\textup{sat}}
\end{equation}
induces an equivalence with respect to the $G$-weak equivalences on the target and the $G$-`h'fp weak equivalences on the source. For this we observe that $\Fun(EG,\blank)$ preserves and reflects weak equivalences and that the weak equivalences on both source and target are coarser than the underlying equivalences of categories; this again critically uses that we restricted to the \emph{saturated} algebras in the target. It is therefore once more enough to show that $(\ref{eq:Fun-EG-factorization})$ induces an equivalence when we equip source and target with the underlying equivalences of categories. By another application of Lemma~\ref{lemma:sat-vs-all-cat} we are then altogether reduced to showing that $\Fun(EG,\blank)\colon\Alg_{\mathcal O}(\cat{$\bm G$-Cat})\to\Alg_{\mathcal O^{EG}}(\cat{$\bm G$-Cat})$ descends to an equivalence when we equip both sides with the {underlying equivalences of categories}, which is precisely the content of Lemma~\ref{lemma:genuine-vs-naive-non-equivariant}.
\end{proof}

In fact, similar arguments also show (for not necessarily finite $G$):

\begin{thm}
Let $\mathcal O$ be any operad in $\cat{$\bm G$-Cat}$ that forgets to an $E_\infty$-operad in $\cat{Cat}$. Then
\begin{equation*}
\Fun(E\mathcal M,\blank)\colon\Alg_{\mathcal O}(\cat{$\bm G$-Cat})\to\Alg_{\mathcal O^{E\mathcal M}}(\cat{$\bm{E\mathcal M}$-$\bm G$-Cat})
\end{equation*}
descends to an equivalence with respect to the $G$-global weak equivalences on both sides.\qed
\end{thm}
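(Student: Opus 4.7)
The argument would closely parallel the proof of Theorem~\ref{thm:main}, with the notion of $G$-equivariant saturation replaced by an appropriate $G$-global version. Call an $E\mathcal{M}$-$G$-category $C$ \emph{$G$-globally saturated} if for every universal $H\subset\mathcal{M}$ and every homomorphism $\phi\colon H\to G$ the canonical comparison map $C^\phi\to\Fun(EH,C)^\phi$ (restriction along any $H$-equivariant functor $EH\to E\mathcal{M}$) is a weak homotopy equivalence. By construction, every $E\mathcal{M}$-$G$-category of the form $\Fun(E\mathcal{M},D)$ is $G$-globally saturated, so the functor $\Fun(E\mathcal{M},\blank)$ factors through the full subcategory $\Alg_{\mathcal{O}^{E\mathcal{M}}}(\cat{$\bm{E\mathcal M}$-$\bm G$-Cat})^{\textup{sat}}$ of saturated $\mathcal{O}^{E\mathcal{M}}$-algebras.

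The proof then splits into three parts. First, the main technical input: show that the inclusion of $G$-globally saturated $\mathcal{O}^{E\mathcal{M}}$-algebras into all $\mathcal{O}^{E\mathcal{M}}$-algebras is a homotopy equivalence with respect to the $G$-global weak equivalences. Following the strategy of the corresponding saturation theorem in Section~\ref{sec:main-thm}, one reduces via change-of-operad (Theorem~\ref{thm:change-of-operad-categories}) to a single convenient operad, such as the injection operad $\mathcal{I}$ with its canonical $E\mathcal{M}$-action; then, via Proposition~\ref{prop:parsum-vs-tame-I} and Theorem~\ref{thm:taming-I-algebras}, this reduces to a corresponding $G$-global saturation statement at the level of $G$-parsummable categories. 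Second: observe that on saturated algebras the $G$-global weak equivalences are coarser than the underlying equivalences of categories, and that $\Fun(E\mathcal{M},\blank)$ preserves and reflects $G$-global weak equivalences when landing in saturated algebras; this last point uses the identification $\Fun(E\mathcal{M},D)^\phi\simeq\Fun(EH,\phi^*D)^H$, which holds because $E\mathcal{M}$ with its restricted $H$-action is indiscrete with free $H$-action when $H$ is universal.

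The third part is the non-equivariant comparison, which is a direct $G$-global analogue of Lemma~\ref{lemma:genuine-vs-naive-non-equivariant}. The restriction functor along the canonical map $\mathcal{O}\to\mathcal{O}^{E\mathcal{M}}$ (together with forgetting the $E\mathcal{M}$-action on the target) provides a one-sided inverse to $\Fun(E\mathcal{M},\blank)$, and since both operads are $\Sigma$-free with a levelwise underlying equivalence between them, Theorem~\ref{thm:change-of-operad-categories} gives an equivalence of algebra categories with respect to underlying equivalences on both sides; two-out-of-three then yields the comparison. Combining the three steps with two-out-of-three gives the desired equivalence.

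The hard part will be the saturation comparison in the first step. The $G$-equivariant analogue was proved by appealing to \cite[Theorem~5.9]{sym-mon-global}, and the $G$-global case requires the parallel result comparing $G$-globally saturated $G$-parsummable categories with all $G$-parsummable categories under $G$-global weak equivalences. This should be accessible by essentially the same techniques used in the equivariant setting, exploiting that $G$-parsummable categories naturally encode $G$-global data via their $E\mathcal{M}$-action (cf.\ Theorem~\ref{thm:taming-I-algebras}); once it is in hand, the remaining steps are formal.
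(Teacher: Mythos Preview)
Your proposal is correct and follows precisely the strategy the paper indicates: the paper does not spell out a proof for this theorem but simply says ``similar arguments also show'' and ends with \qed, meaning the proof of Theorem~\ref{thm:main} is to be rerun with the $G$-equivariant notions replaced by their $G$-global analogues. Your three-step outline (global saturation comparison, reduction to underlying equivalences on saturated algebras, and the non-equivariant change-of-operad argument) is exactly that.

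One remark on your assessment of the ``hard part'': the saturation input you flag is less of an obstacle than you suggest. In the proof of the equivariant saturation theorem, the paper reduces to the statement that $\cat{$\bm G$-ParSumCat}^{\textup{sat}}\hookrightarrow\cat{$\bm G$-ParSumCat}$ is a homotopy equivalence and cites \cite[Theorem~5.9]{sym-mon-global} for this; that reference already works $G$-globally (indeed, the saturation results in \cite{sym-mon-global} are formulated in the $G$-global setting from the outset, cf.\ also the use of \cite[Theorems~5.8 and~6.9]{sym-mon-global} in the proof of Proposition~\ref{prop:where-to-diagram}). So the required $G$-global saturation comparison is available off the shelf rather than needing a fresh argument, which is why the paper can dispense with the proof entirely.
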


\begin{rk}\label{rk:counterexamples}
We can also equip $\Alg_{\mathcal O^{EG}}(\cat{$\bm G$-Cat})$ with the $G$-equivariant equivalences of categories; however, in this case the resulting functor $\Alg_{\mathcal O}(\cat{$\bm G$-Cat})\to\Alg_{\mathcal O^{EG}}(\cat{$\bm G$-Cat})$ is not even essentially surjective on homotopy categories unless $G=1$ as we will show now:

Since the class of saturated $G$-categories is closed under $G$-equivalences, it suffices to construct a non-saturated $\mathcal O^{EG}$-algebra. To this end, we pick $g\in G\setminus\{1\}$ and write $H\mathrel{:=}\langle g\rangle$ for the subgroup generated by $g$. This is cyclic, hence in particular abelian. Thus, the category $C$ with one object and endomorphism monoid given by $H$ can be equipped (in a unique way) with the structure of a commutative monoid in $\cat{Cat}$. Equipping $C$ with the trivial $G$-action therefore yields an algebra in $\cat{$\bm G$-Cat}$ over the terminal operad, and restricting along the unique operad map $\mathcal O^{EG}\to *$ then gives $C$ the structure of an $\mathcal O^{EG}$-algebra. To finish the proof it suffices now to show that the composition $C\hookrightarrow\Fun^H(EH,C)\cong \Fun(EH/H,C)\cong \Fun(C,C)$ is not an equivalence. But clearly the identity of $C$ is not contained in the essential image.
\end{rk}

\begin{rk}\label{rk:counterexample-interesting}
For $G$ the cyclic group of order $2$ (and many other groups), the genuine permutative $G$-category $\mathbb P_G(\mathbb Z)$ from Section~\ref{sec:k-theory-group-rings} provides another example of a non-saturated permutative $G$-category. Note that while the above theorem asserts that this is $G$-weakly equivalent to $\Fun(EG,C)$ for some na\"ive permutative $G$-category $C$, this $C$ is far from admitting any nice tractible description. In particular, unlike $\mathbb P_G(\mathbb Z)$, it will have to contain non-invertible morphisms in order to elicit the correct equivariant behaviour, and these have no evident algebraic interpretation.
\end{rk}

With this established, we can deduce Theorem~\ref{introthm:gm-thomason} from our results in \cite{g-global}:

\begin{thm}\label{thm:guillou-may-k-theory}
The Guillou-May construction
\begin{equation*}
\cat{K}_G\colon\Alg_{E\Sigma_*^{EG}}(\cat{$\bm G$-Cat})^\infty_\textup{$G$-w.e.}\to\cat{$\bm G$-Spectra}^\infty
\end{equation*}
of equivariant algebraic $K$-theory \cite[Definition~4.12]{guillou-may} exhibits the quasi-category of connective genuine $G$-spectra as a Bousfield localization of the category of genuine permutative $G$-categories.
\begin{proof}
By Theorem~\ref{thm:main} it is enough to show this after restricting along the above functor $\Fun(EG,\blank)\colon\cat{$\bm G$-PermCat}\to\Alg_{E\Sigma_*^{EG}}(\cat{$\bm G$-Cat})$. This is in turn immediate from \cite[Theorem~4.3.9]{g-global} together with the comparison of the operadic and $\Gamma$-space approach to equivariant algebraic $K$-theory given in \cite{may-merling-osorno}.
\end{proof}
\end{thm}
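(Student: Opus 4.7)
The plan is to reduce the statement to the corresponding result for na\"ive permutative $G$-categories, which has already been established in \cite{g-global}.

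First, I would apply Theorem~\ref{thm:main} with $\mathcal O = E\Sigma_*$ (which is indeed an operad whose underlying non-equivariant operad is an $E_\infty$-operad). This yields an equivalence of quasi-categories
\begin{equation*}
\Fun(EG,\blank)\colon\cat{$\bm G$-PermCat}^\infty_{\textup{$G$-`h'fp w.e.}}\xrightarrow{\;\simeq\;}\Alg_{E\Sigma_*^{EG}}(\cat{$\bm G$-Cat})^\infty_{\textup{$G$-w.e.}}.
\end{equation*}
Consequently, $\cat{K}_G$ is a Bousfield localization onto the connective $G$-spectra if and only if the composite $\cat{K}_G\circ\Fun(EG,\blank)$ has the same property with respect to the $G$-`h'fp weak equivalences on $\cat{$\bm G$-PermCat}$; in particular, I must verify that the $G$-`h'fp weak equivalences on the source are exactly the maps inverted by $\cat{K}_G\circ\Fun(EG,\blank)$, which is what we ultimately need.

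Next, I would identify this composite with the $\Gamma$-space based equivariant algebraic $K$-theory functor. The operadic Guillou-May construction $\cat{K}_G$ applied to a genuine permutative $G$-category of the form $\Fun(EG,C)$ is, by the main comparison of May, Merling, and Osorno \cite{may-merling-osorno}, naturally equivalent in $\cat{$\bm G$-Spectra}^\infty$ to the $K$-theory spectrum obtained by first passing from the na\"ive permutative $G$-category $C$ to its associated special Shimakawa $G$-$\Gamma$-space and then spectrifying. This naturality upgrades the pointwise equivalence to a commutative diagram of functors between the relevant quasi-categories.

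Finally, I would invoke \cite[Theorem~4.3.9]{g-global}, which asserts precisely that this $\Gamma$-space based equivariant $K$-theory functor exhibits the quasi-category of connective genuine $G$-spectra as a Bousfield localization of $\cat{$\bm G$-PermCat}^\infty_{\textup{$G$-`h'fp w.e.}}$. Combined with the equivalence above, this yields the desired conclusion. The main technical obstacle will be marshalling the comparison of \cite{may-merling-osorno} into a commutative square of quasi-categories compatible with both the equivalence of Theorem~\ref{thm:main} and the notion of weak equivalence used in \cite[Theorem~4.3.9]{g-global}; once this naturality is in place, the proof is a diagram chase.
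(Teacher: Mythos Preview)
Your proposal is correct and follows essentially the same route as the paper: reduce via the equivalence of Theorem~\ref{thm:main} to the composite $\cat{K}_G\circ\Fun(EG,\blank)$ on na\"ive permutative $G$-categories, identify this with the $\Gamma$-space based $K$-theory functor using the May--Merling--Osorno comparison, and then invoke \cite[Theorem~4.3.9]{g-global}. The paper's proof is simply a terser version of exactly this argument.
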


\section{Categories internal to \texorpdfstring{$G$}{G}-spaces and their algebras}\label{sec:internal}
Let $G$ be a finite group again. Guillou and May \cite{guillou-may} more generally consider genuine and na\"ive permutative (or symmetric monoidal) $G$-categories in the context of categories internal to $\cat{$\bm G$-Top}$, or, equivalently, categories internal to $\cat{Top}$ that are equipped with an additional $G$-action. In this final section, we want to extend the above comparisons to this context.

\subsection{Realization} For this we first need to recall some basics about simplicial spaces and bisimplicial sets (i.e.~`simplicial simplicial sets'), and more generally about $G$-simplicial spaces and $G$-bisimplical sets.

\begin{constr}
The categories
\begin{equation*}
\cat{$\bm G$-STop}=\Fun(\Delta^\op,\cat{$\bm G$-Top})
\qquad\text{and}\qquad
\cat{$\bm G$-BiSSet}=\Fun(\Delta^\op,\cat{$\bm G$-SSet})
\end{equation*}
come with Reedy model structures, and as both $\cat{$\bm G$-Top}$ and $\cat{$\bm G$-SSet}$ are simplicial model categories, we get left Quillen realization functors
\begin{equation*}
\|\blank\|\colon\cat{$\bm G$-STop}\to\cat{$\bm G$-Top}
\qquad\text{and}\qquad
\|\blank\|\colon\cat{$\bm G$-BiSSet}\to\cat{$\bm G$-SSet}.
\end{equation*}
\end{constr}

\begin{defi}
We say that a map of $G$-simplicial spaces or $G$-bisimplicial sets is a \emph{realization $G$-weak equivalence} if it is sent to a weak equivalence under the respective left derived realization functor.
\end{defi}

As every bisimplical set is Reedy cofibrant, the realization functor is actually fully homotopical for $\cat{BiSSet}$. Moreover, we see that applying the usual adjunction $|\blank|\colon\cat{SSet}\rightleftarrows\cat{Top} :\!\Sing$ levelwise yields a Reedy cofibrant replacement in $\cat{$\bm G$-STop}$. As in addition the diagram
\begin{equation*}
\begin{tikzcd}[column sep=large]
\Fun(\Delta^\op,\cat{$\bm G$-SSet})\arrow[r, "{\Fun(\Delta^\op,|\blank|)}"]\arrow[d, "\|\blank\|"'] &[2em] \Fun(\Delta^\op,\cat{$\bm G$-Top})\arrow[d, "\|\blank\|"]\\
\cat{$\bm G$-SSet}\arrow[r, "|\blank|"'] & \cat{$\bm G$-Top}
\end{tikzcd}
\end{equation*}
commutes up to isomorphism (since the usual geometric realization functor is a \emph{simplicial} left adjoint), we conclude:

\begin{lemma}\label{lemma:Sing-create-we}
The functor $\Fun(\Delta^\op,\Sing)\colon\cat{$\bm G$-STop}\to\cat{$\bm G$-BiSSet}$ creates realization $G$-weak equivalences.\qed
\end{lemma}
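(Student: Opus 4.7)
The plan is to chase $f\colon X\to Y$ in $\cat{$\bm G$-STop}$ through the commutative square and exploit what has already been recorded in the paragraph preceding the lemma. The key observations to deploy are (a) $R(f):=\Fun(\Delta^\op,|\blank|)\Fun(\Delta^\op,\Sing)(f)$ is a Reedy cofibrant replacement of $f$ via the counit of $|\blank|\dashv\Sing$ (this is exactly the Reedy cofibrant replacement alluded to just before the lemma), (b) the realization functor $\|\blank\|\colon\cat{$\bm G$-BiSSet}\to\cat{$\bm G$-SSet}$ is fully homotopical since every bisimplicial set is Reedy cofibrant, and (c) the classical fact that $|\blank|\colon\cat{$\bm G$-SSet}\to\cat{$\bm G$-Top}$ preserves and reflects $G$-weak equivalences (the fixed points of finite group actions commute with geometric realization, and on fixed points this is just the non-equivariant statement).

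With these in hand, I would argue by chaining equivalences. First, by definition $f$ is a realization $G$-weak equivalence iff $\|R(f)\|$ is a $G$-weak equivalence in $\cat{$\bm G$-Top}$, since $\|\blank\|$ is left Quillen and $R(f)\to f$ is a Reedy weak equivalence between Reedy cofibrants. Next, the commutativity of the displayed square yields the identification
\begin{equation*}
\|R(f)\| = \|\Fun(\Delta^\op,|\blank|)\Fun(\Delta^\op,\Sing)(f)\|\cong |\,\|\Fun(\Delta^\op,\Sing)(f)\|\,|.
\end{equation*}
By observation (c), the right-hand side is a $G$-weak equivalence iff $\|\Fun(\Delta^\op,\Sing)(f)\|$ is one in $\cat{$\bm G$-SSet}$. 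Finally, by (b) this is equivalent to $\Fun(\Delta^\op,\Sing)(f)$ being a realization $G$-weak equivalence in $\cat{$\bm G$-BiSSet}$.

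Stringing the four equivalences together shows that $f$ is a realization $G$-weak equivalence if and only if $\Fun(\Delta^\op,\Sing)(f)$ is one, which is precisely the claim that $\Fun(\Delta^\op,\Sing)$ creates realization $G$-weak equivalences. There is essentially no obstacle here beyond bookkeeping: the entire content is that the two left-derived realization functors compare through the commuting square of the preceding paragraph, together with the fact that plain geometric realization detects $G$-weak equivalences between $G$-simplicial sets, and my only real care will be to spell out clearly that both the ``only if'' and ``if'' directions follow from the same chain of bi-implications.
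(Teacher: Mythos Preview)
Your proposal is correct and is exactly the argument the paper has in mind: the lemma carries a bare \qed because the preceding paragraph already records the three ingredients you use (Reedy cofibrancy of bisimplicial sets, the levelwise $|\Sing(\blank)|$ Reedy replacement, and the commuting square), and you have simply spelled out the chain of bi-implications that the paper leaves implicit. There is nothing to add.
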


\subsection{Topological vs.~simplicial categories}\label{subsec:top-vs-simpl} We write $\cat{Cat}_\cat{SSet}$ and $\cat{Cat}_{\cat{Top}}$ for the category of categories internal to simplicial sets and topological spaces, respectively. Just like every ordinary category gives rise to a simplicial set via the usual nerve construction, we have:

\begin{constr}
Let $C\in\cat{$\bm G$-Cat}_{\cat{SSet}}$ or $C\in\cat{$\bm G$-Cat}_{\cat{Top}}$. The \emph{nerve} $\cat{N}C$ of $C$ is the bisimplicial set or simplicial space, respectively, given in degree $n$ by
\begin{equation*}
(\cat{N}C)_n=\underbrace{\Mor(C)\times_{\Ob(C)}\cdots\times_{\Ob(C)}\Mor(C)}_{\textup{$n$ factors}}
\end{equation*}
together with the evident structure maps. Defining $\cat{N}$ analogously on morphisms we then get functors $\cat{N}\colon\cat{$\bm G$-Cat}_{\cat{SSet}}\to\cat{$\bm G$-BiSSet}$ and $\cat{N}\colon\cat{$\bm G$-Cat}_{\cat{Top}}\to\cat{$\bm G$-STop}$.
\end{constr}

\begin{defi}
A functor $f\colon C\to D$ in $\cat{$\bm G$-Cat}_{\cat{Top}}$ or $\cat{$\bm G$-Cat}_{\cat{SSet}}$ is a \emph{$G$-weak equivalence} if $\NERVE f$ is a realization $G$-weak equivalence.
\end{defi}

As both geometric realization as well as the singular set functor preserve finite products, they lift to an adjunction
\begin{equation}\label{eq:internal-cat-real-sing}
|\blank|\colon\cat{$\bm G$-Cat}_{\cat{SSet}}\rightleftarrows\cat{$\bm G$-Cat}_{\cat{Top}} :\!\Sing
\end{equation}
for every (finite) group $G$.

\begin{prop}\label{prop:top-vs-simpl-categories}
The adjunction $(\ref{eq:internal-cat-real-sing})$ is a homotopy equivalence with respect to the $G$-weak equivalences on both sides.
\begin{proof}
As $\Sing$ preserves limits, the diagram
\begin{equation*}
\begin{tikzcd}[column sep=large]
\cat{$\bm G$-Cat}_{\cat{Top}}\arrow[d, "\NERVE"']\arrow[r, "\Sing"] &[2em] \cat{$\bm G$-Cat}_{\cat{SSet}}\arrow[d, "\NERVE"]\\
\cat{$\bm G$-STop}\arrow[r, "{\Fun(\Delta^\op,\Sing)}"'] & \cat{$\bm G$-BiSSet}
\end{tikzcd}
\end{equation*}
commutes up to natural isomorphism; more precisely, the canonical maps
\begin{align*}
&\Sing\Mor(C)\times_{\Sing\Ob(C)}\cdots\times_{\Sing\Ob(C)}\Sing\Mor(C)\\&\quad\to\Sing\big({\Mor(C)\times_{\Ob(C)}\cdots\times_{\Ob(C)}\Mor(C)}\big)
\end{align*}
assemble into an isomorphism $\NERVE(\mathop\Sing C)\cong\mathop\Sing\NERVE(C)$ for every $C\in\cat{$\bm G$-Cat}_{\cat{Top}}$. Together with Lemma~\ref{lemma:Sing-create-we} we conclude that $\Sing\colon\cat{$\bm G$-Cat}_{\cat{Top}}\to\cat{$\bm G$-Cat}_{\cat{SSet}}$ creates $G$-weak equivalences. It therefore only remains to show that the unit $C\to\Sing|C|$ is a $G$-weak equivalence for every simplicial $G$-category $C$. However, using that also $\|\blank\|$ preserves finite limits, the induced map $\NERVE(C)_n\to\NERVE(\Sing |C|)_{n}$ agrees up to isomorphism with the unit
\begin{equation*}
\Mor(C)\times_{\Ob(C)}\cdots\times_{\Ob(C)}\Mor(C)\to\mathop\Sing\big|{\Mor(C)\times_{\Ob(C)}\cdots\times_{\Ob(C)}\Mor(C)}\big|,
\end{equation*}
so it is a $G$-weak equivalence. Thus, $\NERVE(\eta)\colon\NERVE(C)\to\NERVE(\Sing |C|)$ is a levelwise $G$-weak equivalence in $\cat{$\bm G$-BiSSet}$ and hence in particular a realization $G$-weak equivalence as desired.
\end{proof}
\end{prop}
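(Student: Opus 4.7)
The plan is to show, as the two key ingredients, that (a) the functor $\Sing$ on internal $G$-categories creates $G$-weak equivalences, and (b) the unit $\eta_C\colon C\to\Sing|C|$ is a $G$-weak equivalence for every simplicial $G$-category $C$. Given these, the adjunction $|\blank|\dashv\Sing$ will automatically be a homotopy equivalence: both functors will be homotopical, $\eta$ is a natural weak equivalence, and the triangle identities combined with $2$-out-of-$3$ force the counit $\epsilon_C\colon|\Sing C|\to C$ to be a $G$-weak equivalence as well.

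For (a), I would exploit that $\Sing\colon\cat{Top}\to\cat{SSet}$ is a right adjoint, hence preserves all limits. Since the simplicial levels of $\NERVE C$ are iterated fiber products of $\Mor(C)$ over $\Ob(C)$, the canonical comparison assembles into a natural isomorphism $\NERVE(\Sing C)\cong\Fun(\Delta^\op,\Sing)(\NERVE C)$. Combining this natural isomorphism with Lemma~\ref{lemma:Sing-create-we} immediately yields that $\Sing\colon\cat{$\bm G$-Cat}_{\cat{Top}}\to\cat{$\bm G$-Cat}_{\cat{SSet}}$ creates $G$-weak equivalences.

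For (b), the critical input is that the geometric realization $|\blank|\colon\cat{SSet}\to\cat{Top}$ also preserves finite limits. Consequently $|\blank|$ on internal categories commutes with the $\NERVE$ construction, so $\NERVE(\eta_C)$ is given in simplicial degree $n$ by the ordinary $G$-simplicial-set unit applied to $\Mor(C)\times_{\Ob(C)}\cdots\times_{\Ob(C)}\Mor(C)$. Each of these levelwise maps is a $G$-weak equivalence by the classical Quillen equivalence $|\blank|\dashv\Sing$ for $G$-simplicial sets. Since bisimplicial sets are automatically Reedy cofibrant, a levelwise $G$-weak equivalence of $G$-bisimplicial sets is a realization $G$-weak equivalence, so $\NERVE(\eta_C)$ is a realization $G$-weak equivalence, which is exactly the assertion that $\eta_C$ is a $G$-weak equivalence of internal $G$-categories.

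There is no serious obstacle here: the argument is essentially formal once one keeps in mind that both $\Sing$ and $|\blank|$ preserve the relevant finite limits (the latter being Milnor's theorem), together with the creation result of Lemma~\ref{lemma:Sing-create-we}. The only place where one has to pay attention is the interchange of $|\blank|$ with the iterated pullbacks computing $\NERVE C$; this requires $|\blank|$ to be finitely continuous, which is a feature specific to the standard model of geometric realization (e.g.\ in compactly generated weak Hausdorff spaces) and not a formal consequence of its being a left adjoint.
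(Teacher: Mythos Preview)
Your proof is correct and follows essentially the same approach as the paper: both arguments use that $\Sing$ preserves limits to identify $\NERVE\circ\Sing$ with $\Fun(\Delta^\op,\Sing)\circ\NERVE$ and invoke Lemma~\ref{lemma:Sing-create-we} for part (a), and then use that geometric realization preserves finite limits to identify the unit levelwise with the ordinary $|\blank|\dashv\Sing$ unit for part (b). Your explicit remark that the counit is handled via the triangle identities and $2$-out-of-$3$ spells out what the paper leaves implicit.
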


\begin{constr}
Both $\cat{$\bm G$-Cat}_{\cat{SSet}}$ and $\cat{$\bm G$-Cat}_{\cat{Top}}$ are Cartesian closed, so we get internal function categories that we denote by $\Fun$ again. We will make these explicit in the case that $I$ is an ordinary $G$-category, which is the only case we will need below:

For $\cat{$\bm G$-Cat}_\cat{SSet}$, we observe that we can identify categories internal to simplicial sets with simplicial objects in $\cat{Cat}$ as products in functor categories can be computed levelwise. Using this, we can now describe $\Fun(I,C)$ for any $C\in\cat{$\bm G$-Cat}_{\cat{SSet}}$ very easily: namely, it is the simplicial $G$-category whose $G$-category of $n$-simplices is the ordinary functor category $\Fun(I,C_n)$; the simplicial structure maps are given in the obvious way. Put differently, $\Ob\Fun(I,C)$ is the simplicial subset of $\Ob(C)^{\Ob(I)}\times\Mor(C)^{\Mor(I)}$ whose $n$-simplices are the functors $I\to C_n$, and similarly $\Mor\Fun(I,C)$ is defined as a subcomplex of $\Ob(C)^{\Ob(I\times[1])}\times\Mor(C)^{\Mor(I\times[1])}$. The unit and counit are given by applying the usual unit and counit of $I\times\blank\colon\cat{Cat}\rightleftarrows\cat{Cat}:\!\Fun(I,\blank)$ levelwise.

On the other hand, if $D\in\cat{$\bm G$-Cat}_{\cat{Top}}$, then $\Ob\Fun(I,D)$ is the set of all (ordinary) functors $I\to D$, topologized as a subspace of $\Ob(D)^{\Ob(I)}\times\Mor(D)^{\Mor(I)}$, and similarly for $\Mor\Fun(I,D)$. The unit and counit are given by the usual unit and counit of $I\times\blank\colon\cat{Cat}\rightleftarrows\cat{Cat}:\!\Fun(I,\blank)$.
\end{constr}

\begin{defi}
A functor $f$ in $\cat{$\bm G$-Cat}_{\cat{SSet}}$ or $\cat{$\bm G$-Cat}_{\cat{Top}}$ is called a \emph{$G$-`h'fp weak equivalence} if $\Fun(EG,f)$ is a $G$-weak equivalence.
\end{defi}

We now want to prove the following comparison complementing Proposition~\ref{prop:top-vs-simpl-categories}:

\begin{prop}\label{prop:top-vs-simpl-homotopy}
The adjunction $(\ref{eq:internal-cat-real-sing})$ is also a homotopy equivalence with respect to the $G$-`h'fp weak equivalences on both sides.
\end{prop}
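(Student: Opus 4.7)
The plan is to reduce to Proposition~\ref{prop:top-vs-simpl-categories}, the analogous homotopy equivalence with respect to $G$-weak equivalences. The key observation is that $\Fun(EG,\blank)$ is preserved up to natural isomorphism by both functors in the adjunction $|\blank|\dashv\Sing$, after which the argument becomes purely formal.

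First, I would establish that for every $C\in\cat{$\bm G$-Cat}_{\cat{SSet}}$ and $D\in\cat{$\bm G$-Cat}_{\cat{Top}}$ the canonical comparison maps
\begin{equation*}
|\Fun(EG,C)|\to\Fun(EG,|C|),\qquad\Sing\Fun(EG,D)\to\Fun(EG,\Sing D)
\end{equation*}
are isomorphisms. Since $G$ is finite, $EG$ is a finite $1$-category, and the explicit descriptions of internal function categories given in Subsection~\ref{subsec:top-vs-simpl} show that $\Fun(EG,\blank)$ is computed in both settings as the same finite limit (finite products of $\Ob$ and $\Mor$ indexed by $\Ob(EG)$ and $\Mor(EG)$, cut out by the equalizer conditions expressing functoriality) in the underlying category of simplicial sets or topological spaces. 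The second isomorphism is then immediate because $\Sing$ is a right adjoint, and the first follows because geometric realization preserves finite limits when we take $\cat{Top}$ to be a convenient category of spaces.

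Given these isomorphisms, the proposition follows formally. From the proof of Proposition~\ref{prop:top-vs-simpl-categories} we know that $\Sing$ creates $G$-weak equivalences and that the unit $\eta_C\colon C\to\Sing|C|$ is a $G$-weak equivalence; a routine $2$-out-of-$3$ argument then shows that $|\blank|$ also preserves and reflects $G$-weak equivalences. Combined with the natural isomorphisms above, this yields that $f\in\cat{$\bm G$-Cat}_\cat{SSet}$ is a $G$-`h'fp weak equivalence iff $\Fun(EG,f)$ is a $G$-weak equivalence iff $|\Fun(EG,f)|\cong\Fun(EG,|f|)$ is a $G$-weak equivalence iff $|f|$ is a $G$-`h'fp weak equivalence, and analogously for $\Sing$. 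For the unit $\eta_C$, the mate correspondence identifies $\Fun(EG,\eta_C)$ via the two isomorphisms above with $\eta_{\Fun(EG,C)}\colon\Fun(EG,C)\to\Sing|\Fun(EG,C)|$, which is a $G$-weak equivalence by Proposition~\ref{prop:top-vs-simpl-categories}; thus $\eta_C$ is a $G$-`h'fp weak equivalence, and the counit is handled dually. These ingredients directly imply that the adjunction descends to an equivalence of quasi-localizations at the $G$-`h'fp weak equivalences.

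The main technical point is the preservation of the finite limits defining $\Fun(EG,\blank)$ by geometric realization, which necessitates specifying a convenient category of spaces; once this classical fact is acknowledged (consistent with the setup inherited from Guillou and May), the remainder is a formal manipulation of natural isomorphisms and $2$-out-of-$3$ arguments, closely mirroring the proof of Proposition~\ref{prop:top-vs-simpl-categories} itself.
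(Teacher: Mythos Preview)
Your proof is correct and follows the same overall strategy as the paper: reduce to Proposition~\ref{prop:top-vs-simpl-categories} by showing that $\Fun(EG,\blank)$ commutes with both $|\blank|$ and $\Sing$, and then conclude formally via $2$-out-of-$3$.

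The one genuine difference lies in how you establish the isomorphism $|\Fun(EG,C)|\cong\Fun(EG,|C|)$. You invoke the classical fact that geometric realization $\cat{SSet}\to\cat{Top}$ preserves finite limits in a convenient category of spaces, together with the observation that $\Fun(EG,\blank)$ is a finite limit since $EG$ is finite. The paper instead proves this isomorphism by hand (the unnamed Lemma immediately preceding the proof), giving an explicit simplicial argument that for any finite $I$ and any $C\in\cat{$\bm G$-Cat}_{\cat{SSet}}$ the canonical map $\alpha\colon|\Fun(I,C)|\to\Fun(I,|C|)$ is bijective on objects and morphisms. Your route is shorter and conceptually cleaner, at the cost of importing a nontrivial external input (preservation of equalizers by $|\blank|$); the paper's route is self-contained but more laborious. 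Either way, once this isomorphism is in hand, the remainder of the argument---$\Sing$ creates $G$-`h'fp weak equivalences via the isomorphism $\beta$, and the unit is a $G$-`h'fp weak equivalence via the compatibility square $(\ref{eq:alpha-beta-eta})$---is identical in spirit to what you wrote.
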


Before we can do this, we have to establish a certain compatibility property of the above adjunction with respect to internal function categories:

\begin{constr}
As $|\blank|\colon\cat{$\bm G$-Cat}_{\cat{SSet}}\to\cat{$\bm G$-Cat}_{\cat{Top}}$ preserves finite products, the calculus of mates provides us with a natural map
\begin{equation*}
\alpha_{I,C}\colon |\Fun(I,C)|\to \Fun(|I|,|C|)
\end{equation*}
for every $\cat{$\bm G$-Cat}_{\cat{SSet}}$, and passing to mates again then yields a natural map
\begin{equation*}
\beta_{I,D}\colon \Fun(I,\mathop\Sing D)\to\mathop\Sing\Fun(|I|,D)
\end{equation*}
for every $D\in\cat{$\bm G$-Cat}_{\cat{Top}}$. By the calculus of mates, $\beta$ is actually an isomorphism (being the total mate of the isomorphism $|I\times\blank|\cong I\times|\blank|$), and $\alpha,\beta$ are compatible with the unit of the adjunction $|\blank|\dashv\Sing$ in the sense that the diagram
\begin{equation}\label{eq:alpha-beta-eta}
\begin{tikzcd}
\Fun(I,C)\arrow[r, "\eta"]\arrow[d, "{\Fun(I,\eta)}"'] &[.25em] \mathop\Sing|\Fun(I,C)|\arrow[d,"{\mathop\Sing\alpha_{I,C}}"]\\
\Fun(I,\mathop\Sing|C|)\arrow[r,"{\beta_{I,|C|}}"',"\cong"] & \mathop\Sing\Fun(|I|,|C|)
\end{tikzcd}
\end{equation}
commutes for all $I\in\cat{$\bm G$-Cat}$ and $C\in\cat{$\bm G$-Cat}_{\cat{SSet}}$.
\end{constr}

\begin{lemma}
Let $I$ be a \emph{finite} $G$-category and let $C\in\cat{$\bm G$-Cat}_{\cat{SSet}}$. Then the above map $\alpha\colon|\Fun(I,C)|\to\Fun(I,|C|)$ is an isomorphism.
\begin{proof}
We will show that $\alpha$ induces a homeomorphism on objects; the corresponding claim for morphisms will then follow by replacing $I$ by $I\times[1]$ everywhere.

An $n$-simplex $I\to C_n$ of $\Ob\Fun(I,C)$ corresponds to a functor $I\times\Delta^n\to C$, which gives us a map $|\Delta^n|\times I\cong|\Delta^n\times I|\to |C|$; unravelling definitions, we see that $\alpha\colon|{\Ob\Fun(I,C)}|\to{\Ob\Fun(I,|C|)}$ is given by gluing together all these maps. In particular, the diagram
\begin{equation}\label{diag:alpha-fun}
\begin{tikzcd}
{|{\Ob\Fun(I,C)}|}\arrow[r,hook]\arrow[d, "\alpha"'] & {|{\Ob C^{\Ob I}\times\Mor C^{\Mor I}}|}\arrow[d, "\cong"', "\Phi"]\\
{{\Ob\Fun(I,|C|)}}\arrow[r,hook] & {|{\Ob C}|^{\Ob I}\times|{\Mor C}|^{\Mor I}}
\end{tikzcd}
\end{equation}
commutes, where the map $\Phi$ on the right is the canonical homeomorphism induced by the projections, i.e.~it sends $[F_0,F_1;\alpha]$ to the family that sends $i\in\Ob I$ to $[F_0(i),\sigma]$ and $f\in\Mor(I)$ to $[F_1(f);\sigma]$.

The top horizontal arrow in $(\ref{diag:alpha-fun})$ is an embedding as the geometric realization of an inclusion of simplicial sets, and so is the lower horizontal arrow by definition. To complete the proof it suffices now to show that any $\mathfrak F\in|\Ob C^{\Ob I}\times\Mor C^{\Mor I}|$ for which $\Phi\mathfrak F$ is a functor $I\to|C|$, is already contained in $|\Ob\Fun(I,C)|$.

To this end, we write $\mathfrak F=[F_0,F_1;\sigma]$ with $\sigma\in(\Delta^n)^{\circ}$ for some $n\ge 0$, and it will be enough to prove that $(F_0,F_1)$ defines a functor $I\to C_n$. For this we will only show that $\src\circ F_1=F_0\circ\src$, the argument for the other functoriality properties being similar. Indeed, if $f\colon i\to j$ is any morphism in $I$, then $[F_0(i),\sigma]=(\Phi\mathfrak F)(i)=\src(\Phi\mathfrak F)(f)=[\src F_1(f),\sigma]$; as $\sigma$ lies in the interior of $\Delta^n$ by construction, this already implies that $F_0(i)=\src F_1(f)$ by the basic combinatorics of simplicial sets as desired.
\end{proof}
\end{lemma}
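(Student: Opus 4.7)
The plan is to reduce the whole statement to verifying that $\alpha$ is a homeomorphism on objects. For the morphism part, one applies the object statement to the finite $G$-category $I\times[1]$ in place of $I$: indeed $\Mor\Fun(I,C)=\Ob\Fun(I\times[1],C)$ and analogously on the topological side, and by inspection both constructions are compatible with $\alpha$. So the entire weight of the lemma rests on the object case.

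For the object case, the first key step is to invoke the standard fact that geometric realization preserves finite products to obtain a natural homeomorphism
\begin{equation*}
\Phi\colon\big|\Ob(C)^{\Ob(I)}\times\Mor(C)^{\Mor(I)}\big|\xrightarrow{\cong}|\Ob(C)|^{\Ob(I)}\times|\Mor(C)|^{\Mor(I)},
\end{equation*}
where finiteness of $I$ is crucial (so that both $\Ob(I)$ and $\Mor(I)$ are finite sets; infinite products do not commute with realization). Both $|\Ob\Fun(I,C)|$ and $\Ob\Fun(I,|C|)$ sit inside these respective ambient spaces as closed subspaces, and unwinding the definition of $\alpha$ shows that it is nothing but the restriction of $\Phi$. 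Since $\Phi$ is a homeomorphism, to finish the proof it suffices to check that $\Phi$ identifies the two subspaces set-theoretically, after which the subspace topologies will automatically agree.

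One inclusion is easy: an $n$-simplex of $\Ob\Fun(I,C)$ is a functor $F\colon I\to C_n$, which upon realization yields a continuous functor $|\Delta^n|\to\Fun(I,|C|)$, exhibiting $\Phi$ of its realization as a map into $\Ob\Fun(I,|C|)$. For the reverse inclusion, I would pick a representative $[F_0,F_1;\sigma]$ of a given point with $\sigma$ in the \emph{open} interior of some $\Delta^n$, and then argue that the functoriality constraints (source, target, identity, composition) for $\Phi[F_0,F_1;\sigma]$ in $\Fun(I,|C|)$ translate directly into the same constraints for $(F_0,F_1)$ in $C_n$. The bridge is the standard fact that $[x,\sigma]=[y,\sigma]$ with $\sigma$ interior forces $x=y$, i.e.\ interior points admit unique simplex-level representatives.

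The main obstacle I expect is the bookkeeping required to confirm that a single $\sigma$ (hence a single $n$) works for the entire tuple $(F_0,F_1)$ at once, rather than needing different dimensions for different components of the tuple; this is exactly what one gets for free from the homeomorphism $\Phi$ applied to a \emph{single} point of the realization. Beyond this, there are no conceptual difficulties: the argument is a combination of the product-commutation property of realization (where finiteness of $I$ enters) and the interior-point injectivity trick for geometric realizations. The finiteness hypothesis on $I$ cannot be removed, as infinite-product versions of $\Phi$ fail to be homeomorphisms already non-equivariantly.
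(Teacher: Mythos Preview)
Your proposal is correct and follows essentially the same approach as the paper: reduce to objects via $I\times[1]$, embed both sides into the ambient product $|\Ob C|^{\Ob I}\times|\Mor C|^{\Mor I}$ via the canonical homeomorphism $\Phi$ (using finiteness of $I$), observe that $\alpha$ is the restriction of $\Phi$, and then verify the set-theoretic image by choosing a representative $[F_0,F_1;\sigma]$ with $\sigma$ in the open interior of $\Delta^n$ and using the interior-point injectivity to translate the functoriality constraints in $|C|$ back to $C_n$. Your remark about a single $\sigma$ sufficing for the whole tuple is exactly what the paper uses implicitly when writing a point of the realized product as $[F_0,F_1;\sigma]$.
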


\begin{proof}[Proof of Proposition~\ref{prop:top-vs-simpl-homotopy}]
As $\Sing$ creates $G$-weak equivalences, the isomorphism $\beta_{EG,\blank}$ shows that $\Sing$ also creates $G$-`h'fp weak equivalences. It therefore only remains to show that the unit $\eta\colon C\to\Sing|C|$ is a $G$-`h'fp weak equivalence for every $C\in\cat{$\bm G$-Cat}_{\cat{SSet}}$.

For this, we specialize the commutative diagram $(\ref{eq:alpha-beta-eta})$ to the finite $G$-category $I=EG$. Then the lower horizontal map is an isomorphism, and so is the right hand vertical map by the previous lemma. As moreover the top horizontal arrow is a $G$-weak equivalence by Proposition~\ref{prop:top-vs-simpl-categories}, $2$-out-of-$3$ shows that $\Fun(EG,\eta)$ is a $G$-weak equivalence, i.e.~$\eta$ is a $G$-`h'fp weak equivalence as desired.
\end{proof}

Now let $\mathcal O$ be any operad in $\cat{$\bm G$-Cat}_{\cat{SSet}}$. As $|\blank|$ preserves products, it lifts to a functor $\Alg_{\mathcal O}(\cat{$\bm G$-Cat}_{\cat{SSet}})\to\Alg_{|\mathcal O|}(\cat{$\bm G$-Cat}_{\cat{Top}})$, and likewise $\Sing$ induces $\Alg_{\mathcal P}(\cat{$\bm G$-Cat}_{\cat{Top}})\to\Alg_{\Sing(\mathcal P)}(\cat{$\bm G$-Cat}_{\cat{SSet}})$ for any operad $\mathcal P$ in $\cat{$\bm G$-Cat}_{\cat{Top}}$. If $\mathcal P=|\mathcal O|$, then we can compose this with the restriction along $\mathcal O\to\Sing|\mathcal O|$, yielding an adjunction
\begin{equation}\label{eq:sset-vs-top-alg}
\Alg_{\mathcal O}(\cat{$\bm G$-Cat}_{\cat{SSet}})\rightleftarrows\Alg_{|\mathcal O|}(\cat{$\bm G$-Cat}_{\cat{Top}}).
\end{equation}
Propositions~\ref{prop:top-vs-simpl-categories} and~\ref{prop:top-vs-simpl-homotopy} now immediately imply:

\begin{cor}\label{cor:simplicial-vs-top-alg}
Let $\mathcal O$ be any operad in $\cat{$\bm G$-Cat}_{\cat{SSet}}$. Then $(\ref{eq:sset-vs-top-alg})$ is a homotopy equivalence with respect to the $G$-weak equivalences as well as with respect to the $G$-`h'fp weak equivalences.\qed
\end{cor}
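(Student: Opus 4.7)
The plan is to reduce the claim to the corresponding statements about underlying internal categories, which are exactly Propositions~\ref{prop:top-vs-simpl-categories} and~\ref{prop:top-vs-simpl-homotopy}. The key observation is that the weak equivalences we care about on algebras are, in both cases, created by the forgetful functors to underlying internal categories (since the $G$-weak equivalences are so by definition, and the $G$-`h'fp weak equivalences are then also created because $\Fun(EG,\blank)$ commutes with the respective forgetful functors). Thus it will suffice to show that the unit and counit of $(\ref{eq:sset-vs-top-alg})$ descend, as morphisms of underlying $G$-simplicial resp.~topological categories, to the unit and counit of the adjunction $|\blank|\dashv\Sing$ between $\cat{$\bm G$-Cat}_{\cat{SSet}}$ and $\cat{$\bm G$-Cat}_{\cat{Top}}$.

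First I would make the adjunction $(\ref{eq:sset-vs-top-alg})$ explicit: the left adjoint sends an $\mathcal O$-algebra $C$ to $|C|$ equipped with the $|\mathcal O|$-action obtained by applying $|\blank|$ to the structure maps (using that $|\blank|$ preserves finite products), while the right adjoint sends a $|\mathcal O|$-algebra $D$ to $\Sing D$ with $\mathcal O$-action given by the composite
\begin{equation*}
\mathcal O(n)\times\Sing(D)^n\xrightarrow{u\times\id}\Sing|\mathcal O|(n)\times\Sing(D)^n\xrightarrow{\cong}\Sing\bigl(|\mathcal O|(n)\times D^n\bigr)\xrightarrow{\Sing(\alpha)}\Sing(D),
\end{equation*}
where $u\colon\mathcal O\to\Sing|\mathcal O|$ is the unit and $\alpha$ is the $|\mathcal O|$-action on $D$. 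A direct calculation using the triangle identities then verifies that the unit of $(\ref{eq:sset-vs-top-alg})$ at $C$ is precisely the unit $\eta_C\colon C\to\Sing|C|$ of $|\blank|\dashv\Sing$ on $\cat{$\bm G$-Cat}_{\cat{SSet}}$, and likewise the counit at $D$ is the counit $\varepsilon_D\colon|\Sing D|\to D$ of $|\blank|\dashv\Sing$ on $\cat{$\bm G$-Cat}_{\cat{Top}}$; this is essentially formal from the fact that the right adjoint is built via mate calculus from $|\blank|\dashv\Sing$ using that both functors are product-preserving.

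With these identifications in hand, Proposition~\ref{prop:top-vs-simpl-categories} gives that $\eta_C$ and $\varepsilon_D$ are $G$-weak equivalences, while Proposition~\ref{prop:top-vs-simpl-homotopy} gives the analogous statement for $G$-`h'fp weak equivalences. Since both classes of weak equivalences on $\Alg_{\mathcal O}(\cat{$\bm G$-Cat}_{\cat{SSet}})$ and $\Alg_{|\mathcal O|}(\cat{$\bm G$-Cat}_{\cat{Top}})$ are detected on underlying internal categories, it follows that the unit and counit of $(\ref{eq:sset-vs-top-alg})$ are weak equivalences in each of the two senses, whence the adjunction descends to an equivalence of homotopy categories as claimed.

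The only genuine thing to check is the identification of the unit and counit of $(\ref{eq:sset-vs-top-alg})$ with those of the underlying adjunction; everything else is then purely formal. I do not anticipate any serious obstacle here as this is standard mate calculus, but one has to be a little careful to track the restriction along $u\colon\mathcal O\to\Sing|\mathcal O|$ appearing in the right adjoint.
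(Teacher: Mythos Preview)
Your argument is correct and is exactly the unpacking of what the paper leaves implicit: the corollary is stated there with a \qed immediately after the sentence ``Propositions~\ref{prop:top-vs-simpl-categories} and~\ref{prop:top-vs-simpl-homotopy} now immediately imply,'' and your proof spells out precisely why that implication holds---namely that the lifted adjunction has the same unit and counit on underlying internal $G$-categories as $|\blank|\dashv\Sing$, and that both notions of weak equivalence on algebras are created by the forgetful functors. There is nothing to add.
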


\subsection{Simplicial vs.~ordinary categories}\label{subsec:simpl-vs-ord} It remains to compare algebras of simplicial $G$-categories to those of ordinary $G$-categories. Here the key idea will be to again exploit the identification between simplicial $G$-categories and simplicial objects in $\cat{$\bm G$-Cat}$. We begin by describing the $G$-weak equivalences from this point of view:

\begin{lemma}\label{lemma:nerve-homotopical-internal-simplicial}
A map in $\cat{$\bm G$-Cat}_{\cat{SSet}}$ is a $G$-weak equivalence if and only if it is sent to a realization $G$-weak equivalence under the composition
\begin{equation*}
\cat{$\bm G$-Cat}_{\cat{SSet}}\cong\Fun(\Delta^\op,\cat{$\bm G$-Cat})\xrightarrow{\nerve\circ\blank}\Fun(\Delta^\op,\cat{$\bm G$-SSet}),
\end{equation*}
and analogously for maps in $\Alg_{\mathcal O}(\cat{$\bm G$-Cat}_{\cat{SSet}})$ for any operad $\mathcal O$ in $\cat{$\bm G$-Cat}$.
\begin{proof}
One checks by direct inspection that the diagram
\begin{equation*}
\begin{tikzcd}[column sep=small]
\cat{$\bm G$-Cat}_{\textup{SSet}}\arrow[rr, "\cong"]\arrow[d, "\NERVE"'] && \Fun(\Delta^\op,\cat{$\bm G$-Cat})\arrow[d, "\nerve\circ\blank"]\\
\Fun(\Delta^\op,\cat{$\bm G$-SSet})\arrow[dr, "\diag^*"', bend right=10pt]\arrow[rr, "\textup{twist}^*"]&&\Fun(\Delta^\op,\cat{$\bm G$-SSet})\arrow[dl, "\diag^*", bend left=10pt]\\
&\cat{$\bm G$-SSet}
\end{tikzcd}
\end{equation*}
commutes, where $\textup{twist}$ denotes the functor exchanging the two factors of $\Delta^\op\times\Delta^\op$, and $\diag\colon\Delta^\op\to\Delta^\op\times\Delta^\op$ is the diagonal embedding. The first statement now follows as $\diag^*$ is isomorphic to $\|\blank\|$, and the second statement is a formal consequence of the first one as the forgetful functor $\Alg_{\nerve\mathcal O}(\cat{$\bm G$-SSet})\to\cat{$\bm G$-SSet}$ preserves geometric realization (Proposition~\ref{prop:forget-geometric-realization}).
\end{proof}
\end{lemma}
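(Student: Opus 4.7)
The plan is to show that the two bisimplicial gadgets we can extract from an internal simplicial $G$-category—namely the internal nerve $\cat{N}C\in\cat{$\bm G$-BiSSet}$ and the levelwise ordinary nerve $[n]\mapsto\nerve(C_n)$ of $C$ viewed as an object of $\Fun(\Delta^\op,\cat{$\bm G$-Cat})$—differ only by swapping the two simplicial directions, and that realization is invariant under this swap. Since the $G$-weak equivalences in $\cat{$\bm G$-Cat}_{\cat{SSet}}$ are by definition those maps sent by $\cat N$ to realization $G$-weak equivalences, this will yield the first statement, and the algebraic version will follow by a compatibility between realizations in $\Alg_{\nerve\mathcal O}(\cat{$\bm G$-SSet})$ and in $\cat{$\bm G$-SSet}$.

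The first step is the explicit identification. For $C\in\cat{$\bm G$-Cat}_{\cat{SSet}}$, the $n$-simplices of $(\cat NC)_k=\Mor(C)\times_{\Ob(C)}\cdots\times_{\Ob(C)}\Mor(C)$ ($k$ factors) are composable $k$-tuples of morphisms in the ordinary $G$-category $C_n$, which is exactly the set of $k$-simplices of $\nerve(C_n)$. Identifying $\cat{$\bm G$-BiSSet}$ with $\Fun(\Delta^\op,\cat{$\bm G$-SSet})$, this comparison says that $\textup{twist}^*\circ\cat N$ agrees with the levelwise nerve, where $\textup{twist}\colon\Delta^\op\times\Delta^\op\to\Delta^\op\times\Delta^\op$ exchanges factors; I would make this into a commutative diagram by direct inspection on simplices and on the simplicial and $G$-structure maps.

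Realization of bisimplicial $G$-sets is (up to natural isomorphism) given by the diagonal $\diag^*$, and since $\diag=\textup{twist}\circ\diag$, we have $\diag^*\circ\textup{twist}^*\cong\diag^*$. Combined with the previous paragraph, the composite $\diag^*\circ(\nerve\circ\blank)$ is naturally isomorphic to $\|\cat N(\blank)\|$, so a map in $\cat{$\bm G$-Cat}_{\cat{SSet}}$ is sent by the levelwise nerve to a realization $G$-weak equivalence iff its internal nerve is, which is the first claim. For the algebraic statement, we observe that $\Alg_{\mathcal O}(\cat{$\bm G$-Cat}_{\cat{SSet}})\cong\Fun(\Delta^\op,\Alg_{\mathcal O}(\cat{$\bm G$-Cat}))$ (since $\mathcal O$ lives in $\cat{$\bm G$-Cat}$, algebra structures in $\cat{$\bm G$-Cat}_{\cat{SSet}}$ are detected levelwise), and Proposition~\ref{prop:forget-geometric-realization} applied to $\cat{$\bm G$-SSet}$ (whose geometric realization, being $\diag^*$, preserves finite products) implies that the forgetful functor $\Alg_{\nerve\mathcal O}(\cat{$\bm G$-SSet})\to\cat{$\bm G$-SSet}$ preserves geometric realizations; hence a map in $\Alg_{\mathcal O}(\cat{$\bm G$-Cat}_{\cat{SSet}})$ becomes a realization $G$-weak equivalence after levelwise nerving iff it does so after forgetting to $\cat{$\bm G$-Cat}_{\cat{SSet}}$ first, so the algebra case reduces to the already-proven non-algebraic statement. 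The only real work is the bookkeeping in the commutative diagram; once that is in place, the rest is formal, so I do not anticipate any serious obstacle.
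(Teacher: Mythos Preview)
Your proposal is correct and follows essentially the same route as the paper: both identify the internal nerve with the levelwise nerve up to swapping the two simplicial directions, use that realization is the diagonal (hence invariant under this swap), and deduce the algebraic case from Proposition~\ref{prop:forget-geometric-realization}. Your write-up merely unpacks what the paper records as a commutative diagram ``by direct inspection.''
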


For suitably nice \emph{model categories} $\mathscr C$, \cite[Theorem~3.6]{rss-realization} shows that the homotopy theory of simplicial objects in $\mathscr C$ with respect to the realization weak equivalences is again equivalent to $\mathscr C$. However, in our situation, we do not have model structures available, so we will argue $\infty$-categorically instead:

\begin{lemma}\label{lemma:hocolim-Delta-op}
Let $\mathscr C$ be a cocomplete quasi-category. Then
\begin{equation*}
\hocolim\colon\Fun(\nerve\Delta^\op,\mathscr C)\rightleftarrows\mathscr C :\!\const
\end{equation*}
is a Bousfield localization.
\begin{proof}
We have to show that for every $X\in\mathscr C$ the counit $\hocolim_{\nerve(\Delta^\op)}\mathop\const X\to X$ is an equivalence. However $\nerve(\Delta^\op)$ is weakly contractible as $\Delta^\op$ has an initial object, so the claim is simply an instance of \cite[Corollary~4.4.4.10]{htt}.
\end{proof}
\end{lemma}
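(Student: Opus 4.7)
The plan is to unwind what it means for the adjunction $\hocolim \dashv \const$ to exhibit $\mathscr C$ as a Bousfield localization of $\Fun(\nerve\Delta^\op,\mathscr C)$. At the quasi-categorical level this amounts to asking that the right adjoint $\const$ be fully faithful, which in turn is equivalent to the counit $\epsilon_X\colon \hocolim_{\nerve\Delta^\op}\const X \to X$ being an equivalence in $\mathscr C$ for every object $X$.

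First I would observe that the adjunction itself exists for any cocomplete quasi-category $\mathscr C$: the constant-diagram functor admits a left adjoint given by the $\infty$-categorical colimit, and this is precisely $\hocolim$ by definition. Then I would identify $\epsilon_X$ explicitly with the canonical comparison map from the colimit of the constant diagram with value $X$ to $X$.

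The core step is then to invoke the standard fact that, in any quasi-category, the canonical map $\hocolim_K \const X \to X$ is an equivalence whenever the indexing simplicial set $K$ is weakly contractible; this is essentially \cite[Corollary~4.4.4.10]{htt}.

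Finally, I would check that $\nerve(\Delta^\op)$ is weakly contractible. This holds because $[0]$ is terminal in $\Delta$---the set $\Hom_{\Delta}([n],[0])$ has a unique element for every $n$---and hence initial in $\Delta^\op$; the nerve of any $1$-category with an initial (or terminal) object is weakly contractible, as the unique natural transformation from the identity to the constant functor at the initial object provides an explicit deformation. There is no real obstacle: the lemma is a formal consequence of standard quasi-categorical machinery, and the whole argument should only take a handful of lines.
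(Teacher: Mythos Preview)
Your proposal is correct and follows essentially the same approach as the paper's proof: both reduce the Bousfield localization claim to the counit being an equivalence, then invoke \cite[Corollary~4.4.4.10]{htt} together with the weak contractibility of $\nerve(\Delta^\op)$, which follows from $[0]$ being initial in $\Delta^\op$. Your version simply spells out a few of the standard reductions in more detail.
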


Before we can apply this, however, there is a technical hurdle to overcome---namely, we have to compare simplicial objects in the underlying $1$-categories with simplicial objects in the associated quasi-categories:

\begin{prop}\label{prop:where-to-diagram}
Let $I$ be any small category.
\begin{enumerate}
\item Let $\mathcal O$ be a genuine $G$-$E_\infty$-operad in $\cat{$\bm G$-Cat}$. Then the natural map
\begin{equation*}
\Fun\big(I,\Alg_{\mathcal O}(\cat{$\bm G$-Cat})\big)^\infty_{\textup{levelwise $G$-w.e.}}\to\Fun\big(\nerve I,\Alg_{\mathcal O}(\cat{$\bm G$-Cat})^\infty_{\textup{$G$-w.e.}}\big)
\end{equation*}
is an equivalence.
\item Let $\mathcal P$ be an underlying $E_\infty$-operad in $\cat{$\bm G$-Cat}$. Then the natural map
\begin{equation*}
\Fun\big(I,\Alg_{\mathcal P}(\cat{$\bm G$-Cat})\big)^\infty_{\textup{lev.~$G$-`h'fp w.e.}}\to\Fun\big(\nerve I,\Alg_{\mathcal P}(\cat{$\bm G$-Cat})^\infty_{\textup{$G$-`h'fp w.e.}}\big)
\end{equation*}
is an equivalence.
\end{enumerate}
\begin{proof}
First some terminology: recall that a \emph{relative category} \cite[3.1]{barwick-kan} is a pair of a category $\mathscr C$ together with a wide subcategory $W$, called the \emph{weak equivalences} of $\mathscr C$; as usual, we will also simply refer to $\mathscr C$ as a relative category if $W$ is understood. Any relative category $(\mathscr C,W)$ again admits an $\infty$-categorical localization $\mathscr C_W^\infty$ (or $\mathscr C^\infty$ for short).

We now call $\mathscr C$ \emph{hereditary} if the natural map $\Fun(I,\mathscr C)^\infty_\text{level w.e.}\to \Fun(\nerve I,\mathscr C^\infty)$ is an equivalence for every $I\in\cat{Cat}$. Moreover, a relative equivalence $f$ of relative categories (i.e.~a homotopical functor inducing equivalences of associated quasi-categories) will be called \emph{hereditary} if also $f^I$ is a relative equivalence (with respect to the levelwise weak equivalences) for every $I$; for example, every homotopy equivalence is hereditary.

By $2$-out-of-$3$ all relative equivalences of hereditary relative categories are hereditary, and conversely the hereditary relative categories are closed under hereditary relative equivalences. Moreover, \cite[Theorem~7.9.8 and Remark~7.9.7]{cisinski-book} shows that the underlying relative category of any model category is hereditary. It will therefore suffice to connect $\Alg_{\mathcal O}(\cat{$\bm G$-Cat})_{\textup{$G$-w.e.}}$ and $\Alg_{\mathcal P}(\cat{$\bm G$-Cat})_{\textup{$G$-`h'fp w.e.}}$ by zig-zags of hereditary relative equivalences to suitable model categories.

For this we will unravel parts of the proof of Theorem~\ref{thm:g-equiv-mandell}. We first recall we have a zig-zag of relative equivalences
\begin{equation}\label{eq:zig-zag-alg-O-I-G}
\Alg_{\mathcal O}(\cat{$\bm G$-Cat})\xrightarrow{p_1^*}\Alg_{\mathcal O\times\mathcal I_G}(\cat{$\bm G$-Cat})\xleftarrow{p_2^*}\Alg_{\mathcal I_G}(\cat{$\bm G$-Cat})
\end{equation}
with respect to the \emph{$G$-equivariant equivalences}. Since these are maps between model categories, they are hereditary, i.e.~for every $I\in\cat{Cat}$ also the induced functors
\begin{equation*}
\Alg_{\mathcal O}(\cat{$\bm G$-Cat})^I\to\Alg_{\mathcal O\times\mathcal I_G}(\cat{$\bm G$-Cat})^I\gets\Alg_{\mathcal I_G}(\cat{$\bm G$-Cat})^I
\end{equation*}
are relative equivalences with respect to the levelwise $G$-equivalences of categories, hence also with respect to the levelwise $G$-\emph{weak} equivalences everywhere (as the latter are saturated and each of the above functors creates $G$-weak equivalences). Put differently, the zig-zag $(\ref{eq:zig-zag-alg-O-I-G})$ also consists of hereditary relative equivalences with respect to the $G$-equivariant weak equivalences everywhere.

Similarly, Theorem~\ref{thm:taming-I-algebras} implies that $\cat{$\bm G$-ParSumCat}\hookrightarrow\Alg_{\mathcal I}(\cat{$\bm G$-Cat})$ is a hereditary relative equivalence with respect to the $G$-global equivalences and hence also for the $G$-equivariant weak equivalences. On the other hand, the functor $\Delta\colon\Alg_{\mathcal I}(\cat{$\bm G$-Cat})\to\Alg_{\mathcal I_G}(\cat{$\bm G$-Cat})$ is right Quillen (for the $G$-global model structure on the source) and induces a quasi-localization at the $G$-equivariant equivalences by Lemma~\ref{lemma:Delta-right-Bousfield} and its proof. Precomposing its left adjoint $\nabla$ with a functorial cofibrant replacement then provides a homotopy inverse to $\Delta$ with respect to the $G$-equivariant equivalences on both sides, hence also for the $G$-equivariant weak equivalences. In particular, also $\Delta$ is a hereditary relative equivalence for the $G$-equivariant weak equivalences on both sides.

Moreover, the nerve $\cat{$\bm G$-ParSumCat}\to\cat{$\bm G$-ParSumSSet}$ is again a homotopy equivalence (with respect to the $G$-global and hence also with respect to the $G$-equivariant weak equivalences) by \cite[Theorem~5.8]{sym-mon-global}, hence hereditary. Arguing just like above, we then finally get a zig-zag of hereditary relative equivalences between $\cat{$\bm G$-ParSumSSet}$ and the model category $\Alg_{\mathcal I_G}(\cat{$\bm G$-SSet})$, which completes the proof of the first statement.

The proof of the second statement is similar: namely, as above we get a zig-zag of hereditary relative equivalences between $\Alg_{\mathcal P}(\cat{$\bm G$-Cat})$ and $\Alg_{E\Sigma_*}(\cat{$\bm G$-Cat})$ with respect to the \emph{underlying equivalences} of categories, hence also with respect to the $G$-`h'fp weak equivalences. However, $\Alg_{E\Sigma_*}(\cat{$\bm G$-Cat})$ is even isomorphic as a $1$-category to $\cat{$\bm G$-PermCat}$ (and this isomorphism respects underlying $G$-categories, hence also the weak equivalences in question), which is in turn homotopy equivalent to $\cat{$\bm G$-ParSumSSet}$ with respect to the {$G$-global weak equivalences} \cite[Theorems~5.8 and~6.9]{sym-mon-global}, hence in particular with respect to the $G$-`h'fp weak equivalences on $\cat{$\bm G$-PermCat}$ and the $G$-weak equivalences on $\cat{$\bm G$-ParSumSSet}$. The claim follows as before.
\end{proof}
\end{prop}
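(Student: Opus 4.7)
The plan is to reduce both statements to a classical result of Cisinski \cite[Theorem~7.9.8]{cisinski-book}: the underlying relative category of any combinatorial model category $\mathscr C$ is what I will call \emph{hereditary}, meaning that the comparison map $\Fun(I,\mathscr C)^\infty_{\textup{levelwise w.e.}}\to\Fun(\nerve I,\mathscr C^\infty)$ is an equivalence for every small $I$. The task is then to exhibit both relative categories of interest as hereditary, via zig-zags of functors with sufficiently nice properties, terminating at honest model categories.

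The formal backbone is an easy closure observation: call a homotopical functor $f\colon\mathscr C\to\mathscr D$ a \emph{hereditary equivalence} if each induced $f^I\colon\Fun(I,\mathscr C)\to\Fun(I,\mathscr D)$ is a relative equivalence. Hereditariness of $\mathscr C$ and of $\mathscr D$ are equivalent along any such $f$. In particular, every homotopy equivalence of relative categories is hereditary (apply homotopy inverses and homotopies pointwise), and by $2$-out-of-$3$, any relative equivalence between two already-hereditary relative categories is automatically hereditary.

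For part~(1), I would construct a chain of hereditary equivalences from $\Alg_{\mathcal O}(\cat{$\bm G$-Cat})_{\text{$G$-w.e.}}$ to a model category. First, Theorem~\ref{thm:change-of-operad-categories} applied to the projections $\mathcal O\leftarrow\mathcal O\times\mathcal I_G\to\mathcal I_G$ yields Quillen equivalences (for the $G$-equivariant equivalences), hence hereditary equivalences, landing at $\Alg_{\mathcal I_G}(\cat{$\bm G$-Cat})$. These same maps remain hereditary equivalences after coarsening to $G$-weak equivalences: each of them creates $G$-weak equivalences, so the further localization of the (already equivalent) diagram $\infty$-categories at the levelwise $G$-weak equivalences produces equivalent results. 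Next, the functor $\Delta\colon\Alg_{\mathcal I}(\cat{$\bm G$-Cat})\to\Alg_{\mathcal I_G}(\cat{$\bm G$-Cat})$ from Lemma~\ref{lemma:Delta-right-Bousfield} is a homotopy equivalence for the $G$-equivariant equivalences (a homotopy inverse is obtained by precomposing its left adjoint with a functorial cofibrant replacement), hence hereditary, and again this persists after coarsening by the same saturation/creation argument. Theorem~\ref{thm:taming-I-algebras} supplies a hereditary inclusion $\cat{$\bm G$-ParSumCat}\simeq\Alg_{\mathcal I}(\cat{$\bm G$-Cat})^\tau\hookrightarrow\Alg_{\mathcal I}(\cat{$\bm G$-Cat})$, and finally the nerve $\cat{$\bm G$-ParSumCat}\to\cat{$\bm G$-ParSumSSet}$ is a homotopy equivalence by \cite[Theorem~5.8]{sym-mon-global}; running the analogous chain on the simplicial side lands in the model category $\Alg_{\mathcal I_G}(\cat{$\bm G$-SSet})$.

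For part~(2), the same strategy works with \emph{underlying equivalences of categories} in place of $G$-equivariant equivalences: underlying equivalences are contained in the $G$-`h'fp weak equivalences (by Remark~\ref{rk:injectively-fibrant}), and the zig-zag uses the isomorphism $\Alg_{E\Sigma_*}(\cat{$\bm G$-Cat})\cong\cat{$\bm G$-PermCat}$ together with the nerve comparison to $\cat{$\bm G$-ParSumSSet}$ from the cited prior work. The principal technical obstacle I anticipate is precisely the bookkeeping required to verify that each step of the zig-zag, initially a hereditary equivalence for the finer (equivariant or underlying) notion, survives the passage to the coarser ($G$-weak or $G$-`h'fp weak) notion. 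The facts that enable this are saturation of the coarser classes (they are part of model structures elsewhere) and that each structural functor either creates them or is a homotopy equivalence; these carry over to the diagram-category level because all localizations involved can be computed pointwise.
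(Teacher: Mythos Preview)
Your proposal is correct and follows essentially the same argument as the paper: both introduce the notion of hereditary relative categories, invoke Cisinski's result that model categories are hereditary, and then construct the same zig-zag through $\Alg_{\mathcal I_G}(\cat{$\bm G$-Cat})$, $\Alg_{\mathcal I}(\cat{$\bm G$-Cat})$, $\cat{$\bm G$-ParSumCat}$, and $\cat{$\bm G$-ParSumSSet}$ (respectively via $\cat{$\bm G$-PermCat}$ for part~(2)), using the same coarsening trick based on saturation and creation of weak equivalences to pass from the finer to the coarser notions.
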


\begin{prop}\label{prop:ord-vs-sset-alg}
For every genuine $G$-$E_\infty$-operad $\mathcal O$ the inclusion
\begin{equation*}
\Alg_{\mathcal O}(\cat{$\bm G$-Cat})_{\textup{$G$-w.e.}}\hookrightarrow\Alg_{\mathcal O}(\cat{$\bm G$-Cat}_{\cat{SSet}})_{\textup{$G$-w.e.}}
\end{equation*}
induces an equivalence of associated quasi-categories.
\begin{proof}
We first observe that the composition
\begin{equation*}
\Alg_{\mathcal O}(\cat{$\bm G$-Cat})\hookrightarrow\Alg_{\mathcal O}(\cat{$\bm G$-Cat}_{\cat{SSet}})\cong\Fun\big(\Delta^\op,\Alg_{\mathcal O}(\cat{$\bm G$-Cat})\big)
\end{equation*}
is just the functor sending an $\mathcal O$-algebra to the constant simplicial object. By Proposition~\ref{prop:where-to-diagram} together with Lemma~\ref{lemma:hocolim-Delta-op} it will therefore be enough to show that a map in $\Fun\big(\Delta^\op,\Alg_{\mathcal O}(\cat{$\bm G$-Cat})\big)$ is a $G$-weak equivalence if and only if its image under the composition
\begin{equation*}
\Fun\big(\Delta^\op,\Alg_{\mathcal O}(\cat{$\bm G$-Cat})\big)\to\Fun\big(\nerve\Delta^\op,\Alg_{\mathcal O}(\cat{$\bm G$-Cat})^\infty\big)\xrightarrow{\hocolim}\Alg_{\mathcal O}(\cat{$\bm G$-Cat})^\infty
\end{equation*}
is an equivalence. However, $\nerve\colon\Alg_{\mathcal O}(\cat{$\bm G$-Cat})\to\Alg_{\nerve\mathcal O}(\cat{$\bm G$-SSet})$ descends to an equivalence by Theorem~\ref{thm:g-equiv-mandell}, so a map is inverted by the above if and only if it is inverted by
\begin{align*}
\Fun\big(\Delta^\op,\Alg_{\mathcal O}(\cat{$\bm G$-Cat})\big)&\xrightarrow{\nerve}\Fun\big(\Delta^\op,\Alg_{\mathcal O}(\cat{$\bm G$-SSet})\big)\\
&\to\Fun\big(\nerve\Delta^\op,\Alg_{\mathcal O}(\cat{$\bm G$-SSet})^\infty\big)\\&\xrightarrow{\hocolim}\Alg_{\mathcal O}(\cat{$\bm G$-SSet})^\infty.
\end{align*}
Finally, as $\Alg_{\mathcal O}(\cat{$\bm G$-SSet})$ is a simplicial model category, the composition of the final two arrows is induced by geometric realization. The claim therefore follows from Lemma~\ref{lemma:nerve-homotopical-internal-simplicial}.
\end{proof}
\end{prop}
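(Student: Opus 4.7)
The plan is to exploit the canonical isomorphism of $1$-categories
\[
\Alg_{\mathcal O}(\cat{$\bm G$-Cat}_{\cat{SSet}})\cong\Fun\bigl(\Delta^\op,\Alg_{\mathcal O}(\cat{$\bm G$-Cat})\bigr),
\]
which holds because products of simplicial objects are formed levelwise, so simplicial $G$-categories are the same as simplicial objects in $\cat{$\bm G$-Cat}$, and the algebra structure for an operad in $\cat{$\bm G$-Cat}$ is built from finite products. Under this identification the inclusion becomes the constant-diagram functor $\const\colon\Alg_{\mathcal O}(\cat{$\bm G$-Cat})\to\Fun(\Delta^\op,\Alg_{\mathcal O}(\cat{$\bm G$-Cat}))$, and Lemma~\ref{lemma:nerve-homotopical-internal-simplicial} tells us that the $G$-weak equivalences on the right correspond precisely to those maps whose image under the termwise nerve is a realization $G$-weak equivalence of bisimplicial $G$-sets.

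My next step would be to combine Proposition~\ref{prop:where-to-diagram}, which identifies $\Fun(\Delta^\op,\Alg_{\mathcal O}(\cat{$\bm G$-Cat}))^\infty$ (at the levelwise $G$-w.e.) with the functor quasi-category $\Fun(\nerve\Delta^\op,\Alg_{\mathcal O}(\cat{$\bm G$-Cat})^\infty_{\text{$G$-w.e.}})$, with Lemma~\ref{lemma:hocolim-Delta-op}, which exhibits $\const$ into the latter as the right half of a Bousfield localization with left adjoint $\hocolim_{\Delta^\op}$. Together these reduce the proposition to the claim that a map in $\Fun(\Delta^\op,\Alg_{\mathcal O}(\cat{$\bm G$-Cat}))$ is a $G$-weak equivalence (in the sense of Lemma~\ref{lemma:nerve-homotopical-internal-simplicial}) if and only if its image under $\hocolim_{\Delta^\op}$ in $\Alg_{\mathcal O}(\cat{$\bm G$-Cat})^\infty$ is an equivalence.

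The main obstacle is precisely this last matching: the defining class on the simplicial side is specified by a levelwise nerve followed by the diagonal of bisimplicial sets, while the homotopy colimit is an intrinsically $\infty$-categorical notion in the quasi-category of $\mathcal O$-algebras in $G$-categories. My bridge would be Theorem~\ref{thm:g-equiv-mandell}: applying the nerve termwise transports the problem into $\Fun(\Delta^\op,\Alg_{\nerve\mathcal O}(\cat{$\bm G$-SSet}))$, where the target category carries a genuine simplicial model structure and $\hocolim_{\Delta^\op}$ is represented by simplicial geometric realization, which in turn is nothing but the diagonal of the bisimplicial $G$-set one started with. Combined with the hereditariness properties used already in Proposition~\ref{prop:where-to-diagram}, this will yield the desired identification of weak equivalences and therefore conclude the proof.
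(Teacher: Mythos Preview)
Your proposal is correct and follows essentially the same approach as the paper's proof: identify the inclusion with $\const$ via the isomorphism $\Alg_{\mathcal O}(\cat{$\bm G$-Cat}_{\cat{SSet}})\cong\Fun(\Delta^\op,\Alg_{\mathcal O}(\cat{$\bm G$-Cat}))$, use Proposition~\ref{prop:where-to-diagram} and Lemma~\ref{lemma:hocolim-Delta-op} to reduce to matching the $G$-weak equivalences with the $\hocolim$-equivalences, then transport along the nerve equivalence of Theorem~\ref{thm:g-equiv-mandell} to the simplicial model category $\Alg_{\nerve\mathcal O}(\cat{$\bm G$-SSet})$ where $\hocolim_{\Delta^\op}$ is computed by geometric realization, and conclude via Lemma~\ref{lemma:nerve-homotopical-internal-simplicial}. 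The only cosmetic difference is that the paper invokes Lemma~\ref{lemma:nerve-homotopical-internal-simplicial} at the very end rather than at the outset, but the logical content is identical.
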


Now we can prove:

\begin{thm}\label{thm:main-topological}
Let $\mathcal O$ be an underlying $E_\infty$-operad in $\cat{$\bm G$-Cat}$. Then all the maps in the commutative diagram
\begin{equation*}
\begin{tikzcd}[column sep=large]
\Alg_{\mathcal O}(\cat{$\bm G$-Cat})_\textup{$G$-`h'fp}\arrow[r, "{\Fun(EG,\blank)}"]\arrow[d, hook] &[1.5em] \Alg_{\mathcal O^{EG}}(\cat{$\bm G$-Cat})_\textup{$G$-w.e.}\arrow[d,hook]\\
\Alg_{\mathcal O}(\cat{$\bm G$-Cat}_{\cat{SSet}})_\textup{$G$-`h'fp}\arrow[r, "{\Fun(EG,\blank)}"]\arrow[d, "|\blank|"'] & \Alg_{\mathcal O^{EG}}(\cat{$\bm G$-Cat}_{\cat{SSet}})_\textup{$G$-w.e.}\arrow[d,"|\blank|"]\\
\Alg_{\mathcal O}(\cat{$\bm G$-Cat}_{\cat{Top}})_\textup{$G$-`h'fp}\arrow[r, "{\Fun(EG,\blank)}"'] & \Alg_{\mathcal O^{EG}}(\cat{$\bm G$-Cat}_{\cat{Top}})_\textup{$G$-w.e.}
\end{tikzcd}
\end{equation*}
descend to equivalences of quasi-categories.
\begin{proof}
The right hand vertical inclusion in the upper square induces an equivalence of associated quasi-categories by the previous proposition, and so do the vertical functors in the lower square by Corollary~\ref{cor:simplicial-vs-top-alg}. Moreover, the top horizontal arrow induces an equivalence by Theorem~\ref{thm:main}, and together with Proposition~\ref{prop:where-to-diagram} we see that the middle horizontal arrow becomes an equivalence when we define a map $f$ in the target to be a weak equivalence if applying the nerve levelwise turns it into a levelwise weak equivalence of $G$-bisimplicial sets and a map $f'$ in the source if $\Fun(EG,f')$ has the same property. Thus, it also becomes an equivalence when we quasi-localize the target at the $G$-weak equivalences and the source at those maps inverted by $\Fun(EG,\blank)$, i.e.~the $G$-`h'fp weak equivalences. By $2$-out-of-$3$ we then see that also the top left vertical map and the bottom horizontal map descend to equivalences, which completes the proof of the theorem.
\end{proof}
\end{thm}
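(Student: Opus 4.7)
The plan is to reduce everything to the results already established and then close up with two applications of $2$-out-of-$3$.

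First I would dispatch three of the six arrows by direct appeal: the right-hand vertical inclusion in the upper square is an equivalence by Proposition~\ref{prop:ord-vs-sset-alg} applied to the genuine $G$-$E_\infty$-operad $\mathcal O^{EG}$; the two vertical functors in the lower square are homotopy equivalences by Corollary~\ref{cor:simplicial-vs-top-alg} (noting that $\mathcal O^{EG}=|\Sing\mathcal O^{EG}|$-style identifications are not needed since we can apply the corollary to $\mathcal O$ and $\mathcal O^{EG}$ separately); and the top horizontal arrow induces an equivalence by the main Theorem~\ref{thm:main}. The left-hand vertical inclusion in the upper square, however, cannot be directly accessed: the source is equipped with the $G$-`h'fp weak equivalences and there is no immediate analogue of Proposition~\ref{prop:ord-vs-sset-alg} in that setting.

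The crux of the argument is therefore to independently prove that the middle horizontal map descends to an equivalence; once this is done, two applications of $2$-out-of-$3$ (first to the upper square to handle the top-left vertical inclusion, then to the lower square to handle the bottom horizontal arrow) complete the proof. For the middle map, I would use the identification of $\Alg_{\mathcal O}(\cat{$\bm G$-Cat}_{\cat{SSet}})$ with $\Fun(\Delta^\op,\Alg_{\mathcal O}(\cat{$\bm G$-Cat}))$, and likewise for $\mathcal O^{EG}$. Under this identification the middle horizontal map is simply the pointwise application of $\Fun(EG,\blank)$, and Lemma~\ref{lemma:nerve-homotopical-internal-simplicial} lets us characterize both flavors of weak equivalence as those maps inverted by a suitable nerve-realization procedure on the associated simplicial object. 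Combining Proposition~\ref{prop:where-to-diagram} (to pass from simplicial objects in $1$-categories to diagrams in quasi-categories) with Lemma~\ref{lemma:hocolim-Delta-op} (to recognize realization as the $\Delta^\op$-shaped homotopy colimit), I can rewrite the source and target quasi-categories as the quasi-localizations of these diagram categories at the maps inverted after homotopy colimit. Since Theorem~\ref{thm:main} gives an equivalence of quasi-categories at the level of $\Alg_{\mathcal O}(\cat{$\bm G$-Cat})^\infty\to\Alg_{\mathcal O^{EG}}(\cat{$\bm G$-Cat})^\infty$, pointwise application yields an equivalence on diagram quasi-categories and hence on the above localizations.

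The step I expect to require the most care is the identification of weak equivalences on $\Alg_{\mathcal O}(\cat{$\bm G$-Cat}_{\cat{SSet}})$ and $\Alg_{\mathcal O^{EG}}(\cat{$\bm G$-Cat}_{\cat{SSet}})$ with those detected by the pointwise-nerve-then-geometric-realization procedure: the definitions unfold via the nerve $\NERVE$ of an internal category, not via pointwise nerves of the simplicial object, and aligning these two perspectives is precisely the content of Lemma~\ref{lemma:nerve-homotopical-internal-simplicial}. A parallel subtlety for the $G$-`h'fp side is that weak equivalences are defined via $\Fun(EG,\blank)$ followed by $\NERVE$; fortunately, $\Fun(EG,\blank)$ commutes with the identification of internal simplicial categories as simplicial ordinary categories (since $EG$ is an ordinary category and internal hom-categories are computed levelwise), so the same argument applies verbatim. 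Once this bookkeeping is in place, the final step is routine: two rounds of $2$-out-of-$3$ propagate the equivalence from the middle horizontal arrow and the three already-established arrows to the remaining two arrows in the diagram.
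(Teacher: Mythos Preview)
Your proposal is correct and follows essentially the same strategy as the paper: establish three arrows directly (top horizontal via Theorem~\ref{thm:main}, top-right vertical via Proposition~\ref{prop:ord-vs-sset-alg}, both lower verticals via Corollary~\ref{cor:simplicial-vs-top-alg}), handle the middle horizontal arrow separately, and finish with two applications of $2$-out-of-$3$.

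The only real difference is in the treatment of the middle horizontal arrow. The paper takes a slightly shorter route: it first shows $\Fun(EG,\blank)$ is an equivalence with respect to the \emph{levelwise} weak equivalences on both sides (which follows immediately from Proposition~\ref{prop:where-to-diagram} and Theorem~\ref{thm:main}, without invoking Lemma~\ref{lemma:hocolim-Delta-op}), and then observes that since $\Fun(EG,\blank)$ creates weak equivalences this descends to the coarser realization/$G$-`h'fp localizations. Your route instead identifies the realization weak equivalences directly as those inverted by homotopy colimit, which is correct but brings in Lemma~\ref{lemma:hocolim-Delta-op} and a bit more bookkeeping than strictly necessary. Either way works; the paper's version is just a touch more economical.
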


As an immediate consequence we now get the following version of Theorem~\ref{thm:g-equiv-mandell} for simplicial $G$-categories:

\begin{cor}
In the above situation,
\begin{equation*}
\|\blank\|\circ\NERVE\colon\Alg_{\mathcal O}(\cat{$\bm G$-Cat}_{\cat{SSet}})_{\textup{$G$-w.e.}}\to\Alg_{\nerve\mathcal O}(\cat{$\bm G$-SSet})_{\textup{$G$-w.e.}}
\end{equation*}
induces an equivalence of associated quasi-categories.\qed
\end{cor}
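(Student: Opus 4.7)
The plan is to decompose the functor $\|\blank\|\circ\NERVE$ using the standard identification $\cat{$\bm G$-Cat}_{\cat{SSet}}\cong\Fun(\Delta^\op,\cat{$\bm G$-Cat})$ from Subsection~\ref{subsec:simpl-vs-ord}, which (since $\mathcal O$ is constant as a simplicial operad) lifts to $\Alg_{\mathcal O}(\cat{$\bm G$-Cat}_{\cat{SSet}})\cong\Fun(\Delta^\op,\Alg_{\mathcal O}(\cat{$\bm G$-Cat}))$; combined with the fact that the bisimplicial realization of an internal nerve agrees with the geometric realization of the associated simplicial simplicial $G$-set, this identifies our functor with the composite
\begin{equation*}
\Fun(\Delta^\op,\Alg_{\mathcal O}(\cat{$\bm G$-Cat}))\xrightarrow{\nerve\circ\blank}\Fun(\Delta^\op,\Alg_{\nerve\mathcal O}(\cat{$\bm G$-SSet}))\xrightarrow{|\blank|}\Alg_{\nerve\mathcal O}(\cat{$\bm G$-SSet}).
\end{equation*}

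For the second arrow, $\Alg_{\nerve\mathcal O}(\cat{$\bm G$-SSet})$ is a simplicial combinatorial model category whose geometric realization is fully homotopical by Proposition~\ref{prop:free-monadic-sset}. Invoking the hereditary property of such model categories (as in the proof of Proposition~\ref{prop:where-to-diagram}), together with Lemma~\ref{lemma:hocolim-Delta-op}, the functor $|\blank|$ descends to a Bousfield localization $\Fun(\Delta^\op,\Alg_{\nerve\mathcal O}(\cat{$\bm G$-SSet}))^\infty_{\text{lev.~$G$-w.e.}}\to\Alg_{\nerve\mathcal O}(\cat{$\bm G$-SSet})^\infty_{G\text{-w.e.}}$, corresponding on associated quasi-categories to $\hocolim_{\nerve\Delta^\op}$.

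Next, Lemma~\ref{lemma:nerve-homotopical-internal-simplicial} applied to the operad $\mathcal O$ tells us that the $G$-weak equivalences on $\Alg_{\mathcal O}(\cat{$\bm G$-Cat}_{\cat{SSet}})$ are by definition precisely those maps inverted by the full composite $\|\blank\|\circ\NERVE$, so that the induced functor on associated quasi-categories is homotopical and conservative. Combined with the Bousfield localization of the previous paragraph, this reduces the proof of the corollary to the essential surjectivity of $\|\blank\|\circ\NERVE$: every $\nerve\mathcal O$-algebra $Y$ in $\cat{$\bm G$-SSet}$ should be $G$-weakly equivalent to $\|\NERVE C_\bullet\|$ for some simplicial $\mathcal O$-algebra $C_\bullet$ in $\cat{$\bm G$-Cat}$.

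Essential surjectivity is the main obstacle. My plan is to derive it by reducing to the genuine operad $\mathcal O^{EG}$: combining Theorem~\ref{thm:main-topological} with Proposition~\ref{prop:ord-vs-sset-alg} and Theorem~\ref{thm:g-equiv-mandell} produces an equivalence $\Alg_{\mathcal O^{EG}}(\cat{$\bm G$-Cat}_{\cat{SSet}})^\infty_{G\text{-w.e.}}\simeq\Alg_{\nerve\mathcal O^{EG}}(\cat{$\bm G$-SSet})^\infty_{G\text{-w.e.}}$ implemented by $\|\blank\|\circ\NERVE$, so essential surjectivity holds onto the world of $\nerve\mathcal O^{EG}$-algebras. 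To transfer this back to $\nerve\mathcal O$-algebras, I would exploit the zig-zag of underlying equivalences $\mathcal O\gets\mathcal O\times\mathcal O^{EG}\to\mathcal O^{EG}$ (both $\mathcal T\!\textit{riv}$-equivalences of $\Sigma$-free operads) and the compatibility of $\nerve$ with these projections, using the change-of-operad equivalence in the form of Theorem~\ref{thm:change-of-operad-sset} applied to $\nerve\mathcal O\gets\nerve(\mathcal O\times\mathcal O^{EG})\to\nerve\mathcal O^{EG}$; restriction along the left-hand projection is essentially surjective because its homotopy inverse realizes any $\nerve\mathcal O$-algebra as the restriction of a $\nerve\mathcal O^{EG}$-algebra up to underlying weak equivalence, which refines to a $G$-weak equivalence by a Ken Brown-style argument under suitable cofibrancy assumptions.
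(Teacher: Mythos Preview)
Your reduction to essential surjectivity does not go through. After localizing, the induced functor on quasi-categories is indeed conservative (because the $G$-weak equivalences are \emph{defined} as the maps inverted by $\|\blank\|\circ\NERVE$), but conservativity together with essential surjectivity does not imply that a functor of quasi-categories is an equivalence; you still need full faithfulness. The Bousfield localization $|\blank|\colon\Fun(\Delta^\op,\Alg_{\nerve\mathcal O}(\cat{$\bm G$-SSet}))^\infty_{\text{lev.}}\to\Alg_{\nerve\mathcal O}(\cat{$\bm G$-SSet})^\infty$ does not supply this, because your first step $\nerve\circ\blank$ does not even descend to a functor between the stated localizations: a realization $G$-weak equivalence of simplicial $\mathcal O$-algebras in $\cat{$\bm G$-Cat}$ need not be a \emph{levelwise} $G$-weak equivalence after applying the nerve, so there is no factorization $A^\infty_{\text{realization}}\to B^\infty_{\text{lev.}}\to C^\infty$ to exploit. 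Your subsequent essential surjectivity argument via $\mathcal O^{EG}$ and change of operad along $\mathcal T\!\textit{riv}$-equivalences is therefore aimed at a target that has not been correctly set up, and the final ``Ken Brown-style'' step is in any case too vague to be convincing for the $G$-weak equivalences.

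The paper's argument is much shorter and avoids all of this. Since the corollary is the simplicial-category analogue of Theorem~\ref{thm:g-equiv-mandell}, one simply observes that $\|\blank\|\circ\NERVE$ restricted along the inclusion $\const\colon\Alg_{\mathcal O}(\cat{$\bm G$-Cat})\hookrightarrow\Alg_{\mathcal O}(\cat{$\bm G$-Cat}_{\cat{SSet}})$ is naturally isomorphic to the ordinary nerve $\nerve$. For a genuine $G$-$E_\infty$-operad, this inclusion is an equivalence on quasi-localizations at the $G$-weak equivalences by Proposition~\ref{prop:ord-vs-sset-alg} (which is precisely the ingredient you invoke only at the very end), and the nerve is an equivalence by Theorem~\ref{thm:g-equiv-mandell}; the claim then follows from $2$-out-of-$3$. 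No detour through $\mathcal O^{EG}$, change of operad, or essential surjectivity is required.
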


Next, we come to a version of this statement for $\cat{$\bm G$-Cat}_{\cat{Top}}$. Here a slight subtlety arises: namely, $\|\blank\|\circ\NERVE\colon\cat{$\bm G$-Cat}_{\cat{Top}}\to\cat{$\bm G$-Top}$ is not homotopical, while the usual `fat' realizations do not preserve products up to \emph{isomorphism}. However, we can solve this issue by composing $\|\blank\|$ with the product-preserving cofibrant replacement given by the usual geometric realization-singular set adjunction:

\begin{cor}
The functor $\cat{$\bm G$-Cat}_{\cat{Top}}\to\cat{$\bm G$-Top}$ given on objects by $C\mapsto \||\blank|\circ\Sing\circ\NERVE C\|$ and likewise on morphisms induces an equivalence
\begin{equation*}
\Alg_{\mathcal O}(\cat{$\bm G$-Cat}_{\cat{Top}})^\infty_{\textup{$G$-w.e.}}\to \Alg_{|\mathcal O|}(\cat{$\bm G$-Top})^\infty.\pushQED\qed\qedhere\popQED
\end{equation*}
\end{cor}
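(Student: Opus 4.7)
The plan is to decompose $F(C)=\||\Sing\NERVE C|\|$ as a composition of functors each inducing an equivalence of associated quasi-categories after lifting to algebras, and to invoke the previous corollary in the middle. Specifically, I aim to identify $F$ naturally with the composition
\begin{equation*}
\cat{$\bm G$-Cat}_{\cat{Top}}\xrightarrow{\;\Sing\;}\cat{$\bm G$-Cat}_{\cat{SSet}}\xrightarrow{\;\|\blank\|\circ\NERVE\;}\cat{$\bm G$-SSet}\xrightarrow{\;|\blank|\;}\cat{$\bm G$-Top}
\end{equation*}
by means of a Fubini-type natural isomorphism $\||X|\|\cong|\|X\||$ for $G$-bisimplicial sets $X$: both sides are cocontinuous strong symmetric monoidal functors $\cat{$\bm G$-BiSSet}\to\cat{$\bm G$-Top}$ and agree on bi-representables $\Delta^p\times\Delta^q$ (both produce $|\Delta^p|\times|\Delta^q|$). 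Applied to $X=\Sing\NERVE C$, this yields $F\cong|\blank|\circ\|\blank\|\circ\Sing\circ\NERVE$.

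Since each of $\NERVE$, $\Sing$, $\|\blank\|$, and $|\blank|$ preserves finite products---the first because it preserves pullbacks over the object set, $\Sing$ and $|\blank|$ classically, and $\|\blank\|$ because diagonal realization of $G$-bisimplicial sets preserves products---the above composition lifts to algebras as
\begin{equation*}
\Alg_{\mathcal O}(\cat{$\bm G$-Cat}_{\cat{Top}})\xrightarrow{\Sing}\Alg_{\mathcal O}(\cat{$\bm G$-Cat}_{\cat{SSet}})\xrightarrow{\|\NERVE(\blank)\|}\Alg_{\nerve\mathcal O}(\cat{$\bm G$-SSet})\xrightarrow{|\blank|}\Alg_{|\mathcal O|}(\cat{$\bm G$-Top}),
\end{equation*}
where I use that $\mathcal O$ is discrete to identify $\Sing\mathcal O\cong\mathcal O$ and $\|\NERVE\mathcal O\|\cong\nerve\mathcal O$, and that $|\nerve\mathcal O|=|\mathcal O|$ by definition.

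It remains to verify that each of the three factors induces an equivalence of associated quasi-categories. The first is Corollary~\ref{cor:simplicial-vs-top-alg} applied to the discrete simplicial operad $\mathcal O$. The second is the previous corollary. For the third, I plan to run the monadicity argument from Proposition~\ref{prop:free-monadic-sset} and Theorem~\ref{thm:change-of-operad-sset}: the forgetful functor $\Alg_{|\mathcal O|}(\cat{$\bm G$-Top})^\infty\to\cat{$\bm G$-Top}^\infty$ is monadic (using Proposition~\ref{prop:forget-geometric-realization}, whose hypotheses hold classically in $\cat{$\bm G$-Top}$ once one works in compactly generated spaces where topological geometric realization preserves finite products, together with the conservativity of the forgetful functor), the underlying Quillen equivalence $|\blank|\dashv\Sing$ induces an equivalence $\cat{$\bm G$-SSet}^\infty\simeq\cat{$\bm G$-Top}^\infty$, and the canonical mate $|\cat{P}_{\nerve\mathcal O}X|\to\cat{P}_{|\mathcal O|}|X|$ is even a strict isomorphism because $|\blank|$ strictly preserves coproducts, finite products, and colimits (in particular the coequalizer defining $\nerve\mathcal O(n)\times_{\Sigma_n}X^{\times n}$). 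Hence the $\infty$-categorical Barr--Beck theorem gives the desired equivalence.

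The main technical obstacle is this third step, since the paper does not set up operadic model structures on $\cat{$\bm G$-Top}$. Fortunately, the $\infty$-categorical monadicity framework established in Propositions~\ref{prop:forget-geometric-realization} and~\ref{prop:free-monadic-sset} applies verbatim on the topological side once one notes that topological geometric realization of simplicial spaces preserves finite products, so no transferred model structure is needed. Composing the three equivalences then gives the result.
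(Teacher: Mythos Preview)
Your decomposition into the three steps
\[
\Alg_{\mathcal O}(\cat{$\bm G$-Cat}_{\cat{Top}})\xrightarrow{\Sing}\Alg_{\mathcal O}(\cat{$\bm G$-Cat}_{\cat{SSet}})\xrightarrow{\|\NERVE(\blank)\|}\Alg_{\nerve\mathcal O}(\cat{$\bm G$-SSet})\xrightarrow{|\blank|}\Alg_{|\mathcal O|}(\cat{$\bm G$-Top})
\]
together with the Fubini identification is exactly what the paper has in mind (the corollary is stated with a bare \textsf{qed}), and steps one and two are handled precisely as you say.

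The gap is in your treatment of step three. You claim the monadicity argument of Proposition~\ref{prop:free-monadic-sset} ``applies verbatim on the topological side,'' but it does not: that proof uses that geometric realization of bisimplicial sets is \emph{fully homotopical} (every bisimplicial set is Reedy cofibrant), so that strict realization already models the $\Delta^\op$-shaped homotopy colimit. In $\cat{$\bm G$-Top}$ this fails---realization of simplicial spaces is only homotopical on Reedy cofibrant objects---so Proposition~\ref{prop:forget-geometric-realization} alone does not give you preservation of $\Delta^\op$-shaped \emph{homotopy} colimits by the forgetful functor, and the Barr--Beck step is unjustified.

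The fix is much simpler than monadicity and parallels Corollary~\ref{cor:simplicial-vs-top-alg}: since $|\blank|$ and $\Sing$ both preserve products, they lift to functors between the algebra categories (using restriction along the unit $\nerve\mathcal O\to\Sing|\nerve\mathcal O|$ on the way back), and the unit and counit of the resulting adjunction on algebras are, on underlying objects, just the ordinary unit $X\to\Sing|X|$ and counit $|\Sing Y|\to Y$, which are levelwise $G$-weak equivalences. Thus $|\blank|$ is a homotopy equivalence of relative categories, hence induces the desired equivalence of quasi-categories, with no appeal to realizations in $\cat{Top}$ at all.
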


Finally, we also get a version of Theorem~\ref{thm:guillou-may-k-theory} in the internal context:

\begin{cor}
The Guillou-May construction of equivariant algebraic $K$-theory exhibits the quasi-category of connective genuine $G$-spectra as a quasi-localization of the category of genuine permutative $G$-categories internal to $\cat{Top}$.\qed
\end{cor}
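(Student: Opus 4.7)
The plan is to bootstrap Theorem~\ref{thm:guillou-may-k-theory} to the internal-topological setting via the comparisons of Section~\ref{sec:internal}. Explicitly, consider the natural composite
\begin{equation*}
\Alg_{E\Sigma_*^{EG}}(\cat{$\bm G$-Cat})\hookrightarrow\Alg_{E\Sigma_*^{EG}}(\cat{$\bm G$-Cat}_{\cat{SSet}})\xrightarrow{|\blank|}\Alg_{E\Sigma_*^{EG}}(\cat{$\bm G$-Cat}_{\cat{Top}}),
\end{equation*}
where the first map is the inclusion as levelwise constant simplicial internal categories and the second is geometric realization (using that the geometric realization of the discrete operad $E\Sigma_*^{EG}$ is the same operad viewed internal to $\cat{Top}$). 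By Proposition~\ref{prop:ord-vs-sset-alg} applied to $\mathcal O=E\Sigma_*^{EG}$, the first functor descends to an equivalence of associated quasi-categories with respect to the $G$-weak equivalences; the second does so by Corollary~\ref{cor:simplicial-vs-top-alg}.

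Next I would verify that the Guillou-May $K$-theory functor $\cat{K}_G$ defined on the internal-topological side agrees, up to natural equivalence, with the Guillou-May $K$-theory functor of Theorem~\ref{thm:guillou-may-k-theory} after restriction along the above composite. The compatibility with the constant-simplicial inclusion is essentially tautological: Guillou-May's construction applied to a discrete internal permutative $G$-category recovers the ordinary one by direct inspection. For the geometric realization step, the key property is that $\cat{K}_G$ commutes with geometric realizations of simplicial objects, i.e.~$\cat{K}_G(|C_\bullet|)\simeq|\cat{K}_G(C_\bullet)|$ naturally; this is a standard feature of $K$-theory machines of Segal--May type, whose constructions are built from functors that preserve finite products and accordingly commute with realization of simplicial diagrams.

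Combining these two compatibilities with Theorem~\ref{thm:guillou-may-k-theory}, we conclude that the Guillou-May $K$-theory functor on genuine permutative $G$-categories internal to $\cat{Top}$ is itself a quasi-localization onto connective genuine $G$-spectra, as required.

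The principal obstacle will be the compatibility with geometric realization in the second step: while this is folklore in the non-equivariant setting, in the genuine equivariant context one has to carefully track all of the operadic equivariance data through the realization functor. An attractive alternative would be an abstract $\infty$-categorical argument in the spirit of Proposition~\ref{prop:free-monadic}, using monadicity together with Lemma~\ref{lemma:nerve-homotopical-internal-simplicial} to reduce the compatibility to its simpler statement on underlying $G$-spaces, where it becomes essentially formal.
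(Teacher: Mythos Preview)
Your reduction to Theorem~\ref{thm:guillou-may-k-theory} via the comparisons of Section~\ref{sec:internal} is exactly the paper's approach, and is correct. However, you have introduced an unnecessary obstacle in the middle. The composite
\begin{equation*}
\Alg_{E\Sigma_*^{EG}}(\cat{$\bm G$-Cat})\xrightarrow{\const}\Alg_{E\Sigma_*^{EG}}(\cat{$\bm G$-Cat}_{\cat{SSet}})\xrightarrow{|\blank|}\Alg_{E\Sigma_*^{EG}}(\cat{$\bm G$-Cat}_{\cat{Top}})
\end{equation*}
is simply the inclusion of discrete topological categories: geometric realization of a \emph{constant} simplicial object returns the original object with discrete topology. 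Hence the only compatibility you need is that the Guillou--May $\cat{K}_G$ on internal topological categories restricts to the ordinary $\cat{K}_G$ on discrete ones, which is tautological since Guillou and May's construction is defined in the topological internal setting from the outset. The general statement $\cat{K}_G(|C_\bullet|)\simeq |\cat{K}_G(C_\bullet)|$ for arbitrary simplicial objects $C_\bullet$ is not required, and your ``principal obstacle'' dissolves.

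With this simplification your argument is the paper's: Theorem~\ref{thm:main-topological} (or equivalently Proposition~\ref{prop:ord-vs-sset-alg} together with Corollary~\ref{cor:simplicial-vs-top-alg}) shows that the discrete inclusion induces an equivalence of quasi-localizations at the $G$-weak equivalences, and one then transports Theorem~\ref{thm:guillou-may-k-theory} across this equivalence.
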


\appendix
\section{Comparison of equivariant \texorpdfstring{$K$}{K}-theory constructions}
Throughout, let $G$ be a finite group. Given a $G$-parsummable category $C$, we can use Theorem~\ref{thm:genuine-sym-vs-parsum} to build a genuine permutative $G$-category $\Psi(C)$ from this, which then via the general equivariant infinite loop space machinery of Guillou and May \cite{guillou-may} gives rise to a \emph{$G$-equivariant $K$-theory spectrum} $\cat{K}_G\Psi(C)$. On the other hand, \cite[Definition~4.1.10]{g-global} produces a \emph{$G$-global $K$-theory spectrum} directly from $C$, which by considering the same object with respect to a coarser notion of weak equivalence in particular gives us another $G$-equivariant spectrum $\mathbb K_G(C)$. The goal of this short appendix is to prove that these two spectra actually agree. In fact, we will prove more generally:

\begin{thm}\label{thm:comparison-k-theory}
Let $\Theta\colon\cat{$\bm G$-ParSumCat}^\infty_{\textup{$G$-equiv}}\to\Alg_{E\Sigma_*^{EG}}(\cat{$\bm G$-Cat})^\infty_\textup{$G$-equiv.}$ be any functor that preserves underlying $G$-categories, e.g.~the equivalence $\Psi$ considered above. Then the diagram
\begin{equation}\label{diag:comp-k-theo}
\begin{tikzcd}[column sep=-10pt]
\cat{$\bm G$-ParSumCat}_\textup{$G$-equiv.}^\infty\arrow[rr, "\Theta", "\simeq"']\arrow[dr, bend right=15pt, "\mathbb{K}_G"'] && \Alg_{E\Sigma_*^{EG}}(\cat{$\bm G$-Cat})_\textup{$G$-equiv.}^\infty\arrow[dl, bend left=15pt, "\cat K_G"]\\
& \cat{$\bm G$-Spectra}_\textup{$G$-w.e.}^\infty
\end{tikzcd}
\end{equation}
commutes up to preferred equivalence.
\end{thm}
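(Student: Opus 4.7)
The plan is to reduce to a concrete choice of $\Theta$ via a rigidity argument, and then verify commutativity by factoring both functors through a common $\Gamma$-$G$-space model.

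First I would use rigidity to reduce to a single, tractable choice of $\Theta$. As in Remark~\ref{rk:what-we-can-say-about-the-equivalence} (via \cite[Corollary~2.5-(iii)]{ggn}), the space of equivalences $\cat{$\bm G$-ParSumCat}^\infty\to\Alg_{E\Sigma_*^{EG}}(\cat{$\bm G$-Cat})^\infty$ equipped with a chosen compatibility with the underlying-category functor is contractible; consequently, any two such $\Theta$ are connected by an essentially unique natural equivalence, so it suffices to verify the commutativity of $(\ref{diag:comp-k-theo})$ for a single concrete choice. I would take $\Theta$ to be the explicit comparison functor $\Psi$ constructed in Subsection~\ref{subsec:explicit-comparison}, whose behaviour on the genuine permutative $G$-categories arising from the Guillou--May--Shimakawa construction is further pinned down by Proposition~\ref{prop:comparison-on-saturated}.

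Next, both $K$-theory functors factor through a common $\Gamma$-$G$-space model. On the one hand, $\mathbb K_G$ is obtained from $C$ by first producing its $G$-global special $\Gamma$-space (via the nerve and the Segal machine as in \cite[Chapter~4]{g-global}) and then applying the equivariant infinite loop space machine along the homomorphism $\delta\colon G\hookrightarrow E\mathcal M\times G$. On the other hand, by the May--Merling--Osorno-type comparison summarised in the final corollary of Section~\ref{sec:parsummability}, the Guillou--May construction $\cat K_G$ applied to a genuine permutative $G$-category $D$ is equivalent to the infinite loop space machine applied to the Shimakawa $\Gamma$-$G$-space built from $D$ via its sum. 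Both Segal-style constructions have the description $\bm n_+\mapsto(\blank)^{\boxtimes n}$ up to equivalence, the first using the box product on tame $E\mathcal M$-$G$-objects restricted along $\delta$, and the second using the iterated symmetric monoidal product of $\Psi(C)$. Since $\Psi$ is designed so that the sum on $\Psi(C)$ is defined through the $\mathcal I$-algebra structure on $C$, and since by Theorem~\ref{thm:box-product-operadic} the box product itself is computed from the $\mathcal I$-operations, these two $\Gamma$-$G$-objects are canonically equivalent in a manner natural in $C$.

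Composing with the equivariant infinite loop space machine and appealing once more to its uniqueness produces the desired equivalence $\mathbb K_G(C)\simeq \cat K_G\Psi(C)$, natural in $C$. The principal obstacle will be lifting the comparison of $\Gamma$-$G$-space constructions from a pointwise equivalence to a homotopy coherent natural transformation between functors of quasi-categories. A clean way to handle this is another rigidity argument of the same flavour as in the first paragraph: both $\Gamma$-$G$-space constructions preserve underlying $G$-categories, and so any comparison between them is forced to agree up to essentially unique homotopy. Alternatively, one may observe that both $\mathbb K_G$ and $\cat K_G\circ\Psi$ exhibit connective genuine $G$-spectra as a Bousfield localization of $\cat{$\bm G$-ParSumCat}^\infty$ (via Theorem~\ref{thm:guillou-may-k-theory} together with the analogous $G$-global result \cite[Theorem~4.3.9]{g-global}) inverting the same class of maps, whence they must agree by uniqueness of such localizations.
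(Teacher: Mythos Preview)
Your opening rigidity reduction is the weak point. You invoke \cite[Corollary~2.5-(iii)]{ggn} to conclude that any two underlying-category-preserving equivalences
\[
\cat{$\bm G$-ParSumCat}^\infty_{\textup{$G$-equiv.}}\longrightarrow\Alg_{E\Sigma_*^{EG}}(\cat{$\bm G$-Cat})^\infty_{\textup{$G$-equiv.}}
\]
are canonically equivalent. That result applies when the target is identified with $\CMon(\mathscr C)$ over $\mathscr C$; this is exactly what holds for \emph{na\"ive} $E_\infty$-algebras (and is how Remark~\ref{rk:what-we-can-say-about-the-equivalence} uses it, for $G=1$, and how the proof of Proposition~\ref{prop:comparison-on-saturated} uses it, after passing to \emph{underlying} equivalences where genuine and na\"ive coincide). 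But $\Alg_{E\Sigma_*^{EG}}(\cat{$\bm G$-Cat})^\infty_{\textup{$G$-equiv.}}$ is the category of \emph{genuine} symmetric monoidal $G$-categories, which carries additional norm structure and is not known to be of the form $\CMon(\cat{$\bm G$-Cat}^\infty_{\textup{$G$-equiv.}})$. The paper's closing remark in the appendix explicitly flags the analogous contractibility question as open. So your first step, as written, is unjustified.

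The paper avoids this obstacle rather than confronting it. Instead of pinning down $\Theta$, it precomposes \emph{both} legs of the triangle with the equivalences $\Phi^{\textup{sat}}$ and $\Fun(EG,\blank)$ out of $\cat{$\bm G$-PermCat}^\infty_{\textup{$G$-`h'fp}}$ (these are equivalences once one passes to $G$-\emph{weak} equivalences, by Theorem~\ref{thm:main} and \cite[Theorem~6.9]{sym-mon-global}). Proposition~\ref{prop:comparison-on-saturated} then shows $\Theta\circ\Phi^{\textup{sat}}\simeq\Fun(EG,\blank)$ for \emph{any} such $\Theta$, and its proof only needs the \cite{ggn} rigidity at the underlying-equivalence level, where it is available. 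The problem is thereby reduced to a concrete square involving $\Phi^{\textup{sat}}$, $\Fun(EG,\blank)$, $\mathbb K_G$, and $\cat K_G$, which is handled by \cite[Theorem~4.1.40]{g-global}. Your $\Gamma$-$G$-space comparison and your localization-uniqueness alternative are both reasonable heuristics for this last square, but the paper simply cites the established result; the substantive difference between your argument and the paper's lies in how the dependence on $\Theta$ is eliminated.
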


Somewhat amusingly, we will never need to know how the above $K$-theory constructions actually look like---instead, we will deduce the theorem formally from the results of this paper together with \cite{g-global,sym-mon-global}.

We begin by making the equivalence $\Psi$ (and more generally any $\Theta$ as above) explicit in a special case, as promised in Remark~\ref{rk:comp-on-sat-promise}. For this we recall from \cite[Definition~4.1.25]{g-global} that any na\"ive permutative $G$-category $C$ gives rise to an (explicit) $G$-parsummable category $\Phi^\textup{sat}(C)$; again, the precise construction will not be relevant.

\begin{prop}\label{prop:comparison-on-saturated}
Let $\Theta$ be as above. Then
\begin{equation}\label{diag:comp-saturated}
\begin{tikzcd}[column sep=-10pt]
& \cat{$\bm G$-PermCat}^\infty_\textup{$G$-equiv.}\arrow[dl, bend right=15pt, "\Phi^\textup{sat}"']\arrow[dr, bend left=15pt, "{\Fun(EG,\blank)}"]\\
\cat{$\bm G$-ParSumCat}_\textup{$G$-equiv.}^\infty\arrow[rr, "\Theta"'] && \Alg_{E\Sigma_*^{EG}}(\cat{$\bm G$-Cat})^\infty_\textup{$G$-equiv.}
\end{tikzcd}
\end{equation}
commutes up to preferred equivalence.
\begin{proof}
Let us call a $G$-category \emph{strongly saturated} if for every $H\subset G$ the canonical map $C^H\to \Fun(EH,C)^H\simeq \Fun(EG,C)^H$ of fixed points into (categorical) homotopy fixed points is an equivalence (the $G$-global version of this was simply called \emph{saturated} in \cite{g-global,sym-mon-global}). If $A$ is any $G$-category, then $\Fun(EG,A)$ is strongly saturated \cite[Lemma~2.8]{merling}, and so is the underlying $G$-category of $\Phi^\text{sat}(B)$ for any permutative $G$-category $B$ by \cite[Theorem~4.1.23]{g-global}. Thus, both composites in (\ref{diag:comp-saturated}) factor through the full subcategory spanned by the strongly saturated genuine permutative $G$-categories. However, a map of strongly saturated $G$-categories is a $G$-equivariant equivalence if and only if it is an underyling equivalence, while the same argument as in Lemma~\ref{lemma:sat-vs-all-cat} shows that the inclusion of strongly saturated algebras into all algebras is an equivalence with respect to the \emph{underlying} equivalences. Altogether we are therefore reduced to proving the claim when we consider the composites as functors into $\Alg_{E\Sigma_*^{EG}}(\cat{$\bm G$-Cat})_\text{underlying}^\infty$.

However, in this case we have an isomorphism
\begin{equation*}
\Alg_{E\Sigma_*^{EG}}(\cat{$\bm G$-Cat})_\text{underlying}\cong\Fun(BG,\Alg_{E\Sigma_*^{EG}}(\cat{Cat})_\text{equiv.})
\end{equation*}
preserving and reflecting weak equivalences, hence an equivalence
\begin{equation*}
\Alg_{E\Sigma_*^{EG}}(\cat{$\bm G$-Cat})_\text{underlying}^\infty\simeq\Fun(BG,\CMon(\cat{Cat}^\infty_\textup{equiv.}))
\end{equation*}
lying over the canonical equivalence $\cat{$\bm G$-Cat}^\infty_\textup{underlying}\simeq\Fun(BG,\cat{Cat}^\infty_\text{equiv.})$ from \cite[Theorem~7.9.8 and Remark~7.9.7]{cisinski-book}. As taking commutative monoid objects in the $\infty$-categorical sense commutes with functor categories, this then gives an equivalence
\begin{equation*}
\Alg_{E\Sigma_*^{EG}}(\cat{$\bm G$-Cat})_\text{underlying}^\infty\simeq\CMon(\Fun(BG,\cat{Cat}^\infty_\textup{equiv.}))
\end{equation*}
over the same equivalence as before, and hence by functoriality of $\CMon$ finally an equivalence
\begin{equation*}
\Alg_{E\Sigma_*^{EG}}(\cat{$\bm G$-Cat})_\text{underlying}^\infty\simeq\CMon(\cat{$\bm G$-Cat}^\infty_\textup{underlying})
\end{equation*}
over $\cat{$\bm G$-Cat}^\infty_\textup{underlying}$. By \cite[Corollary~2.5-(iii)]{ggn} it therefore suffices to construct the equivalence filling (\ref{diag:comp-saturated}) after postcomposition with the forgetful functor to $\cat{$\bm G$-Cat}^\infty_\text{underlying}$, where both paths through the diagram can simply be identified with the forgetful functor (see \cite[proof of Lemma~6.12]{g-global} for the left hand composite).
\end{proof}
\end{prop}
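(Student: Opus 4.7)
The plan is to exploit the fact that both composites in the diagram land in strongly saturated objects and that both $\Theta$ and $\Phi^\textup{sat}$ preserve underlying $G$-categories; this will let me reduce the comparison from $G$-equivariant to underlying data, where rigidity of commutative monoid structures takes over. By \emph{strongly saturated} I mean those $G$-categories $C$ for which the canonical map $C^H \to \Fun(EG,C)^H$ is an equivalence for every $H \subset G$. The first step is to recall that $\Fun(EG, A)$ is strongly saturated for any $G$-category $A$ (this is Merling's result invoked in \cite[Lemma~2.8]{merling}), and that the underlying $G$-category of $\Phi^\textup{sat}(B)$ is also strongly saturated (by \cite[Theorem~4.1.23]{g-global}). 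Hence both composites factor through the full subcategory $\Alg_{E\Sigma_*^{EG}}(\cat{$\bm G$-Cat})^\textup{s-sat} \subset \Alg_{E\Sigma_*^{EG}}(\cat{$\bm G$-Cat})$ of strongly saturated algebras. On strongly saturated objects the $G$-equivariant equivalences are exactly the underlying equivalences (as each fixed point is determined up to equivalence by the underlying $G$-action via $\Fun(EG,\blank)^H$), while a variant of Lemma~\ref{lemma:sat-vs-all-cat} shows the inclusion $\Alg_{E\Sigma_*^{EG}}(\cat{$\bm G$-Cat})^\textup{s-sat} \hookrightarrow \Alg_{E\Sigma_*^{EG}}(\cat{$\bm G$-Cat})$ is a homotopy equivalence with respect to the underlying equivalences. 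Combining these reductions, I am left with producing a preferred equivalence between the two composites viewed as functors into $\Alg_{E\Sigma_*^{EG}}(\cat{$\bm G$-Cat})^\infty_\textup{underlying}$.

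The second step is to unpack this target $\infty$-categorically. Since underlying equivalences see none of the $G$-action, there is a strict isomorphism of relative categories
\begin{equation*}
\Alg_{E\Sigma_*^{EG}}(\cat{$\bm G$-Cat})_\textup{underlying} \;\cong\; \Fun\bigl(BG,\,\Alg_{E\Sigma_*^{EG}}(\cat{Cat})_\textup{equiv.}\bigr),
\end{equation*}
which passes to an equivalence of associated quasi-categories by \cite[Theorem~7.9.8 and Remark~7.9.7]{cisinski-book} lying over the equivalence $\cat{$\bm G$-Cat}^\infty_\textup{underlying} \simeq \Fun(BG, \cat{Cat}^\infty_\textup{equiv.})$. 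Using that $\infty$-categorical commutative monoid objects commute both with functor categories and with the forgetful-to-underlying functor, I then assemble an equivalence $\Alg_{E\Sigma_*^{EG}}(\cat{$\bm G$-Cat})^\infty_\textup{underlying} \simeq \CMon(\cat{$\bm G$-Cat}^\infty_\textup{underlying})$ compatible with the forgetful functor to $\cat{$\bm G$-Cat}^\infty_\textup{underlying}$.

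Finally, I invoke the rigidity of commutative-monoid structures in the form of \cite[Corollary~2.5-(iii)]{ggn}: to exhibit an equivalence between two $\CMon(\cat{$\bm G$-Cat}^\infty_\textup{underlying})$-valued functors out of $\cat{$\bm G$-ParSumCat}^\infty$, it suffices to exhibit one between their compositions with the forgetful functor to $\cat{$\bm G$-Cat}^\infty_\textup{underlying}$. But after this forgetting, both composites in the diagram become the forgetful functor $\cat{$\bm G$-ParSumCat}^\infty \to \cat{$\bm G$-Cat}^\infty_\textup{underlying}$: for the $\Theta$-path this is by hypothesis, and for the other path it follows from the fact that the underlying $G$-category of $\Phi^\textup{sat}(B)$ recovers the underlying $G$-category of $B$ (see the analysis in the proof of \cite[Lemma~6.12]{g-global} of which the present statement is a direct adaptation). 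The tautological identification of these two forgetful functors then yields the desired equivalence.

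I expect the main obstacle to be showing that the equivalence $\Alg_{E\Sigma_*^{EG}}(\cat{$\bm G$-Cat})^\infty_\textup{underlying} \simeq \CMon(\cat{$\bm G$-Cat}^\infty_\textup{underlying})$ is sufficiently natural, lying over the forgetful functor, so that the GGN rigidity result genuinely applies in this form; in particular one needs to know that the commutative monoid structure coming from algebras over the Barratt--Eccles operad $E\Sigma_*^{EG}$ agrees (as $\CMon$-structure on the underlying) with the intrinsic one on $\cat{$\bm G$-Cat}^\infty_\textup{underlying}$. Everything else in the argument is essentially a bookkeeping exercise once the saturation-based reduction to underlying data has been carried out.
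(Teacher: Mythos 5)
Your proposal is correct and follows essentially the same route as the paper's own proof: the same reduction to strongly saturated algebras (via Merling's Lemma 2.8 and Theorem 4.1.23 of \cite{g-global}), the same observation that on strongly saturated objects the $G$-equivariant and underlying equivalences coincide, the same identification of the underlying-equivalence localization with $\CMon(\cat{$\bm G$-Cat}^\infty_\textup{underlying})$ via \cite{cisinski-book} and commutation of $\CMon$ with functor categories, and the same appeal to \cite[Corollary~2.5-(iii)]{ggn} together with the fact that both composites reduce to the forgetful functor. The naturality concern you flag at the end is precisely what the paper handles by carefully arranging all the intermediate equivalences to lie over $\cat{$\bm G$-Cat}^\infty_\textup{underlying}$, so there is no genuine gap.
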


\begin{proof}[Proof of Theorem~\ref{thm:comparison-k-theory}]
Both $K$-theory functors actually invert $G$-equivariant \emph{weak} equivalences, and so does $\Theta$ by assumption; we may therefore prove the theorem after localizing at the $G$-equivariant weak equivalences instead. In this case, both $\Fun(EG,\blank)$ and $\Phi^{\textup{sat}}$ become equivalences with respect to the $G$-`h'fp weak equivalences on $\cat{$\bm G$-PermCat}$ by Theorem~\ref{thm:main} and \cite[Theorem~6.9]{sym-mon-global}, respectively. By the previous proposition it therefore suffices to prove the theorem after precomposing (\ref{diag:comp-k-theo}) with the maps from (\ref{diag:comp-saturated}), i.e.~to construct an equivalence filling
\begin{equation*}
\begin{tikzcd}
\cat{$\bm G$-PermCat}^\infty_\text{$G$-`h'fp}\arrow[r, "{\Fun(EG,\blank)}"]\arrow[d, "\Phi^\textup{sat}"'] &[2em] \Alg_{E\Sigma_*^{EG}}(\cat{$\bm G$-Cat})^\infty_\text{$G$-w.e.}\arrow[d, "\cat{K}_G"]\\
\cat{$\bm G$-ParSumCat}^\infty_\textup{$G$-w.e.}\arrow[r, "\mathbb K_G"'] & \cat{$\bm G$-Spectra}_\textup{$G$-w.e.},
\end{tikzcd}
\end{equation*}
which is done in \cite[Theorem~4.1.40]{g-global}.
\end{proof}

\begin{rk}
A slight variation of the above arguments yields the following uniqueness result for our comparison $\cat{$\bm G$-ParSumCat}_\textup{$G$-w.e.}^\infty\simeq\Alg_{E\Sigma_*^{EG}}(\cat{$\bm G$-Cat})^\infty_\text{$G$-w.e.}$: for any functor
\begin{equation*}
\cat{$\bm G$-ParSumCat}_\textup{$G$-equivalences}^\infty\to \Alg_{E\Sigma_*^{EG}}(\cat{$\bm G$-Cat})^\infty_\text{$G$-equivalences}
\end{equation*}
compatible with the forgetful maps to $\cat{$\bm G$-Cat}_\textup{$G$-equiv.}^\infty$ (note the finer notion of weak equivalence!), the induced functor $\cat{$\bm G$-ParSumCat}_\textup{$G$-w.e.}^\infty\to\Alg_{E\Sigma_*^{EG}}(\cat{$\bm G$-Cat})^\infty_\text{$G$-w.e.}$ is canonically equivalent to the equivalence $\Psi$ we constructed, in particular itself an equivalence.

I do not know whether this holds more generally without the assumption that our comparison comes from a functor of the localizations at the $G$-equivariant equivalences, or equivalently, whether the space of endomorphism of $\Alg_{E\Sigma_*^{EG}}(\cat{$\bm G$-SSet})^\infty$ over $\cat{$\bm G$-SSet}^\infty$ is contractible. Similarly, it is not clear whether $\Alg_{E\Sigma_*^{EG}}(\cat{$\bm G$-Cat})_\textup{$G$-equiv.}^\infty$ has non-trivial endomorphisms over $\cat{$\bm G$-Cat}_\textup{$G$-equiv.}^\infty$.
\end{rk}

\begin{ack}
    I would like to thank Bastiaan Cnossen for helpful discussions and Branko Juran for suggesting a simplification of the example in Warning~\ref{warning:no-elmendorf}. I am moreover grateful to two anonymous referees for helpful feedback, which in particular prompted the inclusion of Subsection~\ref{subsec:explicit-comparison}.
\end{ack}

\begin{funding}
    Much of the work on this paper was done when I was at the Max Planck Institute for Mathematics in Bonn, and I would like to thank them for their hospitality and support. The first version of this article was completed while I was in residence at Institut Mittag-Leffler in Djursholm, Sweden in early 2022 as a participant of the research program `Higher algebraic structures in algebra, topology and geometry,' supported by the Swedish Research Council under grant no. 2016-06596.
\end{funding}

\bibliographystyle{emss}
\bibliography{literature.bib}
\end{document}